\theoremstyle{plain}
\newtheorem{thm}{\bf Theorem}[section]
\newtheorem{lem}[thm]{\bf Lemma}
\newtheorem{prop}[thm]{\bf Proposition}
\newtheorem{cor}[thm]{\bf Corollary}
\theoremstyle{definition}
\newtheorem{dfn}[thm]{Definition}
\newtheorem{rem}[thm]{\bf Remark}
\newcommand{\M}{\mathfrak{M}}
\newcommand{\cM}{\overline{\mathfrak{M}}}
\newcommand{\mf}[1]{\mathfrak{#1}}
\newcommand{\cN}{\overline{\mf{N}}}
\newcommand{\norm}[1]{{\rm{||}}#1{\rm{||}}}
\newcommand{\mc}[1]{\mathcal{#1}}
\newcommand{\msc}[1]{\mathscr{#1}}
\newcommand{\tens}{\otimes}
\newcommand{\eps}{\varepsilon}
\newcommand{\eqa}[1]{
\begin{align*}
#1
\end{align*}}
\DeclareMathOperator*{\Bca}{\bigcap}
\DeclareMathOperator*{\Bcu}{\bigcup}
\newcommand{\mbb}[1]{\mathbb{#1}}
\newcommand{\brac}[1]{\left (#1\right )}
\newcommand{\limn}{\lim_{n\to \infty}}
\newcommand{\ol}[1]{\overline{#1}}
\newcommand{\tb}[1]{\textbf{#1}}
\newcommand{\Hs}{\mathcal{H}}
\newcommand{\nai}[2]{\langle #1,#2\rangle}
\newcommand{\dom}[1]{{{\rm{dom}}{(#1)}}}
\newcommand{\ran}[1]{{\rm{ran}}{(#1)}}
\newcommand{\cran}[1]{\overline{{\rm{ran}}}(#1)}
\newcommand{\atensor}{\otimes_{{\rm{alg}}}}
\newcommand{\vtensor}{\overline{\otimes}}
\newcommand{\slim}[1]{{\rm{s}}\text{-}\lim_{#1}}
\newcommand{\disp}{\displaystyle}
\newcommand{\slimn}{{\rm{s}}\text{-}\lim_{n\to \infty}}
\newcommand{\wlimn}[1]{{\rm{w}}\text{-}\lim_{n\to \infty}}
\renewcommand{\ker}[1]{{\rm{ker}}(#1)}
\newcommand{\re}[1]{{\rm{Re}}(#1)}
\newcommand{\im}[1]{{\rm{Im}}(#1)}
\newcommand{\rc}[1]{{\rm{\mathscr{RC}}}(#1)}
\title{Lie Group-Lie Algebra Correspondences of\\ Unitary Groups in Finite von Neumann Algebras}
\author{Hiroshi Ando\\
Research Institute for Mathematical Sciences, Kyoto University\\
Kyoto, 606-8502, Japan\\
E-mail: andonuts@kurims.kyoto-u.ac.jp\\
\\
Yasumichi Matsuzawa$^{1,2, }$\thanks{
Supported by Research fellowships of the Japan Society for the Promotion of Science for Young Scientists 
(Grant No. 2100165000).
} \\
$^{1\ }$Mathematisches Institut, Universit\"{a}t Leipzig\\
Johannisgasse 26, 04103, Leipzig, Germany\\
$^{2\ }$Department of Mathematics, Hokkaido University\\
Kita 10, Nishi 8, Kita-ku, Sapporo, 060-0810, Japan\\
E-mail: matsuzawa@math.sci.hokudai.ac.jp
}
\begin{document}
\maketitle
\begin{abstract} 
We give an affirmative answer to the question whether there exist Lie algebras for suitable closed subgroups 
of the unitary group $U(\Hs)$ in a Hilbert space $\Hs$ with $U(\Hs)$ equipped with the strong operator topology. 
More precisely, for any strongly closed subgroup $G$ of the unitary group $U(\M)$ 
in a finite von Neumann algebra $\M$, 
we show that the set of all generators of strongly continuous one-parameter subgroups of $G$ 
forms a complete topological Lie algebra with respect to the strong resolvent topology.
We also characterize the algebra $\cM$ of all densely defined closed operators affiliated with $\M$ 
from the viewpoint of a tensor category.
\end{abstract}

\noindent
{\bf Keywords}. finite von Neumann algebra, unitary group, affiliated operator, measurable operator, 
strong resolvent topology, tensor category, infinite dimensional Lie group, infinite dimensional Lie algebra.

\medskip

\noindent
{\bf Mathematics Subject Classification (2000)}. 22E65, 46L51.

\medskip

\pagebreak 
\tableofcontents

\section{Introduction and Main Theorem}
\label{intro}
Lie groups played important roles in mathematics because of its close relations with the notion of symmetries. 
They appear in almost all branches of mathematics and have many applications.
While Lie groups are usually understood as finite dimensional ones, 
many infinite dimensional symmetries appear in natural ways: for instance, loop groups $C^{\infty}(S^1,G)$ \cite{Pres}, 
current groups $C^{\infty}_{c}(M,G)$ \cite{Alb}, 
diffeomorphism groups $\mathit{Diff}^{\infty}(M)$ of manifolds \cite{Banyaga} 
and Hilbert-Schmidt groups \cite{Gordina} are among well-known cases.
They have been extensively investigated in several concrete ways.   

In this context, it would be meaningful to consider a general theory of infinite dimensional Lie groups.
One of the most fundamental infinite dimensional groups are Banach-Lie groups.
They are modeled on Banach spaces and many theorems in finite dimensional cases
are also applicable to them. 
Since it has been shown that a Banach-Lie group cannot act transitively and effectively on a compact
manifold as a transformation group \cite{Omori2}, however, Banach-Lie groups are not sufficient for treating
infinite dimensional symmetries.
After the birth of Banach-Lie group theory, more general notions of infinite dimensional Lie groups have been scrutinized to date: 
locally convex Lie groups \cite{Neeb}, ILB-Lie groups \cite{Omori}, pro-Lie groups \cite{Hofmann, Hofmann2} and so on. 
While there are many interesting and important results about them, 
we note that not all theorems in finite dimensional cases remain valid in these categories 
and their treatments are complicated. 
For example, the exponential map might not be a local homeomorphism 
and the Baker-Campbell-Hausdorff formula may no longer be true \cite{Milnor}.

We understand that the one of the most fundamental class of finite dimensional Lie groups 
are the unitary groups $U(n)$ in such a sense that any compact Lie group can be realized 
as a closed subgroup of them. 
From this viewpoint, 
it would be important to study the infinite dimensional analogue of it; 
that is, we like to explicate the Lie theory for the unitary group $U(\Hs)$ of an infinite dimensional Hilbert space $\Hs$.
One of the most fundamental question is whether $\text{Lie}(G)$ defined as the set
of all generators of continuous one-parameter subgroups of a closed subgroup 
$G$ of $U(\mathcal{H})$ forms a Lie algebra or not.
For the infinite dimensional
Hilbert space $\mathcal{H}$, there are at least two topologies on $U(\Hs)$, 
(a) the norm topology and (b) the strong operator topology. 
We discuss the above 
topologies separately.
In the case (a), $U(\Hs)$ is a Banach-Lie group and for each closed subgroup the set $\text{Lie}(G)$ forms a Lie algebra.
But it is well known that there are not many ``nice'' continuous unitary representations of groups 
in $\Hs$, and hence, $U(\Hs)$ with the norm topology is very narrow.
On the other hand, $U(\Hs)$ with the strong operator topology (b) 
is important, because there are many ``nice" continuous unitary representations 
of groups in $\Hs$--say, diffeomorphism groups of compact manifolds, etc. 
However, the answer is
negative to the question whether there exists a corresponding Lie algebra or not.
Indeed, by the Stone theorem, the Lie algebra of $U(\Hs)$ coincides with 
the set of all (possibly unbounded) skew-adjoint operators on $\Hs$,
but we cannot define 
naturally a Lie algebra structure with addition and Lie bracket operations on it.
This arises from the problem of the domains of unbounded operators.
For two skew-adjoint operators $A,B$ on $\Hs$, $\dom{A+B}=\dom{A}\cap \dom{B}$ is not always dense.
Even worse, it can be $\{0\}$ (see Remark \ref{3domain is can be zero}).
Because of this, the Lie theory for $U(\Hs)$ has not been successful, although the group itself is a very natural object.
On the other hand, it is possible that even though the whole group $U(\Hs)$ does not have a Lie algebra, 
some suitable class of closed subgroups of it have ones.
Indeed their Lie algebras $\text{Lie}(G)$ are smaller than $\text{Lie}(U(\Hs))$.

We give an affirmative answer to the last question.
Furthermore we prove that for a suitable subgroup $G$, $\text{Lie}(G)$ 
is a complete topological Lie algebra with respect to some natural topology.
We outline below the essence of our
detailed discussions in the text.

First, a group $G$ to be studied in this paper is a closed subgroup of the unitary group $U(\M)$ 
of some finite von Neumann algebra $\M$ acting on a Hilbert space $\Hs$. 
Clearly it is also a closed subgroup of $U(\Hs)$. 
The key proposition is the following result of Murray-von Neumann (cf. Theorem \ref{2eta_*alg}):

\begin{thm}[Murray-von Neumann]
The set $\cM$ of all densely defined closed operators affiliated with a finite von Neumann algebra $\M$ on $\Hs$,
\begin{align*}
\cM := \left\{ 
A \ ;
\begin{array}{l} A\ \ \text{is \ a \ densely \ defined \ closed \ operator \ on }\ \Hs \\ 
\ \ \ \ \ \ \ \ \
\text{such \ that}\ \ uAu^*=A\ \ \text{for \ all}\ \ u \in U(\M'). 
\end{array} \right\},
\end{align*}
constitutes a *-algebra under the sum $\overline{A+B}$, 
the scalar multiplication $\overline{\alpha A}\ (\alpha \in \mathbb{C})$, 
the product $\overline{AB}$ and the involution $A^*$, where $\overline{X}$ denotes the closure of a closable operator $X$.
\end{thm}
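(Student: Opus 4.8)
The plan is to verify, for $A,B\in\cM$, that each of $A+B$ (on $\dom{A}\cap\dom{B}$), $\alpha A$, $AB$ (on its natural domain) and $A^*$ is densely defined and closable, that the corresponding closures lie again in $\cM$, and that the $*$-algebra identities hold. The assertions for $\alpha A$ and $A^*$ are immediate: $\alpha A$ is already closed, and $A^*$ is closed and densely defined because $A$ is, while for $u\in U(\M')$ one has $uA^*u^*=(uAu^*)^*=A^*$, so $\alpha A,A^*\in\cM$. Everything else rests on showing that the domains of the sum and the product are \emph{dense} -- and this is the single point where the finiteness of $\M$ is essential.

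The finiteness input I would isolate into three facts. First, if $A\in\cM$ has polar decomposition $A=v|A|$ with $v\in\M$ and spectral resolution $|A|=\int_0^\infty\lambda\,de_\lambda$, then $p_n:=e_n$ lies in $\M$, $p_n\uparrow 1$, $p_n\Hs\subseteq\dom{A}$ and $Ap_n\in\M$; moreover $\bigcup_n p_n\Hs$ is a core for $A$. Second (finiteness), fixing a faithful normal tracial state $\tau$ -- passing to the countably decomposable case, or else using the center-valued trace -- the Kaplansky parallelogram law together with $\tau(e\vee f)\le 1$ gives $\tau(e\wedge f)\ge\tau(e)+\tau(f)-1$ for projections $e,f\in\M$, so that any family of projections whose traces have supremum $1$ has supremum $1$; in particular $e_n\uparrow 1$ and $f_n\uparrow 1$ force $e_n\wedge f_n\uparrow 1$. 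Third (finiteness), the left and right support projections of any $x\in\M$ are Murray--von Neumann equivalent, so $\tau(\ker{x})=1-\tau(\mathrm{l}(x))$, and $\mathrm{l}(g^\perp T)\le g^\perp$ for every projection $g$ and every $T\in\M$.

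Granting these, $\dom{A}\cap\dom{B}$ is dense because it contains $\bigcup_n(p_n\wedge q_n)\Hs$, where $p_n,q_n$ truncate $A,B$. For the product, one checks that $\dom{AB}$ contains the range of every projection $\ker{e_m^\perp Bq_n}\wedge q_n$: on $q_n\Hs$ the operator $B$ coincides with $Bq_n\in\M$, and $e_m^\perp Bq_n\xi=0$ forces $B\xi\in e_m\Hs\subseteq\dom{A}$. By the third fact $\tau(\ker{e_m^\perp Bq_n})=1-\tau(\mathrm{l}(e_m^\perp Bq_n))\ge 1-\tau(e_m^\perp)\to 1$ and $\tau(q_n)\to 1$, so by the second fact these meets have supremum $1$ and $\dom{AB}$ is dense. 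Closability then follows since $A+B$ and $AB$ are affiliated with $\M$ (one verifies $u(A+B)u^*=A+B$ and $u(AB)u^*=AB$ on their domains for $u\in U(\M')$) and their adjoints contain the operators $A^*+B^*$, resp.\ $B^*A^*$, which are densely defined by the very same argument applied to $A^*,B^*\in\cM$; taking closures, $u\,\overline{A+B}\,u^*=\overline{u(A+B)u^*}=\overline{A+B}$ and likewise for $AB$, so $\overline{A+B},\overline{AB}\in\cM$.

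Finally, for the $*$-algebra identities (associativity, distributivity, $(\overline{AB})^*=\overline{B^*A^*}$, and the remaining axioms) the strategy is that every expression occurring is a closed operator in $\cM$, so two such expressions coincide as soon as they agree on a subspace that is simultaneously a core for both; the natural candidates are intersections of the spectral subspaces $p_n^{(i)}\Hs$ together with the kernel subspaces introduced above for the products, on which all operators involved are bounded and the identities are trivialities. I expect the genuine obstacle to be exactly this last point -- the combinatorial bookkeeping needed to exhibit, for products and mixed expressions, a common core built from the projections of the second and third facts and to confirm it is a core for each side -- whereas the sum, the scalar multiplication and the involution are routine.
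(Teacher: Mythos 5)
Your density arguments are sound and close in spirit to Murray--von Neumann's: the spectral truncations $p_n$ of $|A|$, the trace estimate $\tau(e\wedge f)\ge\tau(e)+\tau(f)-1$, and the equivalence of left and right supports do give that $\dom{A}\cap\dom{B}$ and $\dom{AB}$ contain ranges of projections of trace arbitrarily close to $1$, hence are dense (in the countably decomposable case; for general finite $\M$ you still owe the reduction, which the paper carries out by decomposing $\M$ into a direct sum of countably decomposable summands and working with increasing \emph{nets} of projections). Closability and affiliation of the closures are fine as you state them.

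The genuine gap is in the last step, and you have in effect flagged it yourself: your plan for associativity, distributivity and $(\overline{AB})^*=\overline{B^*A^*}$ is ``the two sides agree on a subspace that is a core of both,'' but you give no mechanism for proving the core property for composite expressions such as $\overline{\overline{AB}\,C}$ or $\overline{\overline{A+B}\,C}$, and that verification is essentially as hard as the theorem itself if attempted by direct bookkeeping. What makes the proof work is a structural fact you never isolate: in a \emph{finite} von Neumann algebra, operators in $\cM$ admit no proper closed extensions inside $\cM$ (Proposition \ref{2No-closed-extension}(2), proved via the polar decomposition/Cayley-transform argument where finiteness enters), and consequently any ``completely dense'' subspace contained in $\dom{A}$ --- i.e.\ one containing the ranges of an increasing net of projections of $\M$ tending strongly to $1$ --- is automatically a core of $A$ (Lemma \ref{2core of A}). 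Combined with the fact that countable intersections of completely dense subspaces are completely dense (Proposition \ref{2countable intersection of c-dense}), one takes the intersection of the domains of \emph{all} non-commutative polynomial expressions in the finitely many operators involved, checks the algebraic identity there by trivial pointwise computation, and concludes equality of the closed operators with no core verification at all: both sides extend the closure of the common restriction, which lies in $\cM$, so by the no-proper-extension property they coincide (this is the content of Lemma \ref{2eta_alg}). Your trace estimates are really a special case of the complete-density machinery, but without the maximality/no-extension principle the reduction of the $*$-algebra axioms to ``agreement on a dense set'' does not go through, so the proposal as written does not yet prove the theorem.
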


The inclusion $G\subset U(\M)$ implies $\text{Lie}(G)\subset \cM$ and hence, for arbitrary two elements $A$, $B\in \text{Lie}(G)$,
the sum $\overline{A+B}$, the scalar multiplication $\overline{\alpha A}$, 
the Lie bracket $[A,B]:=\overline{AB-BA}$ are determined as elements of $\cM$.
We can prove that they are again elements of $\text{Lie}(G)$, which is not trivial. 
Therefore $\text{Lie}(G)$ indeed forms a Lie algebra which is infinite dimensional in general.
Thus if we do not introduce a topology, it is difficult to investigate it.
Then, what is the natural topology on $\text{Lie}(G)$?
Since $\text{Lie}(G)$ is a Lie algebra, it should be a  
vector space topology. Furthermore, 
in view of the 
correspondences between Lie groups and Lie algebras it is natural to require the continuity of the mapping
\begin{equation*}
\exp : \text{Lie}(G)\ni A\longmapsto e^{A}\in G,
\end{equation*}
where $G$ is equipped with the strong operator topology and $e^{A}$ is defined by the spectral theorem. 
Under these assumptions, a necessary condition for 
a sequence $\{A_n\}_{n=1}^{\infty}\subset \text{Lie}(G)$ to converge to $A\in \text{Lie}(G)$ is given by
\begin{equation*}
\slimn e^{tA_n}=e^{tA}, \ \ \ \ \text{for all}\ t\in \mathbb{R}.
\end{equation*}
This condition is equivalent to
\begin{equation*} 
\slimn (A_n+1)^{-1}=(A+1)^{-1}.
\end{equation*}
The latter convergence is well known in the field of (unbounded) operator theory 
as the convergence with respect to the strong resolvent topology. 
Therefore it seems natural to consider the strong resolvent topology for $\text{Lie}(G)$. 
However, there arises,
unfortunately, another troublesome question as to whether the vector space
operations and the Lie bracket operation are continuous with respect to the
strong resolvent topology of $\text{Lie}(G)$.
For example, even if sequences $\{A_n\}_{n=1}^{\infty}$, $\{B_n\}_{n=1}^{\infty}$ of skew-adjoint operators converge, 
respectively to skew-adjoint operators $A$, $B$ 
with respect to the strong resolvent topology, the sequences $\{A_n+B_n\}_{n=1}^{\infty}$ 
are not guaranteed to converge to $A+B$ (see Remark \ref{3not linear}). 
We can solve this difficulty by applying the noncommutative integration theory and proving that the Lie algebraic operations are continuous 
with respect to the strong resolvent topology and that $\text{Lie}(G)$ is complete as a uniform space.
Hence $\text{Lie}(G)$ forms a complete topological Lie algebra.
Finally, let us remark one point: remarkably, $\text{Lie}(G)$ is not locally convex in general. 
Most of infinite dimensional Lie theories assume the local convexity explicitly, 
but as soon as we consider such groups as natural infinite dimensional analogues of classical Lie groups, 
there appear non-locally convex examples.

We shall explain the contents of the paper. 
$\S$2 is a preliminary section. 
We recall the basic facts about closed operators affiliated with a finite von Neumann algebra 
and explain the generalization of the Murray-von Neumann theorem for a non-countably decomposable case. 
In $\S$3, we introduce three topologies on the set $\cM$ of all densely defined closed operators affiliated with 
a finite von Neumann algebra $\M$.
The first topology originates from (unbounded) operator theory, the second one is Lie theoretical 
and the last one derives from the noncommutative integration theory.
We discuss their topological properties and show that they do coincide on $\overline{\mathfrak{M}}$.
The main result of this section is Theorem \ref{3cta}
which states that 
$\overline{\mathfrak{M}}$ forms a complete topological *-algebra with respect to the strong resolvent topology.
In $\S$4 constituting the main
contents of the paper, we show that $\text{Lie}(G)$ is a complete
topological Lie algebra and discuss some aspects of it. The main result is
given in Theorem \ref{4main}.
In $\S$5, applying the results of $\S$3, we consider the following problem: 
{\it What kind of unbounded operator algebras can they be represented in the form of $\cM$?}
We give their characterization from the viewpoint of a tensor category. 
We show that $\mathscr{R}$ can be represented as $\cM$ if and only if 
it is an object of the category \textbf{fRng} (cf. Definition \ref{6def of fRng}).  
The main result is Theorem \ref{6main theorem}, which says that the category \textbf{fRng} 
is isomorphic to the category \textbf{fvN} of finite von Neumann algebras as a tensor category. 
In Appendix, we list up some fundamental definitions and results of the direct sums of operators, 
the strong resolvent convergence and the categories.

\section{Preliminaries}
\label{sec2}

In this section we review some basic facts about operator algebras and unbounded operators. 
For the details, see \cite{Reed-Simon, Tak}.
See also Appendix A for the direct sums of operators.

\subsection{von Neumann Algebras} 

Let $\Hs$ be a Hilbert space with an inner product $\nai{\xi}{\eta}$, which is linear with respect to $\eta$.
We denote the algebra of all bounded operators on $\Hs$ by $\mathfrak{B}(\Hs)$.
Let $\mf{M}$ be a von Neumann algebra acting on $\Hs$.
The set 
\begin{equation*}
\M':=\left\{x\in \mf{B}(\mathcal{H})\ ;\ xy=yx,\ {\rm for\ all}\ y\in \M\right\}
\end{equation*}
is called the \textit{commutant} of $\M$. 
The group of all unitary operators in $\mf{M}$ is denoted by $U(\mf{M})$. 
The lattice of all projections in $\M$ is denoted by $P(\M).$ 
The orthogonal projection onto the closed subspace $\mathcal{K}\subset \mathcal{H}$ is denoted by $P_{\mathcal{K}}$. 
For a projection $p$ in $\M$, we denote $1-p$ as $p^{\perp}$.

\begin{dfn}
Let $\M$ be a von Neumann algebra acting on a Hilbert space $\Hs$.
\begin{list}{}{}
\item[(1)] A von Neumann algebra with no non-unitary isometry is called \textit{finite}. 
\item[(2)] A von Neumann algebra is called \textit{countably decomposable} if it admits at most countably many non-zero orthogonal projections.
\item[(3)] A subset $\mathcal{D}$ of $\Hs$ is called \textit{separating} for $\M$
if $x\xi=0$, $x\in\M$ for all $\xi\in\mathcal{D}$ implies $x=0$.  
\end{list}
\end{dfn}

It is known that a von Neumann algebra $\M$ acting on a Hilbert space $\Hs$ is countably decomposable 
if and only if there exists a countable separating subset of $\Hs$ for $\M$.

\begin{dfn}
Let $\M$ be a von Neumann algebra.
\begin{list}{}{}
\item[(1)] A state $\tau$ on $\M$ is called \textit{tracial} if for all $x, y\in \M$,
\begin{equation*}
\tau(xy)=\tau(yx)
\end{equation*}
holds.
\item[(2)] A tracial state $\tau$ is called \textit{faithful} if $\tau(x^*x)=0$ ($x\in\M$) implies $x=0$.
\item[(3)] A tracial state $\tau$ is called \textit{normal} if it is $\sigma$-weakly continuous.
\end{list}
\end{dfn}

It is known that a von Neumann algebra is countably decomposable and finite if and only if 
there exists a faithful normal tracial state on it. 
For more informations about tracial states, see \cite{Tak}.

Let $\M$ be a von Neumann algebra and $p\in\M\cup\M'$ be a projection.
Define the set $\M_p$ of bounded operators on the Hilbert space $\ran{p}$ as
\begin{equation*}
\left\{px|_{\ran{p}}\ ;\ x\in\M \right\},
\end{equation*}
then $\M_p$ forms a von Neumann algebra acting on the Hilbert space $\ran{p}$ and $\left(\M_{p}\right)'=(\M')_p$ holds.

If $(\M, \Hs)$ and $(\mathfrak{N}, \mathcal{K})$ are von Neumann algebras 
and if there exists a unitary operator $U$ of $\Hs$ onto $\mathcal{K}$ such that
\begin{equation*}
U\M U^{*} = \mathfrak{N},
\end{equation*} 
then $(\M, \Hs)$ and $(\mathfrak{N}, \mathcal{K})$ are said to be {\it spatially isomorphic}.
The map $\pi$ of $\M$ onto $\mathfrak{N}$ defined by 
\begin{equation*}
\varphi (x) = UxU^{*}, \ \ \ \ x\in\M,
\end{equation*} 
in called a {\it spatial isomorphism}.
The next Lemma is useful.

\begin{lem}\label{2decompose}
Let $\left(\M,\Hs\right)$ be a finite von Neumann algebra.
Then there exists a family of countably decomposable finite von Neumann algebras $\left\{\left(\M_{\alpha},\Hs_{\alpha}\right)\right\}_{\alpha}$
such that $\left(\M,\Hs\right)$ is spatially isomorphic to the direct sum
$\left(\bigoplus_{\alpha}^b\M_{\alpha},\bigoplus_{\alpha}\Hs_{\alpha}\right)$. 
\end{lem}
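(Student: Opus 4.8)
The plan is to reduce the general finite von Neumann algebra to the countably decomposable case by a maximality argument, and then invoke the standard structure theory (existence of a faithful normal tracial state) implicitly through the characterization of countable decomposability via separating vectors. First I would observe that it suffices to find a family $\{p_\alpha\}_\alpha$ of mutually orthogonal central projections in $\M$ with $\sum_\alpha p_\alpha = 1$ such that each $\M_{p_\alpha}$ is countably decomposable; for then, writing $\Hs_\alpha := \ran{p_\alpha}$ and $\M_\alpha := \M_{p_\alpha}$ (which is a finite von Neumann algebra on $\Hs_\alpha$, since a corner of a finite algebra is finite), the natural map $\Hs \to \bigoplus_\alpha \Hs_\alpha$ is unitary and carries $\M$ onto $\bigoplus_\alpha^b \M_\alpha$, i.e. implements a spatial isomorphism.

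The core step is therefore the construction of the central decomposition. I would fix a maximal family $\{p_\alpha\}_\alpha$ of nonzero mutually orthogonal central projections in $\M$ such that each corner $\M_{p_\alpha}$ is countably decomposable, the existence of such a maximal family being guaranteed by Zorn's lemma (the empty family is admissible, and unions of chains remain admissible since the defining property is about each individual member). Set $p := \sum_\alpha p_\alpha$, a central projection. The claim is that $p = 1$. Suppose not, so $q := 1 - p \neq 0$ is a nonzero central projection. Then $\M_q$ is a finite von Neumann algebra acting on $\ran{q}$, and I would produce inside it a nonzero central projection $r \le q$ with $\M_r$ countably decomposable; adjoining $r$ to the family contradicts maximality. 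To find $r$: pick any unit vector $\xi \in \ran{q}$ and let $r$ be the central support (central carrier) of the projection $P_{\overline{\M_q \xi}}$ in $\M_q$; equivalently $r$ is the smallest central projection of $\M_q$ with $r\xi = \xi$. Then $\xi$ is a separating vector for $(\M_q)_r = \M_r$: indeed if $x \in \M_r$ and $x\xi = 0$, then the projection onto $\overline{\ker x}$ dominates a nonzero subprojection... more directly, the left support of $x$ has central support $\le r$, and $x\xi = 0$ forces that central support to be orthogonal to $r$, hence $x = 0$. A von Neumann algebra possessing a single separating vector is countably decomposable (it has a countable — indeed singleton — separating subset, by the cited characterization), so $\M_r$ is countably decomposable, giving the desired contradiction.

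The main obstacle is the verification, in the previous paragraph, that the central support construction does yield a genuinely separating vector for the corner $\M_r$ rather than merely a cyclic or separating vector for some non-central compression; this uses the standard facts that the central support of $P_{\overline{\M_q\xi}}$ equals the central support of $P_{\overline{\M_q'\xi}}$ and that $\xi$ is cyclic for $\M_q'$ on $\ran{r}$ iff it is separating for $\M_r$, all of which are classical (see \cite{Tak}) but must be marshalled carefully. Everything else — that corners of finite algebras are finite, that $(\M_p)' = (\M')_p$, that a countable separating subset is equivalent to countable decomposability, and the identification of $\bigoplus_\alpha^b \M_\alpha$ acting on $\bigoplus_\alpha \Hs_\alpha$ — is either quoted from the preliminaries above or routine, and I would not belabor it.
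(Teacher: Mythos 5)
There is a genuine gap at the heart of your argument: the claim that the unit vector $\xi$ is separating for $\M_r$, where $r$ is the central support of the cyclic projection $P_{\overline{\M_q\xi}}$, is false. The central support dominates the cyclic projection but is in general strictly larger; what your construction actually yields is that $\xi$ is cyclic for $\M_q$ on $\ran{P_{\overline{\M_q\xi}}}$ (hence separating for a corner of the commutant), not that $\xi$ is cyclic for $(\M_q)'$ on $\ran{r}$, which is what the classical criterion ``cyclic for the commutant iff separating'' would require. Concretely, take $\M_q=M_2(\mathbb{C})$ acting on $\mathbb{C}^2$ and $\xi=e_1$: then $\overline{\M_q\xi}=\mathbb{C}^2$, so $r=1$, yet the matrix unit $e_{12}$ satisfies $e_{12}\xi=0$ and $e_{12}\neq 0$; in particular your step ``$x\xi=0$ forces the central support of the left support of $x$ to be orthogonal to $r$'' fails (that central support equals $1=r$ here). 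A structural reason the argument cannot be repaired without new input: your key step nowhere uses finiteness, but the lemma is false without it --- $\mathfrak{B}(\Hs)$ for nonseparable $\Hs$ has trivial center, so it admits no decomposition into countably decomposable direct summands, and your argument applied to it would ``prove'' that any unit vector is separating for $\mathfrak{B}(\Hs)$.

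The standard repair uses finiteness essentially (the paper itself states the lemma without proof, as known structure theory). One route: a finite von Neumann algebra admits a separating family of normal tracial states; the support of a normal tracial state is a central projection (traces are unitarily invariant), and the reduction of $\M$ by such a support carries a faithful normal tracial state, hence is countably decomposable by the criterion quoted in the preliminaries. Given a nonzero central $q$, choose a normal tracial state not vanishing on $q$ and take $r:=q$ times its support; then run exactly your Zorn argument, whose remaining parts (corners of finite algebras are finite, the unitary onto $\bigoplus_{\alpha}\Hs_{\alpha}$ carrying $\M$ onto $\bigoplus_{\alpha}^{b}\M_{\alpha}$) are fine. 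Alternatively, your cyclic-vector idea can be salvaged: $\xi$ is separating for the corner $e\M_q e$ with $e:=P_{\overline{(\M_q)'\xi}}\in\M_q$, so $e\M_q e$ and hence its center $Z(\M_q)e\cong Z(\M_q)z(e)$ are countably decomposable; since $\M_q z(e)$ is finite with countably decomposable center, composing a faithful normal state on the center with the center-valued trace gives a faithful normal tracial state, so $r:=z(e)$ works. Either way, the passage from ``some corner is countably decomposable'' to ``some central reduction is countably decomposable'' is precisely where finiteness must enter, and it is missing from your proposal.
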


A von Neumann algebra $\M$ is called \textit{atomic} if each non-zero projection in $\M$ majorizes a non-zero minimal projection.
%If $\M$ is not atomic, we call it \textit{non-atomic}.
It is known that a finite von Neumann algebra is atomic if and only if it is spatially isomorphic 
to the direct sum of finite dimensional von Neumann algebras $M_{n}(\mathbb{C})$ ($n\in\mathbb{N}$), 
where $M_{n}(\mathbb{C})$ is the algebra of all $n\times n$ complex matrices.    

A von Neumann algebra with no non-zero minimal projection is called \textit{diffuse}.
It is known that every von Neumann algebra is spatially isomorphic to the direct sum of 
some atomic von Neumann algebra $\M_{\rm atomic}$ and diffuse von Neumann algebra $\M_{\rm diffuse}$. 
These von Neumann algebras $\M_{\rm atomic}$ and $\M_{\rm diffuse}$ are unique up to spatial isomorphism. 
We call $\M_{\rm atomic}$ and $\M_{\rm diffuse}$ the \textit{atomic part} and the \textit{diffuse part} of $\M$, respectively.

\subsection{Murray-von Neumann's Result}

The domain of a linear operator $T$ on $\mathcal{H}$ is written as $\dom{T}$ and the range of it is written as $\ran{T}$.
If $T$ is a closable operator, we write $\overline{T}$ for the closure of $T$.
 
\begin{dfn}
A densely defined closable operator $T$ on $\Hs$ is said to be \textit{affiliated} with a von Neumann algebra $\mf{M}$ 
if for any $u\in U(\mf{M}')$, $uTu^*=T$ holds. 
If $T$ is affiliated with $\M$, so is $\overline{T}$. 
The set of all densely defined closed operators affiliated with $\M$ is denoted by $\overline{\M}.$
Each element in $\overline{\M}$ is called a {\it affiliated operator}.  
\end{dfn}

Note that $T$ is affiliated with $\M$ if and only if $xT\subset Tx$ for all $x\in \mf{M}'$. 
Next, we define algebraic structures of unbounded operators in the style of Murray-von Neumann \cite{vNMur}.

Let $x_1,y_1,x_2,y_2,\cdots$ be (finite or countable infinite number of) indeterminants. 
A non-commutative monomial with indeterminants $\{x_i,\ y_i\}_{i}$ is a formal product $z_1z_2\cdots z_n$, 
where all $z_k$ equal to $x_i\text{ or }y_i$. If $n=0$, we write this monomial as 1.
A non-commutative polynomial $p(x_1,y_1,\cdots )$ is a formal sum of finite number of monomials. 
$p(x_1,y_1,\cdots)$ has the following form:
\[p(x_1,y_1,\cdots )=\begin{cases}
\disp \sum_{\rho =1}^qa_{\rho}\cdot z_1^{(\rho)}\cdots z_{n_{\rho}}^{(\rho)} & (q =1,2,\cdots ),\\
0 & (q=0).
\end{cases}\]
Here, $a_{\rho}\in \mathbb{C}$ and we allow such a term as $0\cdot z_1z_2\cdots z_n$ in this expression. 
If there is a term with coefficient 0, it cannot be omitted in the representation. 
Hence $x_1$ is different from $x_1+0\cdot y_1$ as non-commutative polynomials . 
If there are two such terms as $a\cdot z_1\cdots z_n,\ b\cdot z_1\cdots z_n$, 
we identify the sum of them with the term $(a+b)\cdot z_1\cdots z_n$. 
The sum, the scalar multiplication and the multiplication of non-commutative polynomials are defined naturally, 
where we do not ignore the terms with 0 coefficients.

Once a non-commutative polynomial $p(x_1,y_1,\cdots )$ is given we obtain a new polynomial $p^{(r)}(x_1,y_1,\cdots )$ 
by omitting terms with coefficient $a_{\rho}=0$ in the representation of $p$. 
We call $p^{(r)}(x_1,y_1,\cdots )$ the reduced polynomial of $p$. 
We also define the adjoint element by $x_i^{+}:=y_i,\ y_i^{+}:=x_i$. 
We also define the conjugate polynomial of $p$ by
\[
p(x_1,y_1,\cdots )^{+}:=
\begin{cases}
\displaystyle \sum_{\rho =1}^q\ol{a_{\rho}} \cdot (z_{n_{\rho}}^{(\rho)})^{+}\cdots (z_{1}^{(\rho)})^{+} & (q=1,2,\cdots ),\\
0 & (q=0).
\end{cases}\]

Suppose there is a corresponding sequence $\{X_i\}_{i}$ of densely defined closed operators on $\mathcal{H}$. 
For all $i$, we assume $(x_i,y_i)$ corresponds to the pair of the closed operators $(X_i,X_i^*)$. 
In this case we define a new operator $p(X_1,X_1^*,\cdots )$ obtained by substituting each $\{x_i,y_i\}$ 
in the representation of $p(x_1,y_1,\cdots )$ of the pairs $(X_i,X_i^*).$ 
More precisely, the domain of $p(X_1,X_1^*,\cdots )$ is defined according to the following rules:
\begin{list}{}{}
\item[(1)] $\dom{0}=\dom{1}=\mathcal{H}$,\\ 
$0\xi:=0,\ 1\xi:=\xi$,\ \ \ \  for all $\xi \in \Hs$,
\item[(2)] $\dom{aX}:=\dom{X}$,\\ 
$(aX)\xi:=a(X\xi)$,\ \ \ \  for all $\xi \in \dom{aX}$,
\item[(3)] $\dom{X+Y}:=\dom{X}\cap \dom{Y}$,\\ $
(X+Y)\xi:=X\xi +Y\xi$,\ \ \ \ for all $\xi \in \dom{X+Y}$,
\item[(4)] $\dom{XY}:=\left\{\xi \in \dom{Y}\ ;\ Y\xi \in \dom{X}\right\}$,\\ 
$(XY)\xi:=X(Y\xi)$,\ \ \ \ for all $\xi \in \dom{XY}$,
\end{list}
where $X$ and $Y$ are densely defined closed operators on $\Hs$ and $a\in \mbb{C}$. 
In general, $\cM$ is not a *-algebra under these operations. 
This is the reason for the difficulty of constructing Lie theory in infinite dimensions. 
However, Murray and von Neumann proved, in the pioneering paper \cite{vNMur}, 
that for a finite von Neumann algebra $\M$, $\cM$ does constitute a *-algebra of unbounded operators, 
which we will explain more precisely in the sequel.

Murray-von Neumann proved the following results for a countably decomposable case. 
Since we need to apply these results for a general finite von Neumann algebra case, 
we shall offer the generalization of their proofs.
First of all, we recall the notion of complete density, which is important for later discussions.

\begin{dfn}
A subspace $\mathcal{D}\subset\Hs$ is said to be \textit{completely dense} 
for a finite von Neumann algebra $\M$ 
if there exists an increasing net $\{p_{\alpha}\}_{\alpha}\subset P(\M)$ of projections in $\M$ such that
\begin{list}{}{}
\item[(1)] $p_{\alpha}\nearrow 1$ (strongly).
\item[(2)] $p_{\alpha}\mathcal{H}\subset \mathcal{D}$ for any $\alpha$.
\end{list}
\end{dfn}

It is clear that a completely dense subspace is dense in $\Hs$. 
We often omit the phrase ``for $\M$" when the von Neumann algebra in consideration is obvious from the context.

\begin{rem} In \cite{vNMur}, Murray and von Neumann used the term ``strongly dense". 
However, this terminology is somewhat confusing. 
Therefore we tentatively use the term ``completely dense".
\end{rem}

\begin{lem}\label{2c-dense for sigma finite case} 
Let $\M$ be a countably decomposable, finite von Neumann algebra on a Hilbert space $\Hs$, 
$\tau$ be a faithful normal tracial state on $\M$. 
For a subspace $\mathcal{D}\subset \Hs$, the following are equivalent.
\begin{list}{}{}
\item[(1)] $\mathcal{D}$ is completely dense.
\item[(2)] There exists an increasing sequence $\{p_n\}_{n=1}^{\infty}\subset P(\M)$ such that 
\[p_n\nearrow 1 \ (\text{strongly}) ,\ \ \ \   \ran{p_n}\subset \mathcal{D}.\]
\item[(3)] For every $\varepsilon>0$, there exists $p\in P(\M)$ such that 
\[\tau(p^{\perp})<\varepsilon, \ \ \ \  p\Hs \subset \mathcal{D}.\]
\end{list}
\end{lem}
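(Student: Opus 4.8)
The plan is to prove the cyclic chain of implications $(1)\Rightarrow(2)\Rightarrow(3)\Rightarrow(1)$, exploiting at each stage the hypothesis that $\M$ is countably decomposable together with the faithful normal tracial state $\tau$.

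\textbf{Step $(1)\Rightarrow(2)$.} Here the point is to extract a \emph{sequence} from the given net $\{p_\alpha\}_\alpha$. I would use countable decomposability in the following guise: the strong convergence $p_\alpha\nearrow 1$ is governed by $\tau$, since $\tau$ is faithful and normal and $\M$ is finite. Concretely, $\tau(p_\alpha)\nearrow\tau(1)=1$, so I may pick, for each $n\in\mathbb{N}$, an index $\alpha_n$ with $\tau(p_{\alpha_n})>1-1/n$; by passing to an increasing reindexing (replacing $p_{\alpha_n}$ by $p_{\alpha_1}\vee\cdots\vee p_{\alpha_n}$, whose range still lies in $\mathcal{D}$ because the $p_\alpha$ form an increasing net and the net is cofinal above each finite collection — actually more simply, since the net is increasing, $\alpha_1,\dots,\alpha_n$ has an upper bound $\beta_n$ in the net and I set $q_n:=p_{\beta_n}$), I obtain an increasing sequence $\{q_n\}\subset P(\M)$ with $q_n\mathcal{H}\subset\mathcal{D}$ and $\tau(q_n)\to 1$. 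Then $q_n\nearrow q$ for some projection $q\in P(\M)$ with $\tau(q)=1$, so $\tau(q^\perp)=0$ and faithfulness of $\tau$ forces $q^\perp=0$, i.e. $q_n\nearrow 1$ strongly (monotone convergence of projections is automatically strong). This gives (2).

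\textbf{Step $(2)\Rightarrow(3)$.} This is the easy direction: given the sequence $\{p_n\}$ with $p_n\nearrow 1$ strongly and $\ran{p_n}\subset\mathcal{D}$, normality of $\tau$ gives $\tau(p_n)\to\tau(1)=1$, hence $\tau(p_n^\perp)\to 0$, so for any $\varepsilon>0$ some $p=p_n$ works.

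\textbf{Step $(3)\Rightarrow(1)$.} Here I need to manufacture an increasing net (indeed a sequence suffices, but a net is all that is required) out of the family of ``$\varepsilon$-large'' projections. Applying (3) with $\varepsilon=2^{-n}$ produces projections $r_n\in P(\M)$ with $\tau(r_n^\perp)<2^{-n}$ and $r_n\mathcal{H}\subset\mathcal{D}$. Set $p_n:=\bigwedge_{k\ge n} r_k$ (or, to stay with finite operations and monotonicity, $p_n:=r_n\wedge r_{n+1}\wedge\cdots$, which is a decreasing intersection hence a well-defined projection in $\M$); then $p_n$ is increasing in $n$, $p_n\le r_n$ so $p_n\mathcal{H}\subset r_n\mathcal{H}\subset\mathcal{D}$, and $\tau(p_n^\perp)=\tau\bigl(\bigvee_{k\ge n}r_k^\perp\bigr)\le\sum_{k\ge n}\tau(r_k^\perp)<2^{-n+1}\to 0$. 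Thus $p_n\nearrow p$ with $\tau(p^\perp)=0$, so $p=1$ by faithfulness, and $\{p_n\}$ witnesses complete density of $\mathcal{D}$.

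\textbf{Main obstacle.} The routine parts are the estimates with $\tau$; the one genuinely delicate point is Step $(1)\Rightarrow(2)$, where an arbitrary increasing net must be replaced by a sequence. This is exactly where countable decomposability is indispensable: without a faithful normal \emph{state} one cannot in general reduce a net $p_\alpha\nearrow 1$ to a countable cofinal subnet whose supremum is still $1$. I would make sure the argument uses only that $\tau$ is a normal faithful tracial state (so that $\tau(p_\alpha)\to 1$ and that $\tau(q)=1$ implies $q=1$), and does not secretly need separability of $\Hs$.
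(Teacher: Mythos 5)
Your proposal is correct, and its substantive step --- producing from the projections in (3) an increasing sequence via the countable infima $p_n:=\bigwedge_{k\ge n}r_k$, together with the estimate $\tau(p_n^{\perp})\le\sum_{k\ge n}\tau(r_k^{\perp})$ and normality/faithfulness of $\tau$ to conclude $p_n\nearrow 1$ --- is exactly the paper's proof of (3)$\Rightarrow$(2). The only organizational difference is that you additionally prove (1)$\Rightarrow$(2) directly, using $\tau$ to extract an increasing cofinal sequence from the defining net, whereas the paper treats (2)$\Rightarrow$(1)$\Rightarrow$(3) as immediate and proves only (3)$\Rightarrow$(2); both arrangements establish the equivalence.
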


\begin{proof}
It is clear that (2)$\Rightarrow $(1)$\Rightarrow$(3) holds. 
We shall prove (3)$\Rightarrow $(2). 
By assumption, for all $n\in \mbb{N}$, there exists $p_n\in P(\M)$ such that 
$\tau(p_n^{\perp})<1/2^{n}$ and $p_n\Hs\subset \mathcal{D}$. 
Put
\[q_n:=\bigwedge_{k=n}^{\infty}p_k\in P(\M).\]
Since $q_n\le q_{n+1}$, the strong limit $s$-$\lim_{n\to \infty} q_n=:q\in P(\M)$ exists. 
It holds that
\eqa{
\tau(q^{\perp})&=\lim_{n\to \infty}\tau(q_n^{\perp})=\lim_{n\to \infty}\tau \left (\bigvee_{k=n}^{\infty}p_k^{\perp}\right )\\
&\le \lim_{n\to \infty}\sum_{k=n}^{\infty}\tau (p_k^{\perp})\le \lim_{n\to \infty}\sum_{k=n}^{\infty}\frac{1}{2^k}=0.
}
Therefore we have $q=1$.
\end{proof}

\begin{lem}\label{2alg-sum of c-dense spaces} 
Let $\left\{(\M_{\lambda},\Hs_{\lambda})\right\}_{\lambda \in \Lambda}$ be a family of countably decomposable, finite von Neumann algebras. 
Let
\[\M:=\bigoplus_{\lambda \in \Lambda}^b\M_{\lambda},\ \ \ \  \Hs:=\bigoplus_{\lambda \in \Lambda}\Hs_{\lambda}.\]
For each $\lambda \in \Lambda$, let $\mathcal{D}_{\lambda}\subset \Hs_{\lambda}$ be a completely dense subspace for $\M_{\lambda}$. 
Then $\widehat{\bigoplus}_{\lambda \in \Lambda}\mathcal{D}_{\lambda}\subset \Hs$ is a completely dense subspace for $\M$. 
\end{lem}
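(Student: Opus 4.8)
The plan is to build, out of the projections witnessing the complete density of the individual $\mathcal{D}_{\lambda}$, one increasing net of projections in $P(\M)$ that converges strongly to $1$ and whose ranges all lie in $\widehat{\bigoplus}_{\lambda}\mathcal{D}_{\lambda}$. For each $\lambda\in\Lambda$ I would fix an increasing net $\{p^{\lambda}_{\alpha}\}_{\alpha\in A_{\lambda}}\subset P(\M_{\lambda})$ with $p^{\lambda}_{\alpha}\nearrow 1_{\Hs_{\lambda}}$ strongly and $p^{\lambda}_{\alpha}\Hs_{\lambda}\subset\mathcal{D}_{\lambda}$ (by Lemma~\ref{2c-dense for sigma finite case} one may even take $A_{\lambda}=\mbb{N}$, though this is inessential). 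On the directed set $I:=\prod_{\lambda\in\Lambda}A_{\lambda}$ with the coordinatewise order (directed because each $A_{\lambda}$ is), put $q_{\varphi}:=(p^{\lambda}_{\varphi(\lambda)})_{\lambda\in\Lambda}$ for $\varphi\in I$. Then $q_{\varphi}$ is a projection in $\M=\bigoplus^{b}_{\lambda}\M_{\lambda}$, the assignment $\varphi\mapsto q_{\varphi}$ is increasing (since each $\{p^{\lambda}_{\alpha}\}_{\alpha}$ is increasing and the order on $\M$ is coordinatewise), and for $\xi=(\xi_{\lambda})_{\lambda}\in\Hs$ the vector $q_{\varphi}\xi=(p^{\lambda}_{\varphi(\lambda)}\xi_{\lambda})_{\lambda}$ has every coordinate in $\mathcal{D}_{\lambda}$, so $q_{\varphi}\Hs\subset\widehat{\bigoplus}_{\lambda}\mathcal{D}_{\lambda}$. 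Thus condition~(2) in the definition of complete density holds for $\{q_{\varphi}\}$, and everything reduces to checking condition~(1): $q_{\varphi}\nearrow 1$ strongly.

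That last point is the only step requiring an argument, and its subtlety is that the coordinate nets are indexed independently while $\Lambda$ may be uncountable, so one must control the $\ell^{2}$-tail of a vector uniformly. Given $\xi=(\xi_{\lambda})_{\lambda}\in\Hs$ and $\eps>0$, I would choose a finite $F\subset\Lambda$ with $\sum_{\lambda\notin F}\norm{\xi_{\lambda}}^{2}<\eps^{2}/2$, and then, using that $p^{\lambda}_{\alpha}\to 1_{\Hs_{\lambda}}$ strongly, choose for each $\lambda\in F$ an index $\alpha^{0}_{\lambda}\in A_{\lambda}$ with $\norm{p^{\lambda}_{\alpha}\xi_{\lambda}-\xi_{\lambda}}^{2}<\eps^{2}/(2|F|)$ for all $\alpha\ge\alpha^{0}_{\lambda}$. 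Taking any $\varphi_{0}\in I$ with $\varphi_{0}(\lambda)=\alpha^{0}_{\lambda}$ for $\lambda\in F$, one has for every $\varphi\ge\varphi_{0}$ the splitting $\norm{q_{\varphi}\xi-\xi}^{2}=\sum_{\lambda\in F}\norm{p^{\lambda}_{\varphi(\lambda)}\xi_{\lambda}-\xi_{\lambda}}^{2}+\sum_{\lambda\notin F}\norm{p^{\lambda}_{\varphi(\lambda)}\xi_{\lambda}-\xi_{\lambda}}^{2}$, where the first sum is $<\eps^{2}/2$ by the choice of the $\alpha^{0}_{\lambda}$ and the second is $\le\sum_{\lambda\notin F}\norm{\xi_{\lambda}}^{2}<\eps^{2}/2$ because $\norm{p\eta-\eta}\le\norm{\eta}$ whenever $p$ is a projection. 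Hence $\norm{q_{\varphi}\xi-\xi}<\eps$ for $\varphi\ge\varphi_{0}$, so $q_{\varphi}\to 1$ strongly, and being increasing it satisfies $q_{\varphi}\nearrow 1$; therefore $\widehat{\bigoplus}_{\lambda}\mathcal{D}_{\lambda}$ is completely dense for $\M$.

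To summarize, the proof is essentially bookkeeping and the one thing to get right is the indexing directed set: the coordinatewise product $\prod_{\lambda}A_{\lambda}$ makes the combined net of projections simultaneously increasing, range-wise contained in $\widehat{\bigoplus}_{\lambda}\mathcal{D}_{\lambda}$, and strongly convergent to $1$, with the only analytic ingredient being the strong convergences $p^{\lambda}_{\alpha}\nearrow 1_{\Hs_{\lambda}}$ combined with the square-summability of $(\xi_{\lambda})_{\lambda}$. Countable decomposability of the $\M_{\lambda}$ is not actually used here beyond being the ambient hypothesis inherited from Lemma~\ref{2decompose}.
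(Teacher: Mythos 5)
Your construction has a genuine gap: the range of $q_{\varphi}$ is \emph{not} contained in $\widehat{\bigoplus}_{\lambda}\mathcal{D}_{\lambda}$. By the definition used in this paper, $\widehat{\bigoplus}_{\lambda}\mathcal{D}_{\lambda}$ consists only of vectors $(\xi_{\lambda})_{\lambda}$ with $\xi_{\lambda}\in\mathcal{D}_{\lambda}$ that vanish for all but \emph{finitely many} $\lambda$. Your $q_{\varphi}=\oplus_{\lambda}p^{\lambda}_{\varphi(\lambda)}$ has, in general, infinitely many nonzero components (and it must, eventually, if $q_{\varphi}\to 1$ strongly and infinitely many $\Hs_{\lambda}$ are nonzero). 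Picking unit vectors $\eta_{k}\in\ran{p^{\lambda_{k}}_{\varphi(\lambda_{k})}}$ for an infinite sequence $\lambda_{1},\lambda_{2},\dots$ with nonzero components and setting $\xi_{\lambda_{k}}=\tfrac{1}{k}\eta_{k}$, $\xi_{\lambda}=0$ otherwise, gives $\xi\in\Hs$ with $q_{\varphi}\xi=\xi$ supported on infinitely many coordinates, hence $q_{\varphi}\xi\notin\widehat{\bigoplus}_{\lambda}\mathcal{D}_{\lambda}$. Indeed, a projection in $\M$ whose range lies in $\widehat{\bigoplus}_{\lambda}\mathcal{D}_{\lambda}$ necessarily has all but finitely many components equal to $0$ (this is exactly the observation exploited in Remark \ref{2why net is necessary}), so no "full product" net like yours can satisfy condition (2) of complete density while converging strongly to $1$.

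The repair is to cut off to finite subsets of $\Lambda$, which is what the paper does: for a finite set $F\subset\Lambda$ and an index $n$, take $p_{F,n}:=\oplus_{\lambda}p_{F,n}^{(\lambda)}$ with $p_{F,n}^{(\lambda)}=p_{\lambda,n}$ for $\lambda\in F$ and $0$ for $\lambda\notin F$, ordered by $(F,n)\le(F',n')$ iff $F\subset F'$ and $n\le n'$. Then $\ran{p_{F,n}}\subset\widehat{\bigoplus}_{\lambda}\mathcal{D}_{\lambda}$ automatically, and strong convergence $p_{F,n}\nearrow 1$ follows by precisely the tail estimate you wrote (small $\ell^{2}$-tail outside a large finite $F$, plus $\norm{p\eta-\eta}\le\norm{\eta}$), so that part of your argument is sound and can be reused; the defect is only in the choice of the approximating net. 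Your closing remark that countable decomposability is not essential here is fair (one can run the cut-off argument with nets instead of sequences), but the main claim as written fails at the range-containment step.
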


\begin{proof}
By Lemma \ref{2c-dense for sigma finite case}, for each $\lambda \in \Lambda$, 
there exists an increasing sequence $\{p_{\lambda,n}\}_{n=1}^{\infty}\subset P(\M_{\lambda})$ such that $p_{\lambda,n}\nearrow 1$ (strongly) 
and $\ran{p_{\lambda,n}}\subset \mathcal{D}_{\lambda}$. 
For a finite set $F\subset \Lambda$, define
\eqa{
p_{F,n}&:=\oplus_{\lambda}p_{F,n}^{(\lambda)},\\
p_{F,n}^{(\lambda)}&:=\begin{cases}
p_{\lambda,n} & (\lambda \in F),\\
0 & (\lambda \notin F).
\end{cases}
}
Then we have $p_{F,n}\in P(\M)$ and $\{p_{F,n}\}_{(F,n)}$ is an increasing net of projections. 
Here, we define $(F,n)\leq (F',n')$ by $F\subset F'$ and $n\le n'$. 
It is clear that $p_{F,n}\nearrow  1$ (strongly)
and $\ran{p_{F,n}}\subset \widehat{\bigoplus}_{\lambda \in \Lambda}\mathcal{D}_{\lambda}$. 
Hence $\widehat{\bigoplus}_{\lambda \in \Lambda}\mathcal{D}_{\lambda}$ is completely dense.
\end{proof}

\begin{rem}\label{2why net is necessary} 
Lemma \ref{2c-dense for sigma finite case} does not hold if $\M$ is not countably decomposable. 
We will show a counterexample. 
Let
\[\Hs:=\bigoplus_{t\in \mbb{R}}\ \ell^2(\mbb{N}),\ \ \ \ \M:=\bigoplus_{t\in \mbb{R}}^b\ \M_t,\ \ \ \ 
\mathcal{D}:=\widehat{\bigoplus_{t\in \mbb{R}}}\ \ell^2(\mbb{N})\]
Here, all $\M_t$ are isomorphic copies of some finite von Neumann algebra on $\ell^2(\mbb{N})$. 
By Lemma \ref{2alg-sum of c-dense spaces}, $\mathcal{D}$ is completely dense for $\M$. 
Suppose (2) of Lemma \ref{2c-dense for sigma finite case} holds. 
Then there exists $p_n\in P(\M)$ such that $\ran{p_n}\subset \mathcal{D}$ and $p_n\nearrow 1$ (strongly). 
Represent $p_n$ as $\oplus_{t}p_{t,n}$ $(p_{t,n}\in P(\M_t))$. 
Then we have
\[\bigoplus_{t\in \mbb{R}}\ran{p_{t,n}}=\ran{p_n}\subset \mathcal{D}=\widehat{\bigoplus_{t\in \mbb{R}}}\ \ell^2(\mbb{N}).\]
Therefore for each $n\in \mbb{N}$, there exists a finite set $F_n\subset \mbb{R}$ such that $p_{t,n}=0$ for $t\notin F_n$. 
Since $F:=\Bcu_{n=1}^{\infty}F_n\subset \mbb{R}$ is at most countable, there exists some $t_0\notin F$. 
Choose $\xi^{(t_0)}\in \ell^2(\mbb{N})$ to be a unit vector and $\xi^{(t)}:=0$ $(t\neq t_0)$. 
Then for $\xi=\left\{\xi^{(t)}\right\}_{t\in\mathbb{R}}\in \Hs$, it follows that
\eqa{
||p_n\xi-\xi||^2&=\sum_{t\in \mbb{R}}||p_{t,n}\xi^{(t)}-\xi^{(t)}||^2=||\underbrace{p_{t_0,n}}_{=0}\xi^{(t_0)}-\xi^{(t_0)}||^2\\
&=||\xi^{(t_0)}||^2=1.
}
On the other hand, we have $||p_n\xi-\xi||^2\to 0$, which is a contradiction.
\end{rem}

\begin{prop}[Murray-von Neumann \cite{vNMur}]\label{2countable intersection of c-dense} 
Let $\M$ be a finite von Neumann algebra on a Hilbert space $\Hs$. 
Let $\{\mathcal{D}_i\}_{i=1}^{\infty}\subset \Hs$ be a sequence of completely dense subspaces for $\M$. 
Then the intersection $\displaystyle \Bca_{i=1}^{\infty}\mathcal{D}_i$ is also completely dense. 
\end{prop}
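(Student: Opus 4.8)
The plan is to reduce the countable intersection to a two-step argument: first handle the intersection of \emph{two} completely dense subspaces, then iterate with a diagonal/summability trick to control the countable case. For the two-subspace case, suppose $\mathcal{D}_1,\mathcal{D}_2$ are completely dense with witnessing increasing nets $\{p_\alpha\}_\alpha,\{q_\beta\}_\beta\subset P(\M)$ converging strongly to $1$ with $p_\alpha\Hs\subset\mathcal{D}_1$, $q_\beta\Hs\subset\mathcal{D}_2$. The naive guess $p_\alpha\wedge q_\beta$ does not obviously have range inside $\mathcal{D}_1\cap\mathcal{D}_2$ (the range of a meet of projections is the intersection of the ranges, so in fact $\ran(p_\alpha\wedge q_\beta)=p_\alpha\Hs\cap q_\beta\Hs\subset\mathcal{D}_1\cap\mathcal{D}_2$ — good), so the only issue is that $\{p_\alpha\wedge q_\beta\}_{(\alpha,\beta)}$ converges strongly to $1$. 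Since $\M$ is finite it carries a center-valued trace (or, after invoking Lemma \ref{2decompose}, we may assume $\M$ is countably decomposable with a faithful normal tracial state $\tau$), and then $\tau\bigl((p\wedge q)^\perp\bigr)=\tau(p^\perp\vee q^\perp)\le\tau(p^\perp)+\tau(q^\perp)$, which can be made arbitrarily small by choosing $p,q$ appropriately. By Lemma \ref{2c-dense for sigma finite case}(3) this proves $\mathcal{D}_1\cap\mathcal{D}_2$ is completely dense in the countably decomposable case, and a general finite $\M$ is handled by applying Lemma \ref{2decompose} and Lemma \ref{2alg-sum of c-dense spaces} componentwise.

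For the countable intersection, I would argue directly in the countably decomposable case and then reduce via the direct-sum lemmas as above. Fix a faithful normal tracial state $\tau$. For each $i$, using Lemma \ref{2c-dense for sigma finite case}(3), choose $p_i\in P(\M)$ with $\tau(p_i^\perp)<\varepsilon/2^i$ and $p_i\Hs\subset\mathcal{D}_i$. Set $p:=\bigwedge_{i=1}^\infty p_i\in P(\M)$. Then
\begin{align*}
\tau(p^\perp)=\tau\Bigl(\bigvee_{i=1}^\infty p_i^\perp\Bigr)\le\sum_{i=1}^\infty\tau(p_i^\perp)<\varepsilon,
\end{align*}
and $p\Hs=\bigcap_{i=1}^\infty p_i\Hs\subset\bigcap_{i=1}^\infty\mathcal{D}_i$. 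Since $\varepsilon>0$ was arbitrary, Lemma \ref{2c-dense for sigma finite case}(3) gives that $\bigcap_{i=1}^\infty\mathcal{D}_i$ is completely dense for $\M$.

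It remains to pass to an arbitrary finite von Neumann algebra $\M$. By Lemma \ref{2decompose}, write $(\M,\Hs)$ (up to spatial isomorphism) as $\bigl(\bigoplus_\lambda^b\M_\lambda,\bigoplus_\lambda\Hs_\lambda\bigr)$ with each $\M_\lambda$ countably decomposable and finite. A subspace $\mathcal{D}\subset\Hs$ is completely dense for $\M$ exactly when it contains $\widehat{\bigoplus}_\lambda(\mathcal{D}\cap\Hs_\lambda)$ with each $\mathcal{D}\cap\Hs_\lambda$ completely dense for $\M_\lambda$ (one direction is Lemma \ref{2alg-sum of c-dense spaces}; the other follows by compressing a witnessing net by the central projections onto the $\Hs_\lambda$). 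Applying the countably decomposable case to $\bigcap_i(\mathcal{D}_i\cap\Hs_\lambda)$ for each $\lambda$ and then reassembling via Lemma \ref{2alg-sum of c-dense spaces} yields the result. The main obstacle I anticipate is the bookkeeping in this last reduction — verifying that completely dense subspaces of a direct sum are precisely the ones that split as dense direct sums of completely dense pieces, and that the intersection respects this decomposition; the trace estimate at the heart of the argument is then routine.
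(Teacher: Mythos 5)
Your proposal is correct and follows essentially the same route as the paper: the trace estimate $\tau\bigl(\bigwedge_i p_i\bigr)^{\perp}\le\sum_i\tau(p_i^{\perp})$ in the countably decomposable case (the paper's Lemma \ref{2countable intersection for sigma finite}), and the reduction of a general finite $\M$ via Lemma \ref{2decompose}, with your ``compression by central projections'' step being exactly the content of the paper's Lemma \ref{2decomposition of c-dense} and the reassembly being Lemma \ref{2alg-sum of c-dense spaces}. The two-subspace warm-up is redundant but harmless.
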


The proof requires some lemmata.

\begin{lem}\label{2countable intersection for sigma finite} 
Proposition \ref{2countable intersection of c-dense} holds if $\M$ is countably decomposable.
\end{lem}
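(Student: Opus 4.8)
The plan is to reduce the countable intersection to a single statement about traces, using the characterization in Lemma \ref{2c-dense for sigma finite case}(3). Fix a faithful normal tracial state $\tau$ on $\M$. Given $\eps > 0$, I want to produce a projection $p \in P(\M)$ with $\tau(p^{\perp}) < \eps$ and $p\Hs \subset \bigcap_{i=1}^{\infty} \mathcal{D}_i$. For each $i$, Lemma \ref{2c-dense for sigma finite case}(3) supplies $q_i \in P(\M)$ with $\tau(q_i^{\perp}) < \eps/2^{i}$ and $q_i\Hs \subset \mathcal{D}_i$. The natural candidate is $p := \bigwedge_{i=1}^{\infty} q_i$, which exists in $P(\M)$ as the strong limit of the decreasing net of finite meets $\bigwedge_{i=1}^{n} q_i$.

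The two things to check are the range condition and the trace estimate. For the range: $p \le q_i$ for every $i$, so $p\Hs \subset q_i\Hs \subset \mathcal{D}_i$ for every $i$, hence $p\Hs \subset \bigcap_i \mathcal{D}_i$; this part is immediate. For the trace estimate, write $p^{\perp} = \bigvee_{i=1}^{\infty} q_i^{\perp}$ by De Morgan, and use subadditivity of $\tau$ on projections (which follows from $\tau\bigl(\bigvee_{i} r_i\bigr) \le \sum_i \tau(r_i)$, itself a consequence of normality and the elementary two-projection inequality $r \vee s - s \sim r - r\wedge s \le r$ together with countable additivity of $\tau$ along the increasing net of partial joins). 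This gives
\begin{align*}
\tau(p^{\perp}) = \tau\Bigl(\bigvee_{i=1}^{\infty} q_i^{\perp}\Bigr) \le \sum_{i=1}^{\infty} \tau(q_i^{\perp}) < \sum_{i=1}^{\infty} \frac{\eps}{2^{i}} = \eps.
\end{align*}
Since $\eps > 0$ was arbitrary, Lemma \ref{2c-dense for sigma finite case}(3) (in the direction (3)$\Rightarrow$(1)) shows $\bigcap_i \mathcal{D}_i$ is completely dense.

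The only genuinely non-formal point is the countable subadditivity $\tau\bigl(\bigvee_i r_i\bigr) \le \sum_i \tau(r_i)$ for projections in a finite von Neumann algebra with a normal trace; I would either cite it as standard (it is the same estimate already used in the proof of Lemma \ref{2c-dense for sigma finite case}) or prove it by induction on the finite joins — using $\tau(r \vee s) + \tau(r \wedge s) = \tau(r) + \tau(s)$, which is the modularity/parallelogram law for the trace — and then passing to the limit via normality of $\tau$ along the increasing sequence of finite joins. Everything else is bookkeeping with De Morgan's laws and the order-reversing correspondence $r \mapsto r^{\perp}$. I do not expect any obstacle from non-countable-decomposability here since that hypothesis is exactly what lets me invoke the trace and the sequential form of complete density.
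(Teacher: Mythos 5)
Your proposal is correct and is essentially identical to the paper's own proof: both fix a faithful normal tracial state, choose projections $q_i$ with $\tau(q_i^{\perp})<\eps/2^{i}$ and $q_i\Hs\subset\mathcal{D}_i$ via Lemma \ref{2c-dense for sigma finite case}(3), take $p=\bigwedge_i q_i$, and conclude by the same trace estimate $\tau(p^{\perp})=\tau\bigl(\bigvee_i q_i^{\perp}\bigr)\le\sum_i\tau(q_i^{\perp})<\eps$. Your extra remark justifying countable subadditivity of $\tau$ on projections only makes explicit a step the paper uses implicitly.
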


\begin{proof}
Let $\tau$ be a faithful normal tracial state on $\M$. 
By Lemma \ref{2c-dense for sigma finite case}, for each $\varepsilon>0$ and $i\in \mbb{N}$, there exists $p_i\in P(\M)$ such that
$\tau(p_i^{\perp})<\varepsilon/2^i$ and $p_i\Hs \subset \mathcal{D}_i$. 
Put
\[p:=\bigwedge_{i=1}^{\infty}p_i\in P(\M).\]
Then we have 
\eqa{
\tau(p^{\perp})&=\tau \left (\bigvee_{i=1}^{\infty}p_i^{\perp}\right )
\le \sum_{i=1}^{\infty}\tau(p_i^{\perp})\le \sum_{i=1}^{\infty}\frac{\varepsilon}{2^i}=\varepsilon,\\
p\Hs&=\bigcap_{i=1}^{\infty}(p_i\Hs)\subset \Bca_{i=1}^{\infty}\mathcal{D}_i.
}
Hence by Lemma \ref{2c-dense for sigma finite case}, the intersection $\Bca_{i=1}^{\infty}\mathcal{D}_i$ is completely dense.
\end{proof}

\begin{lem}\label{2decomposition of c-dense}
Let $\{(\M_{\lambda},\Hs_{\lambda})\}_{\lambda \in \Lambda}$ be a family of countably decomposable, finite von Neumann algebras. 
Put
\[\M:=\bigoplus_{\lambda \in \Lambda}^b\M_{\lambda},\ \ \ \ \Hs:=\bigoplus_{\lambda \in \Lambda}\Hs_{\lambda}.\]
Let $\mathcal{D}\subset \Hs$ be a completely dense subspace for $\M$. 
Then for each $\lambda \in \Lambda$, there exists some completely dense subspace $\mathcal{D}_{\lambda}\subset \Hs_{\lambda}$ 
for $\M_{\lambda}$ such that
\[\widehat{\bigoplus_{\lambda \in \Lambda}}\ \mathcal{D}_{\lambda}\subset \mathcal{D}.\]
\end{lem}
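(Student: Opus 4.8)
The plan is to produce, for each $\lambda$, a suitable completely dense subspace $\mathcal{D}_\lambda \subset \Hs_\lambda$ out of the given net of projections witnessing complete density of $\mathcal{D}$, and then to check that its external direct sum sits inside $\mathcal{D}$. By hypothesis there is an increasing net $\{p_\alpha\}_\alpha \subset P(\M)$ with $p_\alpha \nearrow 1$ strongly and $p_\alpha \Hs \subset \mathcal{D}$. Decompose each $p_\alpha$ along the direct sum as $p_\alpha = \bigoplus_{\lambda} p_\alpha^{(\lambda)}$ with $p_\alpha^{(\lambda)} \in P(\M_\lambda)$; since the $p_\alpha$ are increasing in $P(\M)$, each coordinate net $\{p_\alpha^{(\lambda)}\}_\alpha$ is increasing in $P(\M_\lambda)$. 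The first point to establish is that $p_\alpha^{(\lambda)} \nearrow 1$ strongly in $\Hs_\lambda$ for every fixed $\lambda$: this follows from $p_\alpha \nearrow 1$ strongly on $\Hs$ by testing against vectors supported in the single summand $\Hs_\lambda$, i.e. $\|p_\alpha^{(\lambda)}\xi - \xi\|_{\Hs_\lambda} = \|p_\alpha \xi - \xi\|_{\Hs} \to 0$ for $\xi \in \Hs_\lambda \subset \Hs$. Hence for each $\lambda$ the subspace
\begin{equation*}
\mathcal{D}_\lambda := \bigcup_\alpha p_\alpha^{(\lambda)} \Hs_\lambda
\end{equation*}
is completely dense for $\M_\lambda$, the witnessing increasing net being exactly $\{p_\alpha^{(\lambda)}\}_\alpha$ (if one prefers a genuine subspace rather than a union, one can instead take $\mathcal{D}_\lambda$ to be the linear span, which only enlarges it and does not affect complete density; alternatively, because each $\M_\lambda$ is countably decomposable one may invoke Lemma \ref{2c-dense for sigma finite case} to replace the net by a sequence).

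Next I would verify the inclusion $\widehat{\bigoplus}_{\lambda} \mathcal{D}_\lambda \subset \mathcal{D}$. A vector $\xi$ in the external (algebraic) direct sum is supported on a finite set $F \subset \Lambda$, with $\xi^{(\lambda)} \in \mathcal{D}_\lambda$ for $\lambda \in F$; so for each such $\lambda$ there is $\alpha_\lambda$ with $\xi^{(\lambda)} \in p_{\alpha_\lambda}^{(\lambda)} \Hs_\lambda$. Because $F$ is finite and the net $\{p_\alpha\}$ is directed, choose $\alpha$ with $\alpha \geq \alpha_\lambda$ for all $\lambda \in F$; monotonicity of each coordinate net gives $p_\alpha^{(\lambda)} \Hs_\lambda \supset p_{\alpha_\lambda}^{(\lambda)} \Hs_\lambda \ni \xi^{(\lambda)}$ for $\lambda \in F$, while for $\lambda \notin F$ the component $\xi^{(\lambda)} = 0$ trivially lies in $p_\alpha^{(\lambda)} \Hs_\lambda$. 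Therefore $\xi = p_\alpha \xi \in p_\alpha \Hs \subset \mathcal{D}$, as desired.

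The only genuinely delicate point is the passage from a net to something coordinatewise usable: the net $\{p_\alpha\}$ need not be a sequence (the algebra $\M$ need not be countably decomposable, which is precisely the situation Remark \ref{2why net is necessary} warns about), so I cannot a priori extract a sequence of projections working simultaneously for all $\lambda$. The resolution is that I never need a global sequence — I only need, coordinate by coordinate, an increasing net, and the one induced on the $\lambda$-summand does the job; finiteness of the support of each vector in the external direct sum is what lets the directedness of the net absorb the finitely many thresholds. I expect the directedness/finite-support bookkeeping in the inclusion step to be the main thing to get right, everything else being a routine unpacking of the definition of complete density together with the behaviour of strong convergence under the direct-sum decomposition.
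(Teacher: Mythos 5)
Your proposal is correct and follows essentially the same route as the paper: decompose the witnessing net coordinatewise as $p_\alpha=\oplus_\lambda p_{\lambda,\alpha}$, set $\mathcal{D}_\lambda:=\bigcup_\alpha \ran{p_{\lambda,\alpha}}$, and deduce the inclusion from finite support plus directedness (the paper states this last step as clear; you simply spell it out). Note also that the union is automatically a subspace since the coordinate nets are increasing, so your span caveat is unnecessary but harmless.
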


\begin{proof}
By the definition, there exists an increasing net $\{p_{\alpha}\}_{\alpha \in A}\subset P(\M)$ such that 
$p_{\alpha}\nearrow 1$ (strongly) and $\ran{p_{\alpha}}\subset \mathcal{D}$. 
Let $p_{\alpha}=:\oplus_{\lambda}p_{\lambda,\alpha}$ $(p_{\lambda, \alpha}\in P(\M_{\lambda}))$. 
Then it holds that $p_{\lambda,\alpha}\nearrow 1$ (strongly). 
Put 
\[\mathcal{D}_{\lambda}:=\bigcup_{\alpha \in A}\ran{p_{\lambda,\alpha}}\subset \Hs_{\lambda}.\]
We see that $\mathcal{D}_{\lambda}$ is completely dense for $\M_{\lambda}$. 
It is clear that $\widehat{\bigoplus}_{\lambda \in \Lambda}\mathcal{D}_{\lambda}\subset \mathcal{D}$ holds.
\end{proof}

\begin{proof}[\bf Proof of Proposition \ref{2countable intersection of c-dense}] 
Since $\M$ is finite, there exists a family of countably decomposable, finite von Neumann algebras 
$\{(\M_{\lambda},\Hs_{\lambda})\}_{\lambda \in \Lambda}$ and a unitary operator 
$U:\Hs\to \bigoplus_{\lambda \in \Lambda}\Hs_{\lambda}$ such that $U\M U^*=\bigoplus_{\lambda \in \Lambda}\M_{\lambda}$. 
Put $\mathcal{D}_i':=U\mathcal{D}_i$. 
To prove the proposition, it suffices to prove that $\Bca_{i=1}^{\infty}\mathcal{D}_i'$ 
is completely dense for $\bigoplus_{\lambda \in \Lambda}\M_{\lambda}$. 
By Lemma \ref{2decomposition of c-dense}, for each $i \in \mbb{N}$, 
there exist completely dense subspaces $\mathcal{D}_{\lambda,i}\subset \Hs_{\lambda}$ for $\M_{\lambda}$ 
such that $\mathcal{D}_i'\supset \widehat{\bigoplus}_{\lambda \in \Lambda}\mathcal{D}_{\lambda,i}$. 
Then it follows that 
\[\Bca_{i=1}^{\infty}\mathcal{D}_i'\supset \Bca_{i=1}^{\infty}\left (\widehat{\bigoplus_{\lambda \in \Lambda}}\ 
\mathcal{D}_{\lambda,i}\right )=\widehat{\bigoplus_{\lambda \in \Lambda}}\left (\Bca_{i=1}^{\infty}\ \mathcal{D}_{\lambda,i}\right ).\]
By Lemma \ref{2countable intersection for sigma finite}, $\Bca_{i=1}^{\infty}\mathcal{D}_{\lambda,i}$ is completely dense for $\M_{\lambda}$. 
Therefore by Lemma \ref{2alg-sum of c-dense spaces}, 
$\widehat{\bigoplus}_{\lambda \in \Lambda}\left (\Bca_{i=1}^{\infty}\mathcal{D}_{\lambda,i}\right )$ 
is completely dense for $\bigoplus_{\lambda\in \Lambda}\M_{\lambda}$, which implies $\Bca_{i=1}^{\infty}\mathcal{D}_i'$ 
is also completely dense for $\bigoplus_{\lambda\in \Lambda}\M_{\lambda}$. 
\end{proof}

\begin{prop}[Murray-von Neumann \cite{vNMur}]\label{2M-vN1}
Let $\M$ be a finite von Neumann algebra.
Then for each $X\in \cM$ and a completely dense subspace $\mathcal{D}$ for $\M$, the subspace 
\[\left\{\xi \in \dom{X}\ ;\ X\xi \in \mathcal{D}\right\}\] 
is also completely dense. 
In particular, $\dom{X}$ is completely dense for all $X\in \cM.$
\end{prop}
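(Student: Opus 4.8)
The ``in particular'' statement is the special case $\mathcal{D}=\Hs$, so I would concentrate on the first assertion; write $\mathcal{D}':=\{\xi\in\dom X:X\xi\in\mathcal{D}\}$. The plan is to dispose of the countably decomposable case by a trace estimate and then to reduce the general case to it via the direct-sum decomposition of Lemma~\ref{2decompose} together with Lemmas~\ref{2decomposition of c-dense} and~\ref{2alg-sum of c-dense spaces}.

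\emph{Countably decomposable case.} Fix a faithful normal tracial state $\tau$ on $\M$. By the equivalence (1)$\Leftrightarrow$(3) of Lemma~\ref{2c-dense for sigma finite case}, it suffices to produce, for each $\eps>0$, a projection $p\in P(\M)$ with $\tau(p^\perp)<\eps$ and $p\Hs\subset\mathcal{D}'$. First I would choose, again by Lemma~\ref{2c-dense for sigma finite case}, a projection $q\in P(\M)$ with $\tau(q^\perp)<\eps/2$ and $q\Hs\subset\mathcal{D}$. Since $X^{*}X$ is a positive self-adjoint operator affiliated with $\M$, its spectral projections $e_n:=\chi_{[0,n]}(X^{*}X)$ lie in $P(\M)$ and increase strongly to $1$, so by normality of $\tau$ I may fix $n$ with $\tau(e_n^\perp)<\eps/2$. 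Then $e_n\Hs\subset\dom{X^{*}X}\subset\dom X$, and $T:=Xe_n$ is everywhere defined, bounded, and affiliated with $\M$, hence $T\in\M$. The decisive step is to shrink $e_n$ so that $T$ takes values in $q\Hs$: let $g\in P(\M)$ be the projection onto $\ker{q^\perp T}$, so that $g^\perp$ is the left support of $(q^\perp T)^{*}=T^{*}q^\perp$. By the Murray--von Neumann equivalence of the left and right supports of an element of a von Neumann algebra, $g^\perp$ is equivalent to the left support of $q^\perp T$, which is majorized by $q^\perp$; since $\tau$ is a trace this gives $\tau(g^\perp)\le\tau(q^\perp)<\eps/2$. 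Now put $p:=e_n\wedge g$. As $p\le e_n$ we get $p\Hs\subset\dom X$ and $Xp=Tp$, and as $p\le g$ each vector $p\xi$ lies in $\ker{q^\perp T}$, so $Xp\xi=Tp\xi\in\ker{q^\perp}=q\Hs\subset\mathcal{D}$; thus $p\Hs\subset\mathcal{D}'$. Finally $\tau(p^\perp)=\tau(e_n^\perp\vee g^\perp)\le\tau(e_n^\perp)+\tau(g^\perp)<\eps$, which settles this case.

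\emph{General case.} By Lemma~\ref{2decompose} I would assume $\M=\bigoplus^{b}_{\lambda}\M_\lambda$ acting on $\Hs=\bigoplus_\lambda\Hs_\lambda$ with each $\M_\lambda$ countably decomposable (every notion involved is a spatial invariant). The projection $z_\lambda$ onto $\Hs_\lambda$ is central in $\M$, hence lies in $\M'$, so $X$ commutes with each $z_\lambda$ and therefore decomposes as $X=\bigoplus_\lambda X_\lambda$ with each $X_\lambda$ a densely defined closed operator affiliated with $\M_\lambda$ (see Appendix~A). Using Lemma~\ref{2decomposition of c-dense} I would pick completely dense subspaces $\mathcal{D}_\lambda\subset\Hs_\lambda$ for $\M_\lambda$ with $\widehat{\bigoplus}_\lambda\mathcal{D}_\lambda\subset\mathcal{D}$; the case already treated shows each $\mathcal{D}'_\lambda:=\{\xi\in\dom{X_\lambda}:X_\lambda\xi\in\mathcal{D}_\lambda\}$ is completely dense for $\M_\lambda$, so $\widehat{\bigoplus}_\lambda\mathcal{D}'_\lambda$ is completely dense for $\M$ by Lemma~\ref{2alg-sum of c-dense spaces}. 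Since a finitely supported vector with each coordinate in $\dom{X_\lambda}$ lies in $\dom X$ and is carried by $X$ into $\widehat{\bigoplus}_\lambda\mathcal{D}_\lambda\subset\mathcal{D}$, we have $\widehat{\bigoplus}_\lambda\mathcal{D}'_\lambda\subset\mathcal{D}'$; as any subspace containing a completely dense subspace is itself completely dense, $\mathcal{D}'$ is completely dense.

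The step I expect to be the main obstacle is the shrinking $e_n\mapsto p=e_n\wedge g$ in the countably decomposable case, i.e. the inequality $\tau(g^\perp)\le\tau(q^\perp)$. It relies on the Murray--von Neumann equivalence of left and right supports together with traciality, and this is precisely where countable decomposability (equivalently, a faithful normal trace) cannot be avoided: a direct net argument breaks down because range projections need not converge strongly even when the underlying operators do, which is also the reason the reduction to the countably decomposable case is needed at all.
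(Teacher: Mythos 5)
Your proof is correct. There is nothing in the paper to compare it with line by line: for Proposition \ref{2M-vN1} the paper simply writes ``See \cite{vNMur}'' and gives no argument of its own. Your two-step scheme, however, is precisely the pattern the paper uses elsewhere to push Murray--von Neumann's countably decomposable results to general finite $\M$ (compare the proof of Proposition \ref{2countable intersection of c-dense}): settle the countably decomposable case through criterion (3) of Lemma \ref{2c-dense for sigma finite case}, then reduce the general case via Lemma \ref{2decompose}, Lemma \ref{2decomposition of c-dense} and Lemma \ref{2alg-sum of c-dense spaces}. Your trace-estimate core is sound and self-contained: $e_n=\chi_{[0,n]}(X^*X)$ gives $T=Xe_n\in\M$ with $\tau(e_n^\perp)$ small by normality; $g$, the kernel projection of $q^\perp T$, lies in $P(\M)$, and $\tau(g^\perp)\le\tau(q^\perp)$ because $g^\perp$ (the right support of $q^\perp T$) is Murray--von Neumann equivalent to the left support of $q^\perp T$, which is majorized by $q^\perp$; then $p=e_n\wedge g$ satisfies $p\Hs\subset\dom{X}$, $Xp=Tp$ with range in $q\Hs\subset\mathcal{D}$, and $\tau(p^\perp)\le\tau(e_n^\perp)+\tau(g^\perp)<\eps$. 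In the reduction step the inclusion $X_\lambda\subset X$ (each $z_\lambda$ being central reduces $X$, and Lemma \ref{6reduction of closed op} gives that $X_\lambda$ is densely defined, closed and affiliated with $\M_\lambda$) indeed yields $\widehat{\bigoplus}_{\lambda}\{\xi\in\dom{X_\lambda}\,;\,X_\lambda\xi\in\mathcal{D}_\lambda\}\subset\{\xi\in\dom{X}\,;\,X\xi\in\mathcal{D}\}$, and a subspace containing a completely dense subspace is completely dense, so the conclusion follows from Lemma \ref{2alg-sum of c-dense spaces}. In short, what you have written is essentially the classical Murray--von Neumann argument plus the paper's own decomposition technique, and it fills in the proof the paper delegates to the reference.
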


\begin{proof}
See \cite{vNMur}.
\end{proof}

\begin{prop}[Murray-von Neumann \cite{vNMur}]\label{2No-closed-extension}
Let $\M$ be a finite von Neumann algebra.
\begin{list}{}{}
\item[(1)] Every closed symmetric operator in $\cM$ is self-adjoint.
\item[(2)] There are no proper closed extensions of operators in $\cM$. 
Namely, if $X,\ Y\in \cM$ satisfy $X\subset Y$, then $X=Y$.
\item[(3)] Let $\disp \{X_i\}_{i}$ be a (finite or infinite) sequence in $\cM$. 
The intersection of domains 
\[\mathcal{D}_{\mathcal{P}}:=\bigcap_{p\in \mathcal{P}} \dom{p(X_1,X_1^*,X_2,X_2^*,\cdots )}\]
of all unbounded operators obtained by substituting $\{X_i\}_{i}$ 
into the non-commutative polynomial $p(x_1,y_1,\cdots )$ is completely dense for $\cM$, 
where $\mathcal{P}$ is the set of all non-commutative polynomials with indefinite elements $\{x_i,y_i\}_{i}$.
\end{list}
\end{prop}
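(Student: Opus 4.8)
The plan is to prove (2) first, deduce (1) from it in one line, and obtain (3) by an induction on monomials resting on Proposition \ref{2M-vN1} and Proposition \ref{2countable intersection of c-dense}. The only substantial point is (2); (1) and (3) will be bookkeeping.

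For (2), I would move to $\Hs\oplus\Hs$ and work inside the von Neumann algebra $M_2(\M)$, realized as the commutant in $\mf{B}(\Hs\oplus\Hs)$ of the diagonally embedded algebra $\{u\oplus u\ ;\ u\in\M'\}$; being a $2\times 2$ matrix amplification of the finite algebra $\M$, it is itself finite and hence carries a faithful normal center-valued trace $\tau_Z$. Given $X\in\cM$, its graph $G(X)=\{(\xi,X\xi)\ ;\ \xi\in\dom{X}\}$ is a closed subspace of $\Hs\oplus\Hs$, and affiliation of $X$ with $\M$ makes $G(X)$ invariant under every $u\oplus u$ with $u\in U(\M')$ (since $uX=Xu$ on $\dom{X}$); hence $P_{G(X)}\in M_2(\M)$. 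Writing $q:=0\oplus 1\in M_2(\M)$, the facts that $X$ is an operator and is densely defined give, respectively, $G(X)\cap(\{0\}\oplus\Hs)=\{0\}$ and $\overline{G(X)+(\{0\}\oplus\Hs)}=\Hs\oplus\Hs$, i.e. $P_{G(X)}\wedge q=0$ and $P_{G(X)}\vee q=1$. Kaplansky's parallelogram law then yields $P_{G(X)}=P_{G(X)}-(P_{G(X)}\wedge q)\sim(P_{G(X)}\vee q)-q=1-q$, so $\tau_Z(P_{G(X)})=\tau_Z(1-q)$; the identical computation applied to $Y$ gives $\tau_Z(P_{G(Y)})=\tau_Z(1-q)=\tau_Z(P_{G(X)})$. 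Now $X\subset Y$ says exactly $G(X)\subset G(Y)$, i.e. $P_{G(X)}\le P_{G(Y)}$, and in the finite algebra $M_2(\M)$ a projection that lies below another of the same center-valued trace must equal it, by faithfulness of $\tau_Z$. Therefore $G(X)=G(Y)$, that is, $X=Y$.

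Part (1) is then immediate: if $X\in\cM$ is closed and symmetric, then $X\subset X^*$, while $X^*$ is densely defined, closed, and satisfies $uX^*u^*=(uXu^*)^*=X^*$ for all $u\in U(\M')$, so $X^*\in\cM$; applying (2) to $X\subset X^*$ gives $X=X^*$. For part (3), first note that each $X_i^*$ is again in $\cM$ by the same reasoning, so every letter $z$ occurring in a monomial $z_1z_2\cdots z_k$ in the $X_i$ and $X_i^*$ belongs to $\cM$. I would then prove by induction on $k$ that $\dom{z_1z_2\cdots z_k}$ is completely dense: stripping off the rightmost letter, $\dom{z_1\cdots z_k}=\{\xi\in\dom{z_k}\ ;\ z_k\xi\in\dom{z_1\cdots z_{k-1}}\}$, which is completely dense by Proposition \ref{2M-vN1} applied to the operator $z_k\in\cM$ and to the subspace $\dom{z_1\cdots z_{k-1}}$ (completely dense by the inductive hypothesis), the base case $k=1$ being Proposition \ref{2M-vN1} itself. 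Since the domain conventions give $\dom{p}=\bigcap\{\dom{m}\ ;\ m\text{ a monomial occurring in }p\}$ for every non-commutative polynomial $p$ — note that a $0$-coefficient term still contributes its domain, by the rule $\dom{aX}=\dom{X}$ — one has $\mathcal{D}_{\mathcal{P}}=\bigcap\{\dom{m}\ ;\ m\text{ a monomial in }\{x_i,y_i\}_i\}$, a countable intersection of completely dense subspaces. Proposition \ref{2countable intersection of c-dense} then finishes (3).

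The step I expect to be the main obstacle, and the only non-formal one, is (2): the crux is to recognize that the graph of an affiliated operator is exactly a projection in the finite von Neumann algebra $M_2(\M)$ and that Kaplansky's law pins its center-valued trace to that of $1\oplus 0$, after which ``no proper affiliated extension'' collapses to the elementary fact that a finite von Neumann algebra has no projection strictly below another of the same center-valued trace. Once (2) is in hand, (1) costs one line and (3) is a routine induction built on the two quoted propositions.
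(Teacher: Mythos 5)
Your proposal is correct, and it is worth noting that the paper itself does not prove this proposition at all: parts (2) and (3) are simply referred to \cite{vNMur}, and the only argument recorded in the text is the short proof of (1) in Remark \ref{2finiteness}, which works directly from the polar decomposition $A+i=u|A+i|$ and the absence of non-unitary isometries in $\M$. You instead make (2) the engine and get (1) as a one-line corollary ($X\subset X^*$ with $X^*\in\cM$), which is a genuinely different, and self-contained, route. Your graph argument for (2) is sound: $P_{G(X)}$ lies in $M_2(\M)=\bigl(\{u\oplus u\,;\,u\in\M'\}\bigr)'$, the conditions $P_{G(X)}\wedge q=0$ and $P_{G(X)}\vee q=1$ follow from single-valuedness and dense definedness, and Kaplansky's parallelogram law gives $P_{G(X)}\sim 1-q\sim P_{G(Y)}$; one small simplification is that you do not need the center-valued trace at all, since $P_{G(X)}\le P_{G(Y)}$ together with $P_{G(X)}\sim P_{G(Y)}$ already forces equality because every projection in the finite algebra $M_2(\M)$ is a finite projection. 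Your part (3) is the expected bookkeeping: the induction on monomial length via Proposition \ref{2M-vN1} is legitimate (composition of partial maps is associative, so the parenthesization does not affect domains), the zero-coefficient convention $\dom{aX}=\dom{X}$ is exactly the paper's rule, and the countability of the monomial set lets Proposition \ref{2countable intersection of c-dense} finish. What each approach buys: the paper's Remark gives the quickest proof of (1) alone, while your argument supplies complete proofs of (1)--(3) inside the paper's own toolkit, at the cost of importing the standard facts that $M_2(\M)$ is finite and the comparison theory of projections.
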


\begin{proof}
See \cite{vNMur}.
\end{proof}

\begin{rem}\label{2finiteness}
Murray-von Neumann proved (1) of Proposition \ref{2No-closed-extension} using Cayley transform, 
but there is a simpler proof. We record it here.
\end{rem} 
\begin{proof}
Let $A\in \cM$ be a symmetric operator. 
It is easy to see that $A+i$ is injective. 
Let $A+i=u|A+i|$ be its polar decomposition. 
From the injectivity, $u^*u=P_{\ker{A+i}^{\perp}}=1_{\mathcal{H}}.$ 
Since $\M$ is finite and $uu^*=P_{\cran{A+i}}$, we see that $\cran{A+i}=\mathcal{H}.$ 
On the other hand, since $A$ is closed and symmetric, $\ran{A+i}$ is closed. 
Therefore we obtain $\ran{A+i}=\mathcal{H}$. 
By the same way, it holds that $\ran{A-i}=\mathcal{H}$, which means $A$ is a self-adjoint operator.
\end{proof}

Similarly, we see that for $X\in \cM$ the injectivity of $X$ is equivalent to the density of $\ran{X}$. 
%This property justifies the name ``finite" von Neumann algebra because the equivalence of the injectivity and surjectivity of a map is the characteristic of ``finite sets", although there is no nonzero finite-dimensional projection when $\M$ is of type {\rm{II}}$_1$. 

\begin{lem}[Murray-von Neumann \cite{vNMur}]\label{2eta_alg} 
Let $\M$ be a finite von Neumann algebra and $\{X_i\}_i$ be a (finite or infinite) sequence in $\cM$. 
Let
\[p(x_1,y_1,x_2,y_2,\cdots ),\ \ \ \  q(x_1,y_1,x_2,y_2,\cdots ),\ \ \ \  r(x_1,y_1,x_2,y_2,\cdots )\]
be non-commutative polynomials and $p(X_1,X_1^*,X_2,X_2^*,\cdots )$ be an operator obtained by substituting $(x_i,y_i)$ by $(X_i,X_i^*)$.
\begin{list}{}{}
\item[(1)] $p(X_1,X_1^*,X_2,X_2^*,\cdots )$ is a densely defined closable operator on $\Hs$, and 
\[\overline{p(X_1,X_1^*,X_2,X_2^*,\cdots )}\in \cM. \]
\item[(2)] If $p^{(r)}(x_1,y_1,\cdots )=q^{(r)}(x_1,y_1,\cdots )$, then 
\[\overline{p(X_1,X_1^*,\cdots )}=\overline{q(X_1,X_1^*,\cdots)}.\]
Namely, the closure of the substitution of operators depends on a reduced polynomial only.
\item[(3)] If $p(x_1,y_1,\cdots )^{+}=q(x_1,y_1,\cdots )$, then 
\[\left \{\overline{p(X_1,X_1^*,\cdots )}\right \} ^*=\overline{q(X_1,X_1^*,\cdots )}.\]
\item[(4)] If $\alpha p(x_1,y_1,\cdots )=q(x_1,y_1,\cdots )\ (\alpha \in \mathbb{C})$, then 
\[\overline{\alpha \cdot \left \{ \overline{p(X_1,X_1^*,\cdots )}\right \} }=\overline{q(X_1,X_1^*,\cdots )}.\]
\item[(5)] If $p(x_1,y_1,\cdots )+q(x_1,y_1,\cdots )=r(x_1,y_1,\cdots )$, then   
\[\overline{\overline{p(X_1,X_1^*,\cdots) }+\overline{q(X_1,X_1^*,\cdots )}}=\overline{r(X_1,X_1^*,\cdots )}.\]
\item[(6)] If $p(x_1,y_1,\cdots )\cdot q(x_1,y_1,\cdots )=r(x_1,y_1,\cdots )$, then  
\[\overline{\overline{p(X_1,X_1^*,\cdots )}\cdot \overline{q(X_1,X_1^*,\cdots )}}=\overline{r(X_1,X_1^*,\cdots )}.\]
\end{list}
\end{lem}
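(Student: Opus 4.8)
The strategy is to reduce everything to two facts already available: Proposition \ref{2No-closed-extension}, which guarantees that the common domain $\mathcal{D}_{\mathcal{P}}$ of all polynomial substitutions is completely dense (hence dense), and Proposition \ref{2M-vN1}, which lets us pass from one completely dense domain to another. The key structural observation is that every operator $p(X_1,X_1^*,\cdots)$ is defined on the completely dense subspace $\mathcal{D}_{\mathcal{P}}$, on which it acts by the same algebraic expression regardless of closures; so all the identities $(2)$--$(6)$ hold \emph{exactly} as unbounded operators when restricted to $\mathcal{D}_{\mathcal{P}}$, and it only remains to show that taking closures is compatible with this. First I would prove $(1)$: affiliation is immediate since each $X_i$ is affiliated, so $uX_iu^*=X_i$ for $u\in U(\M')$, and the domain rules $(1)$--$(4)$ are visibly preserved under conjugation by $u$; thus $up(X_1,X_1^*,\cdots)u^*=p(X_1,X_1^*,\cdots)$. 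Closability then follows because $p(X_1,X_1^*,\cdots)^*$ is densely defined — indeed $p(x_1,y_1,\cdots)^+$ is another polynomial, so $\overline{p(X_1,X_1^*,\cdots)^+ \text{ substituted}}$ is an affiliated operator densely defined on $\mathcal{D}_{\mathcal{P}}$ and is contained in the adjoint; and then $\overline{p(X_1,X_1^*,\cdots)}\in\cM$ by the affiliation-is-preserved-under-closure remark.

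For $(2)$: if $p^{(r)}=q^{(r)}$, then $p$ and $q$ differ only by zero-coefficient terms, which by the domain rules $(1)$--$(4)$ contribute the zero operator on their (full or completely dense) domains; a short bookkeeping argument shows $p(X_1,X_1^*,\cdots)$ and $q(X_1,X_1^*,\cdots)$ agree on $\mathcal{D}_{\mathcal{P}}$, which is a core for both closures by Proposition \ref{2No-closed-extension}(3) together with the no-proper-closed-extension property (2): any closed operator in $\cM$ extending the restriction to $\mathcal{D}_{\mathcal{P}}$ must equal that closure. Hence the closures coincide. For $(3)$: as noted above, the substituted $q$ is densely defined on $\mathcal{D}_{\mathcal{P}}$ and one checks directly (inner product computation on $\mathcal{D}_{\mathcal{P}}$, using $(X_iX_j)^*\supset X_j^*X_i^*$ etc.) that $q(X_1,X_1^*,\cdots)\subset \{p(X_1,X_1^*,\cdots)\}^*$; both sides are closed and affiliated, and the right side has no proper closed affiliated extension and no proper closed affiliated subspace issues, so by Proposition \ref{2No-closed-extension}(2) applied to $\overline{q(\cdots)}\subset \{\overline{p(\cdots)}\}^*$ (the latter being in $\cM$ since $\overline{p(\cdots)}\in\cM$), equality holds. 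Statements $(4)$, $(5)$, $(6)$ are proved the same way: in each case the two operators in question agree on the completely dense subspace $\mathcal{D}_{\mathcal{P}}$ — for $(5)$ one notes $\overline{p(\cdots)}+\overline{q(\cdots)}$ and $r(\cdots)$ both restrict to $p(\cdots)+q(\cdots)$ on $\mathcal{D}_{\mathcal{P}}$, and the outer closure of the left side lies in $\cM$ and extends this common restriction — and then the uniqueness of closed affiliated extensions forces the closures to coincide.

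The main obstacle is $(5)$ and $(6)$, the additivity and multiplicativity statements, precisely because the naive sum $\dom{p(\cdots)}\cap\dom{q(\cdots)}$ or the naive product domain need \emph{not} be completely dense, and one must instead work on the smaller but still completely dense $\mathcal{D}_{\mathcal{P}}$ and argue that it is a core for the relevant closures. The delicate point is to verify that $\overline{\,\overline{p(X_1,X_1^*,\cdots)}+\overline{q(X_1,X_1^*,\cdots)}\,}$ genuinely belongs to $\cM$ — this is where one needs $\cM$ to be closed under the operations $(1)$--$(4)$ followed by closure, which for the sum and product of two \emph{already-known} affiliated operators is the nontrivial content, handled by exhibiting a completely dense common core (again $\mathcal{D}_{\mathcal{P}}$ for the two-element family) on which the operator agrees with a polynomial substitution and invoking Proposition \ref{2No-closed-extension}(3) and the affiliation argument from $(1)$. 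Once that closure-stays-in-$\cM$ fact is in hand, the no-proper-closed-extension property (2) does all the remaining work of pinning down equalities.
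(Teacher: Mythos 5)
The paper itself gives no argument for this lemma --- its ``proof'' is the citation ``See \cite{vNMur}'' --- so there is nothing internal to match your plan against; judged on its own, your reconstruction is correct and is essentially the classical Murray--von Neumann argument, assembled from exactly the ingredients the paper states beforehand: complete density of $\mathcal{D}_{\mathcal{P}}$ (Proposition \ref{2No-closed-extension}(3), which rests only on Propositions \ref{2countable intersection of c-dense} and \ref{2M-vN1}, so there is no circularity), the no-proper-closed-extension property (Proposition \ref{2No-closed-extension}(2)), and the ``completely dense subspace contained in the domain is a core'' mechanism that the paper later isolates as Lemma \ref{2core of A} (whose proof is independent of the present lemma, so your inline use of it is legitimate). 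Two points deserve to be written out rather than asserted. First, in (6) the claim that the two operators agree on $\mathcal{D}_{\mathcal{P}}$ needs the observation that for $\xi\in\mathcal{D}_{\mathcal{P}}$ one actually has $q(X_1,X_1^*,\cdots)\xi\in\dom{p(X_1,X_1^*,\cdots)}$ with $p(\cdots)\bigl(q(\cdots)\xi\bigr)=r(\cdots)\xi$: this holds because $\mathcal{D}_{\mathcal{P}}$ lies in the domain of every composite monomial $m_jn_k$ ($m_j$ a monomial of $p$, $n_k$ of $q$), so each $n_k(\cdots)\xi$, and hence the linear combination $q(\cdots)\xi$, lies in $\dom{m_j(\cdots)}$ for every $j$; alternatively one can shrink to a further completely dense subspace via Proposition \ref{2M-vN1}, but some such step must appear. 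Second, in (5) and (6) the membership of $\overline{\overline{p(\cdots)}+\overline{q(\cdots)}}$ and $\overline{\overline{p(\cdots)}\,\overline{q(\cdots)}}$ in $\cM$ requires closability of the sum and product, which you should get as in (1): the adjoint contains the substitution of the corresponding conjugate polynomial, which is densely defined because its domain also contains $\mathcal{D}_{\mathcal{P}}$. With these details filled in, your outline is a complete and faithful proof of the lemma.
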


\begin{proof}
See \cite{vNMur}.
\end{proof}

\begin{rem}\label{3domain is can be zero}
Lemma \ref{2eta_alg} (1) is not trivial.
Indeed one can construct a pair of densely defined closed operators whose intersection of domains is $\{0\}$.
See, e.g., \cite{Kosaki, Schmuedgen2}.
%Moreover it is known that, for any unbounded self-adjoint operator $A$, one can choose a unitary operator $U$
%satisfying $\dom{A}\cap\dom{UAU^*}=\{0\}$. 
%This is proved by von Neumann \ref{}
\end{rem}

In summary, we have the following theorem.

\begin{thm}[Murray-von Neumann \cite{vNMur}]\label{2eta_*alg} 
For an arbitrary finite von Neumann algebra $\M$, the set $\cM$ forms a *-algebra of unbounded operators, 
where the algebraic operations are defined by\footnote{
$\overline{\alpha X}$ equals $\alpha X$ when $\alpha \neq 0.$
However, $\dom{\overline{0\cdot X}}=\mathcal{H}\neq \dom{X}.$} 
\begin{align*}
(X,Y)&\mapsto \overline{X+Y}, & (\alpha ,X)&\mapsto \overline{\alpha X},\\
(X,Y)&\mapsto \overline{XY},  &           X&\mapsto X^*.
\end{align*}
\end{thm}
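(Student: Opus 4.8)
The plan is to derive Theorem \ref{2eta_*alg} as a bookkeeping consequence of Lemma \ref{2eta_alg}. Everything hard has already been packaged into that lemma; what remains is to verify that the four operations listed—$\overline{X+Y}$, $\overline{\alpha X}$, $\overline{XY}$, $X^*$—really do turn $\cM$ into a $*$-algebra, i.e. that they are well defined on $\cM$ and satisfy the ring and $*$-axioms. First I would note that well-definedness (the outputs lie in $\cM$) is exactly Lemma \ref{2eta_alg}(1): apply it with the finite sequence $\{X_1,X_2\}=\{X,Y\}$ and the polynomials $p=x_1+x_2$, $p=a x_1$, $p=x_1x_2$; for the involution $X\mapsto X^*$ one uses that $\cM$ is closed under adjoints, which is part of the standing setup (a densely defined closed operator affiliated with $\M$ has closed densely defined adjoint, and $u X^* u^* = (uXu^*)^* = X^*$ for $u\in U(\M')$). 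So the four operations are genuinely maps into $\cM$.

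Next I would verify each algebraic identity by exhibiting the relevant polynomial identity among non-commutative polynomials and invoking the appropriate clause of Lemma \ref{2eta_alg}. Associativity of $+$: from $(x_1+x_2)+x_3=x_1+(x_2+x_3)$ in $\mathcal{P}$, clause (5) (applied twice, together with (2) to strip off spurious zero-coefficient terms) gives $\overline{\overline{X+Y}+Z}=\overline{\overline{X+\overline{Y+Z}}}$. Commutativity of $+$, the additive identity (here one must be slightly careful: $\overline{0\cdot X}$ has domain $\mathcal{H}$, as the paper's own footnote stresses, so the additive neutral element is the zero operator $0$ on all of $\Hs$, and $\overline{X+0}=X$ since $X$ is already closed) and additive inverses $\overline{(-1)X}$ follow the same way from the obvious polynomial identities via clauses (4) and (5). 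Distributivity $\overline{\overline{X+Y}\,Z}=\overline{\overline{XZ}+\overline{YZ}}$ and its left-handed twin come from $(x_1+x_2)x_3=x_1x_3+x_2x_3$ using clause (6) combined with (5). Associativity of multiplication, $\overline{\overline{XY}\,Z}=\overline{X\,\overline{YZ}}$, comes from $(x_1x_2)x_3=x_1(x_2x_3)$ via clause (6). Compatibility of scalar multiplication with $+$ and with the product, and $\overline{\alpha\,\overline{\beta X}}=\overline{(\alpha\beta)X}$, come from clauses (4) and (5). The $*$-axioms—$(X^*)^*=X$, $(\overline{X+Y})^*=\overline{X^*+Y^*}$, $(\overline{\alpha X})^*=\overline{\bar\alpha X^*}$, and the order-reversal $(\overline{XY})^*=\overline{Y^*X^*}$—follow from clause (3) applied to the polynomial identities $(x_1)^{+}=y_1$, $(x_1+x_2)^{+}=y_1+y_2$, $(\alpha x_1)^{+}=\bar\alpha y_1$, $(x_1x_2)^{+}=y_2 y_1$, bearing in mind the convention $x_i\leftrightarrow(X_i,X_i^*)$, $y_i\leftrightarrow(X_i^*,X_i)$ so that substituting the pair $(X,X^*)$ into $y_1$ yields $X^*$. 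Throughout, clause (2) is the workhorse that lets us identify operators coming from polynomials that differ only by zero-coefficient terms, which is what makes the closures on the two sides literally equal rather than merely ``equal after reduction''.

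The only genuinely delicate point, and the one I would flag as the main obstacle, is not in this theorem at all but in Lemma \ref{2eta_alg} on which it rests: that $p(X_1,X_1^*,\dots)$ is densely defined and closable with closure in $\cM$, and that closures respect the polynomial algebra modulo domain pathologies (Remark \ref{3domain is can be zero} recalls that without finiteness even $\dom{\overline{X+Y}}$ can collapse to $\{0\}$). Granting that lemma, the present proof is a finite, mechanical check. I would therefore write the proof as: ``All assertions are immediate from Lemma \ref{2eta_alg}. Well-definedness of the four operations is clause (1) together with closedness of $\cM$ under adjoints. The ring axioms and the $*$-identities follow by applying clauses (2)--(6) to the evident polynomial identities, e.g.\ associativity of the product from $(x_1x_2)x_3=x_1(x_2x_3)$ via (6), distributivity from $(x_1+x_2)x_3=x_1x_3+x_2x_3$ via (5) and (6), and the order-reversing property of $*$ from $(x_1x_2)^{+}=y_2y_1$ via (3); the role of the zero-coefficient bookkeeping in (2) is to make these equalities hold on the nose.'' One sentence worth including explicitly: the additive and multiplicative units are the bounded operators $0$ and $1$ on $\Hs$, which lie in $\cM$, and the footnote's caveat $\dom{\overline{0\cdot X}}=\Hs$ is precisely why one takes $0$ rather than $\overline{0\cdot X}$ as the neutral element, though of course $\overline{0\cdot X}=0$ as the zero element of $\cM$ anyway after closure.
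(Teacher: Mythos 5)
Your proposal is correct and follows essentially the same route as the paper, which presents Theorem \ref{2eta_*alg} simply as a summary consequence of Lemma \ref{2eta_alg} (with the hard analytic content deferred to Murray--von Neumann). Your explicit bookkeeping of the ring and $*$-axioms via clauses (2)--(6), including the footnote's caveat about $\overline{0\cdot X}$, is exactly the routine verification the paper leaves implicit.
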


To conclude these preliminaries, we shall show a simple but useful lemma.

\begin{lem}\label{2core of A} 
Let $\M$ be a finite von Neumann algebra, $A$ be an operator in $\cM$. 
If $\mathcal{D}$ is a completely dense subspace of $\mathcal{H}$ contained in $\dom{A}$, 
then it is a core of $A$. That is, $\overline{A|_{\mathcal{D}}}=A$.
\end{lem}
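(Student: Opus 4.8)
The plan is to reduce everything to the claim that the graph of $A|_{\mathcal{D}}$ is dense in the graph of $A$, i.e. that for every $\xi\in\dom{A}$ there are vectors $\xi_\alpha\in\mathcal{D}$ with $\xi_\alpha\to\xi$ and $A\xi_\alpha\to A\xi$. Since $\mathcal{D}$ is completely dense, fix an increasing net $\{p_\alpha\}\subset P(\M)$ with $p_\alpha\nearrow 1$ strongly and $p_\alpha\Hs\subset\mathcal{D}$. The natural candidate is $\xi_\alpha:=p_\alpha\xi$. Then $\xi_\alpha\in\mathcal{D}$ and $\xi_\alpha\to\xi$ in norm is immediate. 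The crux is to show $p_\alpha\xi\in\dom{A}$ and $Ap_\alpha\xi\to A\xi$.

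First I would observe that since $p_\alpha\in\M$ and $A$ is affiliated with $\M$, we have $xA\subset Ax$ for every $x\in\M'$; but more to the point, the relation $uAu^*=A$ for $u\in U(\M')$ together with density of $\dom{A}$ gives that elements of $\M$ leave $\dom{A}$ invariant and commute with $A$ there. Concretely, for $\xi\in\dom{A}$ and $p\in P(\M)$ one has $p\xi\in\dom{A}$ and $Ap\xi=pA\xi$: this is the standard fact that a closed operator affiliated with $\M$ commutes with the projections of $\M$ in the strong sense (it follows by writing $p$ as a weak limit / using that $p$ lies in the von Neumann algebra generated by unitaries of $\M$, each of which, together with $u\in U(\M')$, can be checked to preserve $\dom A$; alternatively one invokes Lemma~\ref{2eta_alg}(6) with the monomial $x_1$ and the bounded operator $p$, regarding $p$ as an element of $\cM$ and noting $\overline{pA}$ is everywhere the restriction $pA$ on $\dom A$). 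Granting this, $Ap_\alpha\xi=p_\alpha A\xi\to A\xi$ in norm because $p_\alpha\nearrow 1$ strongly. Hence $(\xi_\alpha,A\xi_\alpha)\to(\xi,A\xi)$ in $\Hs\oplus\Hs$, so $(\xi,A\xi)$ lies in the closure of the graph of $A|_{\mathcal{D}}$.

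Since this holds for every $\xi\in\dom{A}$, the closure $\overline{A|_{\mathcal{D}}}$ is a closed operator extending $A|_{\mathcal{D}}$ whose graph contains the graph of $A$; in particular $\overline{A|_{\mathcal{D}}}\supset A$. But $A\in\cM$, and by Proposition~\ref{2No-closed-extension}(2) there are no proper closed extensions of operators in $\cM$. Note $\overline{A|_{\mathcal{D}}}$ is closable with closure again affiliated with $\M$ (it is dominated by $A\in\cM$, or one cites that a closed restriction-closure of an affiliated operator is affiliated), so it lies in $\cM$, and $\overline{A|_{\mathcal{D}}}\subset A$ trivially as well. Therefore $\overline{A|_{\mathcal{D}}}=A$, which is exactly the assertion that $\mathcal{D}$ is a core of $A$.

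The main obstacle is the commutation step $Ap_\alpha\xi=p_\alpha A\xi$ with the verification that $p_\alpha\xi\in\dom A$; everything else is soft (norm convergence and the uniqueness-of-extension Proposition~\ref{2No-closed-extension}(2)). I expect to handle that step cleanly by quoting the affiliation relation $xA\subset Ax$ for $x\in\M'$ in its equivalent form, or by the polynomial-substitution machinery of Lemma~\ref{2eta_alg}, rather than by an ad hoc spectral argument.
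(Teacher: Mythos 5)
The step you yourself single out as the crux is exactly where the argument breaks: the claim that for $\xi\in\dom{A}$ and $p\in P(\M)$ one has $p\xi\in\dom{A}$ and $Ap\xi=pA\xi$ is false, and it does not follow from affiliation. Affiliation of $A$ with $\M$ gives commutation with the \emph{commutant}: $xA\subset Ax$ for $x\in\M'$; the projections $p_\alpha$ furnished by complete density lie in $\M$, not in $\M'$, and nothing lets you move them past $A$. A quick way to see the claim cannot hold in general: every bounded $A\in\M$ belongs to $\cM$, so your claim applied to such $A$ would read $pA=Ap$ for all $p\in P(\M)$ and all $A\in\M$, forcing $\M$ to be abelian. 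A concrete unbounded counterexample: on $\Hs=L^2[0,1]\oplus L^2[0,1]$ with $\M=M_2(L^\infty[0,1])$, let $A\colon (g,h)\mapsto (fg,0)$ with $f$ unbounded measurable (this $A$ is closed and affiliated with $\M$), and let $p\in P(\M)$ be the constant matrix projection onto $\mathbb{C}(1,1)$; then $(0,h)\in\dom{A}$ for every $h\in L^2$, while $p(0,h)=\tfrac12(h,h)\notin\dom{A}$ whenever $fh\notin L^2$. Neither of your proposed repairs works: the relation $xA\subset Ax$ concerns $x\in\M'$ only, and Lemma \ref{2eta_alg} only tells you that $\overline{pA}$ and $\overline{Ap}$ are (in general distinct) elements of $\cM$; it yields no pointwise identity on $\dom{A}$. (Note that $p_\alpha\xi\in\dom{A}$ is actually fine for a different reason, namely $p_\alpha\Hs\subset\mathcal{D}\subset\dom{A}$, but without the commutation you have no control of $Ap_\alpha\xi$ for unbounded $A$, so the graph-density argument stalls; also, affiliation of $\overline{A|_{\mathcal{D}}}$ is not automatic from its being a restriction of $A$.)

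The paper's proof avoids proving graph density altogether, and this is the missing idea. Put $\mathcal{D}_0:=\bigcup_\alpha\ran{p_\alpha}\subset\mathcal{D}$ and $A_0:=A|_{\mathcal{D}_0}$. For $u\in U(\M')$ one has $up_\alpha=p_\alpha u$, so $u$ maps $\mathcal{D}_0$ into itself and $uA_0\xi=Au\xi=A_0u\xi$ for $\xi\in\mathcal{D}_0$; hence $uA_0u^*=A_0$ and $\overline{A_0}\in\cM$. Since $\overline{A_0}\subset\overline{A|_{\mathcal{D}}}\subset A$ and operators in $\cM$ admit no proper closed extensions within $\cM$ (Proposition \ref{2No-closed-extension}), the ``easy'' inclusion already forces $\overline{A_0}=\overline{A|_{\mathcal{D}}}=A$. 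In other words, the maximality property inside $\cM$ replaces the approximation $Ap_\alpha\xi\to A\xi$ that your route would require; the only commutation ever used is with unitaries of $\M'$, which is legitimate.
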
 

\begin{proof}
From the complete density of $\mathcal{D}$, there exists an increasing net of closed subspaces $\{M_{\alpha}\}_{\alpha}$ 
of $\mathcal{H}$ with $P_{\alpha}:=P_{M_{\alpha}}\in \M$ such that  \[\mathcal{D}_0:=\Bcu_{\alpha}M_{\alpha}\subset \mathcal{D}\]
is dense in $\mathcal{H}$.
Define $A_0:=A|_{\mathcal{D}_0}$.
Take an arbitrary $u\in U(\M').$ 
Let $\xi \in \mathcal{D}_0=\dom{A_0}$, so that there is some $\alpha$ such that $\xi \in M_{\alpha}$. 
Then we have
\eqa{
uA_0\xi&=uA\xi=Au\xi=AuP_{\alpha}\xi\\
&=AP_{\alpha}u\xi=A_0P_{\alpha}u\xi\ \\
&=A_0u\xi.
} 
Therefore $uA_0\subset A_0u$ holds. 
Since $u\in U(\M')$ is arbitrary, we have $uA_0u^*=A_0$. 
Taking the closure of both sides, we see that $\overline{A_0}=u\overline{A_0}u^*$. 
This means $\overline{A_0}\in \cM$. 
Therefore, it follows that
\[\overline{A_0}= \overline{A|_{\mathcal{D}}}\subset \overline{A}=A\]
Therefore by Proposition \ref{2No-closed-extension}, we have $\overline{A_0}=A.$
\end{proof}

\subsection{Converse of Murray-von Neumann's Result}

The converse of Theorem \ref{2eta_*alg} is also true.
We shall give a proof here.

\begin{lem}\label{23lem}
Let $\M$ be a von Neumann algebra acting on a Hilbert space $\Hs$.
Assume that, for all $A$, $B\in\M$, the domains $\dom{A+B}$ and $\dom{AB}$ are dense in $\Hs$.
Then $A+B$ and $AB$ are densely defined closable operators on $\Hs$ 
and the closures $\overline{A+B}$ and $\overline{AB}$ are affiliated with $\M$
for all $A$, $B\in\M$.
\end{lem}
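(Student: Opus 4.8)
The plan is to treat the sum $A+B$ and the product $AB$ in parallel, establishing first that they are closable and then that their closures are affiliated with $\M$. Here I read the operators as ranging over $\cM$ (so that the density hypothesis has content). The one observation that drives everything is that $\cM$ is stable under the adjoint operation: if $C\in\cM$, then $C^*$ is densely defined (because $C$ is closed and densely defined), $C^*$ is closed, and $uC^*u^*=(uCu^*)^*=C^*$ for every $u\in U(\M')$, so $C^*\in\cM$. Hence the density hypothesis applies not only to the pair $(A,B)$ but also to $(A^*,B^*)$ and to $(B^*,A^*)$.

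\emph{Step 1: closability.} From the definition of the adjoint one has the elementary inclusions $(A+B)^*\supseteq A^*+B^*$ and $(AB)^*\supseteq B^*A^*$ (for the latter, if $\eta\in\dom{A^*}$ with $A^*\eta\in\dom{B^*}$ and $\xi\in\dom{AB}$, then $\langle AB\xi,\eta\rangle=\langle B\xi,A^*\eta\rangle=\langle\xi,B^*A^*\eta\rangle$). By the hypothesis applied to the pairs $(A^*,B^*)$ and $(B^*,A^*)$ in $\cM$, the operators $A^*+B^*$ and $B^*A^*$ are densely defined, hence so are $(A+B)^*$ and $(AB)^*$. Therefore $A+B$ and $AB$ are closable, and $\overline{A+B}=(A+B)^{**}$, $\overline{AB}=(AB)^{**}$ are densely defined closed operators.

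\emph{Step 2: affiliation.} Use the criterion recalled in the preliminaries that $T$ is affiliated with $\M$ if and only if $xT\subset Tx$ for every $x\in\M'$. Since $A,B\in\cM$ we have $xA\subset Ax$ and $xB\subset Bx$ for all $x\in\M'$; in particular $x\dom{A}\subseteq\dom{A}$ and $x\dom{B}\subseteq\dom{B}$. A direct check then shows that $x$ leaves $\dom{A+B}=\dom{A}\cap\dom{B}$ and $\dom{AB}=\{\xi\in\dom{B}:B\xi\in\dom{A}\}$ invariant and that $x(A+B)\subset(A+B)x$ and $x(AB)\subset(AB)x$. So $A+B$ and $AB$ are densely defined closable operators affiliated with $\M$. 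Finally, by the fact recorded with the definition of affiliation, the closure of an operator affiliated with $\M$ is again affiliated (conjugation by $u\in U(\M')$ commutes with closure, so $u(A+B)u^*=A+B$ yields $u\,\overline{A+B}\,u^*=\overline{A+B}$, and likewise for $AB$). Combining Steps 1 and 2 gives the statement.

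The only genuinely nontrivial point is Step 1: density of $\dom{A+B}$ by itself does not make $A+B$ closable, so one must feed the hypothesis into the \emph{adjoint} and use that $A^*,B^*\in\cM$ to conclude that $(A+B)^*$ (resp. $(AB)^*$) is densely defined. The domain bookkeeping in Step 2 and the fact that taking closures preserves affiliation are routine.
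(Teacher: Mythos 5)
Your proof is correct and follows essentially the same route as the paper: closability is obtained from the inclusions $(A+B)^*\supset A^*+B^*$ and $(AB)^*\supset B^*A^*$ together with the density hypothesis applied to the adjoints (which lie in $\cM$), and affiliation is a direct commutation check with $\M'$ followed by passing to closures, which the paper simply labels as easy. Your reading of the hypothesis as ranging over $\cM$ rather than $\M$ is the intended one.
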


\begin{proof}
By the assumption, $A+B$ is densely defined and
\begin{equation*}
(A+B)^* \supset A^*+B^*.
\end{equation*}
Since the right hand side is densely defined, $A+B$ is closable.
As same as the above, we see that $AB$ is closable.
Affiliation property is easy.
\end{proof}

\begin{rem}
Let $\M$ be a von Neumann algebra.
It is easy to check that $\alpha A$ ($\alpha\in\mathbb{C}$, $A\in\cM$) is always densely defined closable 
and its closure $\overline{\alpha A}$ is affiliated with $\M$.
Moreover $\cM$ is closed with respect to the involution $A\mapsto A^*$.
\end{rem}

\begin{thm}\label{23converse}
Let $\M$ be a von Neumann algebra acting on a Hilbert space $\Hs$.
Assume that, for all $A$, $B\in\M$, the domains $\dom{A+B}$ and $\dom{AB}$ are dense in $\Hs$.
If the set $\cM$ forms a *-algebra with respect to the sum $\overline{A+B}$, 
the scalar multiplication $\overline{\alpha{A}}$ ($\alpha\in\mathbb{C}$), the multiplication $\overline{AB}$ and the involution $A^*$,
then $\M$ is a finite von Neumann algebra.
\end{thm}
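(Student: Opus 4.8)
The plan is to argue by contraposition. I will show that if $\M$ is \emph{not} finite, then already the first hypothesis fails: there exist $A,B\in\cM$ for which $\dom{A}\cap\dom{B}$ (hence $\dom{A+B}$) is \emph{not dense} in $\Hs$. Thus the *-algebra axioms are not even needed; only the density assumption is used. The input from hard analysis is the classical construction recalled in Remark~\ref{3domain is can be zero}: there is a pair of densely defined closed operators $A_0,B_0$ on a separable Hilbert space with $\dom{A_0}\cap\dom{B_0}=\{0\}$. The real task is to transplant such a pair into $\cM$, which is where non-finiteness enters.

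First I would manufacture a copy of $\mf{B}(\ell^2(\mbb{N}))$ inside $\M$. Since $\M$ is not finite, by definition it contains a non-unitary isometry $v\in\M$: $v^*v=1$ but $q:=1-vv^*\ne 0$. Put $f_n:=v^nq(v^*)^n$ and $s_n:=v^nq$ for $n\ge 0$; using $v^*v=1$ one gets $qv^k=0$ for $k\ge 1$, and then $s_n^*s_n=q$, $s_ns_n^*=f_n$, while $g_{mn}:=s_ms_n^*=v^mq(v^*)^n$ satisfy $g_{mn}^*=g_{nm}$ and $g_{mn}g_{kl}=\delta_{nk}g_{ml}$. The partial sums telescope, $\sum_{n=0}^{N}f_n=1-v^{N+1}(v^*)^{N+1}$, so $z:=\sum_{n\ge 0}f_n\in P(\M)$ and $z\ge f_0=q\ne 0$. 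Hence $\{g_{mn}\}_{m,n\ge 0}$ is a system of matrix units generating a von Neumann subalgebra $\mf{N}_0\subset z\M z\subset\M$ of $\mf{B}(z\Hs)$, and there is a unitary identification $z\Hs\cong\ell^2(\mbb{N})\tens\mc{K}_0$ with $\mc{K}_0:=q\Hs$ under which $g_{mn}$ becomes the elementary tensor $e_{mn}\tens 1$ and $\mf{N}_0$ becomes $\mf{B}(\ell^2(\mbb{N}))\tens 1$.

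Next I would transplant and conclude. With $A_0,B_0$ as above (on $\ell^2(\mbb{N})$), set, on $\Hs=z\Hs\oplus z^\perp\Hs$,
\[
A:=(A_0\tens 1_{\mc{K}_0})\oplus 0_{z^\perp\Hs},\qquad
B:=(B_0\tens 1_{\mc{K}_0})\oplus 0_{z^\perp\Hs},
\]
where $\tens$ denotes the closed tensor product. Both are closed and densely defined ($\dom{A}\supset(\dom{A_0}\atensor\mc{K}_0)\oplus z^\perp\Hs$). They are affiliated with $\M$: in the polar decomposition $A_0=u_0|A_0|$, the partial isometry $(u_0\tens 1)\oplus 0$ lies in $\mf{N}_0\subset\M$, and every spectral projection of $(|A_0|\tens 1)\oplus 0$ lies in $\mf{N}_0+\mbb{C}z^\perp\subset\M$; hence the polar parts of $A$ lie in, respectively are affiliated with, $\M$, so $A\in\cM$, and likewise $B\in\cM$. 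Now identify a vector $\xi$ of $\ell^2(\mbb{N})\tens\mc{K}_0$ with a Hilbert--Schmidt operator $C:\mc{K}_0\to\ell^2(\mbb{N})$; since $A_0$ is closed, $\xi\in\dom{A_0\tens 1}$ forces $C\kappa\in\dom{A_0}$ for every $\kappa\in\mc{K}_0$. Thus if $0\ne\xi\in\dom{A}\cap\dom{B}\cap z\Hs$ and $\kappa$ is chosen with $C\kappa\ne 0$, then $C\kappa\in\dom{A_0}\cap\dom{B_0}=\{0\}$, a contradiction; so $\dom{A}\cap\dom{B}\cap z\Hs=\{0\}$. Since $A$ and $B$ vanish on $z^\perp\Hs$, we get $\dom{A+B}=\dom{A}\cap\dom{B}=z^\perp\Hs$, which is a proper closed — hence non-dense — subspace because $z\Hs\supset q\Hs\ne 0$. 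This contradicts the standing hypothesis, and therefore $\M$ is finite.

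The step I expect to be the real obstacle is the transplantation. One must check two things: (i) that the pathology $\dom{A_0}\cap\dom{B_0}=\{0\}$ survives tensoring with $1_{\mc{K}_0}$, which the Hilbert--Schmidt (``column by column'') description of $\dom{A_0\tens 1}$ settles by reducing to a single column; and (ii) that $A_0\tens 1$, a priori only affiliated with the corner $\mf{N}_0\cong\mf{B}(\ell^2(\mbb{N}))$, is in fact affiliated with all of $\M$ — this is exactly what the polar decomposition buys, since the partial isometry sits in $\mf{N}_0\subset\M$ and the spectral projections of the positive part sit in $\mf{N}_0\subset\M$, and an operator is affiliated with $\M$ as soon as both of its polar parts are. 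The matrix-unit identities and the final bookkeeping of domains are routine.
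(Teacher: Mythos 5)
Your proposal is correct, but it follows a genuinely different route from the paper's proof. The paper uses both hypotheses head-on: first, the *-algebra structure of $\cM$ is used to write a closed symmetric affiliated operator as $A=\overline{B+iC}$ with $B,C\in\cM$ self-adjoint and to conclude $C=0$, so every closed symmetric operator affiliated with $\M$ is self-adjoint; then, for an isometry $v\in\M$, the Wold decomposition produces a unilateral-shift part $s$ on $\ran{p}$, and the closed symmetric operator $T=i(1+s)(1-s)^{-1}$ (extended by $0$) must be self-adjoint, forcing its Cayley transform $s$ to be unitary and hence $p=0$. You instead argue by contraposition using only the density hypothesis: a non-unitary isometry yields matrix units $g_{mn}=v^{m}q(v^{*})^{n}$ and the identification $z\Hs\cong\ell^{2}(\mbb{N})\tens q\Hs$, into which you transplant a von Neumann--Kosaki pair $A_{0},B_{0}$ with $\dom{A_{0}}\cap\dom{B_{0}}=\{0\}$ (a fact the paper itself only cites, in Remark \ref{3domain is can be zero}); your affiliation argument via the polar parts and the Hilbert--Schmidt ``column'' description of $\dom{A_{0}\tens 1}$ are both sound, giving $\dom{A}\cap\dom{B}=z^{\perp}\Hs$, which is closed and proper, so not dense. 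What your route buys: it proves the stronger statement that the density assumption alone (you never invoke the *-algebra axioms) already forces finiteness, and it pinpoints exactly how non-finiteness destroys the Murray--von Neumann domain calculus. What the paper's route buys: it is self-contained, needing no external pathological example, and is shorter, exhibiting the obstruction directly through the Cayley transform of the shift. One small reading point: the hypothesis ``for all $A,B\in\M$'' is plainly a typo for $A,B\in\cM$, as you assumed; under the literal bounded reading the density clause is vacuous, but your example still defeats the *-algebra hypothesis, since $\overline{A+B}$ then fails to be densely defined and so cannot lie in $\cM$.
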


\begin{proof}
{\bf Step 1.} We first show that all closed symmetric operators affiliated with $\M$ are automatically self-adjoint.
Let $A$ be a closed symmetric operator affiliated with $\M$.
Define operators $B\in\cM$ and $C\in\cM$ as 
\begin{equation*}
B:=\frac{1}{2}\left(\overline{A+A^*}\right), \ \ \ \ C:=\frac{1}{2i}\left(\overline{A-A^*}\right),
\end{equation*} 
then $B$ and $C$ are self-adjoint and $A=\overline{B+iC}$ holds because $\cM$ is a *-algebra.
Since $A$ is symmetric, we see that 
\begin{equation*}
C \supset \frac{1}{2i}\left(A-A^*\right) \supset \frac{1}{2i}\left(A-A\right) = 0|_{\dom{A}}.
\end{equation*}
By taking the closure, we obtain $C=0$.
Hence $A=B$ is self-adjoint.\\

{\bf Step 2.} We shall prove that $\M$ is finite.
Let $v$ be an arbitrary isometry in $\M$.
By the Wold decomposition, there exists a unique projection $p\in\M$
such that $\ran{p}$ reduces $v$, $s:=v|_{\ran{p}}\in\M_{p}$ is a unilateral shift operator 
and $u:=v|_{\ran{p^{\bot}}}\in\M_{p^{\bot}}$ is unitary.
It is easy to see that 
\begin{equation*}
\ker{1-s}=\{0\}, \ \ \ \ \ker{1-s^{*}}=\{0\},
\end{equation*}
so that we can define the closed symmetric operator $T$ on $\ran{p}$ as follows:
\begin{equation*}
T:=i(1+s)(1-s)^{-1}.
\end{equation*}
We immediately see that $T$ is affiliated with the von Neumann algebra $\M_{p}$.  
Define the operator $A$ on $\Hs=\ran{p}\bigoplus\ran{p^{\bot}}$ by
\begin{equation*}
A:=T\oplus 0_{\ran{p^{\bot}}},
\end{equation*}
then $A$ is a closed symmetric operator and it is affiliated with $\M$.
From Step 1., $A$ is self-adjoint, so that $T$ is also self-adjoint. 
Since the Cayley transform of a self-adjoint operator is always unitary 
and the Cayley transform of $T$ is $s$, $s$ is unitary.
This implies $p=0$ because a unilateral shift operator admits 
no non-zero reducing closed subspace on which it is unitary.  
Hence $v=u$ is unitary. 
\end{proof}

\section{Topological Structures of $\cM$}

In this section we investigate topological properties of $\cM$.
We need these results in the next section.
We first endow $\cM$ with two topologies, called the strong resolvent topology
and the strong exponential topology.
The former is (unbounded) operator theoretic and the latter is Lie theoretic.
To show that these two topologies do coincide and $\cM$ forms a complete topological *-algebra with respect to them,
we introduce another topology, called the $\tau$-measure topology which originates from the noncommutative integration theory.
They seem quite different to each other, but in fact they also coincide.
The main topic of the present section is to study correlations between them.

\subsection{Strong Resolvent Topology}

First of all, we define the topology called the strong resolvent topology on the suitable subset of densely defined closed operators.
%We discuss some basic properties about the strong resolvent topology too. 
Let $\Hs$ be a Hilbert space.
We call a densely defined closed operator $A$ on $\Hs$ belongs to the {\it resolvent class} $\rc{\Hs}$
if $A$ satisfies the following two conditions:

\begin{list}{}{}
\item[(RC.1)] there exist self-adjoint operators $X$ and $Y$ on $\Hs$ such that 
the intersection $\dom{X}\cap\dom{Y}$ is a core of $X$ and $Y$,
\item[(RC.2)] $A=\overline{X+iY}$, \ $A^{*}=\overline{X-iY}$.
\end{list}

Note that (RC.1) implies $\dom{X}\cap\dom{Y}$ is dense, so $X+iY$ and $X-iY$ are closable.
Thus $\overline{X+iY}$ and $\overline{X-iY}$ are always defined.
Furthermore, we have
\begin{equation*}
\frac{1}{2}(A+A^{*})= \frac{1}{2}(\overline{X+iY} + \overline{X-iY}) \supset X|_{\dom{X}\cap\dom{Y}}.
\end{equation*}
Since $A+A^{*}$ is closable and by (RC.1), we get
\begin{equation*}
\frac{1}{2}\overline{A+A^{*}} \supset X.
\end{equation*}
As $X$ is self-adjoint, $X$ has no non-trivial symmetric extension, we have
\begin{equation*}
\frac{1}{2}\overline{A+A^{*}} = X.
\end{equation*}
Therefore, $X$ is uniquely determined.
As same as the above, $Y$ is also unique and 
\begin{equation*}
\frac{1}{2i}\overline{A-A^{*}} = Y.
\end{equation*}
We denote 
\begin{equation*}
\re{A} := X = \frac{1}{2}\overline{A+A^{*}}, \ \ \ \ \im{A} := Y = \frac{1}{2i}\overline{A-A^{*}}.
\end{equation*}
Also note that bounded operators and (possibility unbounded) normal operators belong to $\rc{\Hs}$.

Now we endow $\rc{\Hs}$ with the {\it strong resolvent topology} (SRT for short), 
the weakest topology for which the following mappings
\begin{equation*}
\rc{\Hs} \ni A \longmapsto \{\re{A}-i\}^{-1} \in (\mathfrak{B}(\Hs), SOT)
\end{equation*}
and
\begin{equation*}
\rc{\Hs} \ni A \longmapsto \{\im{A}-i\}^{-1} \in (\mathfrak{B}(\Hs), SOT)
\end{equation*}
are continuous.
Thus a net $\{A_{\alpha}\}_{\alpha}$ in $\rc{\Hs}$ converges to $A \in \rc{\Hs}$ 
with respect to the strong resolvent topology if and only if
\begin{equation*} 
\{\re{A_{\alpha}}-i\}^{-1}\xi\rightarrow \{\re{A}-i\}^{-1}\xi, \ \ \ \ \{\im{A_{\alpha}}-i\}^{-1}\xi \rightarrow \{\im{A}-i\}^{-1}\xi,
\end{equation*}
for each $\xi\in\Hs$.
%This topology is important from the viewpoint of Lie theory. 
%Indeed, a sequence of self-adjoint operators $\{A_n\}_{n=1}^{\infty}$ converges to a self-adjoint operator $A$ 
%with respect to the strong resolvent topology 
%if and only if the sequence $\{e^{itA_n}\}_{n=1}^{\infty}$ converges strongly to $e^{itA}$ for all $t\in\mathbb{R}$ (see Lemma \ref{ASRT=exp}).
This topology is well-studied in the field of unbounded operator theory and suitable for the operator theoretical study.
We denote the system of open sets of the strong resolvent topology by $\mathcal{O}_{{\rm{SRT}}}$.

Let $\M$ be a finite von Neumann algebra on a Hilbert space $\Hs$.
We shall show that $\cM$ is a closed subset of the resolvent class $\rc{\Hs}$.
This fact follows from Proposition \ref{2countable intersection of c-dense}, Theorem \ref{2eta_*alg}, 
Lemma \ref{2core of A} and the following lemmata.

\begin{lem}\label{3subset}
Let $\M$ be a finite von Neumann algebra on a Hilbert apace $\Hs$, $A$ be in $\cM$.
Then there exist unique self-adjoint operators $B$ and $C$ in $\cM$ such that
\begin{equation*}
A=\overline{B+iC}.
\end{equation*} 
\end{lem}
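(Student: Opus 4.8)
The plan is to produce the operators $B$ and $C$ directly from the $*$-algebra structure of $\cM$ established in Theorem \ref{2eta_*alg}, and then read off uniqueness from Proposition \ref{2No-closed-extension}. First I would set
\begin{equation*}
B:=\tfrac{1}{2}\,\overline{A+A^*},\qquad C:=\tfrac{1}{2i}\,\overline{A-A^*},
\end{equation*}
which make sense because $A^*\in\cM$ (the involution preserves $\cM$) and because $\cM$ is closed under the sum and scalar multiplication operations of Theorem \ref{2eta_*alg}; hence $B,C\in\cM$. Each of $B$ and $C$ is symmetric: for instance $B^*=\bigl(\tfrac12\overline{A+A^*}\bigr)^*=\tfrac12\overline{A^*+A}=B$ using part (3) of Lemma \ref{2eta_alg} (the conjugate of the polynomial $x_1+y_1$ is itself). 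Since $B,C\in\cM$ are closed and symmetric, Proposition \ref{2No-closed-extension}(1) upgrades them to self-adjoint operators.

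Next I would check $A=\overline{B+iC}$. Working inside the $*$-algebra $\cM$ and using Lemma \ref{2eta_alg}(4),(5) to manipulate closures of polynomial substitutions, one computes
\begin{equation*}
\overline{B+iC}
=\overline{\tfrac{1}{2}\overline{A+A^*}+i\cdot\tfrac{1}{2i}\overline{A-A^*}}
=\overline{\tfrac{1}{2}(A+A^*)+\tfrac{1}{2}(A-A^*)}
=\overline{A}=A,
\end{equation*}
the middle equality being exactly the kind of identity "the closure of a substitution depends only on the reduced polynomial" guaranteed by Lemma \ref{2eta_alg}(2),(4),(5), applied to the polynomial identity $\tfrac12(x_1+y_1)+\tfrac12(x_1-y_1)=x_1$.

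Finally, uniqueness: if $A=\overline{B'+iC'}$ with $B',C'\in\cM$ self-adjoint, then taking adjoints via Lemma \ref{2eta_alg}(3) gives $A^*=\overline{B'-iC'}$, and adding and subtracting (again using Lemma \ref{2eta_alg}) yields $\overline{A+A^*}\supset B'+B'$ hence $B=\tfrac12\overline{A+A^*}\supset B'$, and likewise $C\supset C'$; since there are no proper closed extensions of operators in $\cM$ (Proposition \ref{2No-closed-extension}(2)), $B=B'$ and $C=C'$. I expect the only delicate point to be the bookkeeping of domains when passing between $B+iC$, its closure, and the polynomial-substitution formalism of Lemma \ref{2eta_alg}; everything else is a direct application of the Murray--von Neumann machinery already recalled. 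This is essentially the same argument sketched in Remark \ref{2finiteness} and in Step 1 of Theorem \ref{23converse}.
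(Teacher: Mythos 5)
Your proof is correct and follows essentially the same route as the paper: define $B=\tfrac12\overline{A+A^*}$, $C=\tfrac1{2i}\overline{A-A^*}$, use the Murray--von Neumann machinery (the $*$-algebra structure of $\cM$ together with Proposition \ref{2No-closed-extension}) to get self-adjointness and $A=\overline{B+iC}$. The only difference is that you also write out the uniqueness argument explicitly, whereas the paper's proof leaves it to the uniqueness of $\re{A}$, $\im{A}$ already established in the discussion of the resolvent class; your version of that step is fine.
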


\begin{proof}
Put 
\begin{equation*}
B:=\frac{1}{2}\overline{A+A^*}, \ \ \ \ C:=\frac{1}{2i}\overline{A-A^*}.
\end{equation*}
Applying Proposition \ref{2countable intersection of c-dense}, $\dom{B}$ and $\dom{C}$ are dense in $\Hs$.
Hence $B$ and $C$ are closed symmetric operators affiliated with $\M$.
By Proposition \ref{2No-closed-extension}, in fact, $B$ and $C$ are self-adjoint. 
As $\cM$ is a *-algebra, we have
\begin{equation*}
A=\overline{B+iC}.
\end{equation*}
\end{proof}

\begin{lem}\label{3closed}
Let $\M$ be a finite von Neumann algebra. 
Then $\cM$ is closed with respect to the strong resolvent topology.
\end{lem}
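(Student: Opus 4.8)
The plan is to verify directly that $\cM$, regarded as a subset of $\rc{\Hs}$, contains the limit of every net in it that converges for the strong resolvent topology. So let $\{A_\alpha\}_\alpha$ be a net in $\cM$ with $A_\alpha \to A$ in the strong resolvent topology for some $A \in \rc{\Hs}$; the goal is to show $A \in \cM$, i.e.\ $uAu^* = A$ for all $u \in U(\M')$. The whole argument runs through the self-adjoint parts $\re{A_\alpha}$, $\im{A_\alpha}$ and their resolvents at the point $i$.

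First I would record the standard fact, to be used in both directions, that a self-adjoint operator $X$ on $\Hs$ is affiliated with $\M$ if and only if its resolvent $(X-i)^{-1}$ belongs to $\M$: affiliation of $X$ is equivalent to $u(X-i)^{-1}u^* = (X-i)^{-1}$ for every $u \in U(\M')$, which for the bounded operator $(X-i)^{-1}$ means precisely that $(X-i)^{-1}$ lies in $(\M')' = \M$. Now, by Lemma \ref{3subset} each of $\re{A_\alpha}$ and $\im{A_\alpha}$ is self-adjoint and affiliated with $\M$, so $R_\alpha := (\re{A_\alpha}-i)^{-1}$ and $S_\alpha := (\im{A_\alpha}-i)^{-1}$ belong to $\M$. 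By the very definition of the strong resolvent topology, $R_\alpha \to R := (\re{A}-i)^{-1}$ and $S_\alpha \to S := (\im{A}-i)^{-1}$ in the strong operator topology; and since a von Neumann algebra is closed in the strong operator topology, $R, S \in \M$.

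Applying the recorded fact in the reverse direction, $R \in \M$ forces $\re{A}$ to be affiliated with $\M$, and since $\re{A}$ is self-adjoint by the definition of $\rc{\Hs}$, we obtain $\re{A} \in \cM$; likewise $\im{A} \in \cM$. As $\cM$ is a *-algebra by Theorem \ref{2eta_*alg}, it follows that $\overline{\re{A} + i\,\im{A}} \in \cM$; and condition (RC.2) in the definition of $\rc{\Hs}$ gives precisely $A = \overline{\re{A} + i\,\im{A}}$. Hence $A \in \cM$, which shows that $\cM$ is closed for the strong resolvent topology.

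I do not anticipate a genuine obstacle: the proof is essentially a repackaging of two well-known facts — the resolvent criterion for affiliation of a self-adjoint operator, and the strong-operator closedness of a von Neumann algebra — together with the *-algebra structure of $\cM$ supplied by Theorem \ref{2eta_*alg}. The one point deserving a little care is checking that each $R_\alpha$ truly lies inside $\M$ (and is not merely affiliated with it), which relies on the self-adjointness of $\re{A_\alpha}$; that self-adjointness is exactly what Lemma \ref{3subset} provides, ultimately via Proposition \ref{2No-closed-extension}. Everything after that step is formal.
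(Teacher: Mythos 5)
Your proof is correct and takes essentially the same route as the paper's: both pass to the self-adjoint parts $\re{A}$ and $\im{A}$, exploit the SOT convergence of their resolvents guaranteed by the definition of the strong resolvent topology, and conclude $A=\overline{\re{A}+i\,\im{A}}\in\cM$ via the *-algebra structure of $\cM$. The only difference is packaging: the paper checks affiliation directly by the chain $\{u\re{A}u^*-i\}^{-1}=u\{\re{A}-i\}^{-1}u^*=\lim_{\alpha}\{\re{A_{\alpha}}-i\}^{-1}=\{\re{A}-i\}^{-1}$ for each $u\in U(\M')$, whereas you express the same fact as membership $(\re{A}-i)^{-1}\in\M$ using the SOT-closedness of $\M$ — an equivalent reformulation.
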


\begin{proof}
Let $\{A_{\alpha}\}_{\alpha} \subset \cM$ be a net converging to $A \in \rc{\Hs}$ with respect to the strong resolvent topology.
Then, for all $u \in U(\M')$, we have
\begin{align*}
\{u\re{A}u^{*}-i\}^{-1} &= u\{\re{A}-i\}^{-1}u^{*} = \slim{\alpha} u\{\re{A_{\alpha}}-i\}^{-1}u^{*} \\ 
&= \slim{\alpha} \{u\re{A_{\alpha}}u^{*}-i\}^{-1} = \slim{\alpha} \{\re{A_{\alpha}}-i\}^{-1} \\
&=\{\re{A}-i\}^{-1}.
\end{align*}
This implies $\re{A}$ belongs to $\cM$.
As same as the above, we obtain $\im{A} \in \cM$.
Thus so is $A =\overline{\re{A}+\im{A}}$.
\end{proof}

\begin{rem}\label{3not linear}
In general, the strong resolvent topology is not linear.
Indeed, there exists sequences $\{A_n\}_{n=1}^{\infty}$, $\{B_n\}_{n=1}^{\infty}$ of self-adjoint operators
and self-adjoint operators $A$, $B$ 
such that the following conditions hold:
\begin{list}{}{}
\item[(1)] $\{A_n\}_{n=1}^{\infty}$ and $\{B_n\}_{n=1}^{\infty}$ converge to $A$ and $B$ 
in the strong resolvent topology, respectively.
\item[(2)] $A_n+B_n$ is essentially self-adjoint for each $n\in\mathbb{N}$.
\item[(3)] $A+B$ is essentially self-adjoint.
\item[(4)] $\left\{\overline{A_n+B_n}\right\}_{n=1}^{\infty}$ converges to some self-adjoint operator $C$ 
in the strong resolvent topology, but $C\not=\overline{A+B}$.
\end{list}
For the details, see \cite{Simon}.
However, as we see in the sequel, the strong resolvent topology is linear on $\cM$.
\end{rem}

The next lemma is important in our discussion.

\begin{lem}\label{Ametrizable}
Let $\M$ be a finite von Neumann algebra acting on a Hilbert space $\Hs$.
Then the following are equivalent:
\begin{list}{}{}
\item[(1)] $\M$ is countably decomposable,
\item[(2)] $(\cM, SRT)$ is metrizable as a topological space,
\item[(3)] $(\cM, SRT)$ satisfies the first countability axiom.
\end{list}
\end{lem}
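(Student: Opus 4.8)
The plan is to prove the chain $(1)\Rightarrow(2)\Rightarrow(3)\Rightarrow(1)$, since $(2)\Rightarrow(3)$ is automatic (every metrizable space is first countable). The substantive implications are $(1)\Rightarrow(2)$ and $(3)\Rightarrow(1)$.

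\textbf{Proof of $(1)\Rightarrow(2)$.} Suppose $\M$ is countably decomposable and fix a faithful normal tracial state $\tau$. The SRT on $\cM$ is by definition the initial topology for the two maps $A\mapsto\{\re{A}-i\}^{-1}$ and $A\mapsto\{\im{A}-i\}^{-1}$ into $(\mathfrak B(\Hs),\mathrm{SOT})$, so it suffices to produce a countable set of vectors $\{\xi_k\}$ in $\Hs$ such that strong convergence of the resolvents tested against $\{\xi_k\}$ already forces strong convergence against every $\xi\in\Hs$; then the countably many seminorms $A\mapsto\|(\{\re{A}-i\}^{-1}-\{\re{B}-i\}^{-1})\xi_k\|$ and the analogous ones for $\im{}$ give a metric. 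The natural candidate is a countable \emph{separating} set for $\M$, which exists precisely because $\M$ is countably decomposable (quoted in $\S2.1$). Indeed, if $\mathcal D=\{\xi_k\}$ is separating for $\M$, then since every $\{\re{A}-i\}^{-1}$ lies in $\M$ and the resolvents are uniformly norm-bounded by $1$, a density/equicontinuity argument (the span of $\M'\{\xi_k\}$ is dense in $\Hs$, because its closure is an $\M'$-invariant, hence $\M$-reducing, subspace whose complement is killed by a projection in $\M$ annihilating $\mathcal D$) upgrades SOT-convergence on $\mathcal D$ to SOT-convergence everywhere. This makes $(\cM,\mathrm{SRT})$ a countably-seminormed, hence pseudometrizable, space; Hausdorffness follows from $\re{},\im{}$ being determined by $A$ (Lemma \ref{3subset}), so it is metrizable.

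\textbf{Proof of $(3)\Rightarrow(1)$.} Argue by contraposition: assume $\M$ is not countably decomposable and exhibit a point of $(\cM,\mathrm{SRT})$ with no countable neighborhood basis. Using Lemma \ref{2decompose}, write $\M$ spatially as an uncountable bounded direct sum $\bigoplus_{\lambda\in\Lambda}^b\M_\lambda$ of countably decomposable pieces. The idea mirrors Remark \ref{2why net is necessary}: test against the operator $0\in\cM$. Its SRT-neighborhoods are determined by finitely many vectors and $\eps>0$; but any single vector $\xi=\{\xi^{(\lambda)}\}$ has $\xi^{(\lambda)}=0$ for all but countably many $\lambda$, so no countable family of basic neighborhoods can ``see'' coordinates outside a fixed countable subset $\Lambda_0\subset\Lambda$. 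Picking $\lambda_0\notin\Lambda_0$ and a unit vector supported at $\lambda_0$, one builds a net (indexed by finite subsets of $\Lambda$, as in the complete-density arguments) of self-adjoint affiliated operators converging to $0$ in SRT, while showing that no sequence from the putative countable basis can do the same — contradicting first countability. Concretely one takes, on each coordinate $\lambda$, a self-adjoint operator $C_\lambda$ affiliated with $\M_\lambda$ whose resolvent $(C_\lambda-i)^{-1}$ is far from $1$ (e.g. equal to the multiplication/diagonal operator realizing a fixed spectral gap), sets the direct sum to equal $C_\lambda$ off a finite set $F$ and $0$ on $F$, and checks that this net converges to $0$ in SRT but evades every countable collection of neighborhoods.

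\textbf{Main obstacle.} The delicate point is the converse direction $(3)\Rightarrow(1)$: one must turn the ``uncountably many independent coordinates'' heuristic into an honest failure of first countability at a specific point of $(\cM,\mathrm{SRT})$, and in particular verify that the direct-sum operators constructed genuinely lie in $\cM$ (affiliation with the big algebra) and that the resolvents of their real and imaginary parts behave as claimed coordinatewise — the bounded direct sum and the behavior of $(\re{\cdot}-i)^{-1}$ under $\bigoplus^b$ need to be handled carefully, exactly the kind of subtlety flagged in Remark \ref{2why net is necessary}. The forward direction $(1)\Rightarrow(2)$ is comparatively routine once the separating-set-implies-SOT-metrizability observation is in place.
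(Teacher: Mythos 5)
Your proof of $(1)\Rightarrow(2)$ is essentially the paper's: both take a countable separating family $\{\xi_k\}$ and metrize SRT by weighted sums of the resolvent seminorms evaluated at the $\xi_k$; your extra observation that $\overline{{\rm span}}\,\M'\{\xi_k\}=\Hs$, so that uniform boundedness of the resolvents upgrades SOT-convergence on the separating set to SOT-convergence everywhere, is precisely the verification the paper leaves as ``easy to see'', so this is an elaboration rather than a divergence. For $(3)\Rightarrow(1)$ you take a genuinely different route. The paper argues directly inside $\M$: given a countable neighborhood basis $\{V_k\}$ at $0$, any family $S$ of mutually orthogonal non-zero projections is $\bigcup_k S_k$ with $S_k=\{p\in S\ ;\ p\notin V_k\}$ (Hausdorffness), and each $S_k$ is finite because an infinite sequence of mutually orthogonal projections converges strongly, hence in SRT by Lemma \ref{AcoreSRT}, to $0$ and so eventually enters $V_k$; thus $S$ is countable and $\M$ is countably decomposable, with no structure theory needed. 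Your contrapositive argument via the decomposition $\bigoplus_{\lambda}^{b}\M_{\lambda}$ of Lemma \ref{2decompose} also works once completed: a countable basis at $0$ may be taken to consist of basic neighborhoods, which involve only countably many test vectors and hence only a countable set $\Lambda_0$ of coordinates; the operator equal to, say, $1_{\Hs_{\lambda_0}}$ at some $\lambda_0\notin\Lambda_0$ and $0$ elsewhere then lies in every $V_k$ but outside the neighborhood of $0$ determined by a unit vector supported at $\lambda_0$ --- equivalently, your net $C_F$ converges to $0$ in SRT while no sequence extracted from it does, which contradicts first countability via the standard sequential-closure criterion. What your route buys is an explicit witness of the failure of first countability in the spirit of Remark \ref{2why net is necessary}; what it costs is the appeal to Lemma \ref{2decompose}, the (easy but needed) remark that non-countable decomposability forces uncountably many non-zero summands, and the bookkeeping you yourself flag, all of which the paper's projection argument avoids. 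One small correction: the resolvents should be compared with the resolvent of $0$, i.e.\ with $(0-i)^{-1}=i\,1$, not with $1$; choosing $C_{\lambda}=1_{\Hs_{\lambda}}$ (so your net consists of projections of $\M$) already gives $\left|(1-i)^{-1}-i\right|=\tfrac{\sqrt{2}}{2}$, which suffices.
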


\begin{proof}
$(1)\Rightarrow (2)$. Let $\{\xi_k\}_{k}$ be a countable separating family of unit vectors in $\Hs$ for $\M$.
For each $A$, $B \in \cM$, we define
\begin{align*}
d(A,B) &:= \sum_{k}\frac{1}{2^k}\|\{\re{A}-i\}^{-1}\xi_k-\{\re{B}-i\}^{-1}\xi_k\| \\
&\ \ \ \ + \sum_{k}\frac{1}{2^k}\|\{\im{A}-i\}^{-1}\xi_k-\{\im{B}-i\}^{-1}\xi_k\|.
\end{align*}
It is easy to see that the above $d$ is a distance function on the space $\cM$, 
and the topology induced by the distance function $d$ coincide with the strong resolvent topology on $\cM$. 

$(2)\Rightarrow (3)$ is trivial.

$(3)\Rightarrow (1)$. Let $S \subset P(\M)$ be a family of mutually orthogonal nonzero projections in $\M$.
Since $(\cM, SRT)$ satisfies the first countability axiom, 
the origin $0 \in \cM$ has a countable fundamental system of neighborhoods $\{V_k\}_k$.
Put 
\begin{equation*}
S_k:=\{p\in S\ ;\ p\notin V_k\},
\end{equation*}
then $S=\bigcup_k S_k$.
This follows from the Hausdorff property of the strong resolvent topology.
Next we show that each $S_k$ is a finite set.
Suppose $S_k$ is an infinite set, 
then we can take a countably infinite subset $\{p_n\ ;\ n\in\mathbb{N}\}$ of $S_k$.
Define
\begin{equation*}
p:= \slim{N \rightarrow \infty} \sum_{n=1}^{N}p_n.
\end{equation*}
For every $\xi\in\Hs$ we see that 
\begin{align*}
\|p_n\xi\| &= \|\sum_{i=1}^{n}p_n\xi-\sum_{i=1}^{n-1}p_n\xi\| \\
&\leq \|\sum_{i=1}^{n}p_n\xi-p\xi\| + \|p\xi-\sum_{i=1}^{n-1}p_n\xi\| \\
&\longrightarrow 0.
\end{align*}
Thus $p_n$ converges strongly to $0$.
By Lemma \ref{AcoreSRT}, this implies $p_n$ converges to $0$ with respect to the strong resolvent topology.
Hence there exists a number $n\in\mathbb{N}$ such that $p_n\in V_k$.
This is a contradiction to $p_n\in S_k$.
Therefore $S_k$ is a finite set.
From the above arguments, we conclude that $S=\bigcup_k S_k$ is at most countable.
\end{proof}

\begin{rem}\label{Auniform_metrizable}
As we see in the sequel, $(\cM, SRT)$ is a Hausdorff topological linear space.
Thus in the case that $\M$ satisfies conditions (1), (2) or (3) of Lemma \ref{Ametrizable}, 
$(\cM, SRT)$ is metrizable with a translation invariant distance function.
In particular, it is also metrizable as a uniform space.  
\end{rem}

Finally, we state one lemma.

\begin{lem}\label{3mt=sot}
Let $\M$ be a finite von Neumann algebra acting on a Hilbert space $\Hs$.
Then the strong resolvent topology and the strong operator topology coincide on 
the closed unit ball $\M_1$.
\end{lem}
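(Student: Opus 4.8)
The plan is to show the two topologies agree on $\M_1$ by comparing them directly. One inclusion is almost immediate: if a net $\{A_\alpha\}_\alpha \subset \M_1$ converges to $A \in \M_1$ in the strong operator topology, then since all operators involved are normal (being in $\M_1 \subset \cM$, and bounded), one has $\re{A_\alpha} = \tfrac12(A_\alpha + A_\alpha^*)$ and $\im{A_\alpha} = \tfrac{1}{2i}(A_\alpha - A_\alpha^*)$, and SOT-convergence of $A_\alpha$ to $A$ together with SOT-continuity of the adjoint on bounded self-adjoint parts gives SOT-convergence of $\re{A_\alpha} \to \re{A}$ and $\im{A_\alpha} \to \im{A}$. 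Then, because the resolvent map $T \mapsto (T-i)^{-1}$ is SOT-continuous on uniformly norm-bounded sets of self-adjoint operators (a standard fact; one writes $(T_\alpha - i)^{-1} - (T-i)^{-1} = (T_\alpha-i)^{-1}(T - T_\alpha)(T-i)^{-1}$ and uses the uniform bound $\|(T_\alpha-i)^{-1}\| \le 1$), we get $\{\re{A_\alpha}-i\}^{-1} \to \{\re{A}-i\}^{-1}$ and likewise for the imaginary parts, i.e. $A_\alpha \to A$ in SRT. So SOT is finer than (or equal to) SRT on $\M_1$; more precisely the identity map $(\M_1, SOT) \to (\M_1, SRT)$ is continuous.

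For the reverse inclusion I would argue that the identity map $(\M_1, SRT) \to (\M_1, SOT)$ is continuous. Suppose $A_\alpha \to A$ in SRT with all $A_\alpha, A \in \M_1$. Writing $X_\alpha := \re{A_\alpha}$, $Y_\alpha := \im{A_\alpha}$ (bounded self-adjoint, norm $\le 1$), we have $(X_\alpha - i)^{-1} \to (X-i)^{-1}$ and $(Y_\alpha - i)^{-1} \to (Y-i)^{-1}$ strongly. The key step is to recover strong convergence of $X_\alpha$ itself from strong convergence of its resolvent, using the \emph{uniform} norm bound $\|X_\alpha\| \le 1$: for $\xi \in \Hs$ put $\eta_\alpha := (X_\alpha - i)^{-1}\xi$ and $\eta := (X-i)^{-1}\xi$, so $\eta_\alpha \to \eta$ and $X_\alpha \eta_\alpha = \xi + i\eta_\alpha \to \xi + i\eta = X\eta$; then
\begin{align*}
\|X_\alpha \eta - X\eta\| &\le \|X_\alpha \eta - X_\alpha \eta_\alpha\| + \|X_\alpha \eta_\alpha - X\eta\| \\
&\le \|X_\alpha\|\,\|\eta - \eta_\alpha\| + \|X_\alpha \eta_\alpha - X\eta\| \\
&\le \|\eta - \eta_\alpha\| + \|X_\alpha \eta_\alpha - X\eta\| \longrightarrow 0.
\end{align*}
Since vectors of the form $\eta = (X-i)^{-1}\xi$ range over $\dom{(X-i)^{-1}} \cdot$, wait — actually they range over all of $\Hs$ only if $X$ is bounded, which it is here, so $\{(X-i)^{-1}\xi : \xi \in \Hs\} = \Hs$; combined with the uniform bound $\|X_\alpha\| \le 1$ and the density just noted, an $\varepsilon/3$ argument gives $X_\alpha \to X$ strongly. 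Likewise $Y_\alpha \to Y$ strongly, hence $A_\alpha = X_\alpha + iY_\alpha \to X + iY = A$ strongly.

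The main obstacle — really the only subtle point — is the passage from strong convergence of resolvents back to strong convergence of the operators, which fails for general unbounded self-adjoint operators; what rescues it here is precisely that everything lies in $\M_1$, giving a common operator-norm bound of $1$, so that the "approximation" term $\|X_\alpha(\eta - \eta_\alpha)\|$ is controlled uniformly in $\alpha$. I would make sure to invoke the uniform boundedness explicitly at that step. Everything else is routine: continuity of the adjoint and of multiplication on bounded sets in SOT, and the resolvent identity. Combining the two continuity statements yields that the identity $(\M_1, SOT) \to (\M_1, SRT)$ is a homeomorphism, i.e. the two topologies coincide on $\M_1$.
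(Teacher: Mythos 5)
Your second direction (SRT $\Rightarrow$ SOT) is fine: recovering strong convergence of $\re{A_\alpha}$, $\im{A_\alpha}$ from strong resolvent convergence via the identity $X_\alpha\eta_\alpha=\xi+i\eta_\alpha$ and the uniform bound $\|X_\alpha\|\le 1$ is exactly the content of Lemma \ref{Asrt-sot}, which the paper simply cites, and your use of the uniform bound there is the right point to stress.

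The first direction, however, has a genuine gap, and it sits precisely where the hypothesis that $\M$ is \emph{finite} must enter — a hypothesis you never use. To get SRT-convergence from SOT-convergence of $A_\alpha\to A$ in $\M_1$ you need $\re{A_\alpha}\to\re{A}$ and $\im{A_\alpha}\to\im{A}$ strongly, i.e.\ you need $A_\alpha^*\to A^*$ strongly; your justification, that ``all operators involved are normal (being in $\M_1\subset\cM$, and bounded)'', is false (a contraction in a von Neumann algebra need not be normal), and ``SOT-continuity of the adjoint on bounded self-adjoint parts'' begs the question. In a general von Neumann algebra the involution is \emph{not} SOT-continuous on the unit ball: for the unilateral shift $S$ on $\ell^2(\mathbb{N})$ one has $(S^*)^n\to 0$ strongly while $S^n\not\to 0$ strongly, so $\re{(S^*)^n}$ does not converge strongly to $0$ and hence $(S^*)^n\not\to 0$ in SRT — the lemma itself fails for $\mathfrak{B}(\ell^2)$. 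What rescues the statement is exactly the fact the paper's proof invokes: for a \emph{finite} von Neumann algebra the involution is strongly continuous on the closed unit ball, after which your resolvent-identity argument (or Lemma \ref{AcoreSRT}) gives SRT-convergence. You must insert this finiteness fact explicitly; as written, the first half of your proof would apply verbatim to any von Neumann algebra and therefore cannot be correct.
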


\begin{proof}
Note that if a von Neumann algebra is finite, then the involution is strongly continuous on the closed unit ball.
The lemma follows immediately from this fact, Lemma \ref{AcoreSRT} and Lemma \ref{Asrt-sot}.
\end{proof}

See Appendix B for more informations of the strong resolvent topology.

\subsection{Strong Exponential Topology}

Next we introduce a Lie theoretic topology on $\cM$.
Let $\Hs$ be a Hilbert space.
For each $A\in\rc{\Hs}$, each SOT-neighborhood $V$ at $1\in\mathfrak{B}(\Hs)$ and each compact set $K$ of $\mathbb{R}$, 
we define $W(A;V,K)$ the subset of $\rc{\Hs}$ by

\begin{align*}
W(A;V,K) := \left\{B\in\rc{\Hs}\ ;\ 
\begin{array}{l} e^{-it\re{A}}e^{it\re{B}}\in V,\\
e^{-it\im{A}}e^{it\im{B}}\in V,\ \forall t \in K.
\end{array} \right\},
\end{align*}
then $\{W(A;V,K)\}_{A,V,K}$ is a fundamental system of neighborhoods on $\rc{\Hs}$.
We denote the system of open sets of the topology induced by this fundamental system of neighborhoods by $\mathcal{O}_{{\rm{SET}}}$,
and call this topology  the {\it strong exponential topology} (SET for short).
Note that a net $\{A_{\lambda}\}_{\lambda\in\Lambda}$ in $\rc{\Hs}$ converges to $A\in\rc{\Hs}$
in the strong exponential topology if and only if 
\begin{equation*}
e^{it\re{A_{\lambda}}}\xi\longrightarrow e^{it\re{A}}\xi, \ \ \ \ e^{it\im{A_{\lambda}}}\xi\longrightarrow e^{it\im{A}}\xi,
\end{equation*}
for each $\xi\in\Hs$, uniformly for $t$ in any finite interval.
This topology is important from the viewpoint of Lie theory.
Indeed it can be defined by the unitary group $U(\Hs)$ only.
Before stating the main theorem in this section, we study relations between the strong resolvent topology
and the strong exponential topology.

\begin{lem}\label{ASETmetrizable}
Let $\M$ be a countably decomposable finite von Neumann algebra acting on a Hilbert space $\Hs$.
Then $(\cM, SET)$ is metrizable as a topological space.
\end{lem}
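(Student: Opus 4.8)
The plan is to reduce the metrizability of $(\cM, \mathrm{SET})$ to the metrizability of $(\cM, \mathrm{SRT})$, which we already have from Lemma \ref{Ametrizable} in the countably decomposable case, by showing that the two topologies coincide on $\cM$. The key point is that for self-adjoint operators the strong resolvent convergence and the strong convergence of the one-parameter unitary groups they generate are equivalent; this is the classical fact (a form of the Trotter--Kato theorem, recorded in Appendix B) that $\{A_\alpha\}$ converges to $A$ in SRT if and only if $e^{itA_\alpha}\xi \to e^{itA}\xi$ for every $\xi$, uniformly for $t$ in compact intervals. Applying this separately to the real and imaginary parts $\re{A_\alpha}, \im{A_\alpha}$ (which lie in $\cM$ by Lemma \ref{3subset}) shows that a net in $\cM$ converges in SRT precisely when it converges in SET; hence $\mathcal{O}_{\mathrm{SRT}} = \mathcal{O}_{\mathrm{SET}}$ on $\cM$.

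Concretely, first I would observe that by Lemma \ref{3subset} every $A \in \cM$ is determined by the pair $(\re{A}, \im{A})$ of self-adjoint operators in $\cM$, and that both the SRT and the SET neighborhood bases are phrased entirely in terms of these two self-adjoint parts; so it suffices to compare, for a net $\{H_\alpha\}$ and $H$ of self-adjoint operators, the SOT-convergence of $\{H_\alpha - i\}^{-1}$ to $\{H - i\}^{-1}$ with the SOT-convergence of $e^{itH_\alpha}$ to $e^{itH}$ (uniformly on compacts in $t$). Second, I would invoke the Trotter--Kato equivalence from Appendix B to get these two conditions are equivalent. Third, translating back: $A_\lambda \to A$ in SRT iff $\re{A_\lambda} \to \re{A}$ and $\im{A_\lambda} \to \im{A}$ in SRT iff the corresponding unitary groups converge, i.e. iff $A_\lambda \to A$ in SET. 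Since the topologies agree and $(\cM, \mathrm{SRT})$ is metrizable by Lemma \ref{Ametrizable}, so is $(\cM, \mathrm{SET})$.

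The main obstacle is a bookkeeping subtlety rather than a deep one: the definition of SET is given via a fundamental system of neighborhoods $W(A; V, K)$, and one must check that these genuinely generate a topology whose convergent nets are exactly those described, and that the equivalence "convergence in $\mathcal{O}_{\mathrm{SRT}}$ $\Leftrightarrow$ convergence in $\mathcal{O}_{\mathrm{SET}}$ for all nets" upgrades to equality of the two topologies. For this I would note that, in the countably decomposable case, both topologies are first countable (SRT by Lemma \ref{Ametrizable}, SET then inheriting it once coincidence of convergent sequences is established together with the explicit neighborhood base), so it is enough to compare convergent \emph{sequences}, and two first-countable topologies with the same convergent sequences are equal. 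Thus the metric $d$ constructed in the proof of Lemma \ref{Ametrizable} also metrizes $(\cM, \mathrm{SET})$.
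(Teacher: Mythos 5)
Your strategy inverts the paper's logic and, as written, contains a genuine gap. The paper proves this lemma \emph{directly}: taking a countable separating family $\{\xi_n\}_n$ of unit vectors for $\M$ (available by countable decomposability), it defines
\begin{align*}
d(A,B) &:= \sum_{n}\sum_{m=1}^{\infty}\frac{1}{2^{n+m}}\sup_{t\in[-m,m]}\|e^{it\re{A}}\xi_n - e^{it\re{B}}\xi_n\|\\
&\ \ \ \ + \sum_{n}\sum_{m=1}^{\infty}\frac{1}{2^{n+m}}\sup_{t\in[-m,m]}\|e^{it\im{A}}\xi_n - e^{it\im{B}}\xi_n\|
\end{align*}
and checks that this metric induces SET on $\cM$ (the exponentials lie in $U(\M)$, so control on a separating family together with uniform boundedness controls them on all of $\Hs$). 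Only \emph{afterwards} does the paper obtain the coincidence of SRT and SET (Lemma \ref{3SRT=SET}), and its proof uses the metrizability of \emph{both} topologies plus the sequential Lemma \ref{ASRT=exp}. You propose the opposite order: prove the coincidence first, then quote Lemma \ref{Ametrizable}.

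The gap is in how you establish the coincidence. Lemma \ref{ASRT=exp} is stated for \emph{sequences}, so it only yields that SRT and SET have the same convergent sequences; this alone does not give $\mathcal{O}_{{\rm SRT}}=\mathcal{O}_{{\rm SET}}$. To upgrade it you need either (i) a net version of the resolvent/exponential equivalence, which is true but is not contained in Appendix B and is not proved by you (one has to rework the Reed--Simon argument for nets, e.g.\ via the Laplace transform $(A+i)^{-1}=-i\int_0^\infty e^{-s}e^{isA}\,ds$, splitting the integral and using uniform boundedness), or (ii) first countability of both topologies. Your fallback is (ii), but the justification that SET is first countable (``inheriting it once coincidence of convergent sequences is established'') is circular: first countability, indeed metrizability, of SET is precisely the content of the lemma being proved, and two topologies with the same convergent sequences need not coincide, nor need both be first countable, unless both are already known to be sequential. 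So either supply a genuine net version of the Trotter--Kato equivalence, or, much more simply, construct the metric for SET directly as the paper does, mimicking the proof of Lemma \ref{Ametrizable} with the resolvents replaced by $\sup_{t\in[-m,m]}$ of the unitary groups.
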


\begin{proof}
Let $\{\xi_n\}_{n}$ be a countable separating family of unit vectors in $\Hs$ for $\M$.
For each $A$, $B \in \cM$ we define
\begin{align*}
d(A,B) &:= \sum_{n}\sum_{m=1}^{\infty}\frac{1}{2^{n+m}}\sup_{t\in [-m,m]}\|e^{it\re{A}}\xi_n-e^{it\re{B}}\xi_n\| \\
&\ \ \ \ + \sum_{n}\sum_{m=1}^{\infty}\frac{1}{2^{n+m}}\sup_{t\in [-m,m]}\|e^{it\im{A}}\xi_n-e^{it\im{B}}\xi_n\|.
\end{align*}
It is easy to see that the above $d$ is a distance function on the space $\cM$, 
and the topology induced by the distance function $d$ coincide with the strong exponential topology on $\cM$.
\end{proof}

\begin{lem}\label{3SRT=SET}
Let $\M$ be a countably decomposable finite von Neumann algebra.
Then the strong resolvent topology and the strong exponential topology coincide on $\cM$.
\end{lem}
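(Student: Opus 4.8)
The plan is to reduce the statement to the classical equivalence, for a \emph{single} self-adjoint operator, between strong resolvent convergence and strong convergence of the associated one-parameter unitary group uniformly on compact time intervals, and then to apply this separately to real and imaginary parts.

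First, since $\M$ is countably decomposable, both $(\cM,\mathrm{SRT})$ and $(\cM,\mathrm{SET})$ are metrizable: this is Lemma~\ref{Ametrizable} for the strong resolvent topology and Lemma~\ref{ASETmetrizable} for the strong exponential topology. Hence each of the two topologies is determined by its convergent sequences, and it suffices to show that for a sequence $\{A_n\}_{n=1}^{\infty}$ in $\cM$ and $A\in\cM$ one has $A_n\to A$ in $\mathrm{SRT}$ if and only if $A_n\to A$ in $\mathrm{SET}$. Concretely: if every $\mathrm{SRT}$-convergent sequence is $\mathrm{SET}$-convergent to the same limit, then, because $\mathrm{SRT}$ is first countable, every $\mathrm{SET}$-closed set is $\mathrm{SRT}$-closed, i.e.\ $\mathrm{SET}\subseteq\mathrm{SRT}$; the symmetric implication gives the reverse inclusion, and combining the two yields $\mathrm{SRT}=\mathrm{SET}$ on $\cM$.

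Next, by Lemma~\ref{3subset} every $A\in\cM$ decomposes as $A=\overline{\re{A}+i\,\im{A}}$ with $\re{A}$ and $\im{A}$ self-adjoint operators in $\cM$. Unwinding the definitions, $A_n\to A$ in $\mathrm{SRT}$ means exactly that $\{\re{A_n}-i\}^{-1}\to\{\re{A}-i\}^{-1}$ and $\{\im{A_n}-i\}^{-1}\to\{\im{A}-i\}^{-1}$ in $\mathrm{SOT}$, that is, $\re{A_n}\to\re{A}$ and $\im{A_n}\to\im{A}$ in the usual strong resolvent sense; while $A_n\to A$ in $\mathrm{SET}$ means exactly that $e^{it\re{A_n}}\to e^{it\re{A}}$ and $e^{it\im{A_n}}\to e^{it\im{A}}$ strongly, uniformly for $t$ in each compact subset of $\mathbb{R}$. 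So the lemma follows once one knows: for self-adjoint operators $X_n,X$ on $\Hs$, $\{X_n-i\}^{-1}\to\{X-i\}^{-1}$ strongly if and only if $e^{itX_n}\to e^{itX}$ strongly, uniformly on compact $t$-sets. This is the classical Trotter--Kato type theorem recalled in Appendix~B (Lemma~\ref{AcoreSRT} and Lemma~\ref{Asrt-sot}; see also \cite{Reed-Simon}); one applies it once with $(X_n,X)=(\re{A_n},\re{A})$ and once with $(X_n,X)=(\im{A_n},\im{A})$.

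I expect the only genuinely delicate point to be the forward direction of that self-adjoint statement, and in particular the \emph{uniformity} in $t$ over compact intervals: pointwise-in-$t$ convergence is cheap, since $g_t(x)=e^{itx}(x+i)^{-1}\in C_0(\mathbb{R})$ gives $e^{itX_n}(X_n+i)^{-1}\to e^{itX}(X+i)^{-1}$ strongly, whence $e^{itX_n}\xi\to e^{itX}\xi$ by $\|e^{itX_n}\|=1$ and density of $\dom{X}$; and the converse direction is immediate from $(X_n-i)^{-1}\xi=i\int_0^{\infty}e^{-t}e^{-itX_n}\xi\,dt$ together with dominated convergence. The uniformity requires equicontinuity of the family $t\mapsto e^{itX_n}\xi$, which in turn comes from uniform tightness of the spectral measures $d\|E_{X_n}(\cdot)\xi\|^2$ (no escape of mass to infinity, forced by convergence of the resolvents and constancy of total mass $\|\xi\|^2$). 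This is precisely what Appendix~B supplies, so inside the present lemma it may simply be invoked; beyond countable decomposability (used only for metrizability) no further structure of $\M$ is needed.
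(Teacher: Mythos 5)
Your proof is correct and takes essentially the same route as the paper: countable decomposability gives metrizability of both topologies (Lemmas \ref{Ametrizable} and \ref{ASETmetrizable}), so everything reduces to sequences, and the componentwise statement for $\re{A_n}$ and $\im{A_n}$ is exactly the appendix equivalence between strong resolvent convergence and locally uniform strong convergence of the one-parameter unitary groups. The only slip is a citation: the self-adjoint equivalence you invoke is Lemma \ref{ASRT=exp}, not Lemmas \ref{AcoreSRT} and \ref{Asrt-sot}, though your sketch of its proof shows you have the right statement in mind.
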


\begin{proof}
This follows immediately from Lemma \ref{Ametrizable}, Lemma \ref{ASETmetrizable} and Lemma \ref{ASRT=exp}.
\end{proof}

\begin{rem}
Similar to the above argument, one can prove that 
the strong resolvent topology and the strong exponential topology coincide on $\rc{\Hs}$ 
if the Hilbert space $\Hs$ is separable.
But the authors do not know whether this is true or not if $\Hs$ is not separable.
However we can show the following theorem.
\end{rem}

The next is the main theorem in this section. 

\begin{thm}\label{3cta}
Let $\M$ be a finite von Neumann algebra acting on a Hilbert space $\Hs$.
Then $\cM$ is a complete topological *-algebra with respect to 
the strong resolvent topology. 
Moreover the strong resolvent topology and the strong exponential topology coincide on $\cM$.
\end{thm}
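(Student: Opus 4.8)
The plan is to reduce the general case to the countably decomposable case via the spatial decomposition of Lemma \ref{2decompose}, and then patch the pieces together using the direct-sum structure. First I would treat the countably decomposable case. Here Lemma \ref{3SRT=SET} already gives that SRT and SET coincide on $\cM$, and Remark \ref{Auniform_metrizable} tells us $(\cM, \mathrm{SRT})$ is metrizable by a translation-invariant metric. So the task splits into two parts: (i) show the $*$-algebra operations $(A,B)\mapsto \overline{A+B}$, $(\alpha,A)\mapsto\overline{\alpha A}$, $(A,B)\mapsto\overline{AB}$, $A\mapsto A^*$ are continuous for SRT; and (ii) show $(\cM,\mathrm{SRT})$ is complete as a uniform space. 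For (i), the key tool must be the $\tau$-measure topology promised in the introduction: one shows that on $\cM$ the SRT coincides with the measure topology coming from noncommutative integration, and in that picture $\cM$ is the completion of $\M$ and the algebraic operations are manifestly (jointly) continuous — this is the classical theory of Nelson's measurable operators. So the real content of (i) is the identification $\mathrm{SRT} = \tau\text{-measure topology}$ on $\cM$, presumably carried out in the lemmas of $\S 3$ preceding this theorem. For (ii), completeness should again be inherited from the fact that the space of $\tau$-measurable operators is complete in the measure topology (a standard fact), once the topologies are identified.

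Next I would handle the general finite von Neumann algebra $\M$. By Lemma \ref{2decompose}, $(\M,\Hs)$ is spatially isomorphic to $\bigl(\bigoplus_\alpha^b \M_\alpha, \bigoplus_\alpha \Hs_\alpha\bigr)$ with each $\M_\alpha$ countably decomposable. Spatial isomorphisms clearly transport SRT, SET, and the $*$-algebra structure on $\cM$, so we may assume $\M = \bigoplus_\alpha^b \M_\alpha$. I would then prove that $\cM \cong \prod_\alpha \overline{\M_\alpha}$ as $*$-algebras, where an operator $A \in \cM$ is identified with the family of its "components" $A_\alpha \in \overline{\M_\alpha}$ (using that any $u \in U(\M')$ includes all the coordinate unitaries, which forces $A$ to decompose; Proposition \ref{2No-closed-extension} and the complete-density results of $\S 2$ handle the domain bookkeeping). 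Under this identification I would check that the SRT on $\cM$ is exactly the product topology of the SRT's on the factors $\overline{\M_\alpha}$ — this is essentially because the resolvents $\{\re{A}-i\}^{-1}$, $\{\im{A}-i\}^{-1}$ are bounded operators that decompose as direct sums, and SOT-convergence of a bounded net of direct sums is coordinatewise. Granting that, continuity of the operations and completeness pass from the factors to the product: a product of complete topological $*$-algebras is a complete topological $*$-algebra. The coincidence SRT $=$ SET then also passes to the product, since the SET neighborhoods $W(A;V,K)$ are likewise built from SOT-conditions on $e^{it\re{B}}$, which decompose over $\alpha$.

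The main obstacle I expect is the identification $\mathrm{SRT} = \tau\text{-measure topology}$ in the countably decomposable case and, intertwined with it, the \emph{joint} continuity of addition and multiplication: the strong resolvent topology is, a priori, badly behaved under these operations (this is exactly the content of Remark \ref{3not linear}), so linearity and continuity are genuinely false on $\rc{\Hs}$ and only become true because of the rigidity forced by affiliation with a finite algebra — concretely, because $\dom{A}$ is \emph{completely} dense (Proposition \ref{2M-vN1}), so that on a large projection $p_\varepsilon$ with $\tau(p_\varepsilon^\perp)$ small everything becomes bounded and the algebraic operations become SOT-continuous there. Making this quantitative — i.e. showing that SRT-convergence of $A_n \to A$ forces, for every $\varepsilon$, the existence of projections $p_n \to 1$ in measure on which $A_n p_n$ converges in norm or SOT to $Ap$ — is the technical heart, and it is precisely what lets one transfer the well-understood continuity in the measure topology back to SRT. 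Once that bridge is built, everything else is either standard noncommutative-integration theory or the routine direct-sum patching described above.
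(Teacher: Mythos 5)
Your proposal is correct and follows essentially the same route as the paper: reduce to the countably decomposable case, where SRT coincides with the $\tau$-measure topology (Lemma \ref{3srt=mt}) and with SET (Lemma \ref{3SRT=SET}) so that completeness and continuity of the operations come from Nelson's theory, and then handle a general finite $\M$ via the spatial decomposition of Lemma \ref{2decompose} together with the identification $\cM=\bigoplus_{\alpha}\cM_{\alpha}$ with componentwise operations and componentwise (i.e.\ product-topology) SRT/SET convergence, exactly as in Lemma \ref{3key} and Lemma \ref{6UXU^*}. Your "product of complete topological *-algebras" phrasing is just a compact restatement of the paper's Steps 4--8 of Lemma \ref{3key}, so no genuine gap remains.
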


Throughout this section, we prove the above theorem.

\subsection{$\tau$-Measure Topology}
We first prove Theorem \ref{3cta} in a countably decomposable von Neumann algebra case.
In this case, we can use the nonmmutative integration theory thanks to a faithful normal tracial state.
We shall introduce the $\tau$-measure topology.
Let $\M$ be a countably decomposable finite von Neumann algebra acting on a Hilbert space $\Hs$. 
Fix a faithful normal tracial state $\tau$ on $\M$.
The {\it $\tau$-measure topology} (MT for short) on $\cM$ is the linear topology whose fundamental system of neighborhoods at 0 is
given by
\begin{align*}
N(\eps,\delta) := \left\{A \in \cM \ ; \begin{array}{l} \ {\rm{there \ exists \ a \ projection}} \ p\in\M \\
\ {\rm{such \ that}} \ \|Ap\|<\eps, \ \tau(p^{\perp})<\delta
\end{array} \right\},
\end{align*}
where $\eps$ and $\delta$ run over all strictly positive real numbers.
It is known that $\cM$ is a complete topological *-algebra with respect to this topology \cite{Nelson}.
We denote the system of open sets with respect to the $\tau$-measure topology by $\mathcal{O}_{\tau}$.
Note that the $\tau$-measure topology satisfies the first countability axiom.

\begin{rem}
In this context, the operators in $\cM$ are sometimes called {\it $\tau$-measurable operators} \cite{FackKosaki}.
\end{rem}

Thus there are two topologies on $\cM$, the strong resolvent topology 
and the $\tau$-measure topology.
It seems that these two topologies are quite different.
However, in fact, they coincide on $\cM$, i.e.,

\begin{lem}\label{3srt=mt}
Let $\M$ be a countably decomposable finite von Neumann algebra acting on a Hilbert space $\Hs$.
Then the strong resolvent topology and the $\tau$-measure topology coincide on $\cM$.
In particular, $\cM$ forms a complete topological *-algebra 
with respect to the strong resolvent topology.
Moreover the $\tau$-measure topology is independent of 
the choice of a faithful normal tracial state $\tau$.
\end{lem}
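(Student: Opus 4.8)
The plan is to prove the single assertion that the strong resolvent topology and the $\tau$-measure topology coincide on $\cM$; everything else then follows at once. Once the coincidence is known, the completeness of $(\cM,\mathrm{SRT})$ and its $*$-algebra structure are inherited from the corresponding statement for $(\cM,\mathrm{MT})$ \cite{Nelson}, and since the strong resolvent topology makes no reference to $\tau$, neither can the $\tau$-measure topology depend on the choice of faithful normal trace. Because $\M$ is countably decomposable, $(\cM,\mathrm{SRT})$ is metrizable (Lemma \ref{Ametrizable}) and $(\cM,\mathrm{MT})$ is first countable by construction, so it suffices to compare convergent \emph{sequences}. Using Lemma \ref{3subset} one checks that $A\mapsto\re{A}$ and $A\mapsto\im{A}$ are continuous for each of the two topologies — for MT because they are built from the (continuous) algebra operations, for SRT by the very definition of the latter — so in both implications we may assume all operators self-adjoint. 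For self-adjoint operators, SRT-convergence $A_n\to A$ is precisely the classical strong resolvent convergence $(A_n-i)^{-1}\to(A-i)^{-1}$ in SOT, and on the closed unit ball $\M_1$ the measure topology coincides with the strong operator topology (classical for finite von Neumann algebras, parallel to Lemma \ref{3mt=sot}); these two facts will be used repeatedly.

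For $\mathrm{MT}\Rightarrow\mathrm{SRT}$: if self-adjoint $A_n\to A$ in MT, apply the function $z\mapsto(z-i)^{-1}\in C_0(\mathbb{R})$, for which the continuous functional calculus is MT-continuous \cite{FackKosaki}; this gives $(A_n-i)^{-1}\to(A-i)^{-1}$ in MT, and since all these operators lie in $\M_1$ the convergence is strong, i.e. $A_n\to A$ in SRT.

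For $\mathrm{SRT}\Rightarrow\mathrm{MT}$: if self-adjoint $A_n\to A$ in SRT, then $f(A_n)\to f(A)$ strongly, hence in MT, for every bounded continuous $f$ on $\mathbb{R}$ \cite{Reed-Simon}. One approximates $A_n$ in MT by the bounded truncations $g_\lambda(A_n)$, $g_\lambda(t):=t(1+\lambda t^2)^{-1}$, and splits
\begin{align*}
A_n-A=\bigl(A_n-g_\lambda(A_n)\bigr)+\bigl(g_\lambda(A_n)-g_\lambda(A)\bigr)+\bigl(g_\lambda(A)-A\bigr).
\end{align*}
The middle term tends to $0$ in MT for each fixed $\lambda$; since $|t-g_\lambda(t)|\le\lambda|t|^{3}$ and $\tau$ is a finite trace, the last term tends to $0$ in MT as $\lambda\downarrow0$ (truncate at a spectral projection $\chi_{[-M,M]}(A)$ of small co-trace, then take $\lambda<\eps M^{-3}$). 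The real point is the first term: the same estimate controls it on $\chi_{[-M,M]}(A_n)\Hs$, but one needs the \emph{uniform tightness}
\begin{align*}
\sup_n\tau\bigl(\chi_{\{|t|>M\}}(A_n)\bigr)\longrightarrow 0\qquad(M\to\infty),
\end{align*}
and this is the step I expect to be the main obstacle. I would obtain it as follows: choose $h_M\in C_b(\mathbb{R})$ with $\chi_{\{|t|>2M\}}\le h_M\le\chi_{\{|t|>M\}}$; then $h_M(A_n)\to h_M(A)$ strongly, and since $\tau$ is $\sigma$-weakly continuous it is weak-operator continuous on bounded subsets of $\M$, so $\tau(h_M(A_n))\to\tau(h_M(A))\le\tau(\chi_{\{|t|>M\}}(A))$, which tends to $0$ as $M\to\infty$ by normality of $\tau$. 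This handles all but finitely many indices, and the remaining ones are disposed of by pointwise tightness (finiteness of $\tau$ again). Feeding the uniform tightness back into the displayed decomposition, together with $N(\eps,\delta)+N(\eps,\delta)\subset N(2\eps,2\delta)$, gives $A_n\to A$ in MT. Thus the only genuinely non-formal input is the extraction, from strong resolvent convergence, of a tightness estimate uniform in $n$, and it is here that normality of $\tau$ is essential.
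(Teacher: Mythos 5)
Your proposal is correct, and in both directions it takes a route that differs in substance from the paper's. For $\mathrm{MT}\Rightarrow\mathrm{SRT}$ the paper does not use continuity of the functional calculus in measure: it extracts from an MT-convergent sequence a subsequence converging \emph{almost everywhere} on a completely dense subspace (Lemma \ref{3subseq}), feeds this into Lemma \ref{3almostsrt} via Lemma \ref{AcoreSRT} and Lemma \ref{2core of A}, and concludes with the subsequence principle (Lemma \ref{3metricconv}); your argument via the resolvent identity plus the fact that MT and SOT agree on norm-bounded sets is shorter and avoids the a.e.-convergence machinery altogether, though you should replace the loose appeal to \cite{FackKosaki} by the one-line proof: in $\cM$ one has $(A_n-i)^{-1}-(A-i)^{-1}=\overline{(A_n-i)^{-1}\,\overline{(A-A_n)}\,(A-i)^{-1}}$, and multiplying an MT-null sequence on either side by operators of norm at most $1$ keeps it MT-null (for right multiplication use MT-continuity of the involution). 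For $\mathrm{SRT}\Rightarrow\mathrm{MT}$ the paper truncates with a piecewise-linear ``tent'' function $\phi$ at a level $\Lambda$ chosen \emph{off the point spectrum} of $|A|$ so that $E_n([0,\Lambda))\to E([0,\Lambda))$ strongly (Lemma \ref{Aspec}), and then works with the meet of three spectral projections $E_n([0,\Lambda))\wedge E([0,\Lambda))\wedge F_n([0,\eps))$; your smooth truncation $g_\lambda$ together with the continuous bump $h_M$ dominating the tail indicator extracts the same uniform tightness from the same two ingredients (strong convergence of bounded continuous functions of $A_n$, Lemma \ref{Abf}, plus normality of $\tau$), but it isolates that estimate explicitly and dispenses with the point-spectrum bookkeeping and with Lemma \ref{Aspec}, at the modest cost of the elementary bound $|t-g_\lambda(t)|\le\lambda|t|^3$. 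The reduction to self-adjoint operators, the use of first countability/metrizability to argue with sequences, and the derivation of completeness and $\tau$-independence from \cite{Nelson} are as in the paper.
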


This lemma is the first step to our goal.

\subsection{Almost Everywhere Convergence}

To prove Lemma \ref{3srt=mt}, we define almost everywhere convergence.
Let $\M$ be a countably decomposable finite von Neumann algebra on a Hilbert space $\Hs$.

\begin{dfn}\label{3cae}
A sequence $\{A_n\}_{n=1}^{\infty} \subset \cM$ {\it converges almost everywhere} 
(with respect to $\M$) to $A\in\cM$ if
there exists a completely dense subspace $\mathcal{D}$ such that 
\begin{list}{}{}
\item[(i)] $\mathcal{D} \subset \bigcap_{n=1}^{\infty}\dom{A_n}\cap\dom{A}$,
\item[(ii)] $A_n\xi$ converges to $A\xi$ for each $\xi \in \mathcal{D}$.
\end{list}
\end{dfn}

We shall investigate the relations between the almost everywhere convergence 
and the other topologies.

\begin{lem}\label{3subseq}
Let $\{A_n\}_{n=1}^{\infty} \subset \cM$ be a sequence, $A\in\cM$.
Suppose $A_n$ converges to $A$ in the $\tau$-measure topology, 
then there exists a subsequence $\{A_{n_k}\}_{k=1}^{\infty}$ of $\{A_n\}_{n=1}^{\infty}$ such that 
$A_{n_k}$ converges almost everywhere to $A$.
\end{lem}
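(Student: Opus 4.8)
The plan is to mimic the classical measure-theoretic argument that convergence in measure implies almost-everywhere convergence along a subsequence, transported to the noncommutative setting via the $\tau$-measure topology and the notion of complete density. Since the $\tau$-measure topology satisfies the first countability axiom and its fundamental system of neighborhoods at $0$ is given by the sets $N(\eps,\delta)$, convergence of $A_n$ to $A$ means: for every $k\in\mathbb{N}$ there is $N_k$ such that $A_n-A\in N(1/2^k,1/2^k)$ for all $n\ge N_k$. (Here $A_n-A$ is understood as $\overline{A_n+(-A)}\in\cM$, which is legitimate by Theorem \ref{2eta_*alg}.) Passing to a subsequence, I may therefore assume outright that for each $k$ there is a projection $p_k\in P(\M)$ with $\|(A_{n_k}-A)p_k\|<1/2^k$ and $\tau(p_k^\perp)<1/2^k$; relabel $A_{n_k}$ as $A_k$.

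First I would build the candidate completely dense subspace. Set $q_m:=\bigwedge_{k=m}^{\infty}p_k\in P(\M)$. As in the proof of Lemma \ref{2c-dense for sigma finite case}, $\{q_m\}_m$ is increasing, its strong limit $q$ satisfies
\begin{align*}
\tau(q^\perp)=\lim_{m\to\infty}\tau(q_m^\perp)=\lim_{m\to\infty}\tau\Big(\bigvee_{k=m}^\infty p_k^\perp\Big)\le\lim_{m\to\infty}\sum_{k=m}^\infty\frac{1}{2^k}=0,
\end{align*}
so $q=1$, and hence the subspace $\mathcal{D}_0:=\bigcup_{m=1}^\infty\ran{q_m}$ is completely dense for $\M$ by condition (2) of Lemma \ref{2c-dense for sigma finite case}. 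Now intersect with the common domain: by Proposition \ref{2M-vN1}, $\dom{A_k}$ is completely dense and also $\{\xi\in\dom{A_k}:A_k\xi\in\mathcal{D}_0\}$ is completely dense for each $k$, and likewise for $A$; applying Proposition \ref{2countable intersection of c-dense} to this countable family together with $\mathcal{D}_0$ produces a completely dense subspace $\mathcal{D}\subset\mathcal{D}_0\cap\dom{A}\cap\bigcap_k\dom{A_k}$ on which every $A_k$ and $A$ are defined. This establishes property (i) of Definition \ref{3cae}.

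For property (ii), fix $\xi\in\mathcal{D}$. Then $\xi\in\ran{q_{m_0}}$ for some $m_0$, so for every $m\ge m_0$ and every $k\ge m$ we have $q_m\le p_k$, whence $p_k\xi=\xi$ and therefore
\begin{align*}
\|A_k\xi-A\xi\|=\|(A_k-A)\xi\|=\|(A_k-A)p_k\xi\|\le\|(A_k-A)p_k\|\,\|\xi\|<\frac{\|\xi\|}{2^k}\longrightarrow 0
\end{align*}
as $k\to\infty$. Here I am using that $(A_k-A)p_k$ is a bounded operator on all of $\Hs$ (its norm is $<1/2^k$) whose restriction to $\dom{A_k}\cap\dom{A}$ agrees with $(A_k-A)$ applied after multiplication by $p_k$; the one subtle point, which I would spell out, is that $\overline{A_k+(-A)}$ and the naive sum $A_k-A$ agree on $\mathcal{D}$ because $\mathcal{D}$ is a completely dense subspace contained in the domain of both operators, so by Lemma \ref{2core of A} it is a core and the values are consistent. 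Thus $A_k\xi\to A\xi$ for every $\xi\in\mathcal{D}$, which is (ii), and $\{A_{n_k}\}$ converges almost everywhere to $A$.

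The main obstacle I anticipate is purely bookkeeping rather than conceptual: one must be careful that the algebraic manipulations with unbounded operators — in particular identifying $(A_{n_k}-A)p_k$ as genuinely bounded and handling the discrepancy between $\overline{A_n+(-A)}$ and the pointwise difference on the chosen core — are justified, and that the subsequence extracted from first countability is compatible with the diagonal intersection of domains. All the heavy lifting (countable intersections of completely dense subspaces staying completely dense, domains of affiliated operators being completely dense, the $\wedge$-of-projections trace estimate) is already available from Section 2, so the proof is essentially an assembly of those tools.
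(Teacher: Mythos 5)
Your proof is correct and follows essentially the same route as the paper's: extract the subsequence with $\|\overline{(A_{n_k}-A)}p_k\|<2^{-k}$, $\tau(p_k^{\perp})<2^{-k}$, form the increasing projections $\bigwedge_{k\ge m}p_k$ to get a completely dense $\mathcal{D}_0$, intersect with the relevant domains via Propositions \ref{2M-vN1} and \ref{2countable intersection of c-dense}, and conclude with the norm estimate on vectors eventually fixed by $p_k$. The only (harmless) difference is that the paper intersects with the domains of all non-commutative polynomials in $\{A_{n_k},A,p_k\}$, whereas you intersect only with $\dom{A}\cap\bigcap_k\dom{A_{n_k}}$, which suffices for this lemma.
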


\begin{proof}
For all $j \in \mathbb{N}$, we can take $n_j \in \mathbb{N}$ and $p_j \in P(\M)$ which satisfy 
the following conditions:
\begin{equation*}
\|\overline{(A_{n_{j}}-A)}p_j\| < 1/2^j, \ \ \ \ \tau(p_j^{\perp})<1/2^j, \ \ \ \ n_j < n_{j+1}. 
\end{equation*} 
Put $p:=\bigvee_{l=1}^{\infty} \bigwedge_{k=l}^{\infty} p_k \in P(\M)$, 
then $\ran{p} = \overline{\bigcup_{l=1}^{\infty} \bigcap_{k=l}^{\infty} \ran{p_k}}$.
On the other hand, 
\begin{align*}
\tau(p^{\perp}) &= \lim_{l \rightarrow \infty} 
\tau\left(\bigvee_{k=l}^{\infty}p_k^{\perp}\right)
\leq \lim_{l \rightarrow \infty} \sum_{k=l}^{\infty}\tau(p_k^{\perp}) \\
&\leq \lim_{l \rightarrow \infty} \sum_{k=l}^{\infty} \frac{1}{2^k} = 0.
\end{align*}
Therefore, $\Hs = \ran{p} = \overline{\bigcup_{l=1}^{\infty} \bigcap_{k=l}^{\infty} \ran{p_k}}$.
This implies
\begin{equation*}
\mathcal{D}_0 := \bigcup_{l=1}^{\infty} \bigcap_{k=l}^{\infty} \ran{p_k}
\end{equation*}
is completely dense.
Let $\mathcal{D}_1$ be the intersection of the domains of all non-commutative polynomials of operators $\{A_{n_k},\ A,\ p_k\}_{k=1}^{\infty}$, where we do not take closure for each non-commutative polynomial of operators.
Then $\mathcal{D}_1$ is also completely dense and so is 
$\mathcal{D}:= \mathcal{D}_0 \cap \mathcal{D}_1$.
Take $\xi \in \mathcal{D}$, then there exists $k_0 \in \mathbb{N}$ such that $\xi \in \bigcap_{k=k_0}^{\infty}\ran{p_k}$.
Consequently, for all $k\geq k_0$, we get
\begin{equation*}
\xi = p_k \xi, \ \ \ \ p_k\xi \in \dom{A}\cap\dom{A_k}, \ \ \ \ \xi \in \dom{A}\cap\dom{A_k},
\end{equation*}
and 

\begin{align*}
\|\overline{(A_{n_{k}}-A)}\xi\| &= \|\overline{(A_{n_{k}}-A)}p_j\xi\| \\
&\leq \|\overline{(A_{n_{j}}-A)}p_j\| \cdot \|\xi\| \\
&\leq \frac{1}{2^k} \cdot \|\xi\| \longrightarrow 0.
\end{align*}
Thus $A_{n_k}$ converges almost everywhere to $A$.
\end{proof}

\begin{lem}\label{3almostsrt}
Let $\{A_n\}_{n=1}^{\infty}$ be a sequence in $\cM$ converging almost everywhere to $A\in\cM$.
Suppose $\{{A_n}^{*}\}_{n=1}^{\infty}$ also converges almost everywhere to $A^{*}$, 
then $\{A_n\}_{n=1}^{\infty}$ converges to $A$ in the strong resolvent topology.
\end{lem}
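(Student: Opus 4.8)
The plan is to reduce the convergence of $A_n \to A$ in the strong resolvent topology to the convergence of the resolvents $\{\re{A_n}-i\}^{-1} \to \{\re{A}-i\}^{-1}$ and $\{\im{A_n}-i\}^{-1} \to \{\im{A}-i\}^{-1}$ in the strong operator topology. Since $\re{A} = \frac12\overline{A+A^*}$ and $\im{A} = \frac{1}{2i}\overline{A-A^*}$, and since by hypothesis both $A_n \to A$ and $A_n^* \to A^*$ almost everywhere (on a common completely dense domain, after intersecting the two witnessing subspaces using Proposition \ref{2countable intersection of c-dense}), the sequences $\re{A_n}$ and $\im{A_n}$ converge almost everywhere to $\re{A}$ and $\im{A}$ respectively on a completely dense subspace $\mathcal{D}$. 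So it suffices to prove the following self-adjoint statement: if $\{S_n\}$ is a sequence of self-adjoint operators in $\cM$ converging almost everywhere to a self-adjoint $S \in \cM$ on a completely dense subspace $\mathcal{D}$, then $\{S_n-i\}^{-1} \to \{S-i\}^{-1}$ strongly.

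To prove this self-adjoint statement I would argue as follows. Fix $\xi \in \Hs$ and $\varepsilon > 0$. The strategy is the standard resolvent trick: for $\eta = \{S-i\}^{-1}\zeta$ with $\zeta$ ranging over a completely dense subspace, one has $\eta \in \dom{S}$ and
\begin{equation*}
\{S_n-i\}^{-1}\zeta - \{S-i\}^{-1}\zeta = \{S_n-i\}^{-1}\bigl((S-i)\eta - (S_n-i)\eta\bigr) = \{S_n-i\}^{-1}(S-S_n)\eta.
\end{equation*}
Since $\|\{S_n-i\}^{-1}\| \le 1$, the right-hand side is bounded by $\|(S-S_n)\eta\|$. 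The subtlety is that we need $\eta$ in the completely dense subspace $\mathcal{D}$ on which almost-everywhere convergence holds, and we need such $\eta$'s to be dense in $\Hs$. For this I would use Lemma \ref{2core of A} together with Proposition \ref{2M-vN1}: the subspace $\{\zeta \in \Hs : \{S-i\}^{-1}\zeta \in \mathcal{D}\} = (S-i)(\mathcal{D} \cap \dom S)$ is completely dense because $\mathcal{D}$ is completely dense and $\{S-i\}^{-1} \in \cM$ (apply Proposition \ref{2M-vN1} to the operator $\{S-i\}^{-1}$ and the completely dense subspace $\mathcal{D}$). Hence the set of vectors $\zeta$ for which the bound $\|\{S_n-i\}^{-1}\zeta - \{S-i\}^{-1}\zeta\| \le \|(S-S_n)\eta\| \to 0$ applies is dense in $\Hs$, and combined with the uniform bound $\|\{S_n-i\}^{-1} - \{S-i\}^{-1}\| \le 2$ an $\varepsilon/3$-argument extends strong convergence to all of $\Hs$.

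The main obstacle is exactly the domain bookkeeping in the previous paragraph: making sure that one can choose $\eta$ simultaneously in $\dom S$ and in the completely dense domain $\mathcal{D}$ on which $S_n\eta \to S\eta$, while keeping $(S-i)\eta$ ranging over a dense set. This is where the Murray--von Neumann machinery on completely dense subspaces (Proposition \ref{2M-vN1} and the fact that $\cM$ is a $*$-algebra, Theorem \ref{2eta_*alg}) is essential; without finiteness of $\M$ the relevant domains could fail to be dense. Everything else — passing from $A_n, A_n^*$ to $\re{A_n}, \im{A_n}$, the resolvent identity, and the $\varepsilon/3$ density argument — is routine. I would also remark that one does not even need $S_n, S$ to be self-adjoint a priori for the resolvent identity step; self-adjointness (guaranteed by Proposition \ref{2No-closed-extension}) is what makes $\{S_n-i\}^{-1}$ a genuine bounded everywhere-defined contraction, which is used twice.
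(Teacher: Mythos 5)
Your proposal is correct. The first step — passing from a.e.\ convergence of $A_n$ and $A_n^*$ (on the intersection of the two witnessing completely dense subspaces, via Proposition \ref{2countable intersection of c-dense}) to a.e.\ convergence of $\re{A_n}$ and $\im{A_n}$ — is exactly what the paper does. Where you diverge is in the self-adjoint step: the paper finishes in one line by invoking Lemma \ref{2core of A} (a completely dense subspace contained in $\dom{S}$ is a core of $S$) together with the standard criterion Lemma \ref{AcoreSRT} (strong convergence on a common core implies strong resolvent convergence), whereas you reprove that criterion by hand: the resolvent identity plus the uniform bound on the resolvents, with the density of the set $\left\{\zeta\ ;\ \{S-i\}^{-1}\zeta\in\mathcal{D}\right\}=(S-i)\mathcal{D}$ extracted from Proposition \ref{2M-vN1} applied to $\{S-i\}^{-1}\in\M\subset\cM$ rather than from the core property. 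The two density arguments are interchangeable here (indeed your unfolded argument is essentially the proof of Lemma \ref{AcoreSRT}), so your route buys self-containedness at the cost of length, while the paper's buys brevity by quoting the appendix lemma; both uses of the Murray--von Neumann complete-density machinery are legitimate and rest on the finiteness of $\M$ in the same way.
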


\begin{proof}
It is easy to check that $\re{A_n}$ and $\im{A_n}$ converge almost every where to $\re{A}$ and $\im{A}$, respectively.
Applying Lemma \ref{AcoreSRT} and Lemma \ref{2core of A} to $\re{A_n}$ and $\im{A_n}$, 
we see that $\re{A_n}$ and $\im{A_n}$ converge to $\re{A}$ and $\im{A}$ in the strong resolvent topology, respectively.
This implies $\{A_n\}_{n=1}^{\infty}$ converges to $A$ in the strong resolvent topology.
\end{proof}

The following is well-known:

\begin{lem}\label{3metricconv}
Let $X$ be a metric space,  $\{x_n\}_{n=1}^{\infty} \subset X$ be a sequence, $x\in X$.
Suppose for each subsequence $\{x_{n_k}\}_{k=1}^{\infty}$ of $\{x_n\}_{n=1}^{\infty}$ has 
a subsequence $\{x_{n_{k_l}}\}_{l=1}^{\infty}$ of $\{x_{n_k}\}_{k=1}^{\infty}$ which converges to $x$, 
then $x_n$ converges to $x$.
\end{lem}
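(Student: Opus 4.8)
The plan is to prove the contrapositive: assuming $x_n \not\to x$, I will produce a subsequence none of whose subsequences converges to $x$, which contradicts the hypothesis. So suppose $x_n$ does not converge to $x$. Unwinding the $\eps$-$N$ definition of convergence in the metric space $X$, there is some $\eps > 0$ for which no tail of the sequence is contained in the ball $\{y \in X \;;\; d(y,x) < \eps\}$; equivalently, the index set $I := \{ n \in \mathbb{N} \;;\; d(x_n, x) \geq \eps \}$ is infinite.

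Enumerating $I$ in increasing order as $n_1 < n_2 < \cdots$ yields a subsequence $\{x_{n_k}\}_{k=1}^{\infty}$ with $d(x_{n_k}, x) \geq \eps$ for every $k$. Now I would observe that any subsequence $\{x_{n_{k_l}}\}_{l=1}^{\infty}$ of this subsequence still satisfies $d(x_{n_{k_l}}, x) \geq \eps$ for all $l$, and therefore cannot converge to $x$. This contradicts the standing assumption that every subsequence of $\{x_n\}$ admits a further subsequence converging to $x$. Hence $x_n \to x$, as desired.

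I do not anticipate any genuine obstacle here: the argument is elementary and uses the metric structure only through the sequential ($\eps$-$N$) characterization of convergence, together with the fact that the failure of convergence produces infinitely many ``bad'' indices $n$ with $d(x_n,x)\geq\eps$ — which is exactly what is needed to extract the offending subsequence. The same proof goes through verbatim in any first-countable space, which is the generality relevant when the lemma is later applied to $(\cM, \mathrm{SRT})$ in combination with Lemmata \ref{3subseq} and \ref{3almostsrt}.
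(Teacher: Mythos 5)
Your proof is correct: the paper states this lemma as ``well-known'' and gives no proof, and your contrapositive/contradiction argument (extract the subsequence of indices with $d(x_n,x)\geq\eps$, which admits no further subsequence converging to $x$) is exactly the standard argument it implicitly relies on. As a minor aside, your closing remark is unnecessarily cautious: the same argument works in an arbitrary topological space, replacing the $\eps$-ball by a neighborhood $U$ of $x$ avoided by infinitely many terms, so no first-countability is needed for the lemma itself (though metrizability of $(\cM,\mathrm{SRT})$ is of course what lets the paper apply it there).
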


\subsection{Proof of Lemma \ref{3srt=mt}}

We shall start to prove Lemma \ref{3srt=mt}.
We prove that the system of open sets of the strong resolvent topology $\mathcal{O}_{{\rm{SRT}}}$ 
and the system of open sets of the $\tau$-measure topology $\mathcal{O}_{\tau}$ coincide on $\cM$.
Let $\{A_n\}_{n=1}^{\infty} \subset \cM$ be a sequence, $A\in\cM$. \\
\\
$\mathcal{O}_{{\rm{SRT}}} \subset \mathcal{O}_{\tau}$:\ \ Suppose that 
$\{A_n\}_{n=1}^{\infty}$ converges to $A$ in the $\tau$-measure topology.
Let $\{A_{n_k}\}_{k=1}^{\infty}$ be an arbitrary subsequence of $\{A_n\}_{n=1}^{\infty}$.
By Proposition \ref{3subseq}, there exists a subsequence $\{A_{n_{k_l}}\}_{l=1}^{\infty}$ 
of $\{A_{n_k}\}_{k=1}^{\infty}$ such that 
$\{A_{n_{k_l}}\}_{l=1}^{\infty}$ and $\{{A_{n_{k_l}}}^{*}\}_{l=1}^{\infty}$ 
converge almost everywhere to $A$ and $A^{*}$, respectively.
Applying Lemma \ref{3almostsrt}, $A_{n_{k_l}}$ converges to $A$ in the strong resolvent topology.
This implies $A_n$ converges to $A$ in the strong resolvent topology, by Lemma \ref{3metricconv}.
Thus we get $\mathcal{O}_{{\rm{SRT}}} \subset \mathcal{O}_{\tau}$. \\
\\
$\mathcal{O}_{\tau} \subset \mathcal{O}_{{\rm{SRT}}}$:\ \ Suppose that 
$\{A_n\}_{n=1}^{\infty}$ converges to $A$ with respect to the strong resolvent topology.
First we consider the case that $A_n$ and $A$ are self-adjoint.
Let $|A_n| =: \int_0^{\infty}\lambda dE_n(\lambda)$ and 
$|A| =: \int_0^{\infty}\lambda dE(\lambda)$ be spectral resolutions of $|A_n|$ and $|A|$, respectively.
Fix an arbitrary positive number $\eps >0$.
It is clear that $\slim{\lambda \rightarrow \infty} E([0, \lambda))=1$,
so there exists a positive number $\Lambda>0$ such that $\tau(E([0, \Lambda))^{\perp})<\eps$, 
where we can take $\Lambda>0$ which is not a point spectrum of $|A|$.
Indeed, self-adjoint operators have at most countable point spectra, as $\M$ is countably decomposable.
Next we define a continuous function $\phi$ on $\mathbb{R}$ as follows:
\[
\phi(\lambda) := \left\{
\begin{array}{ll}
0 & \mbox{ if \ \ $\lambda \leq -2\Lambda$, } \\
-\lambda-2\Lambda & \mbox{ if\ \  $-2\Lambda \leq \lambda \leq -\Lambda$, } \\
\lambda & \mbox{ if\ \  $-\Lambda \leq \lambda \leq \Lambda$, } \\
-\lambda+2\Lambda & \mbox{ if \ \ $\Lambda \leq \lambda \leq 2\Lambda$, } \\
0 & \mbox{ if \ \ $2\Lambda \leq \lambda$. }
\end{array}
\right.
\]
Let $|\phi(A_n)-\phi(A)|=:\int_{0}^{\infty}\lambda dF_n(\lambda)$ be a spectral resolution of $|\phi(A_n)-\phi(A)|$,
$e$ be a spectral measure of $A$.
Note that $E([0, \Lambda))=e((-\Lambda, \Lambda))$.
For each $\xi\in\Hs$, 
\begin{align*}
\nai{\xi}{AE([0 ,\Lambda))\xi} &= \int_{(-\Lambda, \Lambda)}\lambda d\nai{\xi}{e(\lambda)\xi} \\
&= \int_{(-\Lambda, \Lambda)}\phi(\lambda) d\nai{\xi}{e(\lambda)\xi} \\
&= \int_{\mathbb{R}}\phi(\lambda) d\nai{\xi}{e(\lambda)E([0, \Lambda))\xi} \\
&= \nai{\xi}{\phi(A)E([0 ,\Lambda))\xi}.
\end{align*}
Thus we have $AE([0 ,\Lambda))\xi = \phi(A)E([0 ,\Lambda))\xi$.
Similar to the above argument, we get $A_nE_n([0 ,\Lambda))\xi = \phi(A_n)E_n([0 ,\Lambda))\xi$.
Therefore, for all $\xi\in\Hs$, we see that
\begin{align*}
&\|(A_n-A)\{E_n([0, \Lambda))\wedge E([0, \Lambda))\wedge F_n([0, \eps))\}\xi\|^2 \\
&= \|\{\phi(A_n)-\phi(A)\}\{E_n([0, \Lambda))\wedge E([0, \Lambda))\wedge F_n([0, \eps))\}\xi\|^2 \\
&= \||\phi(A_n)-\phi(A)|\{E_n([0, \Lambda))\wedge E([0, \Lambda))\wedge F_n([0, \eps))\}\xi\|^2 \\
&= \int_{[0, \eps)}\lambda^2 d\|F_n(\lambda)\{E_n([0, \Lambda))\wedge E([0, \Lambda))\wedge F_n([0, \eps))\}\xi\|^2 \\
&\leq \eps^2\|\xi\|^2.
\end{align*}
This implies 
\begin{equation*}
\|\overline{(A_n-A)}\{E_n([0, \Lambda))\wedge E([0, \Lambda))\wedge F_n([0, \eps))\}\| \leq \eps.
\end{equation*}
On the other hand, 
\begin{align*}
&\tau(\{E_n([0, \Lambda))\wedge E([0, \Lambda))\wedge F_n([0, \eps))\}^{\perp}) \\
&\leq \tau(E_n([0, \Lambda))^{\perp}) + \tau(E([0, \Lambda))^{\perp}) + \tau(F_n([0, \eps))^{\perp}) \\
&\leq \tau(E_n([0, \Lambda))^{\perp}) + \eps + \tau(F_n([0, \eps))^{\perp}).
\end{align*}
By Lemma \ref{Abf}, $|A_n|$ converges to $|A|$ in the strong resolvent topology, 
as the function 
\begin{equation*}
\mathbb{R} \ni \lambda \longmapsto (|\lambda|-i)^{-1} \in \mathbb{C} 
\end{equation*}
is bounded continuous.
By Lemma \ref{Aspec}, 
\begin{align*}
E_n([0, \Lambda)) &= E_n((-1, \Lambda)) \\
&\xrightarrow{SOT} E((-1, \Lambda)) = E([0, \Lambda)).
\end{align*}
Thus for all sufficiently large number $n\in\mathbb{N}$, 
\begin{align*}
\tau(E_n([0, \Lambda))^{\perp}) &= \tau(E([0, \Lambda))^{\perp}) + \tau(E([0, \Lambda))-E_n([0, \Lambda))) \\
&\leq \eps + \eps = 2\eps.
\end{align*}
Furthermore, by Lemma \ref{Abf}, $\phi(A_n)$ converges strongly to $\phi(A)$.
We obtain that for each $\xi\in\Hs$,
\begin{equation*}
\||\phi(A_n)-\phi(A)|\xi\| = \|\{\phi(A_n)-\phi(A)\}\xi\| \longrightarrow 0.
\end{equation*}
Applying Lemma \ref{AcoreSRT} and Lemma \ref{Aspec} to $|\phi(A_n)-\phi(A)|$, we see that
\begin{equation*}
F_n([0, \eps)) = F_n((-1, \eps)) \xrightarrow{SOT} 1.
\end{equation*}
Hence, for all sufficiently large numbers $n\in\mathbb{N}$, $\tau(F_n([0, \eps))^{\perp}) < \eps$.
Thus, for all sufficiently large numbers $n\in\mathbb{N}$, we have
\begin{equation*}
\tau(\{E_n([0, \Lambda))\wedge E([0, \Lambda))\wedge F_n([0, \eps))\}^{\perp}) \leq 4\eps.
\end{equation*} 
From the above argument, we conclude that $A_n$ converges to $A$ in the $\tau$-measure topology.
In a general case, self-adjoint operators $\re{A_n}$ and $\im{A_n}$ converge to $\re{A}$ and $\im{A}$ 
in the strong resolvent topology, respectively.
By the above argument, we see that $\re{A_n}$ and $\im{A_n}$ converge to $\re{A}$ and $\im{A}$ 
in the $\tau$-measure topology, respectively.
Since the addition is continuous with respect to the $\tau$-measure topology, 
$A_n$ converges to $A$ in the $\tau$-measure topology.
This implies $\mathcal{O}_{\tau} \subset \mathcal{O}_{{\rm{SRT}}}$.
Hence the proof of Lemma \ref{3srt=mt} is complete.

\begin{rem}\label{3ref}
We referred to the proof of Theorem 5.5 of the paper \cite{Stinespring} 
to prove the inclusion $\mathcal{O}_{\tau} \subset \mathcal{O}_{{\rm{SRT}}}$.
\end{rem}

\subsection{Direct Sums of Algebras of Unbounded Operators}

To prove Theorem \ref{3cta} in a general case, we show some facts about the direct sums of unbounded operators.
See Appendix A for the definition of the direct sums of unbounded operators.
The next lemma follows immediately from Lemma \ref{3unbdd}.

\begin{lem}\label{3convergence}
Let $\Hs_{\alpha}$ be a Hilbert space, $\Hs$ be the direct sum Hilbert space of $\{\Hs_{\alpha}\}_{{\alpha}}$. 
For each $\alpha$, we consider a net $\{A_{\alpha, \lambda}\}_{\lambda\in\Lambda}$ of self-adjoint operators on $\Hs_{\alpha}$
and self-adjoint operator $A_{\alpha}$ on $\Hs_{\alpha}$.
Set
\begin{equation*}
A_{\lambda} := \oplus_{\alpha}A_{\alpha, \lambda},
\end{equation*}
and
\begin{equation*}
A :=  \oplus_{\alpha}A_{\alpha},
\end{equation*}
on the Hilbert space $\Hs$.
\begin{list}{}{}
\item[(1)] $A_{\lambda}$ converges to $A$ in the strong resolvent topology if and only if
each $\{A_{\alpha, \lambda}\}_{\lambda\in\Lambda}$ converges to $A_{\alpha}$ in the strong resolvent topology.
\item[(2)] $A_{\lambda}$ converges to $A$ in the strong exponential topology if and only if
each $\{A_{\alpha, \lambda}\}_{\lambda\in\Lambda}$ converges to $A_{\alpha}$ in the strong exponential topology.
\end{list}
\end{lem}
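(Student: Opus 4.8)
The plan is to reduce both assertions to one elementary fact about \emph{uniformly bounded} nets of operators on a direct sum Hilbert space, and then invoke Lemma \ref{3unbdd} to identify the resolvents and one-parameter unitary groups of the direct sums $A_{\lambda}$, $A$ with the direct sums of the corresponding operators on the summands.

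First I would isolate the auxiliary observation: if $\{T_{\lambda}\}_{\lambda}$ and $T$ are operators on $\Hs=\bigoplus_{\alpha}\Hs_{\alpha}$ of the form $T_{\lambda}=\oplus_{\alpha}T_{\alpha,\lambda}$, $T=\oplus_{\alpha}T_{\alpha}$ with $\sup_{\alpha,\lambda}\|T_{\alpha,\lambda}\|\le 1$ and $\sup_{\alpha}\|T_{\alpha}\|\le 1$, then $T_{\lambda}\to T$ strongly on $\Hs$ if and only if $T_{\alpha,\lambda}\to T_{\alpha}$ strongly on $\Hs_{\alpha}$ for every $\alpha$. The ``only if'' direction is immediate because each $\Hs_{\alpha}$ reduces every $T_{\lambda}$ and $T$, so $T_{\lambda}\xi_{\alpha}=T_{\alpha,\lambda}\xi_{\alpha}$ for $\xi_{\alpha}\in\Hs_{\alpha}$. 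For ``if'', given $\xi=(\xi_{\alpha})_{\alpha}\in\Hs$ and $\eps>0$, choose a finite index set $F$ with $\sum_{\alpha\notin F}\|\xi_{\alpha}\|^{2}<\eps^{2}$ and estimate
\[
\|(T_{\lambda}-T)\xi\|^{2}=\sum_{\alpha}\|(T_{\alpha,\lambda}-T_{\alpha})\xi_{\alpha}\|^{2}\le\sum_{\alpha\in F}\|(T_{\alpha,\lambda}-T_{\alpha})\xi_{\alpha}\|^{2}+4\eps^{2},
\]
where the tail is controlled via $\|T_{\alpha,\lambda}\|,\|T_{\alpha}\|\le 1$, and the remaining finite sum tends to $0$ along the net. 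The same truncation, but with $\sup_{t\in K}$ taken \emph{before} the split into the $F$-part and the tail (and using that a supremum of a finite sum is at most the sum of the suprema), proves the $t$-dependent variant: if moreover $\|T_{\alpha,\lambda}(t)\|=\|T_{\alpha}(t)\|=1$ for all $t$, then $t\mapsto T_{\lambda}(t)\xi$ converges to $t\mapsto T(t)\xi$ uniformly on any compact $K\subset\mathbb{R}$ for every $\xi$ iff the componentwise convergence is uniform on $K$ for every $\xi_{\alpha}$.

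Next I would apply this. Since the $A_{\alpha,\lambda}$ and $A_{\alpha}$ are self-adjoint, so are $A_{\lambda}$ and $A$ (the direct sum of self-adjoint operators being self-adjoint), hence $A_{\lambda},A\in\rc{\Hs}$ with $\re{A_{\lambda}}=A_{\lambda}$, $\im{A_{\lambda}}=0$, and likewise for $A$. Thus $A_{\lambda}\to A$ in the strong resolvent topology means exactly $\slim{\lambda}(A_{\lambda}-i)^{-1}=(A-i)^{-1}$, while $A_{\lambda}\to A$ in the strong exponential topology means $e^{itA_{\lambda}}\xi\to e^{itA}\xi$ uniformly on compact $t$-intervals for each $\xi$. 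By Lemma \ref{3unbdd}, $(A_{\lambda}-i)^{-1}=\oplus_{\alpha}(A_{\alpha,\lambda}-i)^{-1}$ and $(A-i)^{-1}=\oplus_{\alpha}(A_{\alpha}-i)^{-1}$, and $e^{itA_{\lambda}}=\oplus_{\alpha}e^{itA_{\alpha,\lambda}}$, $e^{itA}=\oplus_{\alpha}e^{itA_{\alpha}}$. Each resolvent $(A_{\alpha,\lambda}-i)^{-1}$ has norm $\le 1$ and each $e^{itA_{\alpha,\lambda}}$ is unitary, so the two cases of the auxiliary observation apply verbatim and yield (1) and (2) respectively.

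I do not anticipate a real obstacle here: the entire content sits in the tail-truncation estimate, which works precisely because resolvents and one-parameter unitary groups are uniformly norm-bounded. The only points needing care are taking the supremum over $t\in K$ before the finite/tail split in part (2), and quoting Lemma \ref{3unbdd} for the decomposition of resolvents and exponentials of a direct sum.
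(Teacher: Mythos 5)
Your proof is correct and follows essentially the same route as the paper: identify $(A_{\lambda}-i)^{-1}$ and $e^{itA_{\lambda}}$ with the direct sums of the componentwise resolvents and exponentials via Lemma \ref{3unbdd}, and then deduce strong (respectively locally uniform) convergence from componentwise convergence using uniform norm bounds together with the density of $\widehat{\bigoplus}_{\alpha}\Hs_{\alpha}$. Your explicit tail-truncation estimate, including taking $\sup_{t\in K}$ before the finite/tail split in part (2), is just the written-out form of the density-plus-uniform-boundedness argument the paper invokes, so there is no substantive difference.
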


\begin{proof}
(1) By Lemma \ref{3unbdd}, we have
\begin{equation*}
(A_{\lambda}-i)^{-1} = \oplus_{\alpha}(A_{\alpha, \lambda}-i)^{-1},\ \ \ \ 
(A-i)^{-1} = \oplus_{\alpha}(A_{\alpha}-i)^{-1}.
\end{equation*}
The necessary condition is trivial.
On the other hand, it is easy to see that $\{(A_{\lambda}-i)^{-1}\}_{\lambda\in\Lambda}$ 
converges to $(A-i)^{-1}$ on $\hat{\oplus}_{\alpha}\Hs_{\alpha}$. 
Since $\widehat{\bigoplus}_{\alpha}\Hs_{\alpha}$ is dense in $\bigoplus_{\alpha}\Hs_{\alpha}$ 
and $\{(A_{\lambda}-i)^{-1}\}_{\lambda\in\Lambda}$ is uniformly bounded, 
the sufficient condition follows. 

(2) Similar to the proof (1).
\end{proof}

The next lemma is the key to prove Theorem \ref{3cta}.

\begin{lem}\label{3key}
Let $\M_{\alpha}$ be a finite von Neumann algebra acting on $\Hs_{\alpha}$, and put 
\begin{equation*}
\M := \bigoplus_{\alpha}^{b}\M_{\alpha}.
\end{equation*}
Then 
\begin{equation*}
\cM = \bigoplus_{\alpha}\cM_{\alpha}
\end{equation*}
holds.
The sum, the scalar multiplication, the multiplication and the involution are given by 
\begin{align*}
\overline{\left(\oplus_{\alpha}A_{\alpha}\right)+\left(\oplus_{\alpha}B_{\alpha}\right)} 
&= \oplus_{\alpha}\left(\overline{A_{\alpha}+B_{\alpha}}\right), \\
\overline{\lambda \left(\oplus_{\alpha}A_{\alpha}\right)} 
&= \oplus_{\alpha}\left(\overline{\lambda A_{\alpha}}\right),\ \ \ \ for\ all\ \lambda \in \mathbb{C}, \\
\overline{\left(\oplus_{\alpha}A_{\alpha}\right)\left(\oplus_{\alpha}B_{\alpha}\right)} 
&= \oplus_{\alpha}\left(\overline{A_{\alpha}B_{\alpha}}\right), \\
\left(\oplus_{\alpha}A_{\alpha}\right)^* &=  \oplus_{\alpha}\left({A_{\alpha}}^*\right).
\end{align*}
In addition, if each $\cM_{\alpha}$ is countably decomposable, 
then $\cM$ is a complete topological *-algebra with respect to the strong resolvent topology,
and the strong resolvent topology coincides with the strong exponential topology on $\cM$.
\end{lem}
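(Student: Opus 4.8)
The statement has two parts: an algebraic identification $\cM = \bigoplus_\alpha \cM_\alpha$ together with the explicit formulas for the operations, and a topological assertion (completeness and the coincidence of SRT with SET) under the extra hypothesis that each $\M_\alpha$ is countably decomposable. I would treat these separately, doing the algebra first and then feeding it, together with the already-established countably decomposable case (Lemma \ref{3srt=mt} and Lemma \ref{3SRT=SET}) and the direct-sum convergence criterion (Lemma \ref{3convergence}), into the topological part.

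\textbf{Step 1: the set-theoretic identification.} First I would show that an operator $A = \oplus_\alpha A_\alpha$ (in the sense of Appendix A) is affiliated with $\M = \bigoplus_\alpha^b \M_\alpha$, densely defined and closed, if and only if each $A_\alpha$ lies in $\cM_\alpha$. The commutant of $\M$ is $\M' = \bigoplus_\alpha^b \M_\alpha'$ (this is standard, or can be quoted from \S2), and a unitary $u \in U(\M')$ decomposes as $u = \oplus_\alpha u_\alpha$ with $u_\alpha \in U(\M_\alpha')$; the relation $uAu^* = A$ then holds iff $u_\alpha A_\alpha u_\alpha^* = A_\alpha$ for every $\alpha$, because the direct sum of operators respects conjugation by a decomposable unitary. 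Closedness of $\oplus_\alpha A_\alpha$ is equivalent to closedness of each $A_\alpha$, and dense definition needs a small argument: $\dom{\oplus_\alpha A_\alpha} \supset \widehat{\bigoplus}_\alpha \dom{A_\alpha}$, which is dense in $\Hs$ as soon as each $\dom{A_\alpha}$ is dense in $\Hs_\alpha$; conversely, if $\oplus_\alpha A_\alpha$ is densely defined then so is each $A_\alpha$ (compressing to $\Hs_\alpha$). This gives the equality of sets. I also note the reverse inclusion uses that every $B \in \cM$ is reduced by each central projection $z_\alpha \in \M$ cutting down to $\Hs_\alpha$ — since $z_\alpha \in \M \subset$ (operators commuting with $\M'$), $B$ commutes with $z_\alpha$, so $B = \oplus_\alpha (B z_\alpha|_{\Hs_\alpha})$ with each summand in $\cM_\alpha$.

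\textbf{Step 2: the algebraic operations.} For the four formulas, the strategy in each case is the same: the right-hand side is visibly a densely defined closed operator affiliated with $\M$ (using Step 1 and that each summand operation lands in $\cM_\alpha$ by Theorem \ref{2eta_*alg} applied to $\M_\alpha$), and it extends the un-closed algebraic operation performed on the common completely dense domain; then invoke Proposition \ref{2No-closed-extension}(2) — no proper closed extensions inside $\cM$ — to conclude equality of the two closures. Concretely, for the sum: on $\widehat{\bigoplus}_\alpha(\dom{A_\alpha}\cap\dom{B_\alpha})$ both $\overline{(\oplus A_\alpha)+(\oplus B_\alpha)}$ and $\oplus_\alpha \overline{A_\alpha+B_\alpha}$ agree with $\xi \mapsto \oplus(A_\alpha+B_\alpha)\xi_\alpha$; both sides lie in $\cM$; so they coincide. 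The product is the only slightly delicate one because one must check the common domain $\widehat{\bigoplus}_\alpha\{\xi_\alpha \in \dom{B_\alpha} : B_\alpha\xi_\alpha \in \dom{A_\alpha}\}$ is completely dense, which follows from Proposition \ref{2M-vN1} applied summand-by-summand together with Lemma \ref{2alg-sum of c-dense spaces}; scalar multiplication and involution are immediate.

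\textbf{Step 3: the topology.} Now assume each $\M_\alpha$ is countably decomposable. By Lemma \ref{3srt=mt} and Lemma \ref{3SRT=SET}, each $(\cM_\alpha, \mathrm{SRT}) = (\cM_\alpha, \mathrm{SET})$ is a complete topological $*$-algebra. For $\cM = \bigoplus_\alpha \cM_\alpha$, Lemma \ref{3convergence} says a net converges in SRT (resp. SET) on $\cM$ iff each component net converges in SRT (resp. SET) on $\cM_\alpha$; hence SRT and SET coincide on $\cM$, and the topology on $\cM$ is exactly the product topology restricted to $\bigoplus_\alpha \cM_\alpha$ under the identification of Step 1 (one should check the neighborhood bases match, which is routine since the defining seminorms / resolvent maps of $\cM$ decompose over $\alpha$ via the block structure of $\{\re{A}-i\}^{-1}$ and $e^{it\re A}$). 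Continuity of the algebraic operations on $\cM$ follows from continuity on each factor together with Step 2's componentwise formulas. For completeness: a Cauchy net in $(\cM,\mathrm{SRT})$ has, by the block decomposition of resolvents, Cauchy components in each $(\cM_\alpha,\mathrm{SRT})$, which converge to some $A_\alpha \in \cM_\alpha$ by the countably decomposable case; then $A := \oplus_\alpha A_\alpha \in \cM$ by Step 1, and Lemma \ref{3convergence} gives convergence of the original net to $A$.

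\textbf{Main obstacle.} I expect the genuinely non-formal point to be the careful handling of domains in Step 1 and Step 2 — specifically verifying that the direct-sum operator $\oplus_\alpha A_\alpha$ is \emph{closed} and \emph{densely defined} with the right domain (the subtlety being that $\dom{\oplus_\alpha A_\alpha}$ is generally larger than the algebraic direct sum of the $\dom{A_\alpha}$, and one wants the completely dense core $\widehat{\bigoplus}_\alpha \mathcal{D}_\alpha$ to sit inside it), and the check that the relevant common domains are completely dense for $\M$ so that Proposition \ref{2No-closed-extension} can be applied to force the two closures to agree. Once the domain bookkeeping is clean, everything else is an assembly of already-proven facts (Theorem \ref{2eta_*alg}, Lemma \ref{3srt=mt}, Lemma \ref{3SRT=SET}, Lemma \ref{3convergence}) applied componentwise.
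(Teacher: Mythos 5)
Your proposal is correct and follows essentially the same route as the paper: the two inclusions via decomposition of unitaries in $U(\M')$ and reduction by the central projections onto the summands, the operation formulas by comparing both sides on the algebraic direct sum of domains (a core, by Lemma \ref{3sum_core}) and invoking Proposition \ref{2No-closed-extension}(2), and the topological part by reducing to the countably decomposable summands through Lemma \ref{3srt=mt}, Lemma \ref{3SRT=SET} and Lemma \ref{3convergence}. The only minor deviations are harmless: for the product formula the core argument already suffices (your appeal to Lemma \ref{2alg-sum of c-dense spaces} would import its countable-decomposability hypothesis, which the algebraic half of the lemma does not assume), and your ``block decomposition of resolvents'' justification that Cauchy nets have Cauchy components is exactly the point the paper spells out in its Steps 5--6 by pulling back SRT-neighborhoods of $0$ from a single summand.
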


\begin{proof}
we shall prove this lemma step by step. \\

{\bf Step 1.} We first show that $\bigoplus_{\alpha}\cM_{\alpha} \subset \cM$.
Indeed let $\oplus_{\alpha}A_{\alpha}\in\bigoplus_{\alpha}\cM_{\alpha}$.
By Lemma \ref{3bdd} and Lemma \ref{3sum_vN}, each unitary operator $u\in U(\M')$ 
can be written as $u=\oplus_{\alpha}u_{\alpha}$, where $u_{\alpha}\in U(\M_{\alpha}')$.
Thus we have
\begin{equation*}
u\left(\oplus_{\alpha}A_{\alpha}\right) = \oplus_{\alpha}\left(u_{\alpha}A_{\alpha}\right) 
\subset \oplus_{\alpha}\left(A_{\alpha}u_{\alpha}\right) 
= \left(\oplus_{\alpha}A_{\alpha}\right)u.
\end{equation*}
This implies $\oplus_{\alpha}A_{\alpha}\in\cM$. \\

{\bf Step 2.} We show that the converse inclusion $\cM \subset \bigoplus_{\alpha}\cM_{\alpha}$.
For each $\beta$, we put
\begin{equation*}
q_{\beta} := \oplus_{\alpha}\left(\delta_{\alpha \beta}1_{\Hs_{\alpha}}\right) \in \M',
\end{equation*}
where $\delta_{\alpha \beta}$ is the Kronecker delta and $1_{\Hs_{\alpha}}$ is the identity operator on $\Hs_{\alpha}$.
From Lemma \ref{3bdd}, $q_{\beta}$ is a projection and 
\begin{equation*}
\ran{q_{\beta}} = \bigoplus_{\alpha}\left(\delta_{\alpha \beta}\Hs_{\alpha}\right) =: \tilde{\Hs}_{\beta}.
\end{equation*}
Let $A\in\cM$ be a self-adjoint operator.
We would like to prove $A\in\bigoplus_{\alpha}\cM_{\alpha}$.
Since $q_{\beta}\in\M'$, we have $q_{\beta}A\subset A q_{\beta}$ for all $\beta$.
This implies that each $\tilde{\Hs}_{\beta}$ reduces $A$.
We denote reduced part of $A$ to $\tilde{\Hs}_{\beta}$ by $A_{\tilde{\Hs}_{\beta}}$.
$A_{\tilde{\Hs}_{\beta}}$ is obviously self-adjoint.
For each $\beta$, we consider natural unitary operator $v_{\beta}:\Hs_{\beta}\longrightarrow\tilde{\Hs}_{\beta}$.
Then the operator $A_{\beta}:=v_{\beta}^*A_{\tilde{\Hs}_{\beta}}v_{\beta}$ is again self-adjoint.
To prove $A=\oplus_{\alpha}A_{\alpha}$, we take an arbitrary $\xi\in\widehat{\bigoplus}_{\alpha}\dom{A_{\alpha}}$.
Since $v_{\alpha}\xi^{(\alpha)}\in \dom{A_{\tilde{\Hs}_{\alpha}}}\subset \dom{A}$, 
we see that 
\begin{equation*}
\xi=\sum_{
\begin{subarray}{c} \alpha \\
{\rm finite\ sum}
\end{subarray}
}v_{\alpha}\xi^{(\alpha)}\in\dom{A}.
\end{equation*}
Therefore we obtain

\begin{align*}
\left(\oplus_{\alpha}A_{\alpha}\right)\xi &= \left\{v_{\alpha}^* Av_{\alpha}\xi^{(\alpha)}\right\}_{\alpha}
= \sum_{
\begin{subarray}{c} \alpha \\
{\rm finite\ sum}
\end{subarray}
}Av_{\alpha}\xi^{(\alpha)} \\
&= A \left(\sum_{
\begin{subarray}{c} \alpha \\
{\rm finite\ sum}
\end{subarray}
}v_{\alpha}\xi^{(\alpha)}\right) 
= A\xi.
\end{align*}
Hence $\left(\oplus_{\alpha}A_{\alpha}\right)|_{\widehat{\bigoplus}_{\alpha}\dom{A_{\alpha}}}\subset A$.
By Lemma \ref{3sum_core}, we have $\oplus_{\alpha}A_{\alpha}\subset A$.
On the other hand, both of $\oplus_{\alpha}A_{\alpha}$ and $A$ are self-adjoint 
and self-adjoint operators have no non-trivial self-adjoint extension.
These facts implies $\oplus_{\alpha}A_{\alpha} = A$.
Next we show that each $A_{\alpha}$ is in $\cM_{\alpha}$. 
Taking arbitrary unitary operators $u_{\alpha}\in U(\M_{\alpha}')$ and putting $u:=\oplus_{\alpha}u_{\alpha}$, 
then by Lemma \ref{3sum_vN}, $u$ is a unitary operator in $\M'$.
Since $A\in\cM$, we see that
\begin{equation*}
\left(\oplus_{\alpha}u_{\alpha}\right)\left(\oplus_{\alpha}A_{\alpha}\right) =uA 
\subset Au = \left(\oplus_{\alpha}A_{\alpha}\right)\left(\oplus_{\alpha}u_{\alpha}\right).
\end{equation*} 
Thus for all $\alpha$, $u_{\alpha}A_{\alpha} \subset A_{\alpha}u_{\alpha}$ holds.
This implies $A_{\alpha}\in\cM_{\alpha}$ for all $\alpha$.
Hence $A\in\bigoplus_{\alpha}\cM_{\alpha}$.

Next we consider an arbitrary element $A\in\cM$.
Putting $B:=\re{A}$, $C:=\im{A}$, then $A=\overline{B+iC}$.
Since $B$ and $C$ are self-adjoint, by the above argument, 
there exist operators $B_{\alpha}\in\cM_{\alpha}$ and $C_{\alpha}\in\cM_{\alpha}$
such that  $B = \oplus_{\alpha}B_{\alpha}$ and $C = \oplus_{\alpha}C_{\alpha}$ holds.
Set $\mathcal{D}:=\widehat{\bigoplus}_{\alpha}\left(\dom{B_{\alpha}}\cap\dom{C_{\alpha}}\right)$.
Since $\dom{B_{\alpha}}\cap\dom{C_{\alpha}}$ is a core of $\overline{B_{\alpha}+iC_{\alpha}}$, 
$\mathcal{D}$ is a core of $\oplus_{\alpha}\left(\overline{B_{\alpha}+iC_{\alpha}}\right)$ by Lemma \ref{3sum_core}.
We observe that 
\begin{equation*}
A = \overline{B+iC} \supset \left(\overline{B+iC}\right)|_{\mathcal{D}} = 
\left\{\oplus_{\alpha}\left(\overline{B_{\alpha}+iC_{\alpha}}\right)\right\}|_{\mathcal{D}},
\end{equation*}
so that $A \supset \oplus_{\alpha}\left(\overline{B_{\alpha}+iC_{\alpha}}\right)$ follows.
Now we use Step 1., then  we see that $\oplus_{\alpha}\left(\overline{B_{\alpha}+iC_{\alpha}}\right)\in\cM$ 
because $\overline{B_{\alpha}+iC_{\alpha}}\in\cM_{\alpha}$ for all $\alpha$.
Since $\M$ is a finite von Neumann algebra, Lemma \ref{2No-closed-extension} means that 
$A = \oplus_{\alpha}\left(\overline{B_{\alpha}+iC_{\alpha}}\right)\in\bigoplus_{\alpha}\cM_{\alpha}$.
Hence $\cM = \bigoplus_{\alpha}\cM_{\alpha}$ follows. \\

{\bf Step 3.} We shall show the formulae with respect to the sum, the scalar product, the product and the involution.
The formulae with respect to the scalar product and the involution are trivial.
We first prove the formula of the sum.
Let $\oplus_{\alpha}A_{\alpha}$, $\oplus_{\alpha}B_{\alpha}\in\bigoplus_{\alpha}\cM_{\alpha}=\cM$.
Put $\mathcal{D}_{+}:=\widehat{\bigoplus}_{\alpha}\left(\dom{A_{\alpha}}\cap\dom{B_{\alpha}}\right)$.
Since $\dom{A_{\alpha}}\cap\dom{B_{\alpha}}$ is a core of $\overline{A_{\alpha}+B_{\alpha}}$, 
$\mathcal{D}_{+}$ is a core of $\oplus_{\alpha}\left(\overline{A_{\alpha}+B_{\alpha}}\right)$.
We observe that 
\begin{equation*}
\overline{\left(\oplus_{\alpha}A_{\alpha}\right)+\left(\oplus_{\alpha}B_{\alpha}\right)} \supset
\left\{\oplus_{\alpha}\left(\overline{A_{\alpha}+B_{\alpha}}\right)\right\}|_{\mathcal{D}_{+}},
\end{equation*}
so that
\begin{equation*}
\overline{\left(\oplus_{\alpha}A_{\alpha}\right)+\left(\oplus_{\alpha}B_{\alpha}\right)} \supset
\oplus_{\alpha}\left(\overline{A_{\alpha}+B_{\alpha}}\right)
\end{equation*}
follows.
Since both sides are elements in $\cM$, we have
\begin{equation*}
\overline{\left(\oplus_{\alpha}A_{\alpha}\right)+\left(\oplus_{\alpha}B_{\alpha}\right)} =
\oplus_{\alpha}\left(\overline{A_{\alpha}+B_{\alpha}}\right).
\end{equation*}

Next, to show the formula of the product, 
we put $\mathcal{D}_{\times}:=\widehat{\bigoplus}_{\alpha}\dom{A_{\alpha}B_{\alpha}}$.
Since $\dom{A_{\alpha}B_{\alpha}}$ is a core of $\overline{A_{\alpha}B_{\alpha}}$, 
$\mathcal{D}_{\times}$ is a core of $\oplus_{\alpha}\left(\overline{A_{\alpha}B_{\alpha}}\right)$.
We observe that 
\begin{equation*}
\overline{\left(\oplus_{\alpha}A_{\alpha}\right)\left(\oplus_{\alpha}B_{\alpha}\right)} \supset
\left\{\oplus_{\alpha}\left(\overline{A_{\alpha}B_{\alpha}}\right)\right\}|_{\mathcal{D}_{+}},
\end{equation*}
so that
\begin{equation*}
\overline{\left(\oplus_{\alpha}A_{\alpha}\right)\left(\oplus_{\alpha}B_{\alpha}\right)} \supset
\oplus_{\alpha}\left(\overline{A_{\alpha}B_{\alpha}}\right)
\end{equation*}
follows.
Since both sides are elements in $\cM$, we have
\begin{equation*}
\overline{\left(\oplus_{\alpha}A_{\alpha}\right)\left(\oplus_{\alpha}B_{\alpha}\right)} =
\oplus_{\alpha}\left(\overline{A_{\alpha}B_{\alpha}}\right).
\end{equation*}
Hence the proof of Step 3. is complete.\\

In the sequel, we assume that each $\cM_{\alpha}$ is countably decomposable. 
Note that, by Lemma \ref{3srt=mt}, each $\cM_{\alpha}$ is a complete topological *-algebra 
with respect to the strong resolvent topology.\\

{\bf Step 4.} Let $\{A_{\lambda}\}_{\lambda\in\Lambda}$ be a net in $\cM$, $A$ be an element of $\cM$.
Corresponding to $\cM = \oplus_{\alpha}\cM_{\alpha}$, we can write them as follows:
\begin{alignat*}{2}
A_{\lambda} &= \oplus_{\alpha}A_{\alpha, \lambda}, & \qquad A_{\alpha, \lambda}&\in\cM, \\
A           &= \oplus_{\alpha}A_{\alpha},          &                 A_{\alpha}&\in\cM. 
\end{alignat*}
We shall show that $A_{\lambda}$ converges to $A$ with respect to the strong resolvent topology if and only if
each $\{A_{\alpha, \lambda}\}_{\lambda\in\Lambda}$ converges to $A_{\alpha}$ with respect to the strong resolvent topology.
From Step 3., we obtain
\begin{alignat*}{2}
\re{A_{\lambda}} &= \oplus_{\alpha}\re{A_{\alpha, \lambda}}, & \qquad \im{A_{\lambda}} &= \oplus_{\alpha}\im{A_{\alpha, \lambda}}, \\
\re{A}           &= \oplus_{\alpha}\re{A_{\alpha}},           &        \im{A}           &= \oplus_{\alpha}\im{A_{\alpha}},
\end{alignat*}
so that, by Lemma \ref{3convergence}, the above equivalence of convergence follows. \\
 
By Step 3. and Step 4., we see that $\cM$ forms a topological *-algebra with respect to the strong resolvent topology.
Next, to prove the completeness, we prepare some facts. \\

{\bf Step 5.} Fix an arbitrary $\alpha_0$ and let $V^{(\alpha_0)}$ be an arbitrary SOT-open set in $\M_{\alpha_0}$.
Set $V^{(\alpha)}:=\M_{\alpha}$ ($\alpha\not= \alpha_0$) and $V:=\bigoplus_{\alpha}^{b}V^{(\alpha)}$.
Then $V$ is a SOT-open set in $\M$.
Indeed, since for any $x=\oplus_{\alpha}x_{\alpha}\in V$, we have $x_{\alpha_0}\in V^{(\alpha_0)}$,
there exists a positive number $\varepsilon>0$ and finitely many vectors 
$\xi_1^{(\alpha_0)}\in\Hs_{\alpha_0}$,$\cdot\cdot\cdot$, $\xi_n^{(\alpha_0)}\in\Hs_{\alpha_0}$
such that
\begin{equation*}
\bigcap_{k=1}^{n}\left\{y\in\M_{\alpha_0}\ ;\ \|(y-x_{\alpha_0})\xi_k^{(\alpha_0)}\|<\eps\right\} \subset V^{(\alpha_0)}.
\end{equation*}
Set $\xi_k^{(\alpha)}:=0$ ($\alpha\not= \alpha_0$), 
then we get $\xi_k\in\bigoplus_{\alpha}\Hs_{\alpha}$ and
\begin{equation*}
x\in \bigcap_{k=1}^{n}\left\{y\in\M\ ;\ \|(y-x)\xi_k\|<\eps\right\} \subset V.
\end{equation*}
Since $\bigcap_{k=1}^{n}\left\{y\in\M\ ;\ \|(y-x)\xi_k\|<\eps\right\}$ is a SOT-open set in $\M$, V is a SOT-open set.\\

{\bf Step 6.} Fix an arbitrary $\alpha_0$. 
Let $W^{(\alpha_0)}$ be an arbitrary SRT-neighborhood at $0\in\cM_{\alpha_0}$.
Set $W^{(\alpha)}:=\cM_{\alpha}$ ($\alpha\not= \alpha_0$) and $W:=\bigoplus_{\alpha}W^{(\alpha)}$.
Then $W$ is a neighborhood at $0\in\cM$.
Indeed, $0\in W$ is trivial.
On the other hand, since $0\in\cM_{\alpha_0}$, there exists finitely many SOT-open sets 
$V_1^{(\alpha_0)}$, $\cdot\cdot\cdot$, $V_n^{(\alpha_0)}$in $\M_{\alpha_0}$
such that
\begin{equation*}
0\in\bigcap_{k=1}^{n}\left\{A\in \cM_{\alpha_0}\ ;\ 
\left\{\re{A}-i\right\}^{-1},\ \left\{\im{A}-i\right\}^{-1} \in V_k^{(\alpha_0)}\right\}
\subset W^{(\alpha_0)}.
\end{equation*}
Put $V_k^{(\alpha)}:=\M_{\alpha}$ ($\alpha\not= \alpha_0$) and $V_k:=\bigoplus_{\alpha}^{b}V_k^{(\alpha)}$,
then, by Step 5., each $V_k$ is a SOT-open set in $\M$ and 
\begin{equation*}
0\in\bigcap_{k=1}^{n}\left\{A\in \cM\ ;\ 
\left\{\re{A}-i\right\}^{-1},\ \left\{\im{A}-i\right\}^{-1} \in V_k\right\} 
\subset W.
\end{equation*}
By the definition of the strong resolvent topology, 
\begin{equation*}
\bigcap_{k=1}^{n}\left\{A\in \cM\ ;\ 
\left\{\re{A}-i\right\}^{-1},\ \left\{\im{A}-i\right\}^{-1} \in V_k\right\}
\end{equation*}
is a open set, so that $W$ is a SRT-neighborhood at $0\in\cM$. \\

{\bf Step 7.} We shall give a proof of the completeness of $\cM$.
Let $\{A_{\lambda}\}_{\lambda\in\Lambda}$ be a Cauchy net in $\cM$.
For each $\lambda\in\Lambda$ we can write as $A_{\lambda}=\oplus_{\alpha}A_{\alpha, \lambda}\in\bigoplus_{\alpha}\cM_{\alpha}$.
Fix an arbitrary $\alpha_0$ and let $W^{(\alpha_0)}$ be an arbitrary SRT-neighborhood at $0\in\cM_{\alpha_0}$.
Set $W^{(\alpha)}:=\cM_{\alpha}$ ($\alpha\not= \alpha_0$) and $W:=\bigoplus_{\alpha}W^{(\alpha)}$, 
then, by Step 6., $W$ is a SRT-neighborhood at $0\in\cM$.
Therefore there exists $\lambda_0 \in\Lambda$ such that $\overline{A_{\lambda}-A_{\mu}}\in W$ for all $\lambda$, $\mu\geq \lambda_0$.
Since $\overline{A_{\lambda}-A_{\mu}}=\oplus_{\alpha}\left(\overline{A_{\alpha,\lambda}-A_{\alpha,\mu}}\right)$,
this implies that $\overline{A_{\alpha_0, \lambda}-A_{\alpha_0, \mu}}\in W^{(\alpha_0)}$ for all $\lambda$, $\mu\geq \lambda_0$.
Hence $\{A_{\alpha_0, \lambda}\}_{\lambda\in\Lambda}$ is a Cauchy net in $\cM_{\alpha_0}$.
We now use the completeness of $\cM_{\alpha_0}$, then there exists an element 
$A_{\alpha_0}\in\cM_{\alpha_0}$ such that $A_{\alpha_0, \lambda}\rightarrow A_{\alpha_0}$.
Since $\alpha_0$ is arbitrary, so that this means that 
for each $\alpha$, there exists an element $A_{\alpha}\in\cM_{\alpha}$ such that $A_{\alpha, \lambda}\rightarrow A_{\alpha}$.
Put $A:=\oplus_{\alpha}A_{\alpha}$, then, by Step 4., we conclude that $A_{\lambda}\rightarrow A$.
Thus $\cM$ is complete.\\

{\bf Step 8.} The strong resolvent topology coincides with the strong exponential topology on $\cM$.
This fact follows from Lemma \ref{3SRT=SET} and Lemma \ref{3convergence}.
\end{proof}

\begin{lem}\label{6UXU^*}
Let $(\M, \Hs)$ and $(\mathfrak{N}, \mathcal{K})$ be spatially isomorphic finite von Neumann algebras.
If a unitary operator $U$ of $\Hs$ onto $\mathcal{K}$ induces the spatial isomorphism, 
then the map
\begin{equation*}
\Phi:\ \cM \to \overline{\mathfrak{N}},\ X\mapsto UXU^*
\end{equation*}
is a *-isomorphism. 
Moreover $\Phi$ is a homeomorphism with respect to the strong resolvent topology and the strong exponential topology.
\end{lem}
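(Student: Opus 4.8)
The plan is to verify in turn that $\Phi$ is a well-defined bijection onto $\overline{\mathfrak{N}}$, that it is a *-isomorphism, and that it together with its inverse is continuous for both the strong resolvent and the strong exponential topologies. The common thread is the elementary observation that conjugation by the unitary $U$, i.e.\ $T\mapsto UTU^{*}$, is a bijection between closable operators on $\Hs$ and closable operators on $\mathcal{K}$ which preserves the relation $\subset$, sends $\dom{T}$ to $U\dom{T}$, commutes with the adjoint and with the formation of closures, and carries $\dom{X+Y}=\dom{X}\cap\dom{Y}$ and $\dom{XY}$ onto $\dom{UXU^{*}+UYU^{*}}$ and $\dom{UXU^{*}\cdot UYU^{*}}$; all of this holds simply because $U$ is a homeomorphism of $\Hs$ onto $\mathcal{K}$ that maps graphs to graphs.

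First I would note that $U\M U^{*}=\mathfrak{N}$ forces $U\M'U^{*}=(U\M U^{*})'=\mathfrak{N}'$. Hence, for $X\in\cM$, the operator $UXU^{*}$ is densely defined (its domain is $U\dom{X}$) and closed, and for every $v\in U(\mathfrak{N}')$, writing $v=UuU^{*}$ with $u\in U(\M')$, one has $v(UXU^{*})v^{*}=U(uXu^{*})U^{*}=UXU^{*}$; thus $UXU^{*}\in\overline{\mathfrak{N}}$, so $\Phi$ maps $\cM$ into $\overline{\mathfrak{N}}$. The same computation with $U^{*}$ in place of $U$ shows that $Y\mapsto U^{*}YU$ maps $\overline{\mathfrak{N}}$ into $\cM$ and is a two-sided inverse of $\Phi$, so $\Phi$ is a bijection. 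Next I would check that $\Phi$ is a *-homomorphism. The identity $(UXU^{*})^{*}=UX^{*}U^{*}$ is immediate; for the scalar multiplication $\alpha\,UXU^{*}=U(\alpha X)U^{*}$, so since conjugation by $U$ commutes with closure, $\overline{\alpha\,UXU^{*}}=U\,\overline{\alpha X}\,U^{*}$; for the sum, $UXU^{*}+UYU^{*}$ and $U(X+Y)U^{*}$ agree on $U\dom{X}\cap U\dom{Y}=U\,\dom{X+Y}$, so passing to closures gives $\overline{UXU^{*}+UYU^{*}}=U\,\overline{X+Y}\,U^{*}$; the product is treated in the same way, using $\dom{UXU^{*}\cdot UYU^{*}}=U\,\dom{XY}$ and the fact that $UXU^{*}\cdot UYU^{*}$ coincides with $U(XY)U^{*}$ there. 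Therefore $\Phi$ intertwines all four Murray-von Neumann operations, so it is a *-isomorphism of $\cM$ onto $\overline{\mathfrak{N}}$.

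Finally, for the homeomorphism assertion I would use that a *-homomorphism automatically satisfies $\re{\Phi(X)}=\frac{1}{2}\overline{\Phi(X)+\Phi(X)^{*}}=\Phi(\re{X})=U\,\re{X}\,U^{*}$ and likewise $\im{\Phi(X)}=U\,\im{X}\,U^{*}$. Consequently
\[
\{\re{\Phi(X)}-i\}^{-1}=U\{\re{X}-i\}^{-1}U^{*},\qquad \{\im{\Phi(X)}-i\}^{-1}=U\{\im{X}-i\}^{-1}U^{*},
\]
and, by the functional calculus, $e^{it\re{\Phi(X)}}=U\,e^{it\re{X}}\,U^{*}$ and $e^{it\im{\Phi(X)}}=U\,e^{it\im{X}}\,U^{*}$ for every $t\in\mathbb{R}$. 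Since $T\mapsto UTU^{*}$ is a homeomorphism of $(\mathfrak{B}(\Hs),\mathrm{SOT})$ onto $(\mathfrak{B}(\mathcal{K}),\mathrm{SOT})$ — both it and its inverse being SOT-continuous because $U$ and $U^{*}$ are bounded — it follows that a net $\{X_{\lambda}\}_{\lambda}$ converges to $X$ in the strong resolvent (respectively strong exponential) topology if and only if $\{\Phi(X_{\lambda})\}_{\lambda}$ converges to $\Phi(X)$, so both $\Phi$ and $\Phi^{-1}$ are continuous for both topologies.

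I do not expect a genuine obstacle in this argument: everything rests on the single soft fact that unitary conjugation is a homeomorphism intertwining graphs, and the only points that need real care are the bookkeeping of domains when verifying compatibility with $\overline{X+Y}$ and $\overline{XY}$, together with the identification $\mathfrak{N}'=U\M'U^{*}$, both of which are routine.
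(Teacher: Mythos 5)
Your proof is correct, and it follows the same overall scheme as the paper: check affiliation via $U\M'U^{*}=\mathfrak{N}'$, verify the three algebraic identities for conjugation, exhibit $Y\mapsto U^{*}YU$ as the inverse, and read off the topological statement from the conjugation formulas for resolvents and exponentials. The one genuine difference is in how the identities $U\overline{X+Y}U^{*}=\overline{UXU^{*}+UYU^{*}}$ and $U\overline{XY}U^{*}=\overline{UXU^{*}\,UYU^{*}}$ are obtained: the paper derives them with the help of Proposition \ref{2No-closed-extension} (2) (no proper closed extensions within $\cM$, which rests on finiteness), whereas you observe that unitary conjugation matches the relevant domains exactly, so $U(X+Y)U^{*}=UXU^{*}+UYU^{*}$ and $U(XY)U^{*}=UXU^{*}\,UYU^{*}$ hold on the nose and one only needs that conjugation commutes with closures. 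Your route is slightly more elementary and makes clear that the *-isomorphism part uses nothing about finiteness beyond the fact that the Murray--von Neumann operations are defined; the paper's appeal to the no-proper-extension lemma buys brevity (one inclusion suffices) at the cost of invoking machinery that is not really needed here. Your elaboration of the "topological property is trivial" step, via $\re{\Phi(X)}=U\re{X}U^{*}$, $\im{\Phi(X)}=U\im{X}U^{*}$ and SOT-bicontinuity of $T\mapsto UTU^{*}$ (including uniformity in $t$ on compacta for the strong exponential topology), is exactly what the paper leaves implicit.
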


\begin{proof}
It is easy to see that $\Phi(X)\in\overline{\mathfrak{N}}$ for all $X\in\cM$. 
Thanks to Proposition \ref{2No-closed-extension} (2), 
it is not difficult to show that $\Phi$ is a unital *-homomorphism:
\eqa{
U\left(\ol{X+Y}\right)U^*&=\ol{UXU^*+UYU^*}\\
U\left(\ol{XY}\right)U^*&=\ol{UXU^*UYU^*}\\
UX^*U^*&=(UXU^*)^*.
}
Furthermore, it is straightforward to verify that $\Phi$ is invertible, 
the inverse of which is given by $\overline{\mathfrak{N}}\ni Y\mapsto U^*YU\in \cM$.
Topological property is trivial.
\end{proof}

\subsection{Proof of Theorem \ref{3cta}}

We shall give a proof of Theorem \ref{3cta}.
By Lemma \ref{2decompose}, there exists a family of countably decomposable finite von Neumann algebras 
$\{\M_{\alpha}\}_{\alpha}$ such that $\M$ is spatially isomorphic onto $\bigoplus_{\alpha}^{b}\M_{\alpha}$.
From Lemma \ref{6UXU^*}, there exists a *-isomorphism of $\cM$ onto $\bigoplus_{\alpha}\cM_{\alpha}$ 
which is homeomorphic with respect to the strong resolvent topology and the strong exponential topology.
By Lemma \ref{3key}, $\bigoplus_{\alpha}\cM_{\alpha}$ is complete topological *-algebra,
so that so is $\cM$.
Hence the proof of Theorem \ref{3cta} is complete.

\subsection{Local Convexity}

We study the local convexity of $\left(\cM,SRT\right)$ here.

\begin{prop}\label{4local convex iff atomic} 
Let $\M$ be a finite von Neumann algebra. 
Then the following are equivalent.
\begin{list}{}{}
\item[(1)] $(\cM,SRT)$ is locally convex.
\item[(2)] $\M$ is atomic.
\end{list}
\end{prop}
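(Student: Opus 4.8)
The plan is to prove the two directions separately, exploiting the structural decomposition of a finite von Neumann algebra into an atomic part and a diffuse part together with the direct-sum description of $\cM$ established in Lemma \ref{3key}.

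For the implication $(2)\Rightarrow(1)$, I would use the fact recalled in \S2 that an atomic finite von Neumann algebra is spatially isomorphic to a bounded direct sum $\bigoplus_{\alpha}^{b}M_{n_\alpha}(\mbb{C})$. By Lemma \ref{6UXU^*} the associated map $\cM\to\bigoplus_{\alpha}\overline{M_{n_\alpha}(\mbb{C})}$ is a homeomorphism for the strong resolvent topology, so it suffices to treat that model. Each $\overline{M_{n_\alpha}(\mbb{C})}=M_{n_\alpha}(\mbb{C})$ is finite-dimensional, hence locally convex, and on it SRT is just the ordinary topology. By Lemma \ref{3convergence} the SRT on $\bigoplus_{\alpha}\cM_{\alpha}$ is the product topology (a net converges iff it converges in each coordinate), and a product of locally convex spaces is locally convex. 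Thus $(\cM,SRT)$ is locally convex. I would phrase this via the standard basic neighborhoods: cylinders over finitely many coordinates, each a convex SRT-neighborhood in a finite-dimensional factor.

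For $(1)\Rightarrow(2)$, I would argue by contraposition: if $\M$ is not atomic, then its diffuse part $\M_{\mathrm{diffuse}}$ is nonzero, and I want to produce a compact (equivalently, absorbing balanced) convex neighborhood obstruction. The natural route is to invoke Lemma \ref{3srt=mt}: on a countably decomposable diffuse summand, SRT coincides with the $\tau$-measure topology, and it is classical that the measure topology on $\cM$ for a diffuse trace is \emph{not} locally convex — the sets $N(\eps,\delta)$ have trivial convex hull, essentially because the trace has no atoms so one can split any projection and use the failure of a continuous norm/seminorm (cf. the analogous non-local-convexity of $L^0$ of a nonatomic measure space). Concretely: reduce by Lemma \ref{2decompose} and Lemma \ref{3key} to a single countably decomposable diffuse $\M$ with faithful normal tracial state $\tau$; then show any SRT-neighborhood $U$ of $0$ contains, for some $\eps,\delta$, the set $N(\eps,\delta)$, whose closed convex hull is all of $\cM$ (given $A\in\cM$ self-adjoint, write $1=\sum_{k=1}^{m}e_k$ with $\tau(e_k)=1/m$ via diffuseness, so $A=\frac1m\sum_k (mAe_k)$ — wait, one must instead split $A$ as an average of operators each supported on a small-trace projection, using $Ae_k\in N(\|A\|,1/m)\subset N(\eps,\delta)$ for $m$ large). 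Hence no proper convex neighborhood of $0$ exists, so $(\cM,SRT)$ is not locally convex.

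The main obstacle is the $(1)\Rightarrow(2)$ direction, specifically the clean verification that the measure topology on the diffuse part fails local convexity and that this failure is inherited by $\cM$ through the direct-sum decomposition (one must check that a convex SRT-neighborhood of $0$ in $\cM$ restricts to a convex SRT-neighborhood of $0$ in each summand $\cM_\alpha$, which follows from Lemma \ref{3convergence}/Step 6 of Lemma \ref{3key}, and that a single nonzero diffuse summand already destroys convexity). I expect the atomic implication to be essentially formal given the machinery already developed; the diffuse non-convexity estimate — choosing the splitting $1=\sum e_k$ and controlling $\|A e_k\|$ versus $\tau(e_k^{\perp})$ — is where the real work lies.
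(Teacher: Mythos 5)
Your proof is correct, and while your $(2)\Rightarrow(1)$ direction is essentially the paper's (atomic $\Rightarrow$ direct sum of matrix algebras, SRT is the product of the finite-dimensional topologies by Lemma \ref{3convergence}, hence locally convex), your $(1)\Rightarrow(2)$ direction takes a genuinely different route. The paper never touches the measure topology at this point: it proves (Lemma \ref{3diffuse}) that a diffuse $\cM$ admits \emph{no} nonzero SRT-continuous linear functional, by an orthogonal-projection summation trick, deduces (Lemma \ref{3conjugate}) that every SRT-continuous functional on $\cM$ factors through the atomic part, and then kills a nonzero diffuse part by Hahn--Banach separation in a locally convex Hausdorff space. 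You instead reduce, via Lemma \ref{2decompose}, Lemma \ref{3key} and the continuity of the coordinate embedding, to a nonzero countably decomposable diffuse summand carrying a faithful normal trace, invoke Lemma \ref{3srt=mt} to replace SRT by the $\tau$-measure topology, and run the classical noncommutative $L^0$ argument: writing $1=\sum_{k=1}^m e_k$ with $\tau(e_k)=1/m<\delta$, every $A\in\cM$ equals $\frac1m\sum_k\overline{mAe_k}$ and each $\overline{mAe_k}$ lies in $N(\eps,\delta)$ for \emph{every} $\eps>0$ (it vanishes on $e_k^{\perp}\Hs$ and $\tau(e_k)<\delta$), so the convex hull of every basic neighborhood is all of $\cM$ and no proper convex neighborhood of $0$ exists. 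Your first displayed splitting was the right one; the second-guessing parenthetical ``$Ae_k\in N(\|A\|,1/m)$'' should be dropped, since $A$ is unbounded in general and the point is precisely that the operator-norm part of the condition is satisfied trivially (with norm $0$) rather than with $\|A\|$. Comparing the two: the paper's duality route works directly on the full, possibly non-countably-decomposable diffuse part without choosing a trace, and yields the dual-space statement (Proposition \ref{3diffuse2}) as a by-product; your route avoids Hahn--Banach and is more hands-on, at the cost of the SRT$=$MT identification, the reduction to a countably decomposable summand (where you should note explicitly that central summands of a diffuse algebra are diffuse), and the observation--which you correctly supply--that local convexity passes to each summand because the embedding $\cM_{\alpha_0}\ni A\mapsto A\oplus 0\in\cM$ is linear and SRT-continuous.
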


We need some lemmata to prove the above proposition.

\begin{lem}\label{3atomic is locally convex}
Let $\M$ be an atomic finite von Neumann algebra, then $\left(\cM,SRT\right)$ is locally convex. 
\end{lem}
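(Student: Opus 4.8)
The plan is to reduce to the atomic case by using the structure theorem for atomic finite von Neumann algebras and the decomposition results already established (Lemma \ref{3key}, Lemma \ref{6UXU^*}). Recall that a finite atomic von Neumann algebra is spatially isomorphic to $\bigoplus_{n}^{b} M_{k_n}(\mathbb{C})$ for some collection of positive integers $k_n$ (with possible repetitions). By Lemma \ref{6UXU^*} the map $X\mapsto UXU^*$ is a homeomorphism for the strong resolvent topology, so local convexity is preserved, and it suffices to treat $\M=\bigoplus_{\alpha}^{b}\M_\alpha$ with each $\M_\alpha=M_{k_\alpha}(\mathbb{C})$ finite-dimensional. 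By Lemma \ref{3key} we have $\cM=\bigoplus_\alpha \cM_\alpha$ with the strong resolvent topology corresponding, via Lemma \ref{3convergence}, to the product topology on the direct sum. Since a product of locally convex spaces (with the product topology) is locally convex, it then suffices to show that each $\cM_\alpha$ is locally convex.

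For a finite-dimensional von Neumann algebra $\M_\alpha=M_{k}(\mathbb{C})$ acting on $\mathbb{C}^{k}$ (or on a direct sum of copies of $\mathbb{C}^k$, but the finite-dimensionality is what matters), every densely defined closed operator affiliated with $\M_\alpha$ is automatically bounded: its domain is completely dense by Proposition \ref{2M-vN1}, hence all of $\mathbb{C}^k$ by finite-dimensionality, so $\cM_\alpha=\M_\alpha$ as a set. Moreover on a finite-dimensional space the strong resolvent topology on $\cM_\alpha=\M_\alpha$ is just the usual norm (= Euclidean) topology: resolvents $(\re{A}-i)^{-1}$, $(\im{A}-i)^{-1}$ depend continuously on $A$ in norm, and conversely $A$ is recovered continuously from these resolvents since inversion is norm-continuous on the invertible elements of a finite-dimensional algebra. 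Hence $(\cM_\alpha, SRT)$ is a finite-dimensional normed space, which is trivially locally convex.

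Assembling: each $(\cM_\alpha,SRT)$ is locally convex, the direct sum $\bigoplus_\alpha \cM_\alpha$ carries the product topology by Lemma \ref{3convergence} (a net converges iff it converges coordinatewise) and is therefore locally convex as a topological vector space, and finally Lemma \ref{6UXU^*} transports this back to $(\cM,SRT)$. The main point requiring a little care is the identification of the topology on the direct sum: one must confirm that the strong resolvent topology on $\bigoplus_\alpha\cM_\alpha$ is precisely the product (Tychonoff) topology — not the finer box topology — so that convexity is inherited; this is exactly what Lemma \ref{3convergence}(1) provides, since basic SRT-neighborhoods are controlled by finitely many vectors $\xi$, each lying in the algebraic direct sum, hence constraining only finitely many coordinates. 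Given that, the argument is routine; the delicate part is really reserved for the converse implication (2)$\Rightarrow$(1), i.e. that non-atomic algebras fail local convexity, which is treated in the subsequent lemmata.
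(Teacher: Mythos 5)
Your proof is correct and takes essentially the same route as the paper's: spatial reduction to a direct sum of matrix algebras, the observation that each $\cM_{\alpha}=M_{k_{\alpha}}(\mathbb{C})$ carries its unique Hausdorff linear (hence norm, hence locally convex) topology, and the identification of the SRT on the direct sum with the product topology, which the paper packages equivalently as the locally convex topology generated by the coordinate seminorms $p_{\lambda}(x)=\|x_{\lambda}\|$. One small slip: the vectors $\xi\in\Hs$ defining basic SRT-neighborhoods need not lie in the algebraic direct sum, but this is harmless because the coordinatewise-convergence criterion of Lemma \ref{3convergence} (together with the fact that a topology is determined by its convergent nets) already gives the identification with the product topology.
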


\begin{proof}
Every atomic finite von Neumann algebra is spatially isomorphic to the direct sum of matrix algebras 
$\left\{M_{n_{\lambda}}(\mathbb{C})\right\}_{\lambda\in\Lambda}$,
where each $M_{n_{\lambda}}(\mathbb{C})$ is the algebra of all $n_{\lambda}\times n_{\lambda}$ complex matrices.
Thus we should only prove this lemma in the case that $\M$ is equal to $\bigoplus_{\lambda\in\Lambda}^{b}M_{n_{\lambda}}(\mathbb{C})$.
Note that 
\begin{equation*}
\cM=\overline{\bigoplus_{\lambda\in\Lambda}^{b}M_{n_{\lambda}}(\mathbb{C})}
=\bigoplus_{\lambda\in\Lambda}\overline{M_{n_{\lambda}}(\mathbb{C})}
=\bigoplus_{\lambda\in\Lambda}M_{n_{\lambda}}(\mathbb{C}).
\end{equation*}
Let $p_{\lambda}$ be a semi-norm on $\cM$ defined by
\begin{equation*}
p_{\lambda}(x):=\|x_{\lambda}\|, \ \ \ \ 
x=\oplus_{\lambda\in\Lambda}x_{\lambda}\in\cM=\bigoplus_{\lambda\in\Lambda}M_{n_{\lambda}}(\mathbb{C}).
\end{equation*}
Then the strong resolvent topology on $\cM$ coincides with 
the locally convex topology induced by the semi-norms $\{p_{\lambda}\}_{\lambda\in\Lambda}$
because there is only one Hausdorff linear topology on a finite dimensional linear space.
Hence the proof is complete.
\end{proof}

\begin{lem}\label{3diffuse}
Let $\M$ be a diffuse finite von Neumann algebra, 
then there exists no non-zero SRT-continuous linear functional on $\cM$.
\end{lem}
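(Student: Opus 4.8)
The plan is to reduce to the countably decomposable case and then run the classical argument showing that the topology of convergence in measure carries no nonzero continuous linear functional. For the reduction: by Lemma \ref{2decompose} we may assume $\M=\bigoplus_{\alpha}^{b}\M_{\alpha}$ with each $\M_{\alpha}$ countably decomposable and finite, and since each $\M_{\alpha}$ is a central compression of the diffuse algebra $\M$, each $\M_{\alpha}$ is again diffuse. By Lemma \ref{3key} we have $\cM=\bigoplus_{\alpha}\cM_{\alpha}$, and by Lemma \ref{3convergence} a net in $\cM$ converges in the SRT if and only if each of its components does. Hence the embeddings $\iota_{\alpha}\colon\cM_{\alpha}\hookrightarrow\cM$ and the truncations $\bigoplus_{\beta}A_{\beta}\mapsto\bigoplus_{\beta\in F}A_{\beta}$ ($F$ finite) are SRT-continuous, so the finitely supported elements are SRT-dense in $\cM$; and if $\varphi$ is an SRT-continuous functional on $\cM$, each $\varphi\circ\iota_{\alpha}$ is an SRT-continuous functional on $\cM_{\alpha}$. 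Thus it suffices to prove $\varphi\circ\iota_{\alpha}=0$ for all $\alpha$, i.e.\ we may assume $\M$ is countably decomposable and diffuse.

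So suppose $\M$ is countably decomposable and diffuse, fix a faithful normal tracial state $\tau$, and let $\varphi$ be an SRT-continuous linear functional on $\cM$. By Lemma \ref{3srt=mt} the SRT coincides with the $\tau$-measure topology, so continuity at $0$ provides, for each $\eps_1>0$, a basic measure-neighborhood $N(\eps_0,\delta_0)$ with $|\varphi(B)|\le\eps_1$ for all $B\in N(\eps_0,\delta_0)$. Fix $A\in\cM$ and choose $n\in\mathbb{N}$ with $1/n<\delta_0$. Using diffuseness of $\M$ I would produce orthogonal projections $p_1,\dots,p_n\in P(\M)$ with $\sum_{k}p_k=1$ and $\tau(p_k)=1/n$ for every $k$ (peel off, successively, subprojections of trace $1/n$; that below any projection there is a subprojection of any prescribed smaller trace is the standard consequence of diffuseness together with comparison theory). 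For each $k$ the operator $\overline{Ap_k}\in\cM$ (Theorem \ref{2eta_*alg}) vanishes on $p_k^{\perp}\Hs$, so $\overline{Ap_k}\,p_k^{\perp}=0$; taking $q=p_k^{\perp}$ in the definition of $N(\eps_0,\delta_0)$ (note $\tau(q^{\perp})=\tau(p_k)=1/n<\delta_0$) shows $\overline{Ap_k}$, and likewise $n\,\overline{Ap_k}=\overline{(nA)p_k}$, lies in $N(\eps_0,\delta_0)$. Hence $|\varphi(\overline{Ap_k})|=\tfrac1n|\varphi(n\,\overline{Ap_k})|\le\eps_1/n$. Since $\sum_{k}p_k=1$, the distributive law in the $*$-algebra $\cM$ gives $A=\sum_{k=1}^{n}\overline{Ap_k}$, whence $|\varphi(A)|\le\sum_{k=1}^{n}\eps_1/n=\eps_1$. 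As $\eps_1>0$ and $A\in\cM$ were arbitrary, $\varphi=0$, which finishes the reduced case and hence the proof.

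The only genuinely delicate point is step (b): that a diffuse finite von Neumann algebra equipped with a trace contains, below any projection, subprojections of every prescribed smaller trace — this is precisely where diffuseness enters and should be stated carefully or cited. The remaining ingredients are routine: that $\overline{Ap_k}$ annihilates $p_k^{\perp}\Hs$, that $A=\sum_k\overline{Ap_k}$ via distributivity and $\sum_k p_k=1$, and the ``coordinatewise'' nature of SRT-convergence used in the reduction (giving density of the finitely supported elements and continuity of $\varphi\circ\iota_\alpha$), together with the final telescoping estimate, which is the familiar proof that $L^{0}$ over a non-atomic measure space has trivial dual.
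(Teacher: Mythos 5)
Your proof is correct, but it follows a genuinely different route from the paper's. You reduce to the countably decomposable case (Lemma \ref{2decompose}, with Lemmas \ref{3key} and \ref{3convergence} giving the componentwise description of SRT-convergence, the density of finitely supported elements, and the continuity of the coordinate embeddings), and then invoke Lemma \ref{3srt=mt} to replace SRT by the $\tau$-measure topology and run the classical ``$L^0$ over a nonatomic measure space has trivial dual'' argument: split $1=p_1+\cdots+p_n$ with $\tau(p_k)=1/n$, observe $n\,\overline{Ap_k}\in N(\eps_0,\delta_0)$ because $\overline{Ap_k}\,p_k^{\perp}=0$, and telescope. The paper instead argues directly in the general (possibly non-countably-decomposable) setting without any trace: it first produces a projection $e_0$ with $f(e_0)\neq 0$ (via Lemma \ref{3mt=sot} and SRT-density of $\M$ in $\cM$), then shows that $f$ annihilates all but finitely many members of any orthogonal family of projections (by forming $A=\sum_n a_ne_n$ with $a_n=1/f(e_n)$, SRT-convergent), uses diffuseness to find under any projection with $f\neq 0$ a nonzero subprojection killed by $f$, and reaches a contradiction with a maximal orthogonal family. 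Your approach buys a very concrete, quantitative estimate at the cost of the direct-sum reduction and of the auxiliary fact you rightly flag — that in a diffuse finite algebra with faithful normal trace every projection dominates subprojections of every prescribed smaller trace — which is standard but should be proved or cited; note also that the pieces of the decomposition in Lemma \ref{2decompose} are compressions by commutant (not central) projections, though they are $*$-isomorphic to central reductions, so diffuseness does pass as you claim. The paper's argument buys uniformity: one trace-free argument covering the non-countably-decomposable case without any reduction.
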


\begin{proof}
Suppose there exists a non-zero SRT-continuous linear functional $f$ on $\cM$ and we shall show a contradiction.
Since, by Lemma \ref{3mt=sot}, the restriction of $f$ onto $\M$ is a $\sigma$-strongly continuous linear functional on $\M$ 
and $\M$ is SRT-dense in $\cM$,
there exists a projection $e_0$ in $\M$ such that $f(e_0)\not=0$.\\

{\bf Step 1.} For any orthogonal family of non-zero projections $\{e_n\}_{n=1}^{\infty}$ of $\M$, 
$f(e_n)=0$ except at most finitely many $n\in\mathbb{N}$.
Indeed, put 
\begin{equation*}
A:= \sum_{n=1}^{\infty}a_ne_n\in\cM, \ \ \ \ 
a_n:= \left\{
\begin{array}{ll}
\frac{1}{f(e_n)} & \mbox{ if \ \ $f(e_n)\not=0$, } \\
0 & \mbox{ if\ \  $f(e_n)=0$, } \\
\end{array}
\right.
\end{equation*}
where convergence of $A$ is in the strong resolvent topology.
Then we have
\begin{equation*}
f(A)=\sum_{n=1}^{\infty}a_nf(e_n) =\sum_{f(e_n)\not=0}1<\infty,
\end{equation*}
so that $f(e_n)=0$ except at most finitely many $n\in\mathbb{N}$.\\

{\bf Step 2.} For any $e\in P(\M)$ with $f(e)\neq 0$, there exists $e'\in P(\M)$ such that $0\neq e'\le e$ and $f(e')=0$.
Indeed, since $\M$ is diffuse, there exists an orthogonal family of non-zero projections 
$\{e_n\}_{n=1}^{\infty}$ in $\M$ such that $e=\sum_{n\ge 1}e_n$. 
By Step 2., $J:=\left\{n\in \mbb{N}\ ;\ f(e_n)\neq 0\right\}$ is a finite set. 
In particular, 
\[e':=e-\sum_{n\in J}e_n\neq 0\]
satisfies $f(e')=0$.\\

{\bf Step 3.} We shall get a contradiction.
By Step 2., we can take a maximal orthogonal family of non-zero projections $\{e_{\alpha}\}_{\alpha\in A}$ in $\M$ 
such that $e_{\alpha}\le e_0$ and $f(e_{\alpha})=0$.
Let $e:=\sum_{\alpha \in A}e_{\alpha}$. 
The maximality of $\{e_{\alpha}\}_{\alpha\in A}$ and Step 2. implies $e=e_0$. 
Thus we have 
\[0\neq f(e_0)=\sum_{\alpha \in A}f(e_{\alpha})=0,\]
which is a contradiction. 
Hence there exists no non-zero SRT-continuous linear functional on $\cM$. 
\end{proof}

\begin{lem}\label{3conjugate}
Let $\M_{\rm a}$ be an atomic finite von Neumann algebra, $\M_{\rm d}$ be a diffuse finite von Neumann algebra
and $\M:=\M_{\rm a}\bigoplus^{b}\M_{\rm d}$ be the direct sum von Neumann algebra.
Denote the conjugate spaces of $\left(\overline{\M_{\rm a}},SRT\right)$ and $\left(\cM,SRT\right)$ by 
$\left(\overline{\M_{\rm a}}\right)^*$ and $\left(\cM\right)^*$ respectively.
For each $f\in\left(\overline{\M_{\rm a}}\right)^*$, we define $I(f)\in\left(\cM\right)^*$ as 
\begin{equation*}
I(f)(A\oplus B):= f(A), \ \ \ \ A\oplus B\in \cM=\overline{\M_{\rm a}}\bigoplus\overline{\M_{\rm d}},
\end{equation*} 
then $I$ is a bijection between $\left(\overline{\M_{\rm a}}\right)^*$ onto $\left(\cM\right)^*$.
\end{lem}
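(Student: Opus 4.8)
The plan is to read off $I$ from the direct-sum decomposition $\cM=\overline{\M_{\rm a}}\oplus\overline{\M_{\rm d}}$ furnished by Lemma~\ref{3key}, and then to annihilate the diffuse coordinate via Lemma~\ref{3diffuse}. By Lemma~\ref{3key} this decomposition is compatible with all the algebraic operations; in particular $A\oplus B=\overline{(A\oplus 0)+(0\oplus B)}$ and $\re{A\oplus B}=\re{A}\oplus\re{B}$, $\im{A\oplus B}=\im{A}\oplus\im{B}$. By Lemma~\ref{3convergence} a net in $\cM$ converges in the strong resolvent topology precisely when both of its coordinates do; hence the coordinate projection $P_{\rm a}\colon\cM\to\overline{\M_{\rm a}}$, $A\oplus B\mapsto A$, and the two coordinate inclusions $\iota_{\rm a}\colon\overline{\M_{\rm a}}\to\cM$, $A\mapsto A\oplus 0$, and $\iota_{\rm d}\colon\overline{\M_{\rm d}}\to\cM$, $B\mapsto 0\oplus B$, are all continuous for the relevant strong resolvent topologies. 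Since $I(f)=f\circ P_{\rm a}$, this shows $I(f)\in(\cM)^*$, so $I$ is a well-defined linear map.

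Injectivity is immediate: if $I(f_1)=I(f_2)$ then $f_1(A)=I(f_1)(A\oplus 0)=I(f_2)(A\oplus 0)=f_2(A)$ for all $A\in\overline{\M_{\rm a}}$. For surjectivity, fix $g\in(\cM)^*$ and set $f:=g\circ\iota_{\rm a}\in(\overline{\M_{\rm a}})^*$ and $h:=g\circ\iota_{\rm d}$. Because $\iota_{\rm d}$ is continuous, $h$ is a strong-resolvent-continuous linear functional on $\overline{\M_{\rm d}}$; since $\M_{\rm d}$ is diffuse, Lemma~\ref{3diffuse} forces $h=0$. Using the additivity of $g$ and $A\oplus B=\overline{(A\oplus 0)+(0\oplus B)}$, we get for every $A\oplus B\in\cM$
\begin{equation*}
g(A\oplus B)=g(A\oplus 0)+g(0\oplus B)=f(A)+h(B)=f(A)=I(f)(A\oplus B),
\end{equation*}
so $g=I(f)$ and $I$ is onto.

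The only non-formal ingredient is Lemma~\ref{3diffuse}; the rest is bookkeeping with the direct-sum structure of $\cM$. The one point deserving a moment's care is the continuity of $\iota_{\rm a}$ and $\iota_{\rm d}$ for the strong resolvent topologies — this is what guarantees that $f$ really lies in $(\overline{\M_{\rm a}})^*$ and that $h$ is a legitimate element of the conjugate space of $(\overline{\M_{\rm d}},SRT)$ before Lemma~\ref{3diffuse} can be applied — but it follows at once from Lemma~\ref{3convergence} together with the coordinatewise formulas for $\re{\cdot}$ and $\im{\cdot}$ in Lemma~\ref{3key}.
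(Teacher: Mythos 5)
Your proof is correct and follows the same route the paper intends: the paper's own proof is just the remark that the lemma ``follows immediately from Lemma \ref{3diffuse}'', and your write-up supplies exactly the bookkeeping (coordinatewise SRT-convergence from Lemma \ref{3convergence} and the algebraic compatibility from Lemma \ref{3key}) that makes this immediate. No gaps.
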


\begin{proof}
This follows immediately from Lemma \ref{3diffuse}.
\end{proof}

\begin{proof}[\bf Proof of Proposition \ref{4local convex iff atomic}]
We have only to prove (1)$\Rightarrow$(2).
Since $\M$ is spatially isomorphic to the direct sum of an atomic von Neumann algebra $\M_{\rm atomic}$
and a diffuse von Neumann algebra $\M_{\rm diffuse}$, it is enough to show that $\M_{\rm diffuse}=\{0\}$.
Suppose $\M_{\rm diffuse}\not=\{0\}$ and take $y\in\M_{\rm diffuse}\backslash\{0\}$.
Then, by local convexity of $\cM$, 
there exists a SRT-continuous linear functional $f$ on $\cM$ 
such that $f(0\oplus y)\not=0$.
However this is a contradiction by Lemma \ref{3conjugate}.
\end{proof}

Similarly one can prove the following proposition.

\begin{prop}\label{3diffuse2}
Let $\M$ be a finite von Neumann algebra. 
Then the following are equivalent.
\begin{list}{}{}
\item[(1)] There exists no non-zero SRT-continuous linear functional on $\cM$.
\item[(2)] $\M$ is diffuse.
\end{list}
\end{prop}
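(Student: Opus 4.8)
The plan is to mirror the structure of the proof of Proposition \ref{4local convex iff atomic}, using that every finite von Neumann algebra $\M$ is spatially isomorphic to the direct sum $\M_{\rm atomic}\oplus^b\M_{\rm diffuse}$ of its atomic and diffuse parts, and that by Lemma \ref{6UXU^*} and Lemma \ref{3key} we may compute on $\cM=\overline{\M_{\rm atomic}}\oplus\overline{\M_{\rm diffuse}}$. The implication $(2)\Rightarrow(1)$ is exactly Lemma \ref{3diffuse}: if $\M$ is diffuse, there is no non-zero SRT-continuous linear functional on $\cM$. So the work is in $(1)\Rightarrow(2)$, which I would prove by contraposition: assuming $\M_{\rm atomic}\neq\{0\}$, I must produce a non-zero SRT-continuous linear functional on $\cM$.

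First I would observe that when $\M_{\rm atomic}\neq\{0\}$, it majorizes a non-zero minimal projection $p$, and $\M p$ is isomorphic to some matrix algebra $M_n(\mathbb{C})$ sitting as a direct summand; more simply, $p\in\M_{\rm atomic}$ gives a one-dimensional corner, so the normal state $x\mapsto \tau_p(pxp)$ (normalized trace on that corner) — or most directly the vector functional $A\mapsto \langle\xi, A\xi\rangle$ for a unit vector $\xi\in\ran{p}$ — restricts to a non-zero $\sigma$-strongly continuous functional on $\M$ that does not vanish identically. The key point is to extend this to an SRT-continuous functional on all of $\cM$. Using the decomposition $\cM=\overline{\M_{\rm atomic}}\oplus\overline{\M_{\rm diffuse}}$ and Lemma \ref{3key}, I would exhibit the functional $f(A\oplus B):=g(A)$, where $g$ is an SRT-continuous functional on $\overline{\M_{\rm atomic}}$ with $g\neq 0$; by Lemma \ref{3convergence} and the fact that the projection $\cM\to\overline{\M_{\rm atomic}}$ is SRT-continuous, $f$ is SRT-continuous on $\cM$. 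So the whole matter reduces to: \emph{on a non-zero atomic finite von Neumann algebra $\M_{\rm atomic}$, there exists a non-zero SRT-continuous linear functional on $\overline{\M_{\rm atomic}}$.} Writing $\M_{\rm atomic}=\bigoplus^b_{\lambda}M_{n_\lambda}(\mathbb{C})$ and hence $\overline{\M_{\rm atomic}}=\bigoplus_\lambda M_{n_\lambda}(\mathbb{C})$ (as in the proof of Lemma \ref{3atomic is locally convex}), I would fix one index $\lambda_0$ and take $f(\oplus_\lambda x_\lambda):=\mathrm{Tr}(x_{\lambda_0})$ (ordinary matrix trace of the $\lambda_0$-component). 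This is a non-zero linear functional, and it is continuous for the SRT because, by Lemma \ref{3convergence}(1), SRT-convergence of a net $\oplus_\lambda x_{\lambda}^{(\mu)}\to\oplus_\lambda x_\lambda$ in $\cM$ forces SRT-convergence of the $\lambda_0$-component in $M_{n_{\lambda_0}}(\mathbb{C})$, which — being finite dimensional with its unique Hausdorff linear topology — means norm convergence, on which $\mathrm{Tr}$ is manifestly continuous.

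The main obstacle, as in Proposition \ref{4local convex iff atomic}, is the bookkeeping with the direct-sum decomposition and verifying SRT-continuity of the coordinate projections $\cM\to\overline{\M_\alpha}$; but this is precisely what Lemma \ref{3key} (together with Lemma \ref{3convergence}) has been set up to supply, so no genuinely new analysis is required. Once $(1)\Rightarrow(2)$ is in hand, combining with Lemma \ref{3diffuse} for $(2)\Rightarrow(1)$ completes the equivalence.
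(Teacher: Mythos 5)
Your proposal is correct and follows essentially the route the paper intends when it says ``similarly'': $(2)\Rightarrow(1)$ is exactly Lemma \ref{3diffuse}, and $(1)\Rightarrow(2)$ is proved by contraposition through the atomic/diffuse decomposition, using the direct-sum machinery of Lemma \ref{3key} (equivalently Lemma \ref{3conjugate}) to transport a non-zero SRT-continuous functional from $\overline{\M_{\rm atomic}}$ to $\cM$. The only cosmetic difference is that you build the functional on the atomic part explicitly as the matrix trace of one summand $M_{n_{\lambda_0}}(\mathbb{C})$ instead of invoking local convexity of $\overline{\M_{\rm atomic}}$ (Lemma \ref{3atomic is locally convex}) together with Hahn--Banach, which is a perfectly valid, if anything slightly more elementary, way to finish.
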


\section{Lie Group-Lie Algebra Correspondences}
\label{sec4}
In this section we state and prove the main result of this paper. 
As explained in the introduction, Lie theory for $U(\Hs)$ is a difficult issue. 
What one has to resolve for discussing the Lie group-Lie algebra correspondence is 
a domain problem of the generators of one parameter subgroups of $G\subset U(\Hs)$. 
The second to be discussed is a continuity of the Lie algebraic operations. 
%To ensure it, let us restrict our attention to the case $G\subset U(\M)$, 
%where $\M$ is a finite von Neumann algebra acting on a Hilbert space $\Hs$. 
However we can show that, for any strongly closed subgroup $G$ of unitary group $U(\M)$ of some finite von Neumann algebra $\M$, 
there exists canonically a  complete topological Lie algebra. 
Since there are continuously many non-isomorphic finite von Neumann algebras on $\Hs$, there are also varieties of such groups. 
We hope that the ``Lie Groups-Lie Algebras Correspondences" will play some important roles in the infinite dimensional Lie theory. 
We study the SRT-closed subalgebra of $\cM$, too.

\subsection{Existence of Lie Algebra}

Let $\M$ be a finite von Neumann algebra acting on a Hilbert space $\Hs$. 
Recall that a densely defined closable operator $A$ is called a {\it skew-adjoint operator} if $A^*=-A$, 
and $A$ is called {\it essentially skew-adjoint} if $\ol{A}$ is skew-adjoint.

\begin{rem}
In general, the strong limit of unitary operators is not necessarily unitary. 
It is known that $U(\M)$ is strongly closed in $\mathfrak{B}(\Hs)$ if and only if $\M$ is a finite von Neumann algebra. 
%This fact plays crucial roles in the sequel.
\end{rem}

\begin{dfn}For a strongly closed subgroup $G$ of $U(\M)$, the set 
\[\mf{g}=\text{Lie}(G):=\{A\ ;\ A^*=-A\ \text{on}\ \mathcal{H},\ e^{tA}\in G,\ \text{for all }t\in \mathbb{R}\}\]
is called the Lie algebra of $G$.\ The complexification $\mf{g}_{\mbb{C}}$ of $\mf{g}$ is defined by
\[\mf{g}_{\mbb{C}}:=\left\{\overline{A+iB}\ ;\ A,B\in \mf{g}\right\}.\]
If $G=U(\M)$, we sometimes write $\mf{g}$ as $\mf{u}(\M)$.
\end{dfn}

At first sight, it is not clear whether we can define algebraic operations on $\mf{g}$. However,
\begin{lem} 
Under the above notations, $\mf{g}\subset \cM$ holds.
\end{lem}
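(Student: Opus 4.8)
The plan is to show that each $A \in \mf{g}$ is a densely defined closed operator on $\Hs$ affiliated with $\M$; since $A$ is already skew-adjoint by definition, the only thing that requires proof is the affiliation property, namely that $uAu^* = A$ for every $u \in U(\M')$. The natural tool is the relationship between a skew-adjoint operator and the one-parameter unitary group it generates, together with the fact that for a von Neumann algebra $\M$ one has $\M = (\M')'$, so membership of a bounded operator in $\M$ can be tested by commutation with $U(\M')$.

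First I would fix $u \in U(\M')$ and consider, for $A \in \mf{g}$, the one-parameter group $t \mapsto e^{tA} \in G \subset U(\M)$. Since each $e^{tA}$ lies in $\M$ and $u \in \M'$, we have $u e^{tA} u^* = e^{tA}$ for all $t \in \mathbb{R}$. On the other hand, by the spectral theorem (Stone's theorem) the conjugated group $u e^{tA} u^* = e^{t(uAu^*)}$ is the one-parameter unitary group generated by the skew-adjoint operator $uAu^*$. Thus $e^{t(uAu^*)} = e^{tA}$ for all $t$, and by the uniqueness part of Stone's theorem the generators agree: $uAu^* = A$. Since $u \in U(\M')$ was arbitrary, $A$ is affiliated with $\M$. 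Being skew-adjoint, $A$ is in particular densely defined and closed, so $A \in \cM$ by definition of $\cM$.

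I expect the only subtlety to be the bookkeeping around Stone's theorem: one must note that conjugation by a fixed unitary $u$ carries the spectral resolution of $A$ to that of $uAu^*$, hence carries the unitary group $e^{tA}$ to $e^{t(uAu^*)}$, and that two strongly continuous one-parameter unitary groups with the same elements have the same infinitesimal generator. This is entirely standard, so there is no real obstacle here; the lemma is essentially a direct consequence of the definitions of $\mf{g}$ and of $\cM$ together with Stone's theorem, and no appeal to the finiteness of $\M$ or to the Murray--von Neumann machinery is needed at this stage.
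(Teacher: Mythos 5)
Your proof is correct and takes essentially the same approach as the paper: both rest on the identity $ue^{tA}=e^{tA}u$ for $u\in U(\M')$, the paper then differentiating strongly to get $uA\subset Au$ (hence $uA=Au$ since $u$ is arbitrary), while you instead identify $ue^{tA}u^{*}=e^{t(uAu^{*})}$ and invoke the uniqueness of the Stone generator — an equally standard and valid step. Your closing observation that neither finiteness of $\M$ nor the Murray--von Neumann machinery is needed here also matches the paper's proof.
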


\begin{proof} Let $u\in U(\M')$ and $A\in \mf{g}$. 
By definition, we have $e^{tA}u=ue^{tA}$. 
Taking the strong derivative on each side, we have $uA\subset Au$. 
Since $u$ is arbitrary we obtain $uA=Au$, which implies $A\in \cM$.
\end{proof}

Therefore the sum $\overline{A+B}$ and the Lie bracket $\overline{AB-BA}$ are well-defined operations in $\cM$, 
but it is not clear whether they belong to $\mf{g}$ again.
The following Lemma \ref{4Lie alg} guarantees the validity of the name ``Lie algebra". 
The former part of the proof is based on the two lemmata established by Trotter-Kato and E. Nelson, 
which are of importance in their own.

\begin{lem}[Trotter-Kato, Nelson \cite{Nelson2}]\label{4TKN}
Let $A,B$ be skew-adjoint operators on a Hilbert space $\mathcal{H}$.
\begin{list}{}{}
\item[(1)] If $A+B$ is essentially skew-adjoint on $\dom{A}\cap \dom{B}$, 
then it holds that
\[e^{t(\overline{A+B})}=\slimn \brac{e^{tA/n}e^{tB/n}}^n,\]
for all $t\in \mbb{R}$.
\item[(2)] If $(AB-BA)$ is essentially skew-adjoint on
\[\dom{A^2}\cap \dom{AB}\cap \dom{BA}\cap \dom{B^2},\] 
then it holds that
\[e^{t[A,B]}=\slimn \brac{e^{-\sqrt{\frac{t}{n}}A}e^{-\sqrt{\frac{t}{n}}B}e^{\sqrt{\frac{t}{n}}A}e^{\sqrt{\frac{t}{n}}B}}^{n^2},\]
for all $t>0$, where $[A,B]:=\overline{AB-BA}$.
\end{list}
\end{lem}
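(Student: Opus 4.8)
The plan is to derive both product formulas from \emph{Chernoff's product formula}: if $\{F(s)\}_{s\ge 0}$ is a strongly continuous family of contractions on $\Hs$ with $F(0)=1$, if $C$ generates a strongly continuous contraction semigroup, and if $\mathcal{D}$ is a core for $C$ on which $\slim{s\downarrow 0}s^{-1}\brac{F(s)-1}\xi=C\xi$, then $\slimn F(t/n)^{n}\xi=e^{tC}\xi$ for all $\xi\in\Hs$ and all $t\ge 0$. By hypothesis $\overline{A+B}$ and $[A,B]=\overline{AB-BA}$ are skew-adjoint, hence generate unitary one-parameter groups; in each case the approximating family $F$ will itself be unitary-valued, so the contraction and strong-continuity requirements are automatic, and the real work is the computation of the strong derivative of $F$ at $s=0$ on an appropriate core, together with (for (1)) an extension from $t\ge 0$ to all $t\in\mathbb{R}$.

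For (1) I would put $F(s):=e^{sA}e^{sB}$ and $C:=\overline{A+B}$. Essential skew-adjointness of $A+B$ on $\dom{A}\cap\dom{B}$ is exactly the statement that $\mathcal{D}:=\dom{A}\cap\dom{B}$ is a core for $C$. For $\xi\in\mathcal{D}$ one writes
\[
\frac{F(s)-1}{s}\,\xi=e^{sA}\,\frac{e^{sB}\xi-\xi}{s}+\frac{e^{sA}\xi-\xi}{s},
\]
which, as $s\downarrow 0$, converges to $B\xi+A\xi=C\xi$ by Stone's theorem and strong continuity of $s\mapsto e^{sA}$. Chernoff's formula then gives the claim for $t\ge 0$; for $t<0$ one applies the same argument to the pair $(-A,-B)$, whose sum is essentially skew-adjoint on the same core with closure $-C$, so that $\brac{e^{tA/n}e^{tB/n}}^{n}=\brac{e^{(-t)(-A)/n}e^{(-t)(-B)/n}}^{n}\longrightarrow e^{(-t)(-C)}=e^{t(\overline{A+B})}$.

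For (2) I would reparametrize, setting $F(s):=e^{-\sqrt{s}A}e^{-\sqrt{s}B}e^{\sqrt{s}A}e^{\sqrt{s}B}$ (again unitary-valued, $F(0)=1$, strongly continuous in $s\ge 0$) and $C:=[A,B]$, with $\mathcal{D}:=\dom{A^{2}}\cap\dom{AB}\cap\dom{BA}\cap\dom{B^{2}}$, which by hypothesis is a core for $C$. The heart of the matter is the identity $\slim{s\downarrow 0}s^{-1}\brac{F(s)-1}\xi=[A,B]\xi$ for $\xi\in\mathcal{D}$: for such $\xi$ one uses the second-order expansions $e^{\pm\sqrt{s}A}\eta=\eta\pm\sqrt{s}A\eta+\tfrac{s}{2}A^{2}\eta+o(s)$ (valid for $\eta$ in the relevant domain) and the analogous ones for $B$, applies the four unitaries in turn, and collects terms: the $O(\sqrt{s})$ contributions cancel stagewise, and the surviving $O(s)$ contributions assemble exactly into $s\,(AB-BA)\xi$. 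Each of the four domain conditions defining $\mathcal{D}$ is used precisely once here — $\dom{A^{2}}$ and $\dom{B^{2}}$ from the square terms, $\dom{AB}$ and $\dom{BA}$ from the mixed terms — and this is what makes the $o(s)$ remainders legitimate. Chernoff's formula then yields the limit asserted in (2) for $t>0$.

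I expect the main obstacle to lie in this last expansion: one has to carry the $\sqrt{s}$-order terms through a noncommuting four-fold product and verify that the $o(s)$ error terms — available only because $\xi$ belongs to the full four-fold domain — survive composition with the unitaries $e^{\pm\sqrt{s}A},e^{\pm\sqrt{s}B}$, the point being that unitarity of these factors preserves the $o(s)$ estimate. None of this is new, since it is exactly the content of the Trotter--Kato and Nelson theorems cited; so in the text one may alternatively simply refer to \cite{Nelson2}.
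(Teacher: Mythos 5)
The paper offers no proof of this lemma at all --- it is quoted verbatim from Nelson's book \cite{Nelson2} --- so there is no internal argument to compare yours with; your Chernoff-based derivation is the standard proof of both product formulas and is essentially the cited one. Part (1) is correct as written: the identity $s^{-1}(F(s)-1)\xi=e^{sA}s^{-1}(e^{sB}\xi-\xi)+s^{-1}(e^{sA}\xi-\xi)$ gives the derivative $A\xi+B\xi$ on the core $\dom{A}\cap\dom{B}$, and the reduction of $t<0$ to the pair $(-A,-B)$ is fine.

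For part (2) two points need attention. First, the exponent: with $F(s):=e^{-\sqrt{s}A}e^{-\sqrt{s}B}e^{\sqrt{s}A}e^{\sqrt{s}B}$, Chernoff yields $\slimn F(t/n)^{n}=e^{t[A,B]}$, i.e.\ the $n$-th power of the group commutator at parameter $\sqrt{t/n}$, not the $n^{2}$-th power printed in the statement; with $\sqrt{t/n}$ inside, the $n^{2}$-th power cannot converge to $e^{t[A,B]}$ in general (already for bounded noncommuting $A,B$ it behaves like $e^{nt[A,B]}$). So either the $n^{2}$ in the statement is a typo for $n$ --- note that the later argument transporting the bracket through a strongly continuous homomorphism uses exactly the $n$-th power version --- or the inner parameter must be $\sqrt{t}/n$, in which case your argument still applies by reading the product as the subsequence $m=n^{2}$ of $F(t/m)^{m}$; as it stands your proof establishes the ($n$-power) form, not the literal statement. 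Second, in computing $s^{-1}(F(s)-1)\xi$ the Taylor expansions must only ever be applied to the fixed vectors $\xi$, $A\xi$, $B\xi$, $(A+B)\xi$, never to $s$-dependent intermediates such as $e^{\sqrt{s}B}\xi$, which need not lie in $\dom{A}$ at all: expand the innermost exponential at $\xi$, push the next unitary through the resulting sum termwise (second order at $\xi$ via $\xi\in\dom{A^{2}}$, first order at $B\xi$ via $\xi\in\dom{AB}$, unitarity on the remainder), and iterate, using $\xi\in\dom{BA}$ and $\xi\in\dom{B^{2}}$ at the next stage; this termwise inside-out ordering is what keeps every expansion on an admissible vector and closes the computation to $F(s)\xi=\xi+s(AB-BA)\xi+o(s)$ with exactly the four stated domain hypotheses. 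Your observation that unitarity preserves the $o(s)$ remainders is the right key, but without the ordering just described the phrase ``applies the four unitaries in turn'' hides a genuine domain issue. With the exponent corrected and that bookkeeping made explicit the proposal is sound; otherwise, as you note, one may simply keep the reference to \cite{Nelson2}, which is what the paper does.
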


\begin{lem}\label{4Lie alg}
Let $G$ be a strongly closed subgroup of $U(\M)$. 
Then $\mf{g}$ is a real Lie algebra with the Lie bracket $[X,Y]:=\overline{XY-YX}.$
\end{lem}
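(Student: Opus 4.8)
The plan is to reduce the statement to three facts: (i) $\mf{g}$ is stable under multiplication by real scalars, (ii) $\mf{g}$ is stable under the sum $\overline{A+B}$, and (iii) $\mf{g}$ is stable under the bracket $[A,B]=\overline{AB-BA}$. Once these are in hand the Lie algebra axioms come for free: by Theorem \ref{2eta_*alg} (through Lemma \ref{2eta_alg}) $\cM$ is an associative complex $*$-algebra, hence a Lie algebra under its commutator, and using Lemma \ref{2eta_alg}(5),(6) one checks that the operation $\overline{AB-BA}$ on $\cM$ is precisely that commutator; so $\mf{g}$, being a real-linear subspace of $\cM$ stable under the bracket, automatically inherits bilinearity, antisymmetry and the Jacobi identity. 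Fact (i) is immediate, since $e^{s(tA)}=e^{(st)A}\in G$ for all $s\in\mbb{R}$. For (ii) and (iii) the engine is the Trotter--Kato--Nelson formulas of Lemma \ref{4TKN} combined with the strong closedness of $G$.

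For (ii), let $A,B\in\mf{g}$. Then $iA$ and $iB$ are self-adjoint elements of $\cM$, so $\overline{iA+iB}=i\,\overline{A+B}$ is a closed symmetric operator affiliated with $\M$; since $\M$ is finite, Proposition \ref{2No-closed-extension}(1) forces it to be self-adjoint, whence $\overline{A+B}$ is skew-adjoint. The domain $\dom{A+B}=\dom{A}\cap\dom{B}$ is completely dense by Proposition \ref{2M-vN1} together with Proposition \ref{2countable intersection of c-dense}; in particular $A+B$ is densely defined and its closure is skew-adjoint, i.e.\ $A+B$ is essentially skew-adjoint on $\dom{A}\cap\dom{B}$. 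Lemma \ref{4TKN}(1) then gives
\[
e^{t\overline{A+B}}=\slimn\brac{e^{tA/n}\,e^{tB/n}}^{n}\qquad(t\in\mbb{R}).
\]
For each $n$ and $t$ the operators $e^{tA/n}$ and $e^{tB/n}$ lie in $G$, hence so does $\brac{e^{tA/n}e^{tB/n}}^{n}$, and since $G$ is strongly closed the strong limit $e^{t\overline{A+B}}$ lies in $G$. Thus $\overline{A+B}\in\mf{g}$.

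For (iii), let $A,B\in\mf{g}$. By Lemma \ref{2eta_alg}(1), $\overline{AB-BA}\in\cM$, and Lemma \ref{2eta_alg}(3) gives $\overline{AB-BA}^{*}=\overline{B^{*}A^{*}-A^{*}B^{*}}=\overline{BA-AB}=-\overline{AB-BA}$, so $\overline{AB-BA}$ is skew-adjoint. Applying Proposition \ref{2M-vN1} four times (e.g.\ with $X=A$ and $\mc{D}=\dom{B}$ to see that $\dom{AB}$ is completely dense, and similarly for $\dom{BA}$, $\dom{A^{2}}$, $\dom{B^{2}}$), the subspace
\[
\mc{D}:=\dom{A^{2}}\cap\dom{AB}\cap\dom{BA}\cap\dom{B^{2}}
\]
is completely dense, and it is contained in $\dom{\overline{AB-BA}}$, on which $\overline{AB-BA}$ acts as $AB-BA$. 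By Lemma \ref{2core of A}, $\mc{D}$ is a core of $\overline{AB-BA}$, so $AB-BA$ is essentially skew-adjoint on $\mc{D}$. Lemma \ref{4TKN}(2) then yields, for $t>0$,
\[
e^{t[A,B]}=\slimn\brac{e^{-\sqrt{t/n}\,A}e^{-\sqrt{t/n}\,B}e^{\sqrt{t/n}\,A}e^{\sqrt{t/n}\,B}}^{n^{2}},
\]
again a strong limit of elements of $G$, so $e^{t[A,B]}\in G$ for $t>0$; the case $t=0$ is trivial, and for $t<0$ one applies the case $t>0$ to the pair $(B,A)$ and uses $[A,B]=-[B,A]$. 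Hence $e^{t[A,B]}\in G$ for all $t\in\mbb{R}$, so $[A,B]\in\mf{g}$.

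I expect the main obstacle to be exactly the verification of the essential skew-adjointness hypotheses in Lemma \ref{4TKN}, which is where the Murray--von Neumann machinery is indispensable: Proposition \ref{2M-vN1} to know the relevant polynomial domains are completely dense, Lemma \ref{2core of A} (hence Proposition \ref{2No-closed-extension}(2)) to identify the closure on these domains with the affiliated operator, and Proposition \ref{2No-closed-extension}(1) to upgrade symmetry to self-adjointness. With those facts secured, the strong closedness of $G$ and the limit formulas of Lemma \ref{4TKN} do the rest, and the Lie algebra structure is inherited from the associative $*$-algebra $\cM$ of Theorem \ref{2eta_*alg}.
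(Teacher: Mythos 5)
Your proposal is correct and follows essentially the same route as the paper: verify the essential skew-adjointness hypotheses of Lemma \ref{4TKN} via the Murray--von Neumann machinery (complete density of the relevant domains, the core lemma, and self-adjointness of closed symmetric operators in $\cM$), then use the Trotter--Kato--Nelson formulas together with the strong closedness of $G$, inheriting the Lie algebra axioms from Theorem \ref{2eta_*alg}. The only differences are cosmetic: you spell out steps the paper leaves implicit, and you handle $t<0$ in the bracket case by antisymmetry applied to $(B,A)$ where the paper simply takes inverses of unitaries.
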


\begin{proof}
Let $A,B\in \mf{g}.$ 
It suffices to prove that $\ol{A+B}$ and $\ol{AB-BA}$ belong to $\mf{g}$. 
Since $\dom{A}\cap \dom{B}$ is completely dense, $A+B$ is essentially skew-adjoint. 
Therefore by Lemma \ref{4TKN} (1), we have $e^{t(\ol{A+B})}\in \ol{G}^{s}=G$ for all $t\in \mbb{R}.$ 
This implies $\ol{A+B}\in \mf{g}.$ 
It is clear that $\lambda A\in \mf{g}$ for all $\lambda \in \mbb{R}.$ 
On the other hand, as $\ol{AB-BA}$ is essentially skew-adjoint on 
\[\mathcal{D}:=\dom{AB}\cap \dom{BA}\cap \dom{A^2}\cap \dom{B^2},\]
since $\mathcal{D}$ is completely dense by Proposition \ref{2countable intersection of c-dense} and $\ol{AB-BA}\in \cM.$ 
Therefore by Proposition \ref{4TKN} (2), we have $e^{t(\ol{AB-BA})}\in G$ for all $t>0.$ 
Thanks to the unitarity, this equality is also valid for $t<0.$ 
Thus we obtain $[A,B]\in \mf{g}.$ 
The associativity of the algebraic operations follows from the Murray-von Neumann's Theorem \ref{2eta_*alg}.
\end{proof}

Now we state the main result of this paper, whose proof is almost completed in the previous arguments.

\begin{thm}\label{4main} 
Let $G$ be a strongly closed subgroup of the unitary group $U(\M)$ of a finite von Neumann algebra $\M$.
Then $\mf{g}$ is a complete topological real Lie algebra with respect to the strong resolvent topology.
Moreover, $\mf{g}_{\mbb{C}}$ is a complete topological Lie $^*$-algebra. 
\end{thm}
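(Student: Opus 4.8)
The plan is to realise $\mf{g}$ as a strong-resolvent-closed real Lie subalgebra of $\cM$, and $\mf{g}_{\mbb{C}}$ as a strong-resolvent-closed complex Lie $^{*}$-subalgebra, and then to deduce everything from Theorem \ref{3cta}. By Lemma \ref{4Lie alg} we already know that $\mf{g}$ is a real Lie algebra under the operations $\overline{A+B}$, $\overline{\alpha A}$ ($\alpha\in\mathbb{R}$) and $[A,B]=\overline{AB-BA}$ that it inherits from $\cM$; by Theorem \ref{3cta} these operations are continuous on $(\cM,SRT)$ — the continuity of the bracket being a consequence of the continuity of the multiplication and the addition together with the Murray--von Neumann consistency identities of Lemma \ref{2eta_alg}. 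Restricting them to $\mf{g}\times\mf{g}$, which is legitimate by Lemma \ref{4Lie alg}, shows that $\mf{g}$ with the subspace topology is a topological real Lie algebra, so the only point still to be settled is completeness. For that it suffices to prove that $\mf{g}$ is closed in $(\cM,SRT)$, since a closed subspace of a complete Hausdorff topological vector space is again complete.

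The hard part is this closedness. Let $\{A_{\lambda}\}_{\lambda}\subset\mf{g}$ be a net converging to some $A\in\cM$; I must show $A\in\mf{g}$. Since each $A_{\lambda}$ is skew-adjoint, $\re{A_{\lambda}}=0$ and $\im{A_{\lambda}}=-iA_{\lambda}$, so SRT-convergence forces $\{\re{A}-i\}^{-1}$ to be the constant resolvent $\{\re{A_{\lambda}}-i\}^{-1}$; hence $\re{A}=0$ and $A=i\,\im{A}$ is skew-adjoint. It remains to see that $e^{tA}\in G$ for all $t\in\mathbb{R}$, and this is exactly the place where the work of $\S3$ pays off: by Theorem \ref{3cta} the strong resolvent topology and the strong exponential topology coincide on $\cM$, so from $A_{\lambda}\to A$ (all real parts being zero) we obtain that $e^{tA_{\lambda}}=e^{it\,\im{A_{\lambda}}}$ converges strongly to $e^{it\,\im{A}}=e^{tA}$ for each $t$. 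As each $e^{tA_{\lambda}}$ lies in $G$ and $G$ is strongly closed, $e^{tA}\in G$, whence $A\in\mf{g}$. Thus $\mf{g}$ is SRT-closed, hence complete, and the first assertion of Theorem \ref{4main} follows.

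For $\mf{g}_{\mbb{C}}=\{\overline{A+iB}\ ;\ A,B\in\mf{g}\}$ I would first verify, by a routine computation with the identities of Lemma \ref{2eta_alg} together with $[\mf{g},\mf{g}]\subset\mf{g}$ and $\mathbb{R}\,\mf{g}\subset\mf{g}$, that $\mf{g}_{\mbb{C}}$ is a complex Lie subalgebra of $\cM$ stable under the involution (concretely $(\overline{A+iB})^{*}=\overline{(-A)+iB}$ and $i\,\overline{A+iB}=\overline{(-B)+iA}$), so that by restriction from Theorem \ref{3cta} it is a topological Lie $^{*}$-algebra. Closedness follows by the same mechanism as before: if $\overline{A_{\lambda}+iB_{\lambda}}\to X$ in $\cM$, then $\re{X}$ and $\im{X}$ are the SRT-limits of $iB_{\lambda}$ and of $-iA_{\lambda}$, and since left multiplication by the fixed bounded operator $i1\in\M\subset\cM$ is SRT-continuous (multiplication is jointly continuous on $\cM$), the nets $\{A_{\lambda}\}$ and $\{B_{\lambda}\}$ converge in the closed subspace $\mf{g}$, say to $A$ and $B$; continuity of the operations then gives $\overline{A_{\lambda}+iB_{\lambda}}\to\overline{A+iB}\in\mf{g}_{\mbb{C}}$, and the Hausdorff property forces $X=\overline{A+iB}\in\mf{g}_{\mbb{C}}$. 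Being a closed subspace of the complete space $\cM$, $\mf{g}_{\mbb{C}}$ is complete, so it is a complete topological Lie $^{*}$-algebra. All remaining steps are bookkeeping with the $^{*}$-algebra structure of $\cM$ provided by Theorem \ref{3cta}.
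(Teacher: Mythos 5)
Your proposal is correct and follows essentially the same route as the paper: the Lie-algebraic structure of $\mf{g}$ from Lemma \ref{4Lie alg}, continuity of the operations and the coincidence of the strong resolvent and strong exponential topologies from Theorem \ref{3cta}, and completeness via SRT-closedness of $\mf{g}$ and $\mf{g}_{\mbb{C}}$ inside the complete space $\cM$. The only difference is that you spell out the closedness argument in detail (skew-adjointness of the limit via the constant real-part resolvent, and $e^{tA}\in G$ via SET-convergence and the strong closedness of $G$), a step the paper leaves implicit in its appeal to Theorem \ref{3cta}.
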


\begin{proof}
The Lie algebraic properties are proved in Lemma \ref{4Lie alg}. 
By Lemma \ref{3cta}, we see that $\mf{g}$ and $\mf{g}_{\mbb{C}}$ are SRT-closed Lie subalgebras of $\cM$. 
As the algebraic operations $(X,Y)\mapsto \ol{X+Y},\ [X,Y]$ are continuous with respect to the strong resolvent topology 
, so that the topological properties follow. 
\end{proof}

\begin{rem}
It is easy to see that for $G=U(\M)$, its Lie algebra $\mf{u}(\M)$ is equal to $\{A\in \cM; A^*=-A\}$ and the exponential map
\[\exp : \mf{u}(\M)\to U(\M)\]
is continuous by Lemma \ref{ASRT=exp} and surjective by the spectral theorem.
\end{rem}

\begin{prop} 
Let $\M_1,\ \M_2$ be finite von Neumann algebras on Hilbert spaces $\mathcal{H}_1$, $\mathcal{H}_2$ respectively. 
Let $G_i$ be a strongly closed subgroup of $U(\M_i)$ ($i=1,2$). 
For any strongly continuous group homomorphism $\varphi:G_1\to G_2$, 
there exists a unique SRT-continuous Lie algebra homomorphism $\Phi:\ \text{Lie}(G_1)\to \text{Lie}(G_2)$ 
such that $\varphi(e^{A})=e^{\Phi (A)}$ for all $A\in \text{Lie}(G_1).$ 
In particular, if $G_1$ is isomorphic to $G_2$ as a topological group, 
then $\text{Lie}(G_1)$ and $\text{Lie}(G_2)$ are isomorphic as a topological Lie algebra.
\end{prop}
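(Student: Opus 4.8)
The plan is to define $\Phi$ on one-parameter subgroups by means of Stone's theorem and then to transport the Trotter--Kato--Nelson formulas of Lemma \ref{4TKN} through $\varphi$ in order to see that $\Phi$ preserves every Lie operation; the topological statement will then fall out of the exponential description of the strong resolvent topology established in Theorem \ref{3cta}.

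First I would construct $\Phi$. Given $A\in\text{Lie}(G_1)$, the map $t\mapsto\varphi(e^{tA})$ is a strongly continuous one-parameter unitary group on $\Hs_2$ taking values in $G_2$, being the composite of the strongly continuous homomorphism $t\mapsto e^{tA}$ (whose image lies in $G_1$) with $\varphi$. By Stone's theorem it has a unique skew-adjoint generator, which I call $\Phi(A)$, and since $e^{t\Phi(A)}=\varphi(e^{tA})\in G_2$ for all $t\in\mathbb{R}$ we have $\Phi(A)\in\text{Lie}(G_2)$. Applying this with $tA$ in place of $A$ yields $\Phi(tA)=t\Phi(A)$ for $t\in\mathbb{R}$, and uniqueness of $\Phi$ among real-homogeneous maps satisfying $\varphi(e^A)=e^{\Phi(A)}$ is then immediate from Stone's theorem.

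The substantive step is to check that $\Phi$ preserves addition and the bracket. For $A,B\in\text{Lie}(G_1)$, Lemma \ref{4TKN}(1) gives $e^{t\overline{A+B}}=\slimn\brac{e^{tA/n}e^{tB/n}}^n$; applying the homomorphism $\varphi$ termwise, passing the strong limit through $\varphi$ (legitimate since all terms and the limit lie in the strongly closed group $G_1$ and $\varphi$ is strongly continuous), and using $\varphi(e^{sA})=e^{s\Phi(A)}$, I get
\[
\varphi\brac{e^{t\overline{A+B}}}=\slimn\brac{e^{(t/n)\Phi(A)}e^{(t/n)\Phi(B)}}^n.
\]
Since $\Phi(A),\Phi(B)\in\text{Lie}(G_2)$, the argument from the proof of Lemma \ref{4Lie alg} shows $\overline{\Phi(A)+\Phi(B)}$ is essentially skew-adjoint on the completely dense domain $\dom{\Phi(A)}\cap\dom{\Phi(B)}$, so Lemma \ref{4TKN}(1) applied inside $\overline{\M_2}$ identifies the right-hand side with $e^{t\,\overline{\Phi(A)+\Phi(B)}}$; comparing this with $\varphi(e^{t\overline{A+B}})=e^{t\Phi(\overline{A+B})}$ and invoking Stone's uniqueness gives $\Phi(\overline{A+B})=\overline{\Phi(A)+\Phi(B)}$. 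The identical scheme with Lemma \ref{4TKN}(2), whose hypotheses are satisfied on both sides by Proposition \ref{2countable intersection of c-dense} exactly as in Lemma \ref{4Lie alg}, produces $\varphi(e^{t[A,B]})=e^{t[\Phi(A),\Phi(B)]}$ for all $t>0$, hence $\Phi([A,B])=[\Phi(A),\Phi(B)]$; so $\Phi$ is a homomorphism of real Lie algebras.

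For continuity I would use that on $\text{Lie}(G_i)\subset\overline{\M_i}$ the strong resolvent topology agrees with the strong exponential topology (Theorem \ref{3cta}), so that --- the operators being skew-adjoint, with vanishing real part --- a net $A_\lambda\to A$ in $\text{Lie}(G_1)$ for the SRT is equivalent to $e^{tA_\lambda}\xi\to e^{tA}\xi$ for every $\xi\in\Hs_1$ and $t\in\mathbb{R}$ (see Appendix B). Strong continuity of $\varphi$ then gives $e^{t\Phi(A_\lambda)}\xi=\varphi(e^{tA_\lambda})\xi\to\varphi(e^{tA})\xi=e^{t\Phi(A)}\xi$ for all $\xi\in\Hs_2$, $t\in\mathbb{R}$, which is exactly SRT-convergence $\Phi(A_\lambda)\to\Phi(A)$ in $\text{Lie}(G_2)$. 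Finally, if $\varphi$ is an isomorphism of topological groups, I would apply the construction to $\varphi$ and to $\varphi^{-1}$ to obtain SRT-continuous Lie algebra homomorphisms $\Phi$ and $\Psi$; then $\Psi\circ\Phi$ and $\Phi\circ\Psi$ each satisfy the defining relation of the corresponding identity map, so by uniqueness they are the identities and $\Phi$ is an isomorphism of topological Lie algebras. The only delicate points are the two interchanges of $\varphi$ with the Trotter limits --- taken care of by strong continuity of $\varphi$ and the fact that the Trotter factors remain in $G_1$ --- and the verification of the hypotheses of Lemma \ref{4TKN} for $\Phi(A),\Phi(B)$, which is word-for-word the computation already carried out for Lemma \ref{4Lie alg}; so I anticipate no real obstruction.
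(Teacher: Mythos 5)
Your proposal is correct and follows essentially the same route as the paper: define $\Phi$ via Stone's theorem, transport the Trotter--Kato--Nelson formulas of Lemma \ref{4TKN} through the strongly continuous homomorphism $\varphi$ (with the hypotheses checked exactly as in Lemma \ref{4Lie alg}), conclude by uniqueness of generators, and get SRT-continuity from the coincidence of the strong resolvent and strong exponential topologies. The only differences are cosmetic: you spell out the additivity step and the isomorphism statement, which the paper dispatches with ``similarly'' and ``in particular''.
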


\begin{proof}
Let $X$ be an element in $\text{Lie}(G_1)$. 
From the strong continuity of $\varphi$, $t\mapsto \varphi(e^{tX})$ is a strongly continuous one-parameter unitary group. 
Therefore by Stone theorem, there exists uniquely a skew-adjoint operator $\Phi(X)$ on $\mathcal{H}_2$ 
such that $\varphi(e^{tX})=e^{t\Phi(X)}.$ 
This equality implies $\Phi(X)\in \text{Lie}(G_2).$ 
Since $\varphi$ is strongly continuous, thanks to Lemma \ref{4TKN}, we see that
\eqa{
e^{t\Phi([X,Y])}&=\varphi (e^{t\cdot [X,Y]})\\
&=\varphi \brac{\slimn \left [e^{-\sqrt{\frac{t}{n}}X}e^{-\sqrt{\frac{t}{n}}Y}e^{\sqrt{\frac{t}{n}}X}e^{\sqrt{\frac{t}{n}}Y}\right ]^n}\\
&=\slimn \left [\varphi\brac{e^{-\sqrt{\frac{t}{n}}X}}\varphi\brac{e^{-\sqrt{\frac{t}{n}}Y}}
\varphi\brac{e^{\sqrt{\frac{t}{n}}X}}\varphi\brac{e^{\sqrt{\frac{t}{n}}Y}}\right ]^n\\
&=\slimn \left [e^{-\sqrt{\frac{t}{n}}\Phi(X)}e^{-\sqrt{\frac{t}{n}}\Phi(Y)}e^{\sqrt{\frac{t}{n}}\Phi(X)}e^{\sqrt{\frac{t}{n}}\Phi(Y)}\right ]^n\\
&=e^{t\cdot [\Phi(X),\Phi(Y)]},
}
for all $t>0$.
Taking the inverse of unitary operators, the equality $e^{t\Phi([X,Y])}=e^{t[\Phi(X),\Phi(Y)]}$ is also valid for all $t<0$. 
Therefore from the uniqueness of a generator of one-parameter group, we have $\Phi([X,Y])=[\Phi(X),\Phi(Y)].$ 
Similarly, we can prove that $\Phi$ is linear. 
Thus, $\Phi$ is a Lie algebra homomorphism. 
The SRT-continuity of $\Phi$ follows from the continuity of $\varphi$ and the definition of the strong exponential topology. 
\end{proof}

As above, $G$ has finite dimensional characters. 
On the other hand, it also has an infinite dimensional character.

\begin{prop}
Let $\M$ be a finite von Neumann algebra, then the following are equivalent.
\begin{list}{}{}
\item[(1)] The exponential map $\exp:\ \mf{u}(\M)\ni X\mapsto \exp(X)\in U(\M)$ is locally injective. 
Namely, the restriction of the map onto some SRT-neighborhood of $0\in \cM$ is injective. 
\item[(2)] $\M$ is finite dimensional.
\end{list}
\end{prop}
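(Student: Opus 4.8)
The plan is to treat the two implications separately. The direction $(2)\Rightarrow(1)$ is just finite-dimensional Lie theory: if $\M$ is finite dimensional then $\M=\bigoplus_{i=1}^{k}M_{n_i}(\mbb{C})$ and $\cM=\M$, so $\mf{u}(\M)$ is a finite-dimensional real vector space; since $(\cM,\mathrm{SRT})$ is a Hausdorff topological vector space by Theorem \ref{3cta}, its topology restricted to this finite-dimensional space must be the norm topology. The exponential map is then the blockwise matrix exponential $\mf{u}(\M)\to U(\M)$, which is real-analytic with differential the identity at $0$, hence by the inverse function theorem a local diffeomorphism at $0$; in particular it is injective on an SRT-neighborhood of $0$.

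For $(1)\Rightarrow(2)$ I would argue the contrapositive: assuming $\M$ infinite dimensional, I would show $\exp$ is injective on no SRT-neighborhood of $0$. The mechanism is the elementary identity $e^{2\pi i e}=1$, valid for any projection $e\in\M$ by functional calculus (since $e^{2\pi i e}=1+(e^{2\pi i}-1)e$), together with the facts that $2\pi i e\in\mf{u}(\M)$ and $2\pi i e\neq 0$ whenever $e\neq 0$. Hence it is enough to produce, in an arbitrary SRT-neighborhood $U$ of $0$, a nonzero element of the form ``$2\pi i e$ placed in a single direct-summand coordinate and $0$ in the others'', with $e$ a nonzero projection: such an $X\in\mf{u}(\M)$ is nonzero but satisfies $\exp(X)=1=\exp(0)$. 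To locate such projections I would first reduce to the countably decomposable case. By Lemma \ref{2decompose}, write $\M\cong\bigoplus_{\alpha}^{b}\M_{\alpha}$ with every $\M_{\alpha}$ countably decomposable and finite; then $\cM=\bigoplus_{\alpha}\cM_{\alpha}$ by Lemma \ref{3key}, and since the strong resolvent topology on $\cM$ is the coordinatewise one (Lemma \ref{3convergence} and Lemma \ref{3key}), the sets
\[
\Big\{\textstyle\bigoplus_{\alpha}A_{\alpha}\in\cM\ ;\ A_{\alpha}\in V_{\alpha}\ \text{for all}\ \alpha\in F\Big\},
\]
with $F$ a finite set of indices and each $V_{\alpha}$ an SRT-neighborhood of $0$ in $\cM_{\alpha}$, form a neighborhood base at $0$.

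Now fix such a basic neighborhood $U$, with its finite index set $F$ and neighborhoods $V_{\alpha}$ ($\alpha\in F$). If there is an index $\alpha_{0}\notin F$ with $\M_{\alpha_{0}}\neq\{0\}$, choose any nonzero projection $e\in\M_{\alpha_{0}}$ and let $X$ be $2\pi i e$ in the $\alpha_{0}$-coordinate and $0$ elsewhere; then $X\in U$ because every coordinate constrained by $U$ equals $0\in V_{\alpha}$. Otherwise every nonzero $\M_{\alpha}$ lies among the finitely many indices in $F$, so $\M=\bigoplus_{\alpha\in F}\M_{\alpha}$ is a finite direct sum and, being infinite dimensional, has an infinite-dimensional (countably decomposable, finite) summand $\M_{\alpha_{1}}$ with $\alpha_{1}\in F$; fix a faithful normal tracial state $\tau_{1}$ on $\M_{\alpha_{1}}$. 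An infinite-dimensional finite von Neumann algebra has nonzero projections of arbitrarily small trace (if its atomic part is infinite dimensional it has infinitely many matricial summands, whose minimal projections have $\tau_{1}$-trace tending to $0$; a nonzero diffuse part supplies them by repeated halving). Using Lemma \ref{3srt=mt}, which identifies the strong resolvent topology on $\cM_{\alpha_{1}}$ with the $\tau_{1}$-measure topology, pick $\eps,\delta>0$ with $N(\eps,\delta)\subseteq V_{\alpha_{1}}$ and a nonzero projection $e\in\M_{\alpha_{1}}$ with $\tau_{1}(e)<\delta$; then $p:=1-e$ gives $\|(2\pi i e)p\|=0<\eps$ and $\tau_{1}(p^{\perp})=\tau_{1}(e)<\delta$, so $2\pi i e\in N(\eps,\delta)\subseteq V_{\alpha_{1}}$, and $X$ equal to $2\pi i e$ in the $\alpha_{1}$-coordinate and $0$ elsewhere again lies in $U$. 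In either case $X$ is a nonzero element of $U\cap\mf{u}(\M)$ with $\exp(X)=\exp(0)$, so $\exp$ is not injective on $U$; this completes the contrapositive.

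The main obstacle is the non-countably-decomposable case: $\M$ itself carries no trace, so one must first pass through the decomposition of Lemma \ref{2decompose}, correctly identify a neighborhood base of $0$ for the strong resolvent topology on the direct sum, and only then invoke the $\tau$-measure description summand by summand. The auxiliary structural statement used above — that an infinite-dimensional finite von Neumann algebra has nonzero projections of arbitrarily small trace — is standard, but should be recorded explicitly in the write-up.
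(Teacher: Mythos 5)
Your proof is correct, but it follows a genuinely heavier route than the paper's. Both arguments hinge on the same kernel elements: for a nonzero projection $e$, the element $2\pi i e\in\mf{u}(\M)$ is nonzero and satisfies $e^{2\pi i e}=1$. The difference is how such elements are shown to lie in every SRT-neighborhood of $0$. The paper argues directly from (1): if $\exp$ were locally injective, every orthogonal family of nonzero projections in $\M$ would be finite, since a countably infinite orthogonal family $\{p_n\}$ satisfies $p_n\to 0$ strongly (orthogonality gives $\sum_n\|p_n\xi\|^2\le\|\xi\|^2$), hence $2\pi i p_n\to 0$ in the strong resolvent topology by Lemma \ref{AcoreSRT}, while $2\pi i p_n\neq 0$ and $e^{2\pi i p_n}=1$; the finiteness of all orthogonal families then forces $\M$ to be atomic (a nonzero diffuse part would yield an infinite family) with only finitely many matrix summands, i.e.\ finite dimensional. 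That route needs no trace, no measure topology, and no decomposition into countably decomposable summands. You instead prove the contrapositive by fixing an arbitrary basic neighborhood of $0$ in $\bigoplus_{\alpha}\cM_{\alpha}$ and planting a small projection in one summand, which forces you through Lemma \ref{2decompose} (plus the routine remark that local injectivity transfers along the spatial isomorphism), the identification of the SRT on the direct sum with the coordinatewise topology (Lemmas \ref{3convergence} and \ref{3key}, via a net argument to get the claim that every SRT-neighborhood of $0$ contains a set constrained in only finitely many coordinates), Lemma \ref{3srt=mt}, and the auxiliary fact about nonzero projections of arbitrarily small trace that you rightly flag as needing an explicit write-up. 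All of these uses are sound, so your proof goes through; but note that in your Case 2 you could avoid the trace and measure topology altogether by taking an infinite orthogonal sequence of nonzero projections in the infinite-dimensional summand and using their strong convergence to $0$ — which is precisely the paper's shortcut, and is what makes its proof shorter.
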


\begin{proof}
(2)$\Rightarrow$(1) is trivial.
We should only prove that (1)$\Rightarrow$(2).\\

{\bf Step 1.} For each orthogonal family of non-zero projections in $\M$, its cardinal number is finite. 
Indeed if there exists a orthogonal family of non-zero projections in $\M$ whose cardinal number is infinite,
we can take a countably infinite subset of it and write it as $\{p_n\}_{n=1}^{\infty}$.
Since $p_n$ converges strongly to $0$, it also converges to $0$ in the strong resolvent topology.   
Define $x_n:=2\pi ip_n\neq 0$. 
Since the spectral set of $p_n$ is $\{0,1\}$, 
we have $e^{x_n}=1$ for all $n\in\mathbb{N}$, while $x_n$ converges to $0$ in the strong resolvent topology. 
This implies that the map $\exp(\cdot)$ is not locally injective, which is a contradiction.\\

{\bf Step 2.} $\M$ is atomic.
Indeed if $\M$ is not atomic, the diffuse part of it is not $\{0\}$.
Thus we can take an infinite sequence of non-zero mutually orthogonal projections in $\M$.
But this is a contradiction to Step 1..\\

{\bf Step 3.} We shall show that $\M$ is finite dimensional.
By Step 2., $\M$ is spatially isomorphic 
to the direct sum of a family $\left\{M_{n_{\lambda}}(\mathbb{C})\right\}_{\lambda\in\Lambda}$ ($n_{\lambda}\in\mathbb{N}$), 
where $M_{n_{\lambda}}(\mathbb{C})$ is the algebra of all $n_{\lambda}\times n_{\lambda}$ complex matrices.
By Step 1., the cardinal number of $\Lambda$ is finite.
Hence $\M$ is finite dimensional.
\end{proof}

\begin{rem}
$\text{Lie}(G)$ is not always locally convex, whereas most of infinite dimensional Lie theories, by contrast, assume local convexity.
Indeed, by Proposition \ref{4local convex iff atomic}, $\mf{u}(\M)$ is locally convex if and only if $\M$ is atomic.
\end{rem}

\subsection{Closed Subalgebras of $\cM$}

Next, we characterize closed *-subalgebras of $\cM$.
%Through this section, we say a subset of $\cM$ is closed 
%if it is closed with respect to the $\tau$-measure topology.

\begin{prop}\label{5main}
Let $\M$ be a finite non Neumann algebra on a Hilbert space $\Hs$, 
$\mathscr{R}$ be a SRT-closed *-subalgebra of $\cM$ with $1_{\Hs}$.
Then there exists a unique von Neumann subalgebra $\mathfrak{N}$ of $\M$ 
such that $\mathscr{R}=\overline{\mathfrak{N}}$.
\end{prop}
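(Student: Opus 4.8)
The plan is to recover $\mathfrak{N}$ as the bounded part of $\mathscr{R}$, i.e. to set $\mathfrak{N} := \mathscr{R}\cap\mathfrak{B}(\Hs)$, and then to show that this is a von Neumann subalgebra of $\M$ with $\overline{\mathfrak{N}} = \mathscr{R}$. First I would check that $\mathfrak{N}$ is a $*$-subalgebra of $\M$ containing $1_{\Hs}$: it is clearly a unital $*$-subalgebra of $\mathfrak{B}(\Hs)$, and since every element of $\mathscr{R}$ lies in $\cM$, every bounded element of $\mathscr{R}$ is in $\cM\cap\mathfrak{B}(\Hs) = \M$ (an element of $\cM$ that is everywhere defined and bounded commutes with $U(\M')$, hence lies in $\M$ by the double commutant theorem). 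To see that $\mathfrak{N}$ is weakly closed, I would use Lemma \ref{3mt=sot}: on the unit ball $\M_1$ the strong resolvent topology and the strong operator topology agree. Given a net in the unit ball of $\mathfrak{N}$ converging $\sigma$-weakly (equivalently strongly, by Kaplansky density together with convexity) to some $x\in\M$, one gets SRT-convergence in $\cM$; since $\mathscr{R}$ is SRT-closed, $x\in\mathscr{R}$, hence $x\in\mathfrak{N}$. So the unit ball of $\mathfrak{N}$ is strongly closed, and by the Krein–Smulian / bounded part theorem $\mathfrak{N}$ is a von Neumann subalgebra of $\M$.

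Next I would prove $\overline{\mathfrak{N}}\subset\mathscr{R}$. Every self-adjoint $A\in\overline{\mathfrak{N}}$ can be approximated in SRT by its bounded truncations $A_n := \int_{[-n,n]}\lambda\,dE_A(\lambda) = f_n(A)$, where $f_n(\lambda)=\lambda$ on $[-n,n]$ and is bounded; since $A$ is affiliated with $\mathfrak{N}$, the spectral projections $E_A(\cdot)$ lie in $\mathfrak{N}$, so $A_n\in\mathfrak{N}\subset\mathscr{R}$. By Lemma \ref{Abf} (bounded continuous functional calculus is SRT-continuous) applied to the function $\lambda\mapsto(\lambda-i)^{-1}$ together with the fact that $A_n\to A$ strongly on the common core $\bigcup_n E_A([-n,n])\Hs$, one gets $A_n\to A$ in SRT; since $\mathscr{R}$ is SRT-closed, $A\in\mathscr{R}$. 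A general $A\in\overline{\mathfrak{N}}$ is $\overline{\re{A}+i\im{A}}$ with $\re{A},\im{A}$ self-adjoint and affiliated with $\mathfrak{N}$, hence in $\mathscr{R}$, and since $\mathscr{R}$ is a $*$-algebra, $A=\overline{\re{A}+i\,\im{A}}\in\mathscr{R}$.

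For the reverse inclusion $\mathscr{R}\subset\overline{\mathfrak{N}}$, take self-adjoint $A\in\mathscr{R}$ with spectral resolution $A=\int\lambda\,dE_A(\lambda)$; I want $E_A(S)\in\mathfrak{N}$ for every Borel set $S$, which gives $A$ affiliated with $\mathfrak{N}$. The spectral projections of $A$ can be obtained from $A$ by SRT-limits of bounded Borel (indeed continuous) functions of $A$: approximate $\chi_S$ suitably, or first note $(A-i)^{-1}\in\mathscr{R}\cap\mathfrak{B}(\Hs)=\mathfrak{N}$ (it is bounded and lies in the SRT-closed $*$-algebra $\mathscr{R}$ since $\mathscr{R}\ni 1$ and... here one uses that $(A+1)^{-1}$ etc. are SRT-limits of polynomials-in-resolvent expressions, or more directly that $A\in\cM$ forces $(A\pm i)^{-1}\in\M$ and it lies in $\mathscr{R}$). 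Then the von Neumann algebra generated by $(A-i)^{-1}$ is contained in $\mathfrak{N}$ and contains all $E_A(S)$, so $A$ is affiliated with $\mathfrak{N}$, i.e. $A\in\overline{\mathfrak{N}}$. Decomposing a general element of $\mathscr{R}$ into real and imaginary parts finishes it. Finally, uniqueness: if $\overline{\mathfrak{N}_1}=\overline{\mathfrak{N}_2}$ then intersecting with $\mathfrak{B}(\Hs)$ gives $\mathfrak{N}_1=\mathfrak{N}_2$, since a von Neumann algebra $\mathfrak{N}$ is exactly the set of bounded elements of $\overline{\mathfrak{N}}$.

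\medskip

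\noindent\textbf{Main obstacle.} The delicate point is showing that $(A\pm i)^{-1}\in\mathscr{R}$ for $A\in\mathscr{R}$, and more generally that bounded Borel functional calculus of a self-adjoint element of $\mathscr{R}$ stays inside the SRT-\emph{closed} algebra $\mathscr{R}$ — SRT is not a locally convex topology and $\mathscr{R}$ is only assumed closed under the algebraic operations of $\cM$ and SRT-limits, so one must carefully produce these resolvents and spectral projections as honest SRT-limits of algebraic expressions in $A$ (e.g. using $(A+\lambda)^{-1}$ type approximations and Lemma \ref{Abf}), rather than invoking a functional-calculus closure property one has not established. Getting this bootstrap right, using only continuity of bounded continuous functional calculus in SRT and the $*$-algebra structure, is where the real work lies.
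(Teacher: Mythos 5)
Your overall skeleton (take $\mathfrak{N}:=\mathscr{R}\cap\mathfrak{B}(\Hs)$, show it is a von Neumann subalgebra via the agreement of SRT and SOT on bounded sets, prove the two inclusions, deduce uniqueness from the fact that $\mathfrak{N}$ is the bounded part of $\overline{\mathfrak{N}}$) is exactly the paper's, and those parts of your argument are fine. The problem is that the only genuinely nontrivial step, $\mathscr{R}\subset\overline{\mathfrak{N}}$, is left unproved: everything hinges on showing that some bounded function of a self-adjoint $A\in\mathscr{R}$ (you choose $(A\pm i)^{-1}$) actually lies in $\mathscr{R}$, and you yourself flag this as the ``main obstacle'' without resolving it. The routes you gesture at do not close it: ``polynomials-in-resolvent expressions'' is circular, since the resolvent is not yet known to belong to $\mathscr{R}$; and Lemma \ref{Abf} only says that if $A_n\to A$ in SRT then $f(A_n)\to f(A)$ strongly -- it does not exhibit $(A-i)^{-1}$ as an SRT-limit of elements of the algebra generated by $A$. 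The remark ``$A\in\cM$ forces $(A\pm i)^{-1}\in\M$'' is true but irrelevant, because membership in $\M$ does not give membership in $\mathscr{R}$. So as written the proof has a genuine gap precisely where the real content of the proposition sits.

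The paper fills this hole with a different device: for self-adjoint $A\in\mathscr{R}$ it works on the completely dense subspace $\mathcal{C}_A=\bigcup_n \ran{E_A([-n,n])}$ of entire analytic vectors, where the partial sums $\overline{\sum_{j\le J}(itA)^j/j!}$ -- which are closures of polynomials in $A$ and hence elements of $\mathscr{R}$ -- converge pointwise to $e^{itA}$; almost-everywhere convergence is upgraded to SRT-convergence by Lemma \ref{3almostsrt}, SRT-closedness of $\mathscr{R}$ gives $e^{itA}\in\mathfrak{N}$ for all $t$, and affiliation of $A$ with $\mathfrak{N}$ follows. Your resolvent route can in fact be repaired with the tools of Section 3 -- e.g.\ approximate $(\lambda-i)^{-1}$ uniformly on $[-n,n]$ by polynomials $p_n$, note $\overline{p_n(A)}\in\mathscr{R}$ by Lemma \ref{2eta_alg}, and use $\|\{p_n(A)-(A-i)^{-1}\}E_A([-n,n])\|\to 0$ together with $\tau(E_A([-n,n])^{\perp})\to 0$ and Lemma \ref{3srt=mt} (SRT $=$ $\tau$-measure topology) to get $(A-i)^{-1}\in\mathscr{R}$ -- but some such argument must actually be supplied; without it the inclusion $\mathscr{R}\subset\overline{\mathfrak{N}}$, and hence the proposition, is not proved.
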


\begin{rem}\label{5rem}
A von Neumann subalgebra of a finite von Neumann algebra is also finite.
\end{rem}

\begin{proof}[{\rm{\textbf{Proof of Proposition \ref{5main}}}}]
Put
\begin{equation*}
\mathfrak{N}:=\{x\in \mathscr{R} \ ; \ x \ {\rm \ is \ bounded} \}.
\end{equation*} 
Since $0,1 \in \mathfrak{N}$, $\mathfrak{N}$ is not empty.
We first show that $\mathfrak{N}$ is a von Neumann algebra.
It is clear that $\mathfrak{N}$ is a subalgebra of $\M$.
Thus it is enough to check that $\mathfrak{N}$ is closed 
with respect to the strong* operator topology.
Let $\{x_{\alpha}\}$ be a net in $\mathfrak{N}$ converging to $x \in \mathfrak{N}$ 
with respect to the strong* operator topology.
So we have 
\begin{equation*}
\re{x_{\alpha}} \longrightarrow \re{x}, \ \ \ \ \im{x_{\alpha}} \longrightarrow \im{x}
\end{equation*}
with respect to the strong* operator topology.
By Lemma \ref{AcoreSRT}, 
\begin{equation*}
\re{x_{\alpha}} \longrightarrow \re{x}, \ \ \ \ \im{x_{\alpha}} \longrightarrow \im{x}
\end{equation*}
with respect to the strong resolvent topology.
As $\re{x_{\alpha}} \in \mathscr{R}$, $\im{x_{\alpha}} \in \mathscr{R}$ 
and $\mathscr{R}$ is SRT-closed, we see that 
$\re{x} \in \mathscr{R}$ and $\im{x} \in \mathscr{R}$.
Therefore
\begin{equation*}
x=\re{x}+i\im{x}\in\mathscr{R}.
\end{equation*}
Since $x$ is bounded, $x$ belongs to $\mathfrak{N}$.
Thus $\mathfrak{N}$ is a von Neumann algebra.

Next, we show that $\mathscr{R} \subset \overline{\mathfrak{N}}$.
Let $A$ be an element of $\mathscr{R}$. 
It is enough to consider the case that $A$ is self-adjoint.
Put 
\begin{equation*}
\mathcal{C}_{A}:= \bigcup_{n=1}^{\infty} \ran{E_A([-n, n])},
\end{equation*}
where $E_A(\cdot)$ is the spectral measure of $A$. 
$\mathcal{C}_{A}$ is completely dense and all elements of $\mathcal{C}_{A}$ are 
entire analytic vectors for $A$.
Thus we have for all $\xi \in \mathcal{C}_{A}$,
\begin{equation*}
e^{itA} = \lim_{J \rightarrow \infty} \sum_{j=1}^{J}\frac{(itA)^j}{j!}\xi.
\end{equation*}
Therefore the sequence 
\begin{equation*}
\left\{\overline{\sum_{j=1}^{J}\frac{(itA)^j}{j!}}\right\}_{J=1}^{\infty} \subset\mathscr{R}\subset\cM
\end{equation*}
converges almost everywhere to $e^{itA}$.
By Proposition \ref{3almostsrt}, 
it converges to $e^{itA}$ with respect to the strong resolvent topology.
%Therefore 
%$\left\{\overline{\sum_{j=1}^{J}\frac{(itA)^j}{j!}}\right\}_{J=1}^{\infty} \subset\mathscr{R}\subset\cM$
%converges almost everywhere to $e^{itA}$.
%By Proposition \ref{3almostsrt}, 
%$\left\{\overline{\sum_{j=1}^{J}\frac{(itA)^j}{j!}}\right\}_{J=1}^{\infty}$ 
%converges to $e^{itA}$ with respect to the strong resolvent topology.
Since $\mathscr{R}$ is SRT-closed and $e^{itA}$ is bounded, we get $e^{itA} \in \mathfrak{N}$.
This implies $A$ belongs to $\overline{\mathfrak{N}}$.

On the other hand, by the definition of $\mathfrak{N}$, $\mathfrak{N}\subset\mathscr{R}$.
Since $\overline{\mathfrak{N}}$ is a SRT-closure of $\mathfrak{N}$, 
we see that $\overline{\mathfrak{N}} \subset \mathscr{R}$.
Thus we conclude that $\overline{\mathfrak{N}} = \mathscr{R}$.

Finally, we show that the uniqueness for $\mathfrak{N}$.
Let $\mathfrak{L}$ be a von Neumann subalgebra of $\M$ satisfying 
$\overline{\mathfrak{L}} = \mathscr{R}$.
Then, we have
\begin{align*}
\mathfrak{N} &= \{x\in \overline{\mathfrak{N}} \ ; \ x \ {\rm \ is \ bounded} \} \\
&= \{x\in \mathscr{R} \ ; \ x \ {\rm \ is \ bounded} \} \\
&= \{x\in \overline{\mathfrak{L}} \ ; \ x \ {\rm \ is \ bounded} \} \\
&= \mathfrak{L}.
\end{align*}
Thus $\mathfrak{N}$ is unique.
\end{proof}

\begin{cor}\label{5cor}
Let $\M$ be a finite non Neumann algebra on a Hilbert space $\Hs$, 
$\mathfrak{g}$ be a real SRT-closed Lie subalgebra of $\mathfrak{u}(\M)$.
Then the following are equivalent:
\begin{list}{}{}
\item[(1)] there exists a von Neumann subalgebra $\mathfrak{N}$ of $\M$ 
such that $\mathfrak{g} = \mathfrak{u}(\mathfrak{N})$,
\item[(2)] $1_{\Hs}\in\mathfrak{g}$ and for all $A, B\in\mathfrak{g}$, $i\left(\overline{AB+BA}\right)\in\mathfrak{g}$.
\end{list}
In the above case, $\mathfrak{N}$ is unique.
\end{cor}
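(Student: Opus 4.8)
The plan is to prove the two implications separately; the implication (2)$\Rightarrow$(1) carries essentially all the content. For (1)$\Rightarrow$(2) I would argue directly: if $\mf{g}=\mf{u}(\mf{N})$ for a von Neumann subalgebra $\mf{N}\subseteq\M$, then $\mf{N}$ is finite by Remark \ref{5rem}, so $\overline{\mf{N}}$ is a $*$-algebra by Theorem \ref{2eta_*alg} and $\mf{u}(\mf{N})=\{X\in\overline{\mf{N}}\ ;\ X^*=-X\}$. Since $i1_{\Hs}\in\overline{\mf{N}}$ is skew-adjoint it lies in $\mf{g}$, and for $A,B\in\mf{g}$ the operator $\overline{AB+BA}\in\overline{\mf{N}}$ is self-adjoint — this follows from $A^*=-A$, $B^*=-B$ and Lemma \ref{2eta_alg} — so $i(\overline{AB+BA})$ is a skew-adjoint element of $\overline{\mf{N}}$, hence an element of $\mf{g}$. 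This is exactly condition (2).

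For (2)$\Rightarrow$(1) the idea is to pass to the complexification $\mathscr{R}:=\mf{g}_{\mbb{C}}=\{\overline{A+iB}\ ;\ A,B\in\mf{g}\}$ and to recognize that condition (2) is precisely what is needed to promote it from a Lie $*$-algebra to an associative $*$-algebra, after which Proposition \ref{5main} finishes. By Theorem \ref{4main}, $\mathscr{R}$ is already a complex, involution-closed Lie subalgebra of $\cM$, and it is complete, hence SRT-closed, since a complete subspace of the Hausdorff space $\cM$ is closed. So it remains only to check that $\mathscr{R}$ is a \emph{unital associative} $*$-subalgebra of $\cM$: associativity is automatic in $\cM$ by Theorem \ref{2eta_*alg}; unitality holds because $i1_{\Hs}\in\mf{g}$ gives $1_{\Hs}=-i(i1_{\Hs})\in\mathscr{R}$; and the substantive point is closure under the product $(X,Y)\mapsto\overline{XY}$.

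This last point is the heart of the proof, and it is where condition (2) and the Lie bracket of $\mf{g}$ combine. For $A_1,A_2\in\mf{g}$, Lemma \ref{2eta_alg} gives the identity
\[\overline{A_1A_2}=\tfrac12\,\overline{A_1A_2-A_2A_1}+\tfrac12\,\overline{A_1A_2+A_2A_1}\]
in $\cM$; the first term is $\tfrac12[A_1,A_2]\in\mf{g}$ by Lemma \ref{4Lie alg}, and the second equals $-\tfrac{i}{2}\cdot i(\overline{A_1A_2+A_2A_1})\in\mbb{C}\cdot\mf{g}$ by condition (2), whence $\overline{A_1A_2}\in\mathscr{R}$. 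Writing $X=\overline{A_1+iB_1}$ and $Y=\overline{A_2+iB_2}$ with $A_j,B_j\in\mf{g}$, and expanding by the Murray--von Neumann polynomial calculus of Lemma \ref{2eta_alg}, one obtains
\[\overline{XY}=\overline{\big(\overline{A_1A_2}-\overline{B_1B_2}\big)+i\big(\overline{A_1B_2}+\overline{B_1A_2}\big)},\]
and since each of $\overline{A_1A_2},\overline{B_1B_2},\overline{A_1B_2},\overline{B_1A_2}$ lies in the complex vector space $\mathscr{R}$, so does $\overline{XY}$. Thus $\mathscr{R}$ is a SRT-closed unital $*$-subalgebra of $\cM$, and Proposition \ref{5main} provides a unique von Neumann subalgebra $\mf{N}\subseteq\M$ with $\mathscr{R}=\overline{\mf{N}}$.

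It then remains to identify $\mf{g}$ with $\mf{u}(\mf{N})$ and to record uniqueness. The inclusion $\mf{g}\subseteq\mf{u}(\mf{N})$ is clear, since every $A\in\mf{g}$ is skew-adjoint and lies in $\mathscr{R}=\overline{\mf{N}}$. Conversely, for $A\in\mf{u}(\mf{N})=\{X\in\overline{\mf{N}}\ ;\ X^*=-X\}$ write $A=\overline{B+iC}$ with $B,C\in\mf{g}$; then $\re{A}=\tfrac12\overline{A+A^*}=\tfrac12\overline{A-A}=0$, while a direct computation with Lemma \ref{2eta_alg} gives $\re{A}=iC$, forcing $C=0$ and $A=B\in\mf{g}$. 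For uniqueness, if $\mf{L}\subseteq\M$ is a von Neumann subalgebra with $\mf{u}(\mf{L})=\mf{g}$, then $\overline{\mf{L}}=\mf{u}(\mf{L})_{\mbb{C}}=\mf{g}_{\mbb{C}}=\overline{\mf{N}}$ — the first equality because $\overline{\mf{L}}$ is a $*$-algebra whose elements have self-adjoint real and imaginary parts in $\overline{\mf{L}}$, while a self-adjoint $H\in\overline{\mf{L}}$ satisfies $e^{itH}\in U(\mf{L})$, i.e.\ $iH\in\mf{u}(\mf{L})$ — so $\mf{L}=\mf{N}$ by the uniqueness clause of Proposition \ref{5main}. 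I expect the multiplicative closure above to be the main obstacle: the difficulty is not the algebra but the verification that the splitting of $\overline{A_1A_2}$ into symmetric and antisymmetric parts and the expansion of $\overline{XY}$ are genuine identities in $\cM$, and not merely formal manipulations of unbounded operators — which is exactly what the Murray--von Neumann non-commutative polynomial calculus of Lemma \ref{2eta_alg} secures; the remaining steps are routine.
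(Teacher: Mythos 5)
Your proposal is correct in substance and follows essentially the same route as the paper: complexify $\mathfrak{g}$ to $\mathscr{R}=\mathfrak{g}_{\mathbb{C}}$, verify that $\mathscr{R}$ is an SRT-closed unital *-subalgebra of $\cM$, invoke Proposition \ref{5main}, recover $\mathfrak{g}$ as the skew-adjoint part of $\overline{\mathfrak{N}}$, and obtain uniqueness from the uniqueness clause of Proposition \ref{5main}. The paper compresses the algebraic verification into the phrase ``by direct computations''; your splitting of $\overline{A_1A_2}$ into $\tfrac12\,\overline{A_1A_2-A_2A_1}$ plus $-\tfrac{i}{2}\cdot i\left(\overline{A_1A_2+A_2A_1}\right)$, and the expansion of $\overline{XY}$ via Lemma \ref{2eta_alg}, are exactly those computations, and your arguments for (1)$\Rightarrow$(2), for $\mathfrak{g}=\{X\in\overline{\mathfrak{N}}\ ;\ X^*=-X\}$, and for uniqueness coincide with the paper's (your tacit reading of the unit condition as $i1_{\Hs}\in\mathfrak{g}$ is the sensible one, since a skew-adjoint operator cannot equal $1_{\Hs}$).

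There is, however, one genuine flaw of justification. Theorem \ref{4main} and Lemma \ref{4Lie alg} are statements about $\mathrm{Lie}(G)$ for a strongly closed subgroup $G$ of $U(\M)$, whereas the $\mathfrak{g}$ of the corollary is merely an SRT-closed real Lie subalgebra of $\mathfrak{u}(\M)$, not presented as the Lie algebra of any group, so neither result applies as cited. The appeal to Lemma \ref{4Lie alg} is harmless, since closure under $[\,\cdot\,,\cdot\,]$ is already part of the hypothesis that $\mathfrak{g}$ is a Lie subalgebra; but the appeal to Theorem \ref{4main} is your only argument for the completeness, hence SRT-closedness, of $\mathscr{R}$, which is precisely what Proposition \ref{5main} requires, and deducing completeness from closedness instead would be circular. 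The repair is short and uses only Theorem \ref{3cta}: the maps $X\mapsto\tfrac12\overline{X-X^*}$ and $X\mapsto\tfrac1{2i}\overline{X+X^*}$ are SRT-continuous on $\cM$, and for $X=\overline{A+iB}$ with $A,B\in\mathfrak{g}$ they return $A$ and $B$ respectively; hence if a net in $\mathscr{R}$ converges to some $X\in\cM$, its $\mathfrak{g}$-components converge in the SRT, their limits lie in the SRT-closed set $\mathfrak{g}$, and $X=\overline{\tfrac12\overline{X-X^*}+i\,\tfrac1{2i}\overline{X+X^*}}\in\mathscr{R}$. With that substitution your proof is complete and agrees with the paper's.
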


\begin{proof}
First of all, we shall show $(1) \Rightarrow (2)$.
Since $\mathfrak{u}(\mathfrak{N}) \subset \overline{\mathfrak{N}}$, 
we have $i\left(\overline{AB+BA}\right)\in\overline{\mathfrak{N}}$.
On the other hand, 
\begin{equation*}
\mathfrak{u}(\mathfrak{N}) = \{X\in\overline{\mathfrak{N}} \ ; \ X^*=-X\}.
\end{equation*}
Thus $i\left(\overline{AB+BA}\right)\in\mathfrak{u}(\mathfrak{N})=\mathfrak{g}$.
Next we shall show $(2) \Rightarrow (1)$. 
By direct computations, we see that 
\begin{equation*}
\mathscr{R} := \left\{\overline{X+iY}\in\overline{\mathfrak{M}} \ ; \ X, \ Y \in \mathfrak{g}\right\}
\end{equation*}
is a SRT-closed *-subalgebra of $\overline{\mathfrak{M}}$.
Thus, by Proposition \ref{5main}, there is a von Neumann subalgebra $\mathfrak{N}$ of 
$\mathfrak{M}$ such that $\mathscr{R}=\overline{\mathfrak{N}}$.
Then we see that
\begin{align*}
\mathfrak{g} &= \left\{X\in\mathscr{R} \ ; \ X^*=-X \right\} \\ 
&= \left\{X\in\overline{\mathfrak{N}} \ ; \ X^*=-X \right\} \\ 
&= \mathfrak{u}(\mathfrak{N}).
\end{align*}
Finally, we show the uniqueness for $\mathfrak{N}$.
Let $\mathfrak{L}$ be a von Neumann subalgebra of $\M$ satisfying 
$\mathfrak{u}(\mathfrak{L}) = \mathfrak{g}$.
Then, we have
\begin{align*}
\overline{\mathfrak{N}} 
&= \left\{\overline{X+iY} \ ; \ X, \ Y \in \mathfrak{u}(\mathfrak{N})\right\} \\
&= \left\{\overline{X+iY} \ ; \ X, \ Y \in \mathfrak{u}(\mathfrak{L})\right\} \\
&= \overline{\mathfrak{L}} 
\end{align*}
By the uniqueness of Proposition \ref{5main}, we get $\mathfrak{N}=\mathfrak{L}$.
\end{proof}

\section{Categorical Characterization of $\cM$}

\subsection{Introduction}

In this last section we turn the point of view and consider some categorical aspects of the *-algebra $\cM.$ 
Especially, we determine when a *-algebra $\mathscr{R}$ of unbounded operators on a Hilbert space $\Hs$ 
turns out to be of the form $\cM$, without any reference to von Neumann algebraic structure in advance. 
As is well known, there are many examples of *-algebra of unbounded operators that is not of the form $\cM$. 
For example, many O$^*$-algebras \cite{Schmuedgen} are not related to any affiliated operator algebra. 
Therefore, the appropriate condition to single out suitable class of *-algebras of unbounded operators are necessary. 
For this purpose, we define the category \textbf{fRng} of unbounded operator algebras 
and compare this category with the category \textbf{fvN} of finite von Neumann algebras 
and show that both of them have natural tensor category structures (cf. Appendix C). 
Furthermore, we will see that they are isomorphic as a tensor category, 
in spite of the fact that the object in \textbf{fRng} is not locally convex in general while the one in \textbf{fvN} is a Banach space. 
However, the algebraic structures of $\cM$ and $\M$ are very similar and in fact they constitute isomorphic categories. 
To begin with, let us introduce the structure of tensor category into \textbf{fRng}.

\subsection{\textbf{fvN} and \textbf{fRng} as Tensor Categories}

Now we turn to the question of characterizing the category \textbf{fRng} of *-algebras of unbounded 
operators which are realized as $\cM$, where $\M$ is a von Neumann algebra acting on a Hilbert space. 
It is well known 
that the usual tensor product $(\M_1,\M_2)\mapsto \M_1\vtensor \M_2$ of von Neumann algebras and 
the tensor product of $\sigma$-weakly continuous homomorphisms $(\phi_1,\phi_2)\mapsto \phi_1\tens \phi_2$ makes the category 
of finite von Neumann algebras a tensor category. 
Therefore we define:

\begin{dfn} 
The category \textbf{fvN} is a category whose objects are pairs $(\M,\mc{H})$ of a finite von Neumann algebra $\M$ 
acting on a Hilbert space $\Hs$ and whose morphisms are $\sigma$-weakly continuous unital *-homomorphisms. 
The unit object is $(\mathbb{C}1_{\mathbb{C}},\mathbb{C})$. 
The tensor functor is the usual tensor product functor of von Neumann algebras. 
The definition of left and right unit constraint functors might be obvious.
\end{dfn}

If we are to characterize the objects in \textbf{fRng}, 
we must settle some subtleties due to the fact that we cannot use von Neumann algebraic structure from the outset. 
However, this difficulty can be overcome thanks to the the notion of the strong resolvent topology and the resolvent class 
whose definitions are independent of von Neumann algebras (See $\S3$). 
We define \textbf{fRng} as follows.

\begin{dfn}\label{6def of fRng} 
The category \textbf{fRng} is a category whose objects $(\msc{R},\mc{H})$ consist of 
a SRT-closed subset $\msc{R}$ of the resolvent class $\msc{RC}(\mc{H})$ on a Hilbert space $\mc{H}$ with the following properties:
\begin{list}{}{}
\item[(1)] $X+Y$ and $XY$ are closable for all $X$, $Y\in\msc{R}$.
\item[(2)] $\overline{X+Y}$, $\overline{\alpha X}$, $\overline{XY}$ and $X^*$ again belong to 
$\msc{R}$ for all $X$, $Y\in\msc{R}$ and $\alpha\in\mathbb{C}$.
\item[(3)] $\msc{R}$ forms a *-algebra with respect to the sum $\overline{X+Y}$, the scalar multiplication $\overline{\alpha X}$,
the multiplication $\overline{XY}$ and the involution $X^*$.
\item[(4)] $1_{\Hs}\in \msc{R}$.
\end{list}
The morphism set between $(\msc{R}_1,\Hs_1)$ and $(\msc{R}_2,\Hs_2)$ consists of SRT-continuous unital *-homomorphisms 
from $\msc{R}_1$ to $\msc{R}_2$.
\end{dfn}

\begin{rem}\label{6*alg} 
From the definition of \textbf{fRng}, it is not clear whether, for each objects in \tb{fRng}, 
its algebraic operations are continuous or not.
However, the next lemma shows that $\msc{R}$ is a complete topological *-algebra.
\end{rem}

\begin{lem}\label{6characterization of R} 
Let ($\msc{R}, \Hs$) be an object in \textbf{fRng}. 
Then there exists a unique finite von Neumann algebra $\M$ on $\Hs$ such that $\msc{R}=\cM$. 
Furthermore, $\M=\msc{R}\cap\mathfrak{B}(\Hs)$ holds. 
\end{lem}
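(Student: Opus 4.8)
The plan is to show that $\M := \msc{R}\cap\mathfrak{B}(\Hs)$ is the desired von Neumann algebra and then invoke Theorem \ref{23converse} together with Proposition \ref{5main} (applied to $\msc{R}$ viewed as a SRT-closed $*$-subalgebra of $\cM$, once we know $\msc{R}$ sits inside some $\cM$). First I would establish that $\M$ is a $*$-subalgebra of $\mathfrak{B}(\Hs)$ containing $1_\Hs$: closure under the bounded-operator sum and product follows from property (3) of Definition \ref{6def of fRng}, noting that for bounded $X,Y$ the closures $\overline{X+Y}$ and $\overline{XY}$ coincide with the ordinary bounded sum and product, so these stay in $\mathfrak{B}(\Hs)$ and hence in $\M$; closure under adjoint is immediate from (2). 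The key point is that $\M$ is closed in the strong$^*$ operator topology — here I would argue exactly as in the proof of Proposition \ref{5main}: if a net $\{x_\alpha\}\subset\M$ converges strong$^*$ to a bounded $x$, then $\re{x_\alpha}\to\re{x}$ and $\im{x_\alpha}\to\im{x}$ strongly, hence (by Lemma \ref{AcoreSRT}) in the strong resolvent topology; since $\re{x_\alpha},\im{x_\alpha}\in\msc{R}$ and $\msc{R}$ is SRT-closed, $\re{x},\im{x}\in\msc{R}$, so $x=\re{x}+i\,\im{x}\in\msc{R}\cap\mathfrak{B}(\Hs)=\M$. Thus $\M$ is a von Neumann algebra on $\Hs$.

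Next I would verify that $\M$ is finite. For this I would apply Theorem \ref{23converse}: one must check that for all $A,B\in\M$ the domains $\dom{A+B}$ and $\dom{AB}$ are dense (automatic since $A,B$ are bounded) and that $\cM$ — the affiliated-operator set of this particular $\M$ — forms a $*$-algebra under the Murray--von Neumann operations. The bridge is to show $\msc{R}=\cM$. One inclusion, $\cM\subset\msc{R}$, I would get by the argument in Proposition \ref{5main}: given a self-adjoint $A\in\cM$, on the completely dense domain $\mathcal{C}_A=\bigcup_n\ran{E_A([-n,n])}$ of entire analytic vectors the partial sums $\overline{\sum_{j=1}^J (itA)^j/j!}$ lie in the von Neumann algebra generated by $A$; but we need these partial sums to be in $\msc{R}$, which requires first knowing $A\in\msc{R}$, so this direction is the one to be careful with. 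The cleaner route is to prove $\msc{R}\subset\cM$ first and then invoke Proposition \ref{5main}.

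For $\msc{R}\subset\cM$: take $A\in\msc{R}$; since $A\in\rc{\Hs}$ we may assume $A=\re{A}$ is self-adjoint, and for each bounded Borel function $f$ vanishing at infinity I would like $f(A)\in\M$. Approximating $A$ by its bounded truncations $A_n:=A E_A([-n,n])$ — which are bounded and lie in $\msc{R}$ (being obtained from $A$ and the spectral projections, the latter available once we know truncations converge in SRT, cf. the almost-everywhere convergence machinery of Lemma \ref{3subseq}, Lemma \ref{3almostsrt}) — one gets $A_n\to A$ in the strong resolvent topology by Lemma \ref{3almostsrt}, and each $A_n$ generates spectral projections in $\M=\msc{R}\cap\mathfrak{B}(\Hs)$; passing to the limit via Lemma \ref{Aspec}-type continuity of spectral projections under SRT convergence shows the spectral projections of $A$ lie in $\M$, i.e. $A$ is affiliated with $\M$, so $A\in\cM$. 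Combined with property (3), which says $\msc{R}$ is a $*$-algebra, Proposition \ref{5main} then yields a unique von Neumann subalgebra $\mathfrak{N}\subset\M$ (automatically finite, Remark \ref{5rem}) with $\msc{R}=\overline{\mathfrak{N}}$; but then $\mathfrak{N}=\{x\in\overline{\mathfrak{N}}:x\text{ bounded}\}=\msc{R}\cap\mathfrak{B}(\Hs)=\M$, giving $\msc{R}=\cM$ with $\M$ finite. Uniqueness of $\M$ is forced since any von Neumann algebra $\mathfrak{L}$ with $\msc{R}=\overline{\mathfrak{L}}$ must satisfy $\mathfrak{L}=\{x\in\msc{R}:x\text{ bounded}\}=\M$.

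The main obstacle I anticipate is the bookkeeping needed to show that spectral projections (and bounded truncations) of an element $A\in\msc{R}$ again lie in $\msc{R}$ using only the axioms of Definition \ref{6def of fRng}, without von Neumann algebra structure available a priori — one has to bootstrap from the SRT-closedness and the $*$-algebra structure, running the truncation-and-limit argument carefully so that it is genuinely internal to $\msc{R}$. Once that is in place, everything else is an assembly of Proposition \ref{5main}, Theorem \ref{23converse} and Remark \ref{5rem}.
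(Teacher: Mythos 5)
There is a genuine gap, and it sits exactly at the point you flag as ``the main obstacle'': your proof of the inclusion $\msc{R}\subset\cM$ is circular. You propose to truncate $A\in\msc{R}$ to $A_n:=AE_A([-n,n])$ and assert these lie in $\msc{R}$, ``the latter available once we know truncations converge in SRT'' --- but membership of the spectral projections (or of the truncations) of $A$ in $\M=\msc{R}\cap\mathfrak{B}(\Hs)$ is essentially the statement that $A$ is affiliated with $\M$, i.e.\ the very thing to be proved. The axioms of Definition \ref{6def of fRng} only give you polynomials in $A$ and $A^*$ and SRT-limits thereof; there is no internal way to reach $E_A([-n,n])$ from these without a limiting argument, and polynomials in an unbounded $A$ do not converge to bounded Borel functions of $A$ in SRT in any obvious fashion. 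The paper's proof supplies precisely the missing bootstrap: for self-adjoint $A\in\msc{R}$ it takes the partial sums $B_n=\overline{\sum_{k=0}^n(itA)^k/k!}$, which \emph{are} in $\msc{R}$ by the $*$-algebra axioms alone, notes that they converge to $e^{itA}$ on the entire analytic vectors $C_A=\bigcup_n\ran{E_A([-n,n])}$, and --- crucially --- views this as almost-everywhere convergence relative to the auxiliary \emph{abelian, hence finite} von Neumann algebra $\M_A$ generated by the spectral projections of $A$ (which exists independently of $\msc{R}$). Finiteness of $\M_A$ upgrades a.e.\ convergence to SRT convergence, SRT-closedness of $\msc{R}$ then gives $e^{itA}\in\msc{R}\cap\mathfrak{B}(\Hs)=\M$ for all $t$, and affiliation of $A$ with $\M$ follows. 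Without this (or an equivalent) device your first inclusion does not go through.

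A secondary problem is the ordering of your remaining steps: you want the reverse inclusion $\cM\subset\msc{R}$ from Proposition \ref{5main}, but that proposition presupposes that $\M$ is a \emph{finite} von Neumann algebra, while in your plan finiteness is only obtained afterwards from Theorem \ref{23converse} (which itself needs $\cM=\msc{R}$ to know $\cM$ is a $*$-algebra). The paper avoids this loop by proving $\cM\subset\msc{R}$ directly and without finiteness: for $A\in\cM$ write $A=U|A|$, note $U\in\M\subset\msc{R}$ and that the truncations $x_n=\int_0^n\lambda\,dE_{|A|}(\lambda)$ lie in $\M\subset\msc{R}$ (legitimately here, since $|A|$ \emph{is} affiliated with $\M$) and converge to $|A|$ in SRT, so $|A|\in\msc{R}$ and hence $A=U|A|\in\msc{R}$; only after both inclusions is Theorem \ref{23converse} invoked to conclude that $\M$ is finite. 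Your definition of $\M$, the strong$^*$-closedness argument, and the uniqueness argument are fine and agree with the paper.
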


\begin{proof}
Define $\M:=\msc{R}\cap \mf{B}(\mc{H})$. 
Then one can prove that $\M$ is von Neumann algebra by the same way as in Proposition \ref{5main}.

We next show that $\msc{R}\subset\cM$.
Let $A\in \msc{R}$ be a self-adjoint operator.
Define the dense subspace $C_A$ according to the spectral decomposition of $A$:
\[C_A:=\Bcu_{n=1}^{\infty}\ran{E_A([-n,n])},\]
where
\[A=\int_{\mathbb{R}} \lambda dE_A(\lambda)\]
is the spectral decomposition of $A$.  
Since all $\xi \in C_A$ is an entire analytic vector for $A$, we have 
\[\disp e^{itA}\xi=\limn \sum_{k=0}^n\frac{(itA)^k}{k!}\xi,\]
for all $t\in\mathbb{R}$.  
Let $\M_A$ be a von Neumann algebra generated by $\{E_A(J)\ ;\ J\in \mc{B}(\mathbb{R})\}$,
where $\mc{B}(\mathbb{R})$ is the one dimensional Borel field.
Since $\M_A$ is abelian, it is a finite von Neumann algebra. 
It is also clear that 
\[B_n:=\overline{\sum_{k=0}^n\frac{(itA)^k}{k!}}\in \overline{(\M_A)}\cap \msc{R}\] 
and $e^{itA}\in \M_A$.
Since $C_A$ is completely dense for $\M_A$, $B_n$ converges almost everywhere to $e^{itA}$ in $\overline{(\M_A)}$. 
As $\M_A$ is finite, we see that $B_n$ converges to $e^{itA}$ in the strong resolvent topology. 
On the other hand, $\msc{R}$ is SRT-closed and therefore $e^{itA}\in \msc{R}\cap \mf{B}(\mc{H})=\M$, for all $t\in\mathbb{R}$.
This implies $A\in \cM$. 
For a general operator $B\in \msc{R}$, using real-imaginary part decomposition $B=\overline{\re{B}+i\im{B}}$, 
we have $B\in \cM$. 

We shall show that $\cM\subset\msc{R}$.
Let $A\in\cM$ and $A=U|A|$ be its polar decomposition, 
then $U\in\M\subset\msc{R}$ and $|A|\in\cM$.
Let $|A|=:\int_{0}^{\infty}\lambda dE_{|A|}(\lambda)$ be the spectral decomposition of $|A|$.
Put 
\[x_n:=\int_{0}^{n}\lambda dE_{|A|}(\lambda)\in\M\subset\msc{R},\] 
then $x_n$ converges to $|A|$ in the strong resolvent topology.
Thus $|A|\in\msc{R}$.
Therefore $A=U|A|\in\msc{R}$.

The finiteness of $\M$ follows immediately from Theorem \ref{23converse}.
\end{proof}

Note that for each finite von Neumann algebra $\M$ on a Hilbert space $\Hs$, $(\cM,\Hs)$ is an object in \textbf{fRng}.
 
The main result of this section is the next theorem.

\begin{thm}\label{6main theorem} 
The category \textbf{fRng} is a tensor category. 
Moreover, \textbf{fRng} and \textbf{fvN} are isomorphic as a tensor category.
\end{thm}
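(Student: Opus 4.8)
The plan is to exhibit an isomorphism of categories $F\colon \textbf{fvN}\to\textbf{fRng}$ and then to \emph{transport} the tensor structure of \textbf{fvN} along $F$; this simultaneously equips \textbf{fRng} with a tensor structure and makes $F$ an isomorphism of tensor categories. On objects I would set $F(\M,\Hs):=(\cM,\Hs)$, which is an object of \textbf{fRng} by the remark following Lemma \ref{6characterization of R}, and define the candidate inverse $G$ by $G(\msc{R},\Hs):=(\msc{R}\cap\mathfrak{B}(\Hs),\Hs)$, a finite von Neumann algebra by Lemma \ref{6characterization of R}. That same lemma gives $\cM\cap\mathfrak{B}(\Hs)=\M$ and $\msc{R}=\overline{\msc{R}\cap\mathfrak{B}(\Hs)}$ (SRT-closure), so $G\circ F$ and $F\circ G$ are the identities on objects.

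The real work is the morphism level. Given a $\sigma$-weakly continuous unital $*$-homomorphism $\varphi\colon\M_1\to\M_2$, I would extend it to $F(\varphi)\colon\cM_1\to\cM_2$ via the Borel functional calculus: since $\varphi$ is normal, $J\mapsto\varphi(E_A(J))$ is a projection-valued measure for each self-adjoint $A\in\cM_1$, so $F(\varphi)(A):=\int_{\mathbb{R}}\lambda\,d\varphi(E_A)(\lambda)$ is a self-adjoint element of $\cM_2$, and I would extend to general $A$ by real and imaginary parts. Arguing as in the proof of Lemma \ref{6characterization of R}, the bounded truncations $A_n:=\int_{[-n,n]}\lambda\,dE_A(\lambda)\in\M_1$ satisfy $A_n\to A$ and $\varphi(A_n)\to F(\varphi)(A)$ in the strong resolvent topology; combined with the SRT-density of $\M_1$ in $\cM_1$, the completeness of $\cM_2$ (Theorem \ref{3cta}) and the coincidence of SRT and SOT on bounded balls (Lemma \ref{3mt=sot}), this identifies $F(\varphi)$ as \emph{the} unique SRT-continuous unital $*$-homomorphism $\cM_1\to\cM_2$ extending $\varphi$. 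Uniqueness then forces $F(\psi)\circ F(\varphi)=F(\psi\circ\varphi)$ and $F(\mathrm{id})=\mathrm{id}$, so $F$ is a functor. Going the other way, $G(\Phi):=\Phi|_{\msc{R}_1\cap\mathfrak{B}(\Hs_1)}$ makes sense because a unital $*$-homomorphism is positive, hence order preserving, hence norm contractive on self-adjoint elements, so it maps bounded elements to bounded elements; and Lemma \ref{3mt=sot} together with the standard normality criterion for $*$-homomorphisms shows $G(\Phi)$ is $\sigma$-weakly continuous. This is where I expect the main difficulty: checking that the functional-calculus formula for $F(\varphi)$ really respects sums and products (not merely the order structure) and is SRT-continuous — in effect re-proving the classical fact that a normal $*$-homomorphism of finite von Neumann algebras induces a measure-topology-continuous $*$-homomorphism of the associated algebras of affiliated operators; one can either grind this out with the functional calculus (reducing to a central-projection cut followed by a $*$-isomorphism, where Lemma \ref{6UXU^*} applies after an amplification) or cite it.

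With $F$ and $G$ in hand I would check they are mutually inverse: on objects this is the first step, and on morphisms $G(F(\varphi))=F(\varphi)|_{\M_1}=\varphi$ since $F(\varphi)$ extends $\varphi$, while $F(G(\Phi))=\Phi$ by uniqueness of the SRT-continuous extension applied to $\Phi|_{\M_1}$. Hence $F$ is an isomorphism of categories. Finally I would transport the tensor structure of \textbf{fvN} (tensor product $(\M_1,\Hs_1)\vtensor(\M_2,\Hs_2)=(\M_1\vtensor\M_2,\Hs_1\tens\Hs_2)$, again finite, unit object $(\mathbb{C}1_{\mathbb{C}},\mathbb{C})$, morphism tensor $\varphi_1\tens\varphi_2$, and the evident constraints) along $F$: define, on objects,
\begin{equation*}
(\msc{R}_1,\Hs_1)\otimes(\msc{R}_2,\Hs_2):=F\big(G(\msc{R}_1,\Hs_1)\vtensor G(\msc{R}_2,\Hs_2)\big)=\Big(\overline{(\msc{R}_1\cap\mathfrak{B}(\Hs_1))\vtensor(\msc{R}_2\cap\mathfrak{B}(\Hs_2))}\,,\ \Hs_1\tens\Hs_2\Big),
\end{equation*}
on morphisms $\Phi_1\otimes\Phi_2:=F\big(G(\Phi_1)\tens G(\Phi_2)\big)$, take unit object $F(\mathbb{C}1_{\mathbb{C}},\mathbb{C})=(\mathbb{C}1_{\mathbb{C}},\mathbb{C})$, and obtain the associativity and unit constraints by applying $F$ to those of \textbf{fvN}. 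Since $F$ is an isomorphism of categories, $\otimes$ is a bifunctor, the constraints are natural isomorphisms, and the pentagon and triangle identities hold because they hold in \textbf{fvN} and are preserved under an isomorphism of categories; thus \textbf{fRng} is a tensor category. By construction $F$ carries $\vtensor$, the unit and all constraints of \textbf{fvN} to those of \textbf{fRng} (using $G\circ F=\mathrm{id}$ and the bijectivity of $F$ on objects), so $F$ is a strict isomorphism of tensor categories, which is the assertion.
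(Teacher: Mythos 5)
Your overall architecture is the same as the paper's: the paper's functors $\mc{E}$ and $\mc{F}$ are exactly your $F$ and $G$, the object-level bijection is Lemma \ref{6characterization of R}, and your argument for the $G$-direction (boundedness via positivity, then SOT-continuity on the unit ball via Lemma \ref{3mt=sot} and the normality criterion) is a sound variant of the paper's. The one genuinely different choice is that you \emph{transport} the tensor structure along $F$ rather than verifying the intrinsic one. That is legitimate for the statement as phrased, and it spares you the direct verification of the constraints; but the paper fixes the tensor product of objects in advance (Definition \ref{6def of tensor in fRng}: the SRT-closure of $\msc{R}_1\atensor\msc{R}_2$), so to prove the theorem about \emph{that} structure you would still need the identification $\cM_1\vtensor\cM_2=\ol{\M_1\vtensor\M_2}$ (the paper's Lemma \ref{6tensor product of fRng}, which rests on Lemmas \ref{6affiliation of tensor product}--\ref{6tensor *-alg}); your transported product coincides with it only because of that lemma, which your proposal does not supply.

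The genuine gap is at the morphism-extension step, i.e.\ Proposition \ref{6exntension of morphisms}(2), which is the technical heart of the theorem. Your primary construction $F(\varphi)(A):=\int_{\mathbb{R}}\lambda\,d\varphi(E_A)(\lambda)$ does produce a self-adjoint element of $\cM_2$, and the SRT-convergence of the bounded truncations is fine; but nothing in this definition yields additivity, multiplicativity, or SRT-continuity, since the spectral measures of $\overline{A+B}$ and $\overline{AB}$ are not controlled by $E_A$ and $E_B$ in any usable way, and "uniqueness of the SRT-continuous extension" cannot be invoked before one knows that a continuous $*$-homomorphic extension \emph{exists}. Your fallback sketch (amplify, cut, conjugate) is indeed the paper's route, but it needs precisely the machinery you leave out: that $X\mapsto X\otimes 1_{\mc{K}}$ carries $\cM_1$ into $\ol{\M_1\vtensor\mathfrak{B}(\mc{K})}$, is a $*$-homomorphism and is SRT-continuous (Lemmas \ref{6affiliation of tensor product}, \ref{6tensor *-alg}, \ref{6SRT convergence of tensor product}); and that reduction by the projection $e'\in P\left(\M_1'\vtensor\mathfrak{B}(\mc{K})\right)$ — a projection in the commutant of the amplification, not a central cut of $\M_1$ — sends affiliated operators to operators affiliated with the reduced algebra, which is again finite (Lemmas \ref{6reduction of closed op}, \ref{6reduction is finite}); Lemma \ref{6UXU^*} only disposes of the final spatial step. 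Nor is "cite the classical fact" a safe substitute: the standard references prove measure-topology continuity relative to a fixed faithful normal trace, which is unavailable when $\M_1$ is not countably decomposable, whereas the theorem is asserted for arbitrary finite von Neumann algebras in the SRT formulation. Until this extension proposition is actually established, your functoriality of $F$, the tensor product of morphisms, and hence the transported tensor structure all remain conditional.
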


To prove this theorem, we need many lemmata. 
The proof is divided into several steps.

Next, we will define the tensor product $\msc{R}_1\vtensor \msc{R}_2$ of objects 
$\msc{R}_i\ (i=1,2)$ in \tb{fRng} (cf. Definition \ref{6def of tensor in fRng}). 
For this purpose, let us review the notion of the tensor product of closed operators.  
Let $A,B$ be densely defined closed operators on Hilbert spaces $\mc{H},\ \mc{K}$, respectively. 
Let $A\tens_0B$ be an operator defined by 
\begin{align*}
&\dom{A\tens_0B}:=\dom{A}\atensor \dom{B}, \\
&(A\tens_0B)(\xi\tens \eta):=A\xi\tens B\eta,\ \ \ \ \xi \in \dom{A},\ \eta \in \dom{B}.
\end{align*}
It is easy to see that $A\tens_0B$ is closable and denote its closure by $A\tens B$.
 
\begin{lem}\label{6affiliation of tensor product}
Let $\M_1$, $\M_2$ be finite von Neumann algebras acting on Hilbert spaces $\Hs_1$, $ \Hs_2$, respectively. 
Let $A\in \cM_1$ and $B\in \cM_2$. 
Then we have $A\tens B\in \ol{\M_1\vtensor \M_2}$.
\end{lem}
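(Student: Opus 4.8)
The plan is to check directly that $A\tens B$ is invariant under conjugation by every unitary in $(\M_1\vtensor\M_2)'$, using the fundamental commutation theorem $(\M_1\vtensor\M_2)' = \M_1'\vtensor\M_2'$. Since $A\tens B$ is already densely defined and closed (it is the closure of $A\tens_0 B$), the only thing to verify is the affiliation relation $u(A\tens B)u^* = A\tens B$ for all $u\in U((\M_1\vtensor\M_2)')$. It suffices to prove $u(A\tens B)\subset (A\tens B)u$ for all $u$ in a set that generates $(\M_1\vtensor\M_2)'$ as a von Neumann algebra, and then take the strong derivative / closure as in the proof of Lemma \ref{2core of A}; the unitary group of $\M_1'\atensor\M_2'$ (algebraic tensor product), or even just elementary tensors $u_1\tens u_2$ with $u_i\in U(\M_i')$, will do, because these generate $(\M_1\vtensor\M_2)'$ strongly.

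First I would treat elementary tensors. For $u_1\in U(\M_1')$ and $u_2\in U(\M_2')$, and for $\xi\tens\eta$ with $\xi\in\dom{A}$, $\eta\in\dom{B}$, one has $u_1\xi\in\dom{A}$ and $u_2\eta\in\dom{B}$ since $A\in\cM_1$, $B\in\cM_2$; hence $(u_1\tens u_2)$ maps $\dom{A\tens_0 B}=\dom{A}\atensor\dom{B}$ into itself and
\begin{equation*}
(u_1\tens u_2)(A\tens_0 B)(\xi\tens\eta) = u_1 A\xi\tens u_2 B\eta = A u_1\xi\tens B u_2\eta = (A\tens_0 B)(u_1\tens u_2)(\xi\tens\eta).
\end{equation*}
So $(u_1\tens u_2)(A\tens_0 B) = (A\tens_0 B)(u_1\tens u_2)$ on $\dom{A}\atensor\dom{B}$, and passing to the closure gives $(u_1\tens u_2)(A\tens B)(u_1\tens u_2)^* = A\tens B$ (using that conjugation by a unitary commutes with closure).

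Next I would bootstrap from elementary tensors to all of $U((\M_1\vtensor\M_2)')$. Let $\mathcal{D} := \dom{A}\atensor\dom{B}$, which is a core for $A\tens B$. The set $\mathcal{D}$ is invariant under the $*$-algebra $\M_1'\atensor\M_2'$, and the argument above shows $w(A\tens B)w^{-1} = A\tens B$ on appropriate cores for every $w$ in the unitary group of $\M_1'\atensor\M_2'$; more cleanly, for any $x = \sum_j x_j^{(1)}\tens x_j^{(2)}\in\M_1'\atensor\M_2'$ one checks $x(A\tens_0 B) \subset (A\tens_0 B)x$ by linearity from the elementary-tensor case. Now an arbitrary $x\in(\M_1\vtensor\M_2)'$ is a strong limit of a bounded net $\{x_\alpha\}$ in $\M_1'\atensor\M_2'$ (Kaplansky density applied inside $(\M_1\vtensor\M_2)' = \M_1'\vtensor\M_2'$), so for $\zeta\in\dom{A\tens B}$ and $\zeta\in$ the appropriate domain we get $x\zeta = \slim{\alpha} x_\alpha\zeta$ and $(A\tens B)x_\alpha\zeta = x_\alpha(A\tens B)\zeta \to x(A\tens B)\zeta$; since $A\tens B$ is closed this yields $x\zeta\in\dom{A\tens B}$ and $(A\tens B)x\zeta = x(A\tens B)\zeta$, i.e.\ $x(A\tens B)\subset (A\tens B)x$. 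Taking $x = u$ unitary and combining with the same inclusion for $u^*$ gives $u(A\tens B)u^* = A\tens B$. Hence $A\tens B$ is a densely defined closed operator affiliated with $\M_1\vtensor\M_2$, i.e.\ $A\tens B\in\ol{\M_1\vtensor\M_2}$.

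The main obstacle is the passage from elementary tensors (or the algebraic tensor product) to the full commutant: one must be careful that the approximating net $\{x_\alpha\}$ of $x\in(\M_1\vtensor\M_2)'$ can be taken bounded — this is where Kaplansky density and the identification $(\M_1\vtensor\M_2)' = \M_1'\vtensor\M_2'$ are essential — so that the strong-convergence argument interacts correctly with the (possibly unbounded) operator $A\tens B$ via its closedness. Everything else is the routine domain-tracking already familiar from Lemma \ref{2core of A}.
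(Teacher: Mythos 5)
Your proposal is correct and follows essentially the same route as the paper: verify commutation with elementary tensors from $\M_1'$ and $\M_2'$ on $\dom{A}\atensor\dom{B}$, pass to all of $(\M_1\vtensor\M_2)'=\M_1'\vtensor\M_2'$ by strong density, and then upgrade from $A\tens_0 B$ to its closure $A\tens B$ by the standard closedness/limiting argument. The only differences (using unitaries and Kaplansky density rather than arbitrary elements of $\M_i'$ and plain strong density) are cosmetic.
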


\begin{proof}
Let $x_i\in \M_i'\ (i=1,2)$. 
For any $\xi\in \dom{A}$ and $\eta\in \dom{B}$, we have $(x_1\tens x_2)(\xi\tens \eta)\in \dom{A\tens_0B}$ and 
\[\{(x_1\tens x_2)(A\tens_0B)\}(\xi\tens \eta)=\{(A\tens_0B)(x_1\tens x_2)\}(\xi\tens \eta).\]
Therefore, by the linearity, we have $(x_1\tens x_2)(A\tens_0B)\subset (A\tens _0B)(x_1\tens x_2)$. 
Since $(\M_1\vtensor \M_2)'=\M_1'\vtensor \M_2'$ is the strong closure of $\M_1'\atensor \M_2'$, we have
\[y(A\tens_0B)\subset (A\tens B)y,\ \text{for all}\ y\in (\M_1\vtensor \M_2)'.\]
Therefore by the limiting argument, we have $y(A\tens B)\subset (A\tens B)y$, 
which implies $A\tens B$ is affiliated with $\M_1\vtensor\M_2$.
\end{proof}

\begin{lem}\label{6tensor of cores} 
Let $A,B$ be densely defined closed operators on Hilbert spaces $\mc{H},\ \mc{K}$ 
with cores $\mc{D}_A,\mc{D}_B$ respectively. 
Then $\mc{D}:=\mc{D}_A\atensor \mc{D}_B$ is a core of $A\otimes B$.
\end{lem}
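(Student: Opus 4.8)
The plan is to show that $A \otimes_0 B$ restricted to the algebraic tensor product $\mathcal{D}_A \atensor \mathcal{D}_B$ of the two cores has the same closure as $A \otimes_0 B$ itself, i.e.\ as $A \otimes B$. Since $\mathcal{D}_A \atensor \mathcal{D}_B \subset \dom{A} \atensor \dom{B} = \dom{A \otimes_0 B}$ and $A \otimes B$ is by definition the closure of $A \otimes_0 B$, it suffices to prove that every elementary tensor $\xi \otimes \eta$ with $\xi \in \dom{A}$, $\eta \in \dom{B}$ lies in the closure of the graph of $(A \otimes_0 B)|_{\mathcal{D}}$; by linearity and a density/closure argument this yields $\overline{(A\otimes_0 B)|_{\mathcal D}} \supset A\otimes_0 B$, hence $\overline{(A\otimes B)|_{\mathcal D}} = A \otimes B$.

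\medskip\noindent
\textbf{Key steps.} First I would fix $\xi \in \dom{A}$ and $\eta \in \dom{B}$. Since $\mathcal{D}_A$ is a core of $A$, choose $\xi_n \in \mathcal{D}_A$ with $\xi_n \to \xi$ and $A\xi_n \to A\xi$; similarly choose $\eta_m \in \mathcal{D}_B$ with $\eta_m \to \eta$ and $B\eta_m \to B\eta$. Then I would estimate
\begin{align*}
\|\xi_n \otimes \eta_m - \xi \otimes \eta\| &\le \|\xi_n - \xi\|\,\|\eta_m\| + \|\xi\|\,\|\eta_m - \eta\|, \\
\|A\xi_n \otimes B\eta_m - A\xi \otimes B\eta\| &\le \|A\xi_n - A\xi\|\,\|B\eta_m\| + \|A\xi\|\,\|B\eta_m - B\eta\|,
\end{align*}
so that by first sending $n \to \infty$ and then $m \to \infty$ (along a suitable diagonal subsequence) one gets $\xi_n \otimes \eta_m \to \xi \otimes \eta$ and $(A\otimes_0 B)(\xi_n \otimes \eta_m) = A\xi_n \otimes B\eta_m \to A\xi \otimes B\eta$. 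This shows $\xi \otimes \eta$, together with its image $A\xi \otimes B\eta$, is approximated in graph norm by vectors in $\mathcal{D}_A \atensor \mathcal{D}_B$, hence $(A\otimes_0 B)|_{\mathcal D_A \atensor \mathcal D_B}$ has $A\xi\otimes B\eta$ as the value of its closure at $\xi\otimes\eta$. Taking finite linear combinations, every element of $\dom{A}\atensor\dom{B}$ is in the graph-closure of $(A\otimes_0 B)|_{\mathcal D}$, so $\overline{(A\otimes_0 B)|_{\mathcal D}} \supset A \otimes_0 B$, and since the reverse inclusion of closures is automatic, $\overline{(A\otimes_0 B)|_{\mathcal D}} = \overline{A\otimes_0 B} = A \otimes B$.

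\medskip\noindent
\textbf{Main obstacle.} The only genuine subtlety is handling the two-parameter limit in $n$ and $m$ simultaneously: one cannot in general take $n=m$, so I would either pass to a diagonal subsequence $\eta_{m_n}$ chosen so that $\|\eta_{m_n}-\eta\|$ and $\|B\eta_{m_n}-B\eta\|$ are small relative to $1/n$, or simply argue directly that the graph-closure is a closed set containing all $\xi_n \otimes \eta_m$ and hence their iterated limits. Beyond that, the cross-terms in the norm estimates are bounded because $\{\eta_m\}$ and $\{B\eta_m\}$ are convergent, hence bounded, sequences; so the argument is entirely routine once the bookkeeping of limits is set up. No appeal to the von Neumann algebra structure is needed for this lemma — it is a purely Hilbert-space statement about closed operators and their cores.
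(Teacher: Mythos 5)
Your proof is correct and follows essentially the same idea as the paper's: graph-norm approximation of tensors by elements of $\mc{D}_A\atensor\mc{D}_B$, using the estimate $\norm{a\tens b-a'\tens b'}\le \norm{a-a'}\,\norm{b}+\norm{a'}\,\norm{b-b'}$ with the cross terms bounded. The only difference is organizational — you treat elementary tensors and invoke linearity of the closure plus the fact that $\overline{(A\tens_0B)|_{\mc{D}}}\supset A\tens_0B$ forces equality of closures, whereas the paper approximates a general $\zeta\in\dom{A\tens B}$ directly with explicit $\eps$-bookkeeping — so no further comment is needed.
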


\begin{proof}
From the definition of $A\otimes B$, 
for any $\zeta \in \dom{A\otimes B}$ and for any $\eps>0$, 
there exists some $\zeta_{\eps}=\sum_{i=1}^n\xi_i\otimes \eta_i\in \dom{A}\atensor \dom{B}$ 
such that
\begin{equation*}
||\zeta-\zeta_{\eps}||<\eps,\hspace{0.4cm} ||(A\otimes B)\zeta-(A\otimes B)\zeta_{\eps}||<\eps.
\end{equation*}
Put 
\[C:=\max_{1\le i\le n}\{||\xi_i||,\ ||A\xi_i||\}+1>0.\]
Since $D_B$ is a core of $B$, there exists $\eta_i^{\eps}\in D_B$ such that 
\[||\eta_i-\eta_i^{\eps}||<\frac{\eps}{nC},\hspace{0.4cm} ||B\eta_i-B\eta_i^{\eps}||<\frac{\eps}{nC}.\]
Put 
\[C':=\max_{1\le i\le n}\{||\eta_i^{\eps}||,\ ||B\eta_i^{\eps}||\}+1>0.\]
Similarly, since $\mc{D}_A$ is a core of $A$, there exists $\xi_i^{\eps}\in \mc{D}_A$ such that 
\[ ||\xi_i-\xi_i^{\eps}||<\frac{\eps}{nC'},\hspace{0.4cm} ||A\xi_i-A\xi_i^{\eps}||<\frac{\eps}{nC'}.\]
Define $\disp \zeta^{\eps}:=\sum_{i=1}^n\xi_i^{\eps}\otimes \eta_i^{\eps}\in \mc{D}$.
Then we have
\eqa{
||\zeta-\zeta^{\eps}||&\le ||\zeta-\zeta_{\eps}||+||\zeta_{\eps}-\zeta^{\eps}||\\
&\le \eps +\sum_{i=1}^n||\xi_i\otimes \eta_i-\xi_i^{\eps}\otimes \eta_i^{\eps}||\\
&\le \eps +\sum_{i=1}^n||\xi_i\otimes \eta_i-\xi_i\otimes \eta_i^{\eps}||+\sum_{i=1}^n||\xi_i\otimes \eta_i^{\eps}-\xi_i^{\eps}\otimes \eta_i^{\eps}||\\
&\le \eps +\sum_{i=1}^n\norm{\xi_i}\norm{\eta_i-\eta_i^{\eps}}+\sum_{i=1}^n\norm{\xi_i-\xi_i^{\eps}}\norm{\eta_i^{\eps}}\\
&\le \eps +\sum_{i=1}^nC\cdot \frac{\eps}{nC}+\sum_{i=1}^n\frac{\eps}{nC'}\cdot C'\\
&=3\eps.\ \cdots (*)
}
Furthermore,
\eqa{
&||(A\otimes B)\zeta-  (A\otimes B)\zeta^{\eps}||\\
&\le \norm{(A\otimes B)\zeta-(A\otimes B)\zeta_{\eps}}+\norm{(A\otimes B)\zeta_{\eps}-(A\otimes B)\zeta^{\eps}}\\
&\le \eps+\sum_{i=1}^n\norm{A\xi_i\otimes B\eta_i-A\xi_i^{\eps}\tens B\eta_i^{\eps}}\\
&\le \eps+\sum_{i=1}^n\norm{A\xi_i\tens B\eta_i-A\xi_i\tens B\eta_i^{\eps}}+\sum_{i=1}^n\norm{
A\xi_i\tens B\eta_i^{\eps}-A\xi_i^{\eps}\tens B\eta_i^{\eps}}\\
&\le \eps +\sum_{i=1}^n\norm{A\xi_i}\norm{B\eta_i-B\eta_i^{\eps}}+\sum_{i=1}^n\norm{A\xi_i-A\xi_i^{\eps}}\norm{B\eta_i^{\eps}}\\
&\le \eps +\sum_{i=1}^nC\cdot \frac{\eps}{nC}+\sum_{i=1}^n\frac{\eps}{nC'}\cdot C'\\
&=3\eps\ \cdots (**)
}
$(*)$ and $(**)$ implies $\mc{D}$ is a core of $A\tens B$.
\end{proof}

Next lemma says that the tensor product of algebras of affiliated operators has a natural *-algebraic structures.

\begin{lem}\label{6tensor *-alg}
Let $\M,\ \mf{N}$ be finite von Neumann algebras acting on Hilbert spaces $\mc{H},\ \mc{K}$ respectively.
Let $A,C\in \cM,\ B,D\in \cN$.
Then we have
\begin{list}{}{}
\item[(1)] $\overline{(A\tens B)(C\tens D)}=\overline{AC}\tens \overline{BD}$.
\item[(2)] $(A\tens B)^*=A^*\tens B^*.$
\item[(3)] $\ol{A+C}\tens \ol{B+D}=\ol{A\tens B+A\tens D+C\tens B+C\tens D}.$
\item[(4)] $\ol{\lambda (A\tens B)}=\ol{\lambda A}\tens B=A\tens \ol{\lambda B}\ (\lambda \in \mbb{C}).$
\end{list}
\end{lem}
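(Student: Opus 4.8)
The plan is to deduce each identity from the uniqueness principle of Proposition~\ref{2No-closed-extension}(2). First note that $\M\vtensor\mf{N}$ is again a finite von Neumann algebra, so by Theorem~\ref{2eta_*alg} the set $\overline{\M\vtensor\mf{N}}$ is a $*$-algebra. By Lemma~\ref{6affiliation of tensor product} all the elementary tensors occurring in (1)--(4) --- $A\tens B$, $C\tens D$, $\ol{AC}\tens\ol{BD}$, $A^{*}\tens B^{*}$, $\ol{A+C}\tens\ol{B+D}$, $\ol{\lambda A}\tens B$, and so on --- lie in $\overline{\M\vtensor\mf{N}}$ (using also that $\cM$, $\cN$ are $*$-algebras), and by Lemma~\ref{2eta_alg} so do the closures $\ol{(A\tens B)(C\tens D)}$, $\ol{\lambda(A\tens B)}$ and $\ol{A\tens B+A\tens D+C\tens B+C\tens D}$. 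Thus in each identity both sides belong to $\overline{\M\vtensor\mf{N}}$, and it suffices to find a common core on which the two operators agree verbatim: equality then follows because an operator in $\overline{\M\vtensor\mf{N}}$ admits no proper closed extension inside $\overline{\M\vtensor\mf{N}}$.

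For (2), $(A\tens B)^{*}=(A\tens_0B)^{*}$, and a direct inner-product computation shows $(A\tens_0B)^{*}\supset A^{*}\tens_0B^{*}$, hence $(A\tens B)^{*}\supset\ol{A^{*}\tens_0B^{*}}=A^{*}\tens B^{*}$; both sides lie in $\overline{\M\vtensor\mf{N}}$, so they coincide. For (1), put $\mc{E}_1:=\{\xi\in\dom{C}\ ;\ C\xi\in\dom{A}\}\subset\mc{H}$ and $\mc{E}_2:=\{\eta\in\dom{D}\ ;\ D\eta\in\dom{B}\}\subset\mc{K}$; by Proposition~\ref{2M-vN1} these are completely dense, and being contained in $\dom{AC}$, $\dom{BD}$ they are cores of $\ol{AC}$, $\ol{BD}$ by Lemma~\ref{2core of A}. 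On $\mc{E}_1\atensor\mc{E}_2$ the operators $(A\tens B)(C\tens D)$ and $\ol{AC}\tens_0\ol{BD}$ coincide, and $\mc{E}_1\atensor\mc{E}_2$ is a core of $\ol{AC}\tens\ol{BD}$ by Lemma~\ref{6tensor of cores}; passing to closures gives $\ol{AC}\tens\ol{BD}\subset\ol{(A\tens B)(C\tens D)}$, whence equality. For (3), repeat the argument with $\mc{E}_1:=\dom{A}\cap\dom{C}$ and $\mc{E}_2:=\dom{B}\cap\dom{D}$, which are completely dense by Proposition~\ref{2countable intersection of c-dense} and cores of $\ol{A+C}$, $\ol{B+D}$; on $\mc{E}_1\atensor\mc{E}_2$ bilinearity of $(\xi,\eta)\mapsto\xi\tens\eta$ gives $(\ol{A+C}\tens_0\ol{B+D})(\xi\tens\eta)=A\xi\tens B\eta+A\xi\tens D\eta+C\xi\tens B\eta+C\xi\tens D\eta$, so $\ol{A+C}\tens\ol{B+D}\subset\ol{A\tens B+A\tens D+C\tens B+C\tens D}$ and equality follows. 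For (4): when $\lambda\neq0$ one has $\lambda(A\tens_0B)=(\lambda A)\tens_0B=A\tens_0(\lambda B)$ already on the algebraic tensor product, together with $\ol{\lambda A}=\lambda A$ and $\ol{\lambda B}=\lambda B$; when $\lambda=0$ all three operators vanish on a dense domain, so all three closures are the zero operator on $\mc{H}\vtensor\mc{K}$.

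The main obstacle is the core argument behind (1) and (3): one needs that the algebraic tensor product of cores is again a core (Lemma~\ref{6tensor of cores}) and that the domains produced by Murray--von Neumann's machinery are completely dense (Propositions~\ref{2M-vN1} and~\ref{2countable intersection of c-dense}), hence cores (Lemma~\ref{2core of A}). With these tools in hand, each identity reduces to a short verification on elementary tensors followed by an appeal to the no-proper-extension property of $\overline{\M\vtensor\mf{N}}$.
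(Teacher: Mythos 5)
Your proof is correct and follows essentially the same route as the paper: check agreement on the algebraic tensor product of the completely dense cores supplied by the Murray--von Neumann machinery (Proposition \ref{2M-vN1}, Proposition \ref{2countable intersection of c-dense}, Lemma \ref{6tensor of cores}), pass to closures, and conclude equality from the no-proper-closed-extension property of $\overline{\M\vtensor\mf{N}}$ via Lemma \ref{6affiliation of tensor product} and Proposition \ref{2No-closed-extension}(2). The only difference is that you spell out parts (3) and (4), which the paper dismisses as ``similar,'' and your handling of the $\lambda=0$ case in (4) is a welcome extra detail.
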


\begin{proof}
(1)  From Proposition \ref{2M-vN1}, $\mc{D}_1:=\{\xi \in \dom{C};C\xi\in \dom{A}\}$ 
is a core of $\overline{AC}$ and $\mc{D}_2:=\{\eta\in \dom{D};D\eta\in \dom{B}\}$ 
is a core of $\overline{BD}$. 
Define $\mc{D}:=\mc{D}_1\atensor \mc{D}_2$, which is a core of $\overline{AC}\tens \overline{BD}$.
Since 
\[\disp \dom{\overline{(A\tens B)(C\tens D)}}\supset \dom{(A\tens B)(C\tens D)}\supset \mc{D},\]
it holds that for any $\disp \zeta=\sum_{i=1}^n\xi_i\tens \eta_i\in \mc{D}$, we have
\eqa{
\overline{(A\tens B)(C\tens D)}\zeta&=\sum_{i=1}^nAC\xi_i\tens BD\eta_i=(\overline{AC}\tens \overline{BD})\sum_{i=1}^n\xi_i\tens \eta_i\\
&=(\overline{AC}\tens \overline{BD})\zeta.
}
Therefore $\disp \overline{(A\tens B)(C\tens D)}\supset (\overline{AC}\tens \overline{BD})|_{\mc{D}}$.
Since $\mc{D}$ is a core of $\overline{AC}\tens \overline{BD}$, we have (by taking the closure)
\[\overline{(A\tens B)(C\tens D)}\supset \overline{AC}\tens \overline{BD}.\]
Since both operators belong to $\overline{\M\vtensor \mf{N}}$ by Lemma \ref{6affiliation of tensor product}, 
we have 
\[\overline{(A\tens B)(C\tens D)}=\overline{AC}\tens \overline{BD}.\] 
by Proposition \ref{2No-closed-extension}(2).

(2) It is easy to see that $(A\tens B)^*\supset A^*\tens B^*$. 
Since $(A\tens B)^*$ and $A^*\tens B^*$ are closed operators belonging to $\ol{\M\vtensor \mf{N}}$, 
we have $(A\tens B)^*=A^*\tens B^*$ by Proposition \ref{2No-closed-extension} (2).

(3) and (4) can be easily shown in a similar manner as in (1).
\end{proof}

Now we shall define the tensor product $\msc{R}_1\vtensor \msc{R}_2$ of $(\msc{R}_1, \Hs_1)$ and 
$(\msc{R}_2, \Hs_2)$ in Obj(\textbf{fRng}).
Let $\M_i$ be finite von Neumann algebras on $\Hs_i$ such that $\msc{R}_i=\cM_i\ (i=1,2)$, 
respectively (cf. Lemma \ref{6characterization of R}). 
From Lemma \ref{6tensor *-alg}, the linear space $\msc{R}_1\tens_{\text{alg}}\msc{R}_2$ spanned by 
$\{A_1\tens A_2\ ;\ A_i\in \msc{R}_i,\ i=1,2\}$ is a *-algebra. 
Since $\msc{R}_1\tens_{\text{alg}}\msc{R}_2$ is a subset of $\ol{\M_1\vtensor \M_2}$, it belongs to $\msc{RC}(\Hs_1\tens \Hs_2)$. 
Therefore:

\begin{dfn}\label{6def of tensor in fRng}
Under the above notations, we define $\msc{R}_1\vtensor \msc{R}_2$ to be the SRT-closure 
(for $\mc{H}_1\tens \mc{H}_2$) of $\msc{R}_1\atensor \msc{R}_2$.
\end{dfn}

\begin{lem}\label{6tensor product of fRng}
Let $\msc{R}_i$ $(i=1,2)$ be as above. 
Then $\msc{R}_1\vtensor \msc{R}_2$ is also an object in \textbf{fRng}. 
More precisely, if $\msc{R}_i=\cM_i$, where $\M_i$ is a finite von Neumann algebra $(i=1,2)$, 
then $\cM_1\vtensor \cM_2=\ol{\M_1\vtensor\M_2}$.
\end{lem}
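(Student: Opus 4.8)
The plan is to prove the sharper assertion that, when $\msc{R}_i=\cM_i$ with $\M_i$ finite $(i=1,2)$, one has $\msc{R}_1\vtensor\msc{R}_2=\ol{\M_1\vtensor\M_2}$; the statement that $\msc{R}_1\vtensor\msc{R}_2$ is an object of \textbf{fRng} then follows immediately. First I would record that $\M:=\M_1\vtensor\M_2$ is again a finite von Neumann algebra: by Lemma \ref{2decompose} one reduces to the countably decomposable case, where the tensor product of faithful normal tracial states is a faithful normal tracial state on the spatial tensor product, and a bounded direct sum of finite von Neumann algebras is finite. Consequently, by Theorem \ref{3cta}, $\ol{\M}$ is a complete topological $*$-algebra whose algebraic operations are SRT-continuous, and by Lemma \ref{3closed} it is SRT-closed in $\msc{RC}(\Hs_1\tens\Hs_2)$; so once $\msc{R}_1\vtensor\msc{R}_2=\ol{\M}$ is proved, $\msc{R}_1\vtensor\msc{R}_2$ satisfies all the requirements of Definition \ref{6def of fRng}.

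The inclusion $\msc{R}_1\vtensor\msc{R}_2\subset\ol{\M}$ is the easy half. By Lemma \ref{6affiliation of tensor product} every elementary tensor $A\tens B$ with $A\in\cM_1$, $B\in\cM_2$ lies in $\ol{\M}$; by Lemma \ref{6tensor *-alg} together with the fact that $\ol{\M}$ is a $*$-algebra under $\overline{X+Y},\overline{\alpha X},\overline{XY},X^*$ (Theorem \ref{2eta_*alg}), the algebraic tensor product $\cM_1\atensor\cM_2$ is contained in $\ol{\M}$. Since $\ol{\M}$ is SRT-closed, its SRT-closure $\msc{R}_1\vtensor\msc{R}_2$ (Definition \ref{6def of tensor in fRng}) is contained in $\ol{\M}$ as well.

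For the reverse inclusion I would argue via Proposition \ref{5main}. By Lemma \ref{6tensor *-alg} the set $\cM_1\atensor\cM_2$ is a unital $*$-subalgebra of $\ol{\M}$, and since the operations of $\ol{\M}$ are SRT-continuous (Theorem \ref{3cta}), its SRT-closure $\msc{R}_1\vtensor\msc{R}_2$ is an SRT-closed unital $*$-subalgebra of $\ol{\M}$. Applying Proposition \ref{5main} with $\M_1\vtensor\M_2$ as the ambient finite von Neumann algebra yields a von Neumann subalgebra $\mathfrak{N}\subset\M$ with $\msc{R}_1\vtensor\msc{R}_2=\ol{\mathfrak{N}}$ and $\mathfrak{N}=(\msc{R}_1\vtensor\msc{R}_2)\cap\mathfrak{B}(\Hs_1\tens\Hs_2)$. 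Since $\M_1\atensor\M_2\subset\cM_1\atensor\cM_2\subset\msc{R}_1\vtensor\msc{R}_2$ and consists of bounded operators, $\M_1\atensor\M_2\subset\mathfrak{N}$; taking double commutants and using $(\M_1\atensor\M_2)''=\M_1\vtensor\M_2$ and $\mathfrak{N}''=\mathfrak{N}$ gives $\M\subset\mathfrak{N}\subset\M$, hence $\mathfrak{N}=\M$ and $\msc{R}_1\vtensor\msc{R}_2=\ol{\M_1\vtensor\M_2}$.

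I expect the main obstacle to be this reverse inclusion; the route above is short precisely because it outsources the work to Proposition \ref{5main}. A more hands-on alternative would be to show $\M_1\vtensor\M_2\subset\msc{R}_1\vtensor\msc{R}_2$ by approximating elements of the unit ball of $\M_1\vtensor\M_2$ by elements of $\M_1\atensor\M_2$ in the strong operator topology (Kaplansky density) and invoking that SOT and SRT agree on the unit ball (Lemma \ref{3mt=sot}), then extending to all of $\ol{\M_1\vtensor\M_2}$ by spectral truncation of the self-adjoint part and the polar decomposition exactly as in the proof of Lemma \ref{6characterization of R}, using SRT-continuity of multiplication on $\ol{\M_1\vtensor\M_2}$. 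Either way the genuinely new analytic content already resides in Lemmas \ref{6affiliation of tensor product}, \ref{6tensor of cores} and \ref{6tensor *-alg}; the residual difficulty is merely the bookkeeping needed to pass from the algebraic tensor product to its SRT-closure.
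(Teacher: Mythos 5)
Your proof is correct, and for the inclusion $\cM_1\vtensor\cM_2\subset\ol{\M_1\vtensor\M_2}$ it proceeds exactly as the paper does (Lemma \ref{6affiliation of tensor product}, linearity, and SRT-closedness of $\ol{\M_1\vtensor\M_2}$ via Lemma \ref{3closed}). For the harder reverse inclusion you take a genuinely different route: you note that the SRT-closure of $\cM_1\atensor\cM_2$ is an SRT-closed unital *-subalgebra of $\ol{\M_1\vtensor\M_2}$ (using Lemma \ref{6tensor *-alg} and the continuity of the operations from Theorem \ref{3cta}), apply Proposition \ref{5main} to write it as $\ol{\mf{N}}$ for a von Neumann subalgebra $\mf{N}\subset\M_1\vtensor\M_2$, and then identify $\mf{N}=\M_1\vtensor\M_2$ by the bicommutant theorem, since $\mf{N}$ contains $\M_1\atensor\M_2$ and $(\M_1\atensor\M_2)''=\M_1\vtensor\M_2$. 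The paper instead argues directly: $\M_1\atensor\M_2\subset\cM_1\vtensor\cM_2$, the Kaplansky density theorem together with Lemma \ref{3mt=sot} (SRT $=$ SOT on the unit ball) gives $\M_1\vtensor\M_2\subset\cM_1\vtensor\cM_2$, and taking SRT-closures --- implicitly using the SRT-density of a finite von Neumann algebra in its algebra of affiliated operators --- yields $\ol{\M_1\vtensor\M_2}\subset\cM_1\vtensor\cM_2$; this is essentially the ``hands-on alternative'' you sketch at the end. Your main route outsources the analytic content (analytic vectors, almost everywhere convergence) to Proposition \ref{5main} and replaces Kaplansky density by von Neumann density, at the small cost of having to record explicitly that $\M_1\vtensor\M_2$ is finite and that the SRT-closure of a *-subalgebra is again a *-subalgebra --- facts the paper also needs but leaves implicit. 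Both arguments are sound; the paper's is shorter at this point, while yours makes the structural mechanism (an SRT-closed unital *-subalgebra of $\ol{\M}$ whose bounded part is SOT-dense in $\M$ must be all of $\ol{\M}$) more transparent.
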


\begin{proof}
We first show that $\cM_1\vtensor \cM_2 \subset \ol{\M_1\vtensor \M_2}$. 
Let $T_i\in \cM_i\ (i=1,2)$. 
Then we can show that $T_1\tens T_2\in \ol{\M_1\vtensor \M_2}$ by Lemma \ref{6affiliation of tensor product}. 
Therefore by the linearity, we obtain 
\[\cM_1\atensor \cM_2\subset \ol{\M_1\vtensor \M_2} .\]
As the left hand side is SRT-closed in $\rc{\Hs_1\tens \Hs_2}$, 
we have $\cM_1\vtensor \cM_2 \subset \ol{\M_1\vtensor \M_2}$.
Next we prove that $\ol{\M_1\vtensor \M_2}\subset \cM_1\vtensor \cM_2$. 
It is clear that 
$\M_1\otimes_{\text{alg}} \M_2 \subset \cM_1 \overline{\otimes} \cM_2$.
By the Kaplansky density theorem and Lemma \ref{3mt=sot}, 
we have $\M_1\overline{\otimes}\M_2 \subset \cM_1 \overline{\otimes} \cM_2$.
By taking the SRT-closure, we obtain $\overline{\M_1\overline{\otimes}\M_2}\subset \cM_1\vtensor \cM_2$. 
\end{proof}

The above Lemma says that $(\msc{R}_1\overline{\otimes}\msc{R}_2, \Hs_1\otimes \Hs_2)$ is again an object in \textbf{fRng}.

Next, we discuss the extension of morphisms in \tb{fvN} to ones in \tb{fRng}. 
It requires some steps.

\begin{lem}\label{6SRT convergence of tensor product}
Let $(\M_1,\mc{H}_1)$, $(\M_2,\mc{H}_2)$ be finite von Neumann algebras. 
Then the mapping 
\begin{align*}
(\overline{\M_1},SRT)\times (\overline{\M_2},SRT) &\longrightarrow 
(\overline{\M_1}\vtensor\overline{\M_2},SRT), \\
(A,B)&\longmapsto A\tens B,
\end{align*}
is continuous.
\end{lem}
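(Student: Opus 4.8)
The plan is to prove continuity by first treating the case where $\M_1$ and $\M_2$ are countably decomposable, exploiting that there the strong resolvent topology coincides with a $\tau$-measure topology in which multiplication is jointly continuous, and then to pass to the general case by the direct sum decomposition of Lemma \ref{2decompose}.

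So, suppose first that $\M_1$ and $\M_2$ are countably decomposable and fix faithful normal tracial states $\tau_i$ on $\M_i$. Then $\tau:=\tau_1\tens\tau_2$ is a faithful normal tracial state on $\M_1\vtensor\M_2$, so by Lemma \ref{3srt=mt} the strong resolvent topology on $\overline{\M_1\vtensor\M_2}=\overline{\M_1}\vtensor\overline{\M_2}$ (Lemma \ref{6tensor product of fRng}) coincides with the $\tau$-measure topology, and likewise on each $\overline{\M_i}$; moreover, by Lemma \ref{Ametrizable} all three spaces are metrizable, so I may argue with sequences. I would first check that the two ``slot'' maps $\overline{\M_1}\ni A\mapsto A\tens 1$ and $\overline{\M_2}\ni B\mapsto 1\tens B$ into $\overline{\M_1}\vtensor\overline{\M_2}$ are continuous: if $C_n\to 0$ in $(\overline{\M_1},MT)$ and $p\in P(\M_1)$ satisfies $\|C_np\|<\eps$ and $\tau_1(p^{\perp})<\delta$, then $p\tens 1\in P(\M_1\vtensor\M_2)$ satisfies $\|(C_n\tens 1)(p\tens 1)\|=\|C_np\|<\eps$ and $\tau((p\tens 1)^{\perp})=\tau_1(p^{\perp})<\delta$, so $C_n\tens 1\to 0$ in $(\overline{\M_1\vtensor\M_2},MT)$; since $C_n\tens 1=\overline{(A_n\tens 1)-(A\tens 1)}$ when $C_n=\overline{A_n-A}$ (by the relations of Lemma \ref{6tensor *-alg}), this gives $A_n\tens 1\to A\tens 1$, and the second slot is symmetric. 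Now, if $A_n\to A$ in $\overline{\M_1}$ and $B_n\to B$ in $\overline{\M_2}$, write $A_n\tens B_n=\overline{(A_n\tens 1)(1\tens B_n)}$ by Lemma \ref{6tensor *-alg}(1); since $(\overline{\M_1\vtensor\M_2},MT)$ is a topological $^*$-algebra \cite{Nelson}, the product is jointly continuous, and combining this with the two slot continuities yields $A_n\tens B_n\to\overline{(A\tens 1)(1\tens B)}=A\tens B$. This settles the countably decomposable case.

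For the general case, by Lemma \ref{2decompose} and Lemma \ref{6UXU^*} we may assume $\M_i=\bigoplus_{\gamma}^b\M_{i,\gamma}$ with each $\M_{i,\gamma}$ countably decomposable and finite. Then $\M_1\vtensor\M_2=\bigoplus_{(\alpha,\beta)}^b\left(\M_{1,\alpha}\vtensor\M_{2,\beta}\right)$, each summand is again countably decomposable and finite (it carries the faithful normal tracial state $\tau_{1,\alpha}\tens\tau_{2,\beta}$), and by Lemma \ref{3key} we have $\overline{\M_i}=\bigoplus_{\gamma}\overline{\M_{i,\gamma}}$ and $\overline{\M_1\vtensor\M_2}=\bigoplus_{(\alpha,\beta)}\overline{\M_{1,\alpha}\vtensor\M_{2,\beta}}$. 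A routine computation with algebraic tensor products of cores, using Lemma \ref{6tensor of cores} and the direct-sum lemmas of Appendix A, shows that for $A=\oplus_\alpha A_\alpha\in\overline{\M_1}$ and $B=\oplus_\beta B_\beta\in\overline{\M_2}$ one has $A\tens B=\bigoplus_{(\alpha,\beta)}\left(A_\alpha\tens B_\beta\right)$. Now let $\{(A^{(\lambda)},B^{(\lambda)})\}_\lambda$ be a net converging to $(A,B)$ in the product of the strong resolvent topologies; by Lemma \ref{3convergence} this is equivalent to $A^{(\lambda)}_\alpha\to A_\alpha$ and $B^{(\lambda)}_\beta\to B_\beta$ in $SRT$ for all $\alpha,\beta$. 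Applying the countably decomposable case to each pair $(\M_{1,\alpha},\M_{2,\beta})$ gives $A^{(\lambda)}_\alpha\tens B^{(\lambda)}_\beta\to A_\alpha\tens B_\beta$ in $SRT$ for every $(\alpha,\beta)$, and then Lemma \ref{3convergence} yields $A^{(\lambda)}\tens B^{(\lambda)}=\bigoplus_{(\alpha,\beta)}\left(A^{(\lambda)}_\alpha\tens B^{(\lambda)}_\beta\right)\to\bigoplus_{(\alpha,\beta)}\left(A_\alpha\tens B_\beta\right)=A\tens B$ in $SRT$, which is the assertion.

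The main obstacle is precisely the passage from countably decomposable to arbitrary finite von Neumann algebras: there is no global faithful normal tracial state, the strong resolvent topology need not be metrizable, and hence neither the $\tau$-measure topology nor purely sequential arguments are directly available. What makes the reduction work is that strong resolvent convergence on a direct sum is exactly coordinatewise convergence (Lemma \ref{3convergence}) and that the operator tensor product distributes over such direct sums; the only slightly delicate technical point is verifying the identity $A\tens B=\bigoplus_{(\alpha,\beta)}(A_\alpha\tens B_\beta)$ at the level of unbounded closed operators, which is handled by exhibiting a common core.
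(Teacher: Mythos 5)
Your proposal is correct, but it takes a genuinely different route from the paper's. The paper proves the continuity of the slot maps directly for arbitrary finite $\M_i$: for self-adjoint $A_\alpha\to A$ one has $e^{it(A_\alpha\tens 1)}=e^{itA_\alpha}\tens 1$, so $A_\alpha\tens 1\to A\tens 1$ in the strong exponential topology and hence in SRT by Theorem \ref{3cta}; then $A_\alpha\tens B_\alpha=\overline{(A_\alpha\tens 1)(1\tens B_\alpha)}$ converges because multiplication in $\overline{\M_1\vtensor\M_2}$ is SRT-continuous (Theorem \ref{3cta} again), and the general case follows from the real/imaginary part decomposition via Lemma \ref{6tensor *-alg}. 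Since that argument works with nets and needs no countability hypothesis, no reduction to the countably decomposable case is required. You instead obtain the slot maps in the traceable case from $\tau$-measure estimates and Nelson's joint continuity of multiplication, and then pass to the general case through the block decomposition; that forces you to verify $\M_1\vtensor\M_2=\bigoplus^b_{(\alpha,\beta)}\left(\M_{1,\alpha}\vtensor\M_{2,\beta}\right)$ and $A\tens B=\oplus_{(\alpha,\beta)}\left(A_\alpha\tens B_\beta\right)$ (your common-core argument together with Proposition \ref{2No-closed-extension} (2) does settle the latter), and to use the componentwise-convergence criterion for general, not merely self-adjoint, elements of $\cM$, which is Step 4 of the proof of Lemma \ref{3key} rather than Lemma \ref{3convergence} itself. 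What your route buys is that it avoids the SET--SRT identification and the continuity of multiplication outside the countably decomposable setting; what the paper's route buys is brevity and the complete avoidance of the direct-sum bookkeeping for the spatial tensor product.
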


\begin{proof}
Let $\{A_{\alpha}\}_{\alpha}\subset\overline{\M_1}$, 
$\{B_{\alpha}\}_{\alpha}\subset\overline{\M_2}$ be SRT-converging nets 
and $A\in\overline{\M_1}$, $B\in\overline{\M_2}$ be their limits respectively.
We should only show that the net $\left\{A_{\alpha}\tens B_{\alpha}\right\}_{\alpha}$ converges to $A\tens B$ 
in the strong resolvent topology.\\

{\bf Step 1.} The above claim is true if all $A_{\alpha}$, $B_{\alpha}$, $A$ and $B$ are self-adjoint.
Indeed, since 
\begin{equation*}
e^{it(A_{\alpha}\tens 1)} = e^{itA_{\alpha}}\tens 1, \ \ \ \ e^{it(A\tens 1)}=e^{itA}\tens 1
\end{equation*}
hold, we easily see that $A_{\alpha}\tens 1$ converges to $A\tens 1$ in the strong exponential topology.
Thus, by Theorem \ref{3cta}, the SRT-convergence of $A_{\alpha}\tens 1$ to $A\tens 1$ follows.
Similarly $1\tens B_{\alpha}$ converges to $1\tens B$ in the strong resolvent topology.
Therefore, by Lemma \ref{6tensor *-alg} and the SRT-continuity of the multiplication, we have
\begin{equation*}
A_{\alpha}\tens B_{\alpha} = \overline{\left(A_{\alpha}\tens 1\right)\left(1\tens B_{\alpha}\right)} 
\rightarrow \overline{\left(A\tens 1\right)\left(1\tens B\right)} 
= A\tens B.
\end{equation*}
\\

{\bf Step 2.} In a general case, by Lemma \ref{6tensor *-alg}, we obtain
\begin{align*}
A_{\alpha}\tens B_{\alpha} &= \left(\overline{\re{A_{\alpha}}+i\im{A_{\alpha}}}\right)\tens \left(\overline{\re{B_{\alpha}}+i\im{B_{\alpha}}}\right) \\
&= \overline{\re{A_{\alpha}}\tens\re{B_{\alpha}} + i\re{A_{\alpha}}\tens\im{B_{\alpha}}} \\
&\ \ \ \ \ \ \ \ \overline{+ i\im{A_{\alpha}}\tens\re{B_{\alpha}}-\im{A_{\alpha}}\tens\im{B_{\alpha}}}\\
&\rightarrow \overline{\re{A}\tens\re{B} + i\re{A}\tens\im{B}} \\
&\ \ \ \ \ \ \ \ \overline{+ i\im{A}\tens\re{B}-\im{A}\tens\im{B}}\\
&= A\tens B.
\end{align*}
Hence the proof of Lemma \ref{6SRT convergence of tensor product} is complete.
\end{proof}

\begin{lem}\label{6reduction is finite}
Let $\mathfrak{M}$ be a finite von Neumann algebra on a Hilbert space $\mathcal{H}$ 
and $e$ is a projection in $\mathfrak{M}'$, then $\mathfrak{M}_{e}$ is also finite. 
\end{lem}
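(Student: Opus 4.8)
The plan is to reduce the finiteness of $\mathfrak{M}_e$ to that of $\mathfrak{M}$ by using the definition of finiteness in terms of the absence of non-unitary isometries, together with the standard relation $(\mathfrak{M}_e)' = (\mathfrak{M}')_e$ recalled in $\S2.1$ for a projection $e \in \mathfrak{M} \cup \mathfrak{M}'$. First I would take an arbitrary isometry $v \in \mathfrak{M}_e$, so that $v^*v = e = 1_{\ran{e}}$ and $vv^* \le e$. I want to produce from $v$ an isometry in $\mathfrak{M}$ and invoke finiteness of $\mathfrak{M}$; the natural candidate is $w := v + e^{\perp}$, viewed as an operator on $\mathcal{H} = \ran{e} \oplus \ran{e^{\perp}}$, i.e. $w$ acts as $v$ on $\ran{e}$ and as the identity on $\ran{e^{\perp}}$. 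Since $e \in \mathfrak{M}'$, the subspace $\ran{e}$ reduces every element of $\mathfrak{M}$, and $\mathfrak{M}_e = \{x e|_{\ran e} : x \in \mathfrak{M}\}$; so I would first check that $w$ indeed lies in $\mathfrak{M}$: writing $v = x e|_{\ran e}$ for some $x \in \mathfrak{M}$, one verifies that $w = exe + e^{\perp}$ as an operator on $\mathcal{H}$, which belongs to $\mathfrak{M}$ because $e \in \mathfrak{M}' \subset \mathfrak{B}(\mathcal{H})$ commutes with everything in $\mathfrak{M}$ and $e, e^\perp \in \mathfrak{M}'' = \mathfrak{M}$ — wait, $e \in \mathfrak{M}'$, not necessarily $\mathfrak{M}$, so I should instead argue directly: $w = v \oplus 1_{\ran{e^\perp}}$ commutes with every $u \in \mathfrak{M}'$ because $e \in \mathfrak{M}'$ makes the decomposition $\mathcal{H} = \ran e \oplus \ran{e^\perp}$ $\mathfrak{M}'$-invariant and $v$ commutes with $(\mathfrak{M}')_e = (\mathfrak{M}_e)'$, hence $w \in \mathfrak{M}'' = \mathfrak{M}$.

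Next I would observe that $w$ is an isometry: $w^*w = v^*v \oplus 1_{\ran{e^\perp}} = e \oplus e^{\perp} = 1_{\mathcal{H}}$. Since $\mathfrak{M}$ is finite, $w$ must be unitary, so $ww^* = 1_{\mathcal{H}}$. Restricting to the reducing subspace $\ran e$ gives $vv^* = 1_{\ran e} = e$, which together with $v^*v = e$ shows $v$ is unitary in $\mathfrak{M}_e$. Hence $\mathfrak{M}_e$ has no non-unitary isometry and is finite by Definition 2.3(1).

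The only genuinely delicate point is the bookkeeping in the first paragraph: carefully identifying the algebra $\mathfrak{M}_e$ on $\ran e$ with the reduced algebra and checking that the ``padded'' operator $w$ really sits inside $\mathfrak{M}$ rather than merely in $\mathfrak{B}(\mathcal{H})$. This is exactly where the hypothesis $e \in \mathfrak{M}'$ (as opposed to $e \in \mathfrak{M}$) is used, via $(\mathfrak{M}_e)' = (\mathfrak{M}')_e$ and a bicommutant argument; everything after that is the routine Wold/finiteness argument and is immediate. An alternative, perhaps cleaner, route would be to note that if $\mathfrak{M}$ is countably decomposable this follows from the existence of a faithful normal tracial state $\tau$ on $\mathfrak{M}$, whose restriction-and-renormalization $\tau(e \,\cdot\, e)/\tau(e)$ gives a faithful normal tracial state on $\mathfrak{M}_e$ (using $\tau(e) \neq 0$, which holds since $e \neq 0$ may be assumed; the case $e = 0$ is trivial), and then to pass to the general case via the decomposition $\mathfrak{M} = \bigoplus^b_\alpha \mathfrak{M}_\alpha$ of Lemma \ref{2decompose}; but the direct isometry argument is self-contained and avoids invoking countable decomposability, so I would present that one.
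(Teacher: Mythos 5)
Your key step fails: the padded operator $w = v \oplus 1_{\ran{e^{\perp}}}$ need not lie in $\mathfrak{M}$, and the justification you give for it is incorrect. Since $e \in \mathfrak{M}'$, the subspace $\ran{e}$ reduces every element of $\mathfrak{M}$, but it is \emph{not} invariant under $\mathfrak{M}'$: a general $u \in \mathfrak{M}'$ does not commute with $e$ (that would force $e$ to lie in the center), so $u$ does not respect the decomposition $\mathcal{H} = \ran{e} \oplus \ran{e^{\perp}}$, and there is no reason for $w$ to commute with it. Concretely, take $\mathfrak{M} = \mathbb{C}1$ on $\mathcal{H} = \mathbb{C}^2$, $e$ the projection onto the first coordinate (so $e \in \mathfrak{M}' = M_2(\mathbb{C})$ and $\mathfrak{M}_e = \mathbb{C}$), and $v = -1$: then $w = \mathrm{diag}(-1,1) \notin \mathbb{C}1 = \mathfrak{M}$, and indeed $w$ does not commute with the flip matrix in $\mathfrak{M}'$. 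The identity-padding trick is the right tool for corners $e\mathfrak{M}e$ with $e \in \mathfrak{M}$; for a reduction by a projection in the commutant it does not apply as stated. Your alternative sketch has the same confusion: $\tau(e\,\cdot\,e)/\tau(e)$ is meaningless here because $e \notin \mathfrak{M}$.

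The standard repair goes through the central support $c_e \in Z(\mathfrak{M})$ of $e$ in $\mathfrak{M}'$: for $x \in \mathfrak{M}$ one has $xe = 0$ if and only if $xc_e = 0$, so the normal surjective $*$-homomorphism $x \mapsto xe|_{\ran{e}}$ of $\mathfrak{M}$ onto $\mathfrak{M}_e$ has kernel $\mathfrak{M}(1-c_e)$ and induces a $*$-isomorphism of $\mathfrak{M}c_e$ onto $\mathfrak{M}_e$. At that point your padding argument works verbatim, because $c_e$ is a (central) projection in $\mathfrak{M}$: if $\tilde{v} \in \mathfrak{M}c_e$ satisfies $\tilde{v}^*\tilde{v} = c_e$, then $\tilde{v} + (1-c_e)$ is an isometry in $\mathfrak{M}$, hence unitary by finiteness of $\mathfrak{M}$, hence $\tilde{v}\tilde{v}^* = c_e$; since the absence of non-unitary isometries is an algebraic property preserved under $*$-isomorphism, $\mathfrak{M}_e$ is finite. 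Note that the paper itself offers no argument (it records the lemma as well known), so a corrected proof along these lines would be a genuine addition, but as written your proposal does not establish the statement.
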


\begin{proof}
Well-known.
\end{proof}

\begin{lem}\label{6reduction of closed op}
Let $A$ be a densely defined closed operator on a Hilbert space $\mc{H}$, $\mc{K}$ be a closed subspace of $\mc{K}$ such that 
$P_{\mc{K}}A\subset AP_{\mc{K}}$. 
Then the operator $B:=A|_{\dom{A}\cap \mc{K}}$ is a densely defined closed operator on $\mc{K}$.
\end{lem}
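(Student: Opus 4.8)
The plan is to check the three things the statement asks for: that $B$ actually maps into $\mc{K}$, that $\dom{B}=\dom{A}\cap\mc{K}$ is dense in $\mc{K}$, and that $B$ is closed as an operator on $\mc{K}$. The starting point is to unwind the hypothesis $P_{\mc{K}}A\subset AP_{\mc{K}}$: since $P_{\mc{K}}$ is bounded, the domain of $P_{\mc{K}}A$ is exactly $\dom{A}$, so the inclusion says that every $\xi\in\dom{A}$ lies in the domain of $AP_{\mc{K}}$, i.e.\ $P_{\mc{K}}\xi\in\dom{A}$, and moreover $P_{\mc{K}}A\xi=AP_{\mc{K}}\xi$. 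Thus $P_{\mc{K}}$ leaves $\dom{A}$ invariant and commutes with $A$ on it.

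From this, the range and density statements are immediate. For $\xi\in\dom{A}\cap\mc{K}$ we have $P_{\mc{K}}\xi=\xi$, hence $B\xi=A\xi=AP_{\mc{K}}\xi=P_{\mc{K}}A\xi\in\mc{K}$, so $B$ is genuinely an operator on $\mc{K}$. For density, fix $\zeta\in\mc{K}$; since $\dom{A}$ is dense in $\mc{H}$ there is a sequence $\xi_n\in\dom{A}$ with $\xi_n\to\zeta$, and then $P_{\mc{K}}\xi_n\in\dom{A}\cap\mc{K}=\dom{B}$ with $P_{\mc{K}}\xi_n\to P_{\mc{K}}\zeta=\zeta$. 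Hence $\dom{B}$ is dense in $\mc{K}$.

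Closedness will follow from the closedness of $A$ together with the closedness of the subspace $\mc{K}$. Suppose $\xi_n\in\dom{B}$ with $\xi_n\to\xi$ and $B\xi_n\to\eta$, all limits taken in $\mc{K}$ (equivalently in $\mc{H}$). Since $B\xi_n=A\xi_n$ and $A$ is closed, we get $\xi\in\dom{A}$ and $A\xi=\eta$; since $\mc{K}$ is closed and $\xi_n\in\mc{K}$, we also get $\xi\in\mc{K}$. Therefore $\xi\in\dom{A}\cap\mc{K}=\dom{B}$ and $B\xi=A\xi=\eta$, so $B$ is closed. I do not expect any serious obstacle here; the only point that needs a moment of care is the correct reading of the operator inclusion $P_{\mc{K}}A\subset AP_{\mc{K}}$ as the invariance of $\dom{A}$ under $P_{\mc{K}}$, and after that every step is routine.
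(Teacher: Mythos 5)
Your proof is correct: reading $P_{\mc{K}}A\subset AP_{\mc{K}}$ as invariance of $\dom{A}$ under $P_{\mc{K}}$ together with commutation on $\dom{A}$, and then checking range, density via $P_{\mc{K}}\xi_n$, and closedness from the closedness of $A$ and of $\mc{K}$, is exactly the routine verification intended. The paper itself gives no details (its proof reads ``This is a straightforward verification''), so your argument simply supplies the omitted steps along the same lines.
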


\begin{proof}
This is a straightforward verification.
\end{proof}

The next proposition guarantees the existence and the uniqueness of the extension of morphisms in \tb{fvN} to the morphisms in \tb{fRng}. 
Note that the claim is not trivial, 
because many $\sigma$-weakly continuous linear mappings between finite von Neumann algebras cannot be extended 
SRT-continuously to the algebra of affiliated operators. 
Indeed, we can not extend any $\sigma$-weakly continuous state on a finite von Neumann algebra $\M$ SRT-continuously onto $\cM$ if $\M$ is diffuse. 

\begin{prop}\label{6exntension of morphisms} 
Let $\M_1,\M_2$ be finite von Neumann algebras on Hilbert spaces $\Hs_1$, $\Hs_2$ respectively.
\begin{list}{}{}
\item[(1)] For each SRT-continuous unital *-homomorphism $\Phi:\cM_1\to \cM_2$, 
the restriction $\varphi$ of $\Phi$ onto $\M_1$ 
is a $\sigma$-weakly continuous unital *-homomorphism from $\M_1$ to $\M_2$.
\item[(2)] Conversely, for each $\sigma$-weakly continuous unital *-homomorphism $\varphi:\M_1\to \M_2$, 
there exists a unique SRT-continuous unital *-homomorphism $\Phi:\cM_1\to \cM_2$ such that $\Phi|_{\M_1}=\varphi.$
\end{list}
\end{prop}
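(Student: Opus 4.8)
The plan is to handle the two assertions separately, treating (2) as the substantial one. For (1), the key observation is that $\cM_1$ is a topological $*$-algebra in the strong resolvent topology (Theorem~\ref{3cta}), so $\Phi$ is automatically continuous for the algebraic operations; moreover, by Lemma~\ref{3mt=sot} the strong resolvent topology restricted to the closed unit ball $(\M_1)_1$ agrees with the strong operator topology. Since $\Phi$ is a unital $*$-homomorphism of $*$-algebras, it is automatically contractive on the bounded parts (a $*$-homomorphism between $C^*$-algebras is norm-decreasing), so $\varphi := \Phi|_{\M_1}$ maps $\M_1$ into $\M_2 = \cM_2 \cap \mathfrak{B}(\Hs_2)$ and maps the unit ball of $\M_1$ into the unit ball of $\M_2$. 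SOT-continuity of $\varphi$ on bounded sets then follows from SRT-continuity of $\Phi$ together with Lemma~\ref{3mt=sot} applied on both sides; by the Kaplansky density theorem and boundedness this upgrades to $\sigma$-weak continuity on $\M_1$. So (1) reduces to standard facts about $*$-homomorphisms together with Lemma~\ref{3mt=sot}.

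For (2), uniqueness is the easy half: since $\M_1$ is $\sigma$-strongly dense in $\cM_1$, hence (by Lemma~\ref{3mt=sot} and the spectral-truncation argument used in Lemma~\ref{6characterization of R}) SRT-dense in $\cM_1$, any two SRT-continuous extensions of $\varphi$ agree. For existence, I would first reduce to a spatial situation: a $\sigma$-weakly continuous unital $*$-homomorphism $\varphi:\M_1\to\M_2$ factors as a composition of (i) a $\sigma$-weak isomorphism onto its image $\varphi(\M_1)$, (ii) an amplification, and (iii) a reduction by a projection in the commutant — this is the standard structure theorem for normal $*$-homomorphisms of von Neumann algebras. Thus it suffices to build the extension in each of these three cases. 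For a spatial isomorphism implemented by a unitary $U$, Lemma~\ref{6UXU^*} already gives the SRT-continuous (indeed homeomorphic) extension $X\mapsto UXU^*$. For an amplification $x \mapsto x \otimes 1_{\mathcal{K}}$, the extension is $X \mapsto X \otimes 1_{\mathcal{K}}$, which lands in $\overline{\M_1 \vtensor \mathfrak{B}(\mathcal{K})_{\text{fin-part}}}$... more carefully, I restrict to the finite ampliation so that $\M_1 \vtensor \mathbb{C}1$ stays finite; SRT-continuity here is exactly the content of Step~1 in the proof of Lemma~\ref{6SRT convergence of tensor product} (one checks $e^{it(X\otimes 1)} = e^{itX}\otimes 1$, so the map is SET-continuous, hence SRT-continuous by Theorem~\ref{3cta}). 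Finally, for a reduction $x \mapsto xe|_{\ran{e}}$ by a projection $e \in \M_1'$, the extension is $X \mapsto X|_{\dom{X}\cap\ran{e}}$; this is a well-defined map into $\overline{(\M_1)_e}$ by Lemma~\ref{6reduction of closed op} and Lemma~\ref{6reduction is finite}, and its SRT-continuity follows because $(\re{X} - i)^{-1}$ and $(\im{X}-i)^{-1}$ restrict to the corresponding resolvents of the reduced operators, so strong convergence is inherited. Composing the three extensions gives the desired $\Phi$, and the composite is a unital $*$-homomorphism because each factor is (using Theorem~\ref{3cta} so that $\Phi$ respects $\overline{X+Y}$, $\overline{XY}$, $X^*$).

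The hard part will be verifying that the extension is genuinely a $*$-\emph{homomorphism} of the unbounded algebras — not merely a continuous map extending $\varphi$ — i.e.\ that $\Phi(\overline{X+Y}) = \overline{\Phi(X)+\Phi(Y)}$, $\Phi(\overline{XY}) = \overline{\Phi(X)\Phi(Y)}$, and $\Phi(X^*) = \Phi(X)^*$. For the amplification and reduction cases these identities are essentially Lemma~\ref{6tensor *-alg} and Proposition~\ref{2No-closed-extension}(2) (no proper closed extensions in $\cM$), respectively: one shows the two sides have a common core and agree there, then invokes uniqueness of closed affiliated extensions. The cleanest route may in fact be to bypass the structure-theorem decomposition entirely and argue directly: given $A \in \cM_1$ self-adjoint with spectral measure $E_A$, set $\Phi(A) := \int_{\mathbb{R}} \lambda\, d(\varphi \circ E_A)(\lambda)$ using that $\varphi \circ E_A$ is a projection-valued measure in $\M_2$ (here $\varphi$ extends to a normal homomorphism on the abelian von Neumann algebra generated by $E_A$), extend to general $A$ via real/imaginary parts, and then check additivity, multiplicativity and SRT-continuity using the entire-analytic-vector approximation of Lemma~\ref{6characterization of R} together with the SRT-density of $\M_1$ in $\cM_1$ and SRT-continuity of the algebraic operations (Theorem~\ref{3cta}). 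Either way, the technical crux is the bookkeeping of domains ensuring the spectral/functional-calculus construction is consistent across all of $\cM_1$; I would lean on Proposition~\ref{2No-closed-extension} repeatedly to collapse the domain subtleties.
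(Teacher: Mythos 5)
Your part (2) — the substantive half — follows essentially the same route as the paper: the same structure theorem for normal unital *-homomorphisms (amplification, reduction by a projection in the commutant, spatial isomorphism), the same key lemmas (Lemma \ref{6UXU^*} for the spatial piece, Lemma \ref{6SRT convergence of tensor product} and Lemma \ref{6tensor *-alg} for the amplification, Lemmas \ref{6reduction is finite} and \ref{6reduction of closed op} for the reduction), the same device of exhibiting a common core and invoking Proposition \ref{2No-closed-extension} (2) to verify the identities $\Phi(\overline{X+Y})=\overline{\Phi(X)+\Phi(Y)}$, etc., and the same uniqueness argument from the SRT-density of $\M_1$ in $\cM_1$. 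The paper merely packages the amplification and reduction into the single formula $\Phi'(X)=U(X\otimes 1_{\mathcal{K}})_{e'}U^*$, so splitting it into three composed extensions is a cosmetic difference; also note that $\M_1\vtensor\mathbb{C}1_{\mathcal{K}}$ is automatically finite (its isometries are of the form $v\otimes 1$ with $v$ an isometry of $\M_1$), so your ``finite ampliation'' caveat is unnecessary.

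In part (1), however, there is a genuine gap. You justify that $\varphi=\Phi|_{\M_1}$ maps $\M_1$ into $\M_2$ by citing that a *-homomorphism between C$^*$-algebras is norm-decreasing. But $\Phi$ takes values in $\cM_2$, which is not a C$^*$-algebra (it is not even locally convex in general), and the point at issue is exactly whether images of bounded elements are bounded; as stated, the appeal is circular. This step needs an operator-theoretic argument, which is what the paper supplies: for $u\in U(\M_1)$ and $\xi$ in the completely dense subspace $\dom{\Phi(u)^*\Phi(u)}$ one has
\begin{equation*}
\|\Phi(u)\xi\|^2=\nai{\xi}{\overline{\Phi(u)^*\Phi(u)}\,\xi}=\nai{\xi}{\Phi(u^*u)\xi}=\|\xi\|^2,
\end{equation*}
and since this subspace is a core of the closed operator $\Phi(u)$ (Lemma \ref{2core of A}), $\Phi(u)$ is a bounded isometry affiliated with $\M_2$, hence lies in $\M_2$ (and is unitary by finiteness of $\M_2$); linear combinations of unitaries then give $\varphi(\M_1)\subset\M_2$. (An alternative repair is a positivity argument: for self-adjoint $x$ with $\|x\|\le 1$ write $1-x^2=y^*y$ and estimate $\|\Phi(x)\xi\|$ on the core $\dom{\Phi(x)^2}$.) Once boundedness is established, $\varphi$ is a *-homomorphism between von Neumann algebras, and your use of Lemma \ref{3mt=sot} to obtain strong continuity on the unit ball and thence $\sigma$-weak continuity matches the paper's argument.
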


\begin{proof}
(1) We have to prove that $\Phi$ maps all bounded operators to bounded operators. 
For any $u\in U(\M_1)$ and $\xi\in \dom{\Phi(u)^*\Phi(u)}$, we have
\eqa{
||\Phi(u)\xi||^2&=\nai{\xi}{\Phi(u)^*\Phi(u)\xi}=\nai{\xi}{\Phi(u^*u)\xi}\\
&=\nai{\xi}{\Phi(1)\xi}=||\xi||^2.
}
Since $\dom{\Phi(u)^*\Phi(u)}$ is a (completely) dense subspace, $\Phi(u)\in \M_2$ and $\Phi(u)$ is an isometry. 
Therefore the finiteness of $\M_2$ implies $\Phi(u)\in U(\M_2)$. 
Thus, we see that $\Phi(U(\M_1))\subset U(\M_2)$. 
Since any element in $\M_1$ is a linear combination of $U(\M_1)$, $\Phi$ maps $\M_1$ into $\M_2$. 
To show that $\varphi$ is $\sigma$-weakly continuous, 
it is sufficient to prove the ($\sigma$-) strong continuity on the unit ball, because it is a homomorphism. 
Since the strong resolvent topology coincides with the strong operator topology 
on the closed unit ball by Lemma \ref{3mt=sot}, $\varphi$ is strongly continuous on the closed unit ball. 
Therefore $\varphi$ is a $\sigma$-weakly continuous homomorphism.

(2) Regard $\varphi$ as a composition of a surjection $\varphi': \M_1\to \varphi(\M_1)$ 
and the inclusion map $\iota:\ \varphi(\M_1)\hookrightarrow  \M_2.$ 
Note that the $\sigma$-weak continuity of $\varphi$ implies $\varphi(\M_1)$ is a von Neumann algebra. 
Since $\varphi'$ is surjective, from Theorem IV.5.5 of \cite{Tak}, 
there exists a Hilbert space $\mc{K}$, a projection $e'\in P(\M_1'\overline{\otimes}\mf{B}(\mc{K}))$ 
and a unitary operator $U:\ e'(\mc{H}_1\otimes \mc{K})\stackrel{\sim}{\to}\mc{H}_2$ such that 
\[\varphi'(x)=U(x\otimes 1_{\mc{K}})_{e'}U^*\]
for all $x\in \M_1$.
Now we would like to define the extension $\Phi'$ of $\varphi'$ to $\cM_1\to \ol{\varphi(\M_1)}$. 
Then we define $\Phi'$ as follows:
\[\Phi'(X)=U(X\otimes 1_{\mc{K}})_{e'}U^*,\ X\in \cM_1.\]
\[\xymatrix{
\cM_1\ar[d]^{\cdot \tens 1}\ar@{}[rrd]|\circlearrowleft \ar@{-->}[rr]^{\Phi'}& &\ol{\varphi(\M_1)}\\
\cM_1\tens \mbb{C}1_{\mc{K}}\ar[rr]^{\text{reduction by }e'}& &(\cM_1\otimes \mbb{C}1_{\mc{K}})_{e'}\ar[u]_{U\cdot U^*}
}\]
More precisely, we define  
\[Z=(X\tens 1)_{e'}:=e'(X\tens 1)|_{\ran{e'}\cap \dom{X\tens 1}},\ \ \ \  \Phi'(X):=UZU^*.\]
We have $Z\in \overline{(\M_1\otimes \mathbb{C}1_{\mathcal{K}})_{e'}}.$ 
Indeed, since $e'$ commutes with $\M\tens \mbb{C}1_{\mc{K}}$, it reduces the operator $X\tens 1$ 
and therefore by Lemma \ref{6reduction of closed op}, 
$(X\tens 1)_{e'}$ is a densely defined closed operator on $\ran{e'}$. 
Since $(\mf{N}_f)'=(\mf{N}')_{f}$ for each von Neumann algebra $\mf{N}$ and $f\in P(\mf{N}')$, the affiliation property is manifest. 
In addition, by Lemma \ref{6reduction is finite}, $(\M\tens \mbb{C}1_{\mc{K}})_{e'}$ is a finite von Neumann algebra. 
Next, we prove the map $\cM\ni X\mapsto (X\tens 1)_{e'}\in \overline{(\M_1\otimes \mathbb{C}1_{\mathcal{K}})_{e'}}$ 
is a SRT-continuous unital *-homomorphism. 
The continuity follows from Lemma \ref{6SRT convergence of tensor product}. 
To prove that it is a *-homomorphism, we have to show that for $X$, $Y\in \cM$, 
\begin{align*}
(\ol{(X+Y)}\tens 1)_{e'}&=\ol{(X\tens 1)_{e'}+(Y\tens 1)_{e'}},\\
(\ol{XY}\tens 1))_{e'}&=\ol{(X\tens 1)_{e'}(Y\tens 1)_{e'}},\\
((X\tens 1)_{e'})^*&=(X^*\tens 1)_{e'}.
\end{align*}
To prove the first equality, by Lemma \ref{6tensor *-alg}, we see that
\begin{align*}
\left((\overline{X+Y})\otimes 1\right)_{e^{'}} 
&= \left(\overline{X\otimes 1+Y\otimes 1}\right)_{e^{'}} \\
&\supset (X\otimes 1)_{e^{'}}+(Y\otimes 1)_{e^{'}}.
\end{align*}
Taking the closure, by Lemma \ref{2No-closed-extension}, we have
\begin{equation*}
\left((\overline{X+Y})\otimes 1\right)_{e^{'}} = \overline{(X\otimes 1)_{e^{'}}+(Y\otimes 1)_{e^{'}}}.
\end{equation*}
The others are proved in a similar manner. Next, by Lemma \ref{6UXU^*}, 
%$Z\mapsto UZU^*$\ is SRT continuous.\ Finally, 
the correspondence $\cM_1\ni X\mapsto U(X\otimes 1_{\mc{K}})_{e'}U^*\in \ol{\varphi(\M_1)}\subset \cM_2$ 
defines a SRT-continuous unital *-homomorphism $\Phi'$ which is clearly an extension of $\varphi'$. 
Therefore by considering $\Phi:=\iota'\circ \Phi':\cM_1\to \cM_2$ is the desired extension of $\varphi$, 
where $\iota' : \Phi'(\cM_1)\hookrightarrow \cM_2$ is the mere inclusion. 
Finally, we prove the uniqueness of the extension. 
Let $\Psi$ be another SRT-continuous unital *-homomorphism such that $\Psi|_{\M_1}=\varphi$. 
Let $X\in \cM_1$. 
Then from the SRT-density of $\M_1$ in $\cM_1$, 
there exists a net $\{x_{\alpha}\}\subset \M_1$ such that $\lim_{\alpha} x_{\alpha}=X$ in the strong resolvent topology. 
Therefore we have
\eqa{
\Psi(X)&=\lim_{\alpha} \Psi(x_{\alpha})=\lim_{\alpha} \varphi(x_{\alpha})\\
&=\lim_{\alpha} \Phi(x_{\alpha})=\Phi(X).
}
\end{proof}

The next lemmata, together with Lemma \ref{6tensor product of fRng}, implies that \textbf{fRng} is a tensor category.

\begin{lem}\label{6tensor product of arrows in fRng} 
Let $\msc{R}_i$, $\msc{S}_i$ $(i=1,2)$ be objects in Obj(\textbf{fRng}). 
If $\Psi_1:\msc{R}_1\to \msc{S}_1$, $\Psi_2:\msc{R}_2\to \msc{S}_2$ are SRT-continuous unital *-homomorphisms, 
then there exists a unique SRT-continuous unital *-homomorphism 
$\Psi: \msc{R}_1\vtensor \msc{R}_2\to \msc{S}_1\vtensor \msc{S}_2$ such that 
$\Psi(A\tens B)=\Psi_1(A)\tens \Psi_2(B),$ for all $A\in \msc{R}_1$ and $B\in \msc{R}_2$. 
We define $\Psi_1\tens \Psi_2$ to be the map $\Psi$.
\end{lem}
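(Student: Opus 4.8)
The plan is to build $\Psi$ from the corresponding statement for von Neumann algebras via the identification $\msc{R}_i = \overline{\M_i}$ and $\msc{S}_i = \overline{\mf{N}_i}$ furnished by Lemma \ref{6characterization of R}, together with the extension result Proposition \ref{6exntension of morphisms}. First I would invoke Lemma \ref{6characterization of R} to write $\msc{R}_i = \cM_i$ and $\msc{S}_i = \overline{\mf{N}_i}$ for finite von Neumann algebras $\M_i = \msc{R}_i \cap \mf{B}(\Hs_i)$ and $\mf{N}_i = \msc{S}_i \cap \mf{B}(\mc{K}_i)$. By Proposition \ref{6exntension of morphisms}(1), the restriction $\psi_i := \Psi_i|_{\M_i}$ is a $\sigma$-weakly continuous unital *-homomorphism $\M_i \to \mf{N}_i$. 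The usual tensor product of von Neumann algebra morphisms then produces a $\sigma$-weakly continuous unital *-homomorphism $\psi_1 \vtensor \psi_2 : \M_1 \vtensor \M_2 \to \mf{N}_1 \vtensor \mf{N}_2$, and by Proposition \ref{6exntension of morphisms}(2) this extends uniquely to an SRT-continuous unital *-homomorphism $\Psi : \overline{\M_1 \vtensor \M_2} \to \overline{\mf{N}_1 \vtensor \mf{N}_2}$. By Lemma \ref{6tensor product of fRng} the domain and codomain are precisely $\msc{R}_1 \vtensor \msc{R}_2$ and $\msc{S}_1 \vtensor \msc{S}_2$, so $\Psi$ has the right type.

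Next I would verify the defining property $\Psi(A \tens B) = \Psi_1(A) \tens \Psi_2(B)$ for $A \in \msc{R}_1$, $B \in \msc{R}_2$. For bounded $A, B$ this is immediate since $\Psi$ agrees with $\psi_1 \vtensor \psi_2$ on $\M_1 \vtensor \M_2$ and the latter satisfies $(\psi_1 \vtensor \psi_2)(a \tens b) = \psi_1(a) \tens \psi_2(b)$. For general $A \in \cM_1$, $B \in \cM_2$, I would approximate: using the SRT-density of $\M_i$ in $\cM_i$ choose nets $\{a_\alpha\} \subset \M_1$, $\{b_\beta\} \subset \M_2$ with $a_\alpha \to A$ and $b_\beta \to B$ in the strong resolvent topology. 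By Lemma \ref{6SRT convergence of tensor product} the tensor product map is SRT-continuous, so $a_\alpha \tens b_\beta \to A \tens B$; applying the SRT-continuous map $\Psi$ and using $\Psi(a_\alpha \tens b_\beta) = \Psi_1(a_\alpha) \tens \Psi_2(b_\beta)$ together with SRT-continuity of $\Psi_1, \Psi_2$ and a second application of Lemma \ref{6SRT convergence of tensor product}, we get $\Psi(A \tens B) = \Psi_1(A) \tens \Psi_2(B)$ after passing to limits (one handles the two nets successively, or passes to the product net). Finally, uniqueness of $\Psi$ is clear: any SRT-continuous unital *-homomorphism agreeing with $\Psi_1 \tens \Psi_2$ on elementary tensors $A \tens B$ agrees on the algebraic span $\msc{R}_1 \atensor \msc{R}_2$, hence on its SRT-closure $\msc{R}_1 \vtensor \msc{R}_2$.

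The step I expect to be the main obstacle is making the limiting argument for $\Psi(A \tens B) = \Psi_1(A) \tens \Psi_2(B)$ fully rigorous in the non-metrizable setting, where one must be careful about passing to limits in two nets simultaneously and about the fact that convergence of $a_\alpha$ and of $b_\beta$ must be combined — this is precisely where Lemma \ref{6SRT convergence of tensor product} (joint SRT-continuity of $(A,B) \mapsto A \tens B$) is essential and must be applied in both directions. A clean way to sidestep difficulties is to first reduce to self-adjoint $A, B$ via the real-imaginary decomposition (as in the proof of Lemma \ref{6SRT convergence of tensor product}), then use that $\Psi_i$ and the tensor map intertwine the exponentials, so that one can argue with strong-exponential convergence of one-parameter groups where needed. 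I would also double-check that $\psi_1 \vtensor \psi_2$ is unital — which it is, since $\psi_i(1) = 1$ — so that Proposition \ref{6exntension of morphisms}(2) applies verbatim.
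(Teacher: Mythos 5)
Your argument matches the paper's proof essentially verbatim: restrict $\Psi_i$ to $\M_i$ via Proposition \ref{6exntension of morphisms}(1), form the von Neumann tensor product $\psi_1\otimes\psi_2$, extend it SRT-continuously by Proposition \ref{6exntension of morphisms}(2) using Lemma \ref{6tensor product of fRng} to identify domain and codomain, and verify $\Psi(A\otimes B)=\Psi_1(A)\otimes\Psi_2(B)$ by approximating with bounded nets and invoking Lemma \ref{6SRT convergence of tensor product}. Your extra care with the two nets and the uniqueness step is fine and does not change the route.
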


\begin{proof}
Let $\psi_i$ be the restrictions of $\Psi_i$ onto $\mathfrak{M}_i$ $(i=1,2)$.
Then $\psi_i$  is a $\sigma$-weakly continuous unital *-homomorphism 
from $\mathfrak{M}_i$ to $\mathfrak{N}_i$, 
where $\overline{\mathfrak{N}_i}=\mathscr{S}_i$.
Thus there exists a $\sigma$-weakly continuous unital *-homomorphism $\psi$
from $\mathfrak{M}_1\overline{\otimes}\mathfrak{M}_2$ to 
$\mathfrak{N}_1\overline{\otimes}\mathfrak{N}_2$ such that
\begin{equation*}
\psi(x\otimes y) = \psi_1(x)\otimes\psi_2(y), 
\ \ \ \ x\in\mathfrak{M}_1,\ y\in\mathfrak{M}_2.
\end{equation*}
By Proposition \ref{6exntension of morphisms}, there exists a SRT-continuous unital *-homomorphism $\Psi$ 
from $\mathscr{R}_1\overline{\otimes}\mathscr{R}_2$ to 
$\mathscr{S}_1\overline{\otimes}\mathscr{S}_2$  
whose restriction to $\mathfrak{M}_1\overline{\otimes}\mathfrak{M}_2$ 
is equal to $\psi$.
For all $A\in\mathscr{R}_1$, $B\in\mathscr{R}_2$, we can take nets 
$\{x_{\alpha}\}_{\alpha}\subset\mathfrak{M}_1$, $\{y_{\alpha}\}_{\alpha}\subset\mathfrak{M}_2$ 
converging to $A$, $B$ in the strong resolvent topology, respectively.
Therefore, by Proposition \ref{6SRT convergence of tensor product}, we have
\begin{align*}
\Psi(A\otimes B)&=\lim_{\alpha}\Psi(x_{\alpha}\otimes y_{\alpha}) 
= \lim_{\alpha}\psi_1(x_{\alpha})\otimes \psi_2(y_{\alpha}) \\
&= \lim_{\alpha}\Psi_1(x_{\alpha})\otimes \Psi_2(y_{\alpha}) 
=\Psi_1(A)\otimes \Psi_2(B).
\end{align*}
\end{proof}

\begin{lem}\label{6associativity in fRng} 
Let $(\msc{R}_i,\Hs_i)\ (i=1,2,3)$ be objects in \tb{fRng}. 
Then we have a unique *-isomorphism which is homeomorphic with respect to the strong resolvent topology:
\eqa{
(\msc{R}_1\ol{\tens}\msc{R}_2)\ol{\tens}\msc{R}_3 & \cong \msc{R}_1\ol{\tens}(\msc{R}_2\ol{\tens}\msc{R}_3)\\ 
(X_1\tens X_2)\tens X_3 & \mapsto X_1\tens (X_2\tens X_3), \text{for all}\ X_i\in \msc{R}_i
} 
We denote the map as $\alpha_{\msc{R}_1,\msc{R}_2,\msc{R}_3}$. 
\end{lem}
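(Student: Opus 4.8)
The plan is to reduce the statement entirely to the associativity of the von Neumann algebra tensor product, using the identifications established earlier in this section. First I would invoke Lemma \ref{6characterization of R} to write $\msc{R}_i=\ol{\M_i}$ for a finite von Neumann algebra $\M_i$ acting on $\Hs_i$, and then apply Lemma \ref{6tensor product of fRng} twice to obtain
\[
(\msc{R}_1\vtensor \msc{R}_2)\vtensor \msc{R}_3=\ol{(\M_1\vtensor \M_2)\vtensor \M_3},\qquad \msc{R}_1\vtensor (\msc{R}_2\vtensor \msc{R}_3)=\ol{\M_1\vtensor (\M_2\vtensor \M_3)},
\]
where the left algebra acts on $(\Hs_1\tens \Hs_2)\tens \Hs_3$ and the right one on $\Hs_1\tens (\Hs_2\tens \Hs_3)$.

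Next I would take the canonical unitary $U:(\Hs_1\tens \Hs_2)\tens \Hs_3\to \Hs_1\tens (\Hs_2\tens \Hs_3)$ determined by $U((\xi_1\tens \xi_2)\tens \xi_3)=\xi_1\tens (\xi_2\tens \xi_3)$, and recall that associativity of the von Neumann algebra tensor product (see \cite{Tak}) says precisely that $U\bigl((\M_1\vtensor \M_2)\vtensor \M_3\bigr)U^*=\M_1\vtensor (\M_2\vtensor \M_3)$, i.e. that $U$ implements a spatial isomorphism between these two finite von Neumann algebras. Lemma \ref{6UXU^*} then immediately gives that $\alpha_{\msc{R}_1,\msc{R}_2,\msc{R}_3}(X):=UXU^*$ is a $*$-isomorphism of $(\msc{R}_1\vtensor \msc{R}_2)\vtensor \msc{R}_3$ onto $\msc{R}_1\vtensor (\msc{R}_2\vtensor \msc{R}_3)$ that is a homeomorphism for the strong resolvent topology (and the strong exponential topology).

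It then remains to check that $\alpha_{\msc{R}_1,\msc{R}_2,\msc{R}_3}$ has the asserted effect on elementary tensors and is the only such morphism. For fixed $X_i\in \msc{R}_i$, on the algebraic tensor product $\mc{D}:=\dom{X_1}\atensor \dom{X_2}\atensor \dom{X_3}$ one verifies directly, evaluating on simple tensors $(\xi_1\tens \xi_2)\tens \xi_3$, that $U\bigl((X_1\tens X_2)\tens X_3\bigr)U^*$ and $X_1\tens (X_2\tens X_3)$ agree on $U\mc{D}$. By Lemma \ref{6tensor of cores}, applied first to $X_1\tens X_2$ and then to $(X_1\tens X_2)\tens X_3$ (respectively to $X_2\tens X_3$ and then to $X_1\tens (X_2\tens X_3)$), the subspace $\mc{D}$ is a core of $(X_1\tens X_2)\tens X_3$ and $U\mc{D}$ is a core of $X_1\tens (X_2\tens X_3)$; taking closures and using Proposition \ref{2No-closed-extension}(2) yields $\alpha_{\msc{R}_1,\msc{R}_2,\msc{R}_3}\bigl((X_1\tens X_2)\tens X_3\bigr)=X_1\tens (X_2\tens X_3)$. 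For uniqueness, the linear span of $\{(X_1\tens X_2)\tens X_3\ ;\ X_i\in \msc{R}_i\}$ equals $(\msc{R}_1\atensor \msc{R}_2)\atensor \msc{R}_3$, which is SRT-dense in $(\msc{R}_1\vtensor \msc{R}_2)\vtensor \msc{R}_3$ by Definition \ref{6def of tensor in fRng} together with Lemma \ref{6SRT convergence of tensor product}; hence any SRT-continuous $*$-homomorphism agreeing with $\alpha_{\msc{R}_1,\msc{R}_2,\msc{R}_3}$ on these elements coincides with it.

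I expect the only delicate point to be the bookkeeping in the middle step: one must make sure that the single unitary $U$ simultaneously realizes the von Neumann algebra associativity constraint and conjugates the iterated operator tensor product correctly, and that the passage from ``agreement on a common core'' to ``agreement as closed operators'' is legitimate — this is exactly what Lemma \ref{6tensor of cores} and Proposition \ref{2No-closed-extension}(2) are there to supply. Everything else is routine unwinding of definitions.
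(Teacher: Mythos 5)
Your proposal is correct, and it reaches the same reduction as the paper (everything comes down to associativity of the von Neumann algebra tensor product after identifying $(\msc{R}_1\vtensor\msc{R}_2)\vtensor\msc{R}_3=\ol{(\M_1\vtensor\M_2)\vtensor\M_3}$ and $\msc{R}_1\vtensor(\msc{R}_2\vtensor\msc{R}_3)=\ol{\M_1\vtensor(\M_2\vtensor\M_3)}$ via Lemma \ref{6characterization of R} and Lemma \ref{6tensor product of fRng}), but the mechanism you use to lift the bounded-level isomorphism is genuinely different. The paper takes the abstract algebraic $*$-isomorphism $\alpha_0:(\M_1\vtensor\M_2)\vtensor\M_3\to\M_1\vtensor(\M_2\vtensor\M_3)$ and extends it to the affiliated algebras by Proposition \ref{6exntension of morphisms}, whose proof in turn invokes the structure theorem for normal homomorphisms (amplification, reduction by a projection, spatial isomorphism); uniqueness of the extension is part of that proposition, and the action on elementary unbounded tensors is left implicit. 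You instead observe that $\alpha_0$ is spatially implemented by the canonical unitary $U:(\Hs_1\tens\Hs_2)\tens\Hs_3\to\Hs_1\tens(\Hs_2\tens\Hs_3)$, so Lemma \ref{6UXU^*} immediately yields the SRT-homeomorphic $*$-isomorphism $X\mapsto UXU^*$ between the two affiliated algebras, bypassing the general extension machinery; you then verify explicitly, via Lemma \ref{6tensor of cores} and Proposition \ref{2No-closed-extension}(2), that $U\bigl((X_1\tens X_2)\tens X_3\bigr)U^*=X_1\tens(X_2\tens X_3)$ for unbounded $X_i$, and you get uniqueness from SRT-density of $(\msc{R}_1\atensor\msc{R}_2)\atensor\msc{R}_3$ together with SRT-continuity and Hausdorffness. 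What your route buys is a more elementary and more explicit argument: the formula on elementary tensors of affiliated operators and the uniqueness statement are actually proved rather than inherited, at the modest cost of the core/closure bookkeeping you flag; what the paper's route buys is brevity, since Proposition \ref{6exntension of morphisms} packages both existence and uniqueness of the extension in one stroke.
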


\begin{proof}
Let $\M_i$ be a finite von Neumann algebra such that $\msc{R}_i=\cM_i$ $(i=1,2,3)$. 
Let $\alpha_0$ be the *-isomorphism from $(\M_1\vtensor \M_2)\vtensor \M_3$ 
onto $\M_1\vtensor (\M_2\vtensor \M_3)$ defined by $(x_1\tens x_2)\tens x_3\mapsto x_1\tens (x_2\tens x_3)$. 
By Lemma \ref{6tensor product of fRng}, both $(\cM_1\vtensor \cM_2)\vtensor \cM_3$ and 
$\cM_1\vtensor (\cM_2\vtensor \cM_3)$ are generated by $(\M_1\vtensor \M_2)\vtensor \M_3$ 
and $\M_1\vtensor (\M_2\vtensor \M_3)$, respectively. 
Therefore by Proposition \ref{6exntension of morphisms}, $\alpha_0$ can be extended to 
the desired *-isomorphism $\alpha_{\msc{R}_1,\msc{R}_2,\msc{R}_3}.$ 
\end{proof}

\begin{prop}\label{6fRng is a tensor category} 
\textbf{fRng} is a tensor category.
\end{prop}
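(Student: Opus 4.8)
The plan is to assemble the tensor-category axioms for \textbf{fRng} from the pieces already in hand, using the characterization $\msc{R}=\cM$ (Lemma \ref{6characterization of R}) at every step to transport structure from \textbf{fvN}. First I would fix the data: the tensor functor on objects is $(\msc{R}_1,\Hs_1),(\msc{R}_2,\Hs_2)\mapsto(\msc{R}_1\vtensor\msc{R}_2,\Hs_1\tens\Hs_2)$, which lands in Obj(\textbf{fRng}) by Lemma \ref{6tensor product of fRng}; on morphisms it is $(\Psi_1,\Psi_2)\mapsto\Psi_1\tens\Psi_2$ as constructed in Lemma \ref{6tensor product of arrows in fRng}. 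The unit object is $(\mbb{C}1_{\mbb{C}},\mbb{C})$, which equals $\overline{\mbb{C}1_{\mbb{C}}}$ and is therefore an object. I would first check that $\tens$ is a bifunctor: identities go to identities and composites to composites. This is immediate from the uniqueness clause of Lemma \ref{6tensor product of arrows in fRng}, since both $(\Psi_1\circ\Phi_1)\tens(\Psi_2\circ\Phi_2)$ and $(\Psi_1\tens\Psi_2)\circ(\Phi_1\tens\Phi_2)$ are SRT-continuous unital $*$-homomorphisms sending $A\tens B$ to $\Psi_1\Phi_1(A)\tens\Psi_2\Phi_2(B)$.

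Next I would produce the associativity constraint $\alpha_{\msc{R}_1,\msc{R}_2,\msc{R}_3}$, which is exactly Lemma \ref{6associativity in fRng}, and the left and right unit constraints $\ell_{\msc{R}}:\mbb{C}1\vtensor\msc{R}\xrightarrow{\sim}\msc{R}$ and $r_{\msc{R}}:\msc{R}\vtensor\mbb{C}1\xrightarrow{\sim}\msc{R}$. For the unit constraints, write $\msc{R}=\cM$; then $\overline{\mbb{C}1_{\mbb{C}}}\vtensor\cM=\overline{\mbb{C}1_{\mbb{C}}\vtensor\M}$ by Lemma \ref{6tensor product of fRng}, and the canonical spatial isomorphism $\mbb{C}1_{\mbb{C}}\vtensor\M\cong\M$ extends SRT-continuously and uniquely to an isomorphism of the affiliated algebras by Proposition \ref{6exntension of morphisms}; this is $\ell_{\msc{R}}$, and $r_{\msc{R}}$ is analogous. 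Naturality of $\alpha$, $\ell$, $r$ in all variables then follows in each case from the uniqueness half of Proposition \ref{6exntension of morphisms} (or of Lemma \ref{6tensor product of arrows in fRng}): the two composites in each naturality square are SRT-continuous unital $*$-homomorphisms agreeing on elementary tensors $X_1\tens X_2\tens X_3$ (resp.\ on $1\tens X$, $X\tens 1$), hence agree everywhere because such elementary tensors SRT-generate the domain.

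The remaining axioms are the pentagon identity for $\alpha$ and the triangle identity relating $\alpha$, $\ell$, $r$. Here the strategy is again to reduce to \textbf{fvN}: the pentagon and triangle already hold for the von Neumann tensor product, i.e.\ for the constraints $\alpha_0,\ell_0,r_0$ on $\M_1\vtensor\M_2\vtensor\cdots$, because \textbf{fvN} is a tensor category. Both sides of the pentagon, as maps $((\cM_1\vtensor\cM_2)\vtensor\cM_3)\vtensor\cM_4\to\cM_1\vtensor(\cM_2\vtensor(\cM_3\vtensor\cM_4))$, are SRT-continuous unital $*$-homomorphisms; their restrictions to $((\M_1\vtensor\M_2)\vtensor\M_3)\vtensor\M_4$ coincide by the \textbf{fvN} pentagon; and that von Neumann algebra SRT-generates the whole affiliated algebra by Lemma \ref{6tensor product of fRng}, so the uniqueness clause of Proposition \ref{6exntension of morphisms} forces the two sides to agree. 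The triangle identity is handled identically. I expect the main obstacle to be purely bookkeeping: verifying at each step that the maps in question really are everywhere-defined SRT-continuous unital $*$-homomorphisms of the affiliated algebras (so that the extension/uniqueness machinery of Proposition \ref{6exntension of morphisms} applies), and keeping track of the identifications $\msc{R}_i=\cM_i$ coherently — no genuinely new analytic input beyond Lemmas \ref{6tensor product of fRng}, \ref{6tensor product of arrows in fRng}, \ref{6associativity in fRng} and Proposition \ref{6exntension of morphisms} should be needed.
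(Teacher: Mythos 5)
Your proposal is correct and follows essentially the same route as the paper: it builds the tensor structure from Lemma \ref{6tensor product of fRng}, Lemma \ref{6tensor product of arrows in fRng} and Lemma \ref{6associativity in fRng}, with the unit object $(\mbb{C}1_{\mbb{C}},\mbb{C})$, and settles naturality, the unit constraints, and the pentagon/triangle axioms by the uniqueness clause of Proposition \ref{6exntension of morphisms} together with SRT-density of the algebraic tensor products. The paper compresses these verifications into the phrase ``routine task,'' and your write-up simply makes that routine explicit.
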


\begin{proof}
We define the tensor product $\tens:\tb{fRng}\times \tb{fRng}\to \tb{fRng}$ by 
\[
(\msc{R}_1,\Hs_1)\tens (\msc{R}_2,\Hs_2):=(\msc{R}_1\vtensor \msc{R}_2,\Hs_1\tens \Hs_2)
\]
and for two morphisms $\Psi_i:(\msc{R}_i,\Hs_i)\to (\msc{S}_i,\mc{K}_i)\ (i=1,2)$, 
define $\Psi_1\tens \Psi_2$ according to Lemma \ref{6tensor product of arrows in fRng}. 
The unit object is $I:=(\mbb{C}1_{\mbb{C}},\mbb{C})$. 
The associative constraint $\alpha_{\msc{R}_1,\msc{R}_2,\msc{R}_3}$ is the map defined in Lemma \ref{6associativity in fRng}. 
The naturality of $\alpha_{\mathscr{R}_1,\mathscr{R}_2,\mathscr{R}_3}$ follows from Proposition \ref{6exntension of morphisms}. 
The definition of left (resp. right) constraint $\lambda_{\cdot}$ (resp. $\rho_{\cdot}$) might be clear. 
Now it is a routine task to verify that the data $(\tb{fRng},\tens,I,\alpha,\lambda,\rho)$ constitutes a tensor category.  
\end{proof}

Now we will prove that \textbf{fvN} is isomorphic to \textbf{fRng} as a tensor category. 
Define two functors $\mc{E}:$ \textbf{fvN}$\to $ \textbf{fRng}, $\mc{F}:$ \textbf{fRng}$\to $\textbf{fvN}.

\begin{dfn}
Define two correspondences $\mc{E}$, $\ \mc{F}$ as follows:
\begin{list}{}{}
\item[(1)] For each object $(\M,\mc{H})$ in \tb{fvN}, 
\[\mc{E}(\M,\mc{H}):=(\cM,\mc{H}),\]
which is an object in \tb{fRng}. 
For each morphism $\varphi:\M_1\to \M_2$ in \tb{fvN}, $\mc{E}(\varphi):\cM_1\to \cM_2$ 
is the unique SRT-continuous extension of $\varphi$ to $\cM_1$, 
so that $\mc{E}(\varphi)$ is a morphism in \tb{fRng} by Proposition \ref{6exntension of morphisms}.
\item[(2)] For each object $(\msc{R},\mc{H})$ in \tb{fRng}, 
\[\mc{F}(\msc{R},\mc{H}):=(\msc{R}\cap \mf{B}(\mc{H}),\mc{H}).\]
For each morphism $\Phi:\msc{R}_1\to \msc{R}_2$ in \tb{fRng}, 
$\mc{F}(\Phi):=\Phi|_{\msc{R}_1\cap \mf{B}(\Hs)}$, 
which is a morphism in \tb{fvN} by Proposition \ref{6exntension of morphisms}.
\end{list}
\end{dfn}

\begin{lem}\label{6 E and F are tensor functors}
$\mc{E}$ and $\mc{F}$ are tensor functors.
\end{lem}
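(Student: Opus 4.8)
The plan is to verify directly that $\mc{E}$ and $\mc{F}$ satisfy the axioms of a tensor functor, namely that each is a functor compatible with the tensor structure up to coherent natural isomorphisms. First I would check functoriality of $\mc{E}$: it sends objects to objects by Lemma \ref{6characterization of R} (or rather, $(\cM,\Hs)$ is an object in \tb{fRng} as noted after that lemma), and sends morphisms to morphisms by Proposition \ref{6exntension of morphisms}(2); the identities $\mc{E}(\mathrm{id})=\mathrm{id}$ and $\mc{E}(\varphi\circ\psi)=\mc{E}(\varphi)\circ\mc{E}(\psi)$ follow from the uniqueness clause of Proposition \ref{6exntension of morphisms}(2), since $\mc{E}(\varphi)\circ\mc{E}(\psi)$ is an SRT-continuous unital $*$-homomorphism extending $\varphi\circ\psi$ on the bounded part, hence equals $\mc{E}(\varphi\circ\psi)$. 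For $\mc{F}$, functoriality is immediate since $\mc{F}$ is just restriction to the bounded part, and by Proposition \ref{6exntension of morphisms}(1) the restriction of an SRT-continuous unital $*$-homomorphism is $\sigma$-weakly continuous; composition and identities are preserved trivially.

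Next I would produce the tensor-structure data. For $\mc{E}$, I need a natural isomorphism $\mc{E}(\M_1)\vtensor\mc{E}(\M_2)\xrightarrow{\sim}\mc{E}(\M_1\vtensor\M_2)$, i.e. $\cM_1\vtensor\cM_2\cong\ol{\M_1\vtensor\M_2}$; this is exactly the content of Lemma \ref{6tensor product of fRng}, and in fact the two sides are literally equal, so the structure morphism is the identity. Similarly $\mc{E}(\mbb{C}1_{\mbb{C}})=\ol{\mbb{C}1_{\mbb{C}}}=\mbb{C}1_{\mbb{C}}$ gives the unit isomorphism. Naturality of these isomorphisms in both arguments reduces to checking $\mc{E}(\varphi_1\vtensor\varphi_2)=\mc{E}(\varphi_1)\vtensor\mc{E}(\varphi_2)$; again both sides are SRT-continuous unital $*$-homomorphisms from $\cM_1\vtensor\cM_2$ to $\cM_1'\vtensor\cM_2'$ agreeing on $\M_1\atensor\M_2$ (by the defining property in Lemma \ref{6tensor product of arrows in fRng}), so the uniqueness in Proposition \ref{6exntension of morphisms}(2) gives equality. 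For $\mc{F}$, the structure morphism $\mc{F}(\msc{R}_1\vtensor\msc{R}_2)\to\mc{F}(\msc{R}_1)\vtensor\mc{F}(\msc{R}_2)$ is $(\msc{R}_1\vtensor\msc{R}_2)\cap\mf{B}(\Hs_1\tens\Hs_2)\cong(\msc{R}_1\cap\mf{B}(\Hs_1))\vtensor(\msc{R}_2\cap\mf{B}(\Hs_2))$, which using Lemma \ref{6characterization of R} and Lemma \ref{6tensor product of fRng} becomes $\ol{\M_1\vtensor\M_2}\cap\mf{B}=\M_1\vtensor\M_2=(\cM_1\cap\mf{B})\vtensor(\cM_2\cap\mf{B})$; so it is again the identity. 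The unit isomorphism for $\mc{F}$ is likewise the identity.

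Finally I would check the coherence (hexagon/triangle) diagrams relating the associativity and unit constraints with the tensor-structure isomorphisms. Since all the structure morphisms are identities and the associativity constraints $\alpha_{\msc{R}_1,\msc{R}_2,\msc{R}_3}$ in \tb{fRng} are by Lemma \ref{6associativity in fRng} the unique SRT-continuous extensions of the associativity constraints $\alpha_0$ of \tb{fvN}, each coherence diagram for $\mc{E}$ or $\mc{F}$ reduces, via the uniqueness clause of Proposition \ref{6exntension of morphisms}(2), to the corresponding coherence diagram in \tb{fvN}, which holds because \tb{fvN} is a tensor category. I expect the main obstacle to be purely bookkeeping: one must carefully check that every map appearing in the tensor-functor axioms is SRT-continuous and unital and agrees with a known $\sigma$-weakly continuous map on the bounded part, so that the uniqueness in Proposition \ref{6exntension of morphisms} can be invoked; there is no new analytic input, but the diagrams must be chased with care to ensure the extension $\mc{E}$ really does intertwine the two associativity constraints rather than merely some abstract iso. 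Once that is in place, the verification that $(\mc{E},\text{id},\text{id})$ and $(\mc{F},\text{id},\text{id})$ are tensor functors is complete, and Theorem \ref{6main theorem} follows by noting $\mc{F}\circ\mc{E}=\mathrm{id}_{\tb{fvN}}$ and $\mc{E}\circ\mc{F}=\mathrm{id}_{\tb{fRng}}$, again by the uniqueness of SRT-continuous extensions.
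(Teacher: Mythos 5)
Your proposal is correct and follows essentially the same route as the paper: take the structure morphisms $h_1,h_2$ to be identities (justified by Lemma \ref{6tensor product of fRng} and Lemma \ref{6tensor product of arrows in fRng}), verify functoriality and naturality through the uniqueness clause of Proposition \ref{6exntension of morphisms}, and reduce the coherence diagrams to the corresponding ones in \textbf{fvN}, with the $\mc{F}$ case handled as the easier restriction-to-bounded-part direction. The only difference is that you spell out the naturality check $\mc{E}(\varphi_1\otimes\varphi_2)=\mc{E}(\varphi_1)\otimes\mc{E}(\varphi_2)$ more explicitly than the paper, which simply calls the diagrams ``almost obvious.''
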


\begin{proof}
We define the tensor functor $(\mc{E},h_1,h_2)$, where 
\eqa{
&h_1: \ (\mathbb{C}1_{\mathbb{C}}, \mathbb{C}) \stackrel{\text{id}}{\longrightarrow} 
(\mathbb{C}1_{\mathbb{C}}, \mathbb{C}) = 
\mathcal{E}((\mathbb{C}1_{\mathbb{C}}, \mathbb{C})),\\
&h_2((\M_1,\Hs_1),(\M_2,\Hs_2)): \cM_1\overline{\otimes}\cM_2 \stackrel{\text{id}}
{\longrightarrow} \overline{\M_1\overline{\otimes}\M_2},
}
can be taken to be identity morphisms thanks to Lemma \ref{6tensor product of arrows in fRng}. 
It is clear that $\mc{E}(1_{\M})=1_{\cM},$ where $1_{\M}$ and $1_{\cM}$ are identity map of $\M$ and $\cM$, respectively.
Let $\M_1\stackrel{\varphi_1}{\longrightarrow }\M_2\stackrel{\varphi_2}{\longrightarrow }\M_3$ be a sequence of morphisms in \tb{fvN}. 
Let $x\in \M_1$. 
It holds that
\eqa{
\mc{E}(\varphi_2\circ \varphi_1)(x)
&=(\varphi_2\circ \varphi_1)(x)=\mc{E}(\varphi_2)(\varphi_1(x))\\
&=\mc{E}(\varphi_2)(\mc{E}(\varphi_1)(x))=\left \{\mathcal{E}(\varphi_2)\circ \mathcal{E}(\varphi_1)\right \}(x). 
}
By Proposition \ref{6exntension of morphisms} (2), 
we have $\mc{E}(\varphi_2\circ \varphi_1)=\mc{E}(\varphi_2)\circ \mc{E}(\varphi_1)$. 
Therefore $\mc{E}$ is a functor. 
The conditions for $(\mc{E},h_1,h_2)$ to be a tensor functor are described as the following three diagrams, 
the commutativity of which are almost obvious by Proposition \ref{6exntension of morphisms} 
and ``$\sim $" symbols are followed from Lemma \ref{6associativity in fRng}.
\[
\xymatrix{
(\cM_1\vtensor \cM_2)\vtensor \cM_3\ar@{}[rdd]|\circlearrowleft \ar[d]_{\text{id}}\ar[r]^{\sim } 
& \cM_1\vtensor (\cM_2\vtensor \cM_3)\ar[d]^{\text{id}}\\
(\ol{\M_1\vtensor \M_2})\vtensor \cM_3\ar[d]_{\text{id}} 
& \cM_1\vtensor (\ol{\M_2\vtensor \M_3})\ar[d]^{\text{id}}\\
\ol{(\M_1\vtensor \M_2)\vtensor \M_3}\ar[r]^{\sim } & \ol{\M_1\vtensor (\M_2\vtensor \M_3)}
}
\]
\[
\xymatrix{
\mbb{C}\vtensor \cM\ \ar[d]_{\text{id}}\ar@{}[rd]|\circlearrowleft\ar[r]^{1\tens X\mapsto X} 
& \cM & \cM\vtensor \mbb{C}\ar[d]_{\text{id}}\ar@{}[rd]|\circlearrowleft\ar[r]^{X\tens 1\mapsto X} & \cM\\
\mbb{C}\vtensor \cM\ar[r]^{\text{id}} & \ol{\mbb{C}\vtensor \M}\ar[u] & \cM\vtensor \mbb{C}\ar[r]^{\text{id}} 
& \ol{\M\vtensor \mbb{C}}\ar[u]
}
\]  
Thus, $(\mc{E},h_1,h_2)$ is a tensor functor. 
The proof that $(\mc{F},h_1',h_2')$ is a tensor functor, including the definitions of $h_1',h_2'$ are easier.
\end{proof}

Now we are able to prove the main theorem easily.

\begin{proof}[{\rm{\textbf{Proof of Theorem \ref{6main theorem}}}}]
We will show that $\mc{E}$ and $\mc{F}$ are the inverse tensor functor of each other. 
By Lemma \ref{6 E and F are tensor functors}, they are tensor functors. 
Let $(\M_i,\Hs_i)$ $(i=1,2)$ be in Obj(\tb{fvN}). 
Let $\varphi: \M_1\to \M_2$ be a morphism in \tb{fvN}. 
Proposition \ref{6exntension of morphisms} implies $\varphi=(\mc{F}\circ \mc{E})(\varphi)$. 
By Proposition \ref{6characterization of R}, we have
\[(\M_i,\Hs_i)=(\cM_i\cap \mf{B}(\Hs_i),\Hs_i)=(\mc{F}\circ \mc{E})(\M_i,\Hs_i),\]
therefore $\mc{F}\circ \mc{E}=\text{id}_{\tb{\text{fvN}}}.$

Let $(\msc{R}_i,\Hs_i)\ (i=1,2)$ be objects in \tb{fRng}, 
$\Phi: (\msc{R}_1,\Hs_1)\to (\msc{R}_2,\Hs_2)$ be a morphism in \tb{fRng}. 
By Proposition \ref{6characterization of R}, we have $\msc{R}_i=\cM_i$ for a unique $(\M_i,\Hs_i)$ in Obj(\tb{fvN}). 
Similarly, we can prove that 
\[
(\msc{R}_i,\Hs_i)=(\mc{E}\circ \mc{F})(\msc{R}_i,\Hs_i),\hspace{0.4cm} (\mc{E}\circ \mc{F})(\Phi)=\Phi,
\] hence $\mc{E}\circ \mc{F}=\text{id}_{\tb{\text{fRng}}}$.
\end{proof}

Finally, we remark the correspondence of factors in \tb{fvN} and ones in \tb{fRng}.
Recall that, for a *-algebra $\mathscr{A}$, its {\it center} $Z(\mathscr{A})$ is defined by
\begin{equation*}
Z(\mathscr{A}) := \left\{x\in\mathscr{A}\ ;\ xy=yx, \ \text{for\ all}\ y\in\mathscr{A}\right\}.
\end{equation*}
$Z(\mathscr{A})$ is also a *-algebra.

\begin{prop}\label{6XY=1 implies YX=1} 
Let $\M$ be a finite von Neumann algebra on $\mc{H}$. The following conditions are equivalent.
\begin{list}{}{}
\item[(1)] The center $Z(\cM)$ of $\cM$ is trivial. 
I.e., $Z(\cM)=\mathbb{C}1_{\Hs}$.
\item[(2)] The center $Z(\M)$ of $\M$ is trivial.
\end{list}
\end{prop}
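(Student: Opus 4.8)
The plan is to show that the center of $\cM$, as a $*$-algebra of affiliated operators, is exactly the algebra of operators affiliated with the center of $\M$; i.e., $Z(\cM) = \overline{Z(\M)}$. Once this identification is in hand, the equivalence $(1)\Leftrightarrow(2)$ follows at once: $Z(\M) = \mathbb{C}1_{\Hs}$ iff $\overline{Z(\M)} = \overline{\mathbb{C}1_{\Hs}} = \mathbb{C}1_{\Hs}$ (a one-dimensional von Neumann algebra has no unbounded affiliated operators), and the latter is $Z(\cM)$.

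First I would prove the inclusion $\overline{Z(\M)} \subset Z(\cM)$. If $A \in \overline{Z(\M)}$, then in particular $A$ is affiliated with $\M$, so $A \in \cM$. To see $A$ commutes with every $B \in \cM$ in the $*$-algebraic sense (i.e.\ $\overline{AB} = \overline{BA}$), note that $Z(\M) \subset \M'$, so every spectral projection of $\re A$ and $\im A$, and more to the point every bounded Borel function of them, lies in $\M'$; conversely $B$ is affiliated with $\M$, hence commutes with all unitaries of $\M'$. The cleanest route is to approximate: by Theorem~\ref{3cta} the bounded part $Z(\M)$ is SRT-dense in $\overline{Z(\M)}$, so write $A = \lim_\alpha x_\alpha$ with $x_\alpha \in Z(\M)$; since each $x_\alpha \in \M' \cap \M$ we have $x_\alpha B \subset B x_\alpha$ for $B \in \cM$, hence $\overline{x_\alpha B} = \overline{B x_\alpha}$, and passing to the SRT-limit using the joint SRT-continuity of multiplication on $\cM$ (Theorem~\ref{3cta}) gives $\overline{AB} = \overline{BA}$.

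Next, the reverse inclusion $Z(\cM) \subset \overline{Z(\M)}$, which I expect to be the main obstacle. Let $A \in Z(\cM)$. Apply Lemma~\ref{3subset} to write $A = \overline{\re A + i\,\im A}$ with $\re A, \im A$ self-adjoint elements of $\cM$; since $Z(\cM)$ is a $*$-subalgebra, both $\re A$ and $\im A$ lie in $Z(\cM)$, so it suffices to treat a self-adjoint $A \in Z(\cM)$. For such $A$, every bounded Borel function $f(A)$ lies in $\cM \cap \mathfrak{B}(\Hs) = \M$ (using the spectral calculus and boundedness), and moreover $f(A)$ commutes with every element of $\M$: indeed for $x \in \M$ we have $\overline{Ax} = \overline{xA}$, hence $A$ commutes with the unitary group of $\M$, hence (by the bounded transform / resolvents $(\re A - i)^{-1}$, which lie in $\M$ and commute with $\M$) $f(A) \in \M \cap \M' = Z(\M)$. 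Then $A$ is a limit in SRT of the bounded operators $A_n := \int_{-n}^n \lambda\, dE_A(\lambda) \in Z(\M)$ (this is the same spectral-truncation argument used in the proof of Lemma~\ref{6characterization of R}, where $A_n \to A$ in the strong resolvent topology since $\bigcup_n \operatorname{ran} E_A([-n,n])$ is a core for $A$), and since $Z(\M)$ is SRT-closed inside $\overline{Z(\M)}$ we get $A \in \overline{Z(\M)}$.

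Finally I would assemble the two inclusions to conclude $Z(\cM) = \overline{Z(\M)}$, and then read off the stated equivalence. The only subtlety to be careful about is the interplay between "commuting in the operator sense" ($xA \subset Ax$) and "commuting in the $*$-algebra sense" ($\overline{xA} = \overline{Ax}$) for unbounded operators; this is handled exactly as in Lemma~\ref{2core of A} and Proposition~\ref{2No-closed-extension}~(2) — two elements of $\cM$ that agree on a common core are equal — so no genuinely new difficulty arises, only bookkeeping.
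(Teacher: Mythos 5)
Your argument is correct, and it takes a slightly different (and stronger) route than the paper. The paper's proof is a direct two-step computation: for a self-adjoint $A\in Z(\cM)$, the resolvent $(A-i)^{-1}$ commutes with $U(\M')$ (affiliation) and, because $A$ is central, also with $U(\M)$, hence $(A-i)^{-1}\in\M\cap\M'=\mathbb{C}1_{\Hs}$ under hypothesis (2), so $A$ is a scalar; a general $A\in Z(\cM)$ is then handled through $A=\overline{\re{A}+i\,\im{A}}$, while $(1)\Rightarrow(2)$ is immediate since $Z(\M)\subset Z(\cM)$. You instead establish the structural identity $Z(\cM)=\overline{Z(\M)}$ and read off both implications; the key computation is the same as the paper's (bounded Borel functions of a self-adjoint central element land in $\M\cap\M'$), but you do not assume (2) at that stage, so you obtain the more general fact, at the cost of also having to prove the inclusion $\overline{Z(\M)}\subset Z(\cM)$, which the proposition itself only needs in its bounded fragment $Z(\M)\subset Z(\cM)$. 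Two small points of bookkeeping: the phrase ``$Z(\M)$ is SRT-closed inside $\overline{Z(\M)}$'' is not what you want (if $Z(\M)$ were SRT-closed your limit would be bounded); what you need is that $\overline{Z(\M)}$ is SRT-closed in $\rc{\Hs}$, which is Lemma \ref{3closed} applied to the finite (abelian) von Neumann algebra $Z(\M)$ --- or, even more directly, that all spectral projections of $A$ lie in $Z(\M)$, so $A$ commutes with every unitary of $Z(\M)'$ and is therefore affiliated with $Z(\M)$. Likewise, the SRT-density of $Z(\M)$ in $\overline{Z(\M)}$ is obtained by spectral truncation together with Lemma \ref{AcoreSRT} (as in the proofs of Proposition \ref{5main} and Lemma \ref{6characterization of R}), not literally by Theorem \ref{3cta}; the theorem is what you invoke, correctly, for the separate SRT-continuity of multiplication when passing to the limit in $\overline{x_{\alpha}B}=\overline{Bx_{\alpha}}$, with Proposition \ref{2No-closed-extension} (2) justifying that equality for each $\alpha$. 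With these citations adjusted, your proof is complete.
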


\begin{proof}
$(1)\Rightarrow (2)$ is evident.

$(2)\Rightarrow (1)$. Let $A\in \cM$ be a self-adjoint element of the center $Z(\cM)$. 
For any $u\in U(\M')$, we have  $uAu^*=A$. 
Therefore from the unitary covariance of the functional calculus, 
it holds that $u(A-i)^{-1}u^*=(A-i)^{-1}$ and $(A-i)^{-1}\in \M\cap \M'=\mathbb{C}1.$ Hence $(A-i)^{-1}=\alpha 1$ for some $\alpha \in \mathbb{C}$. 
By operating $A-i$ on both sides, we see that $A\in \mathbb{C}1.$ 
For a general closed operator $A\in Z(\cM)$, we know that there is a canonical decomposition $A=\overline{\re{A}+i\ \im{A}}$. 
Since $A$ belongs to $Z(\cM)$, $\disp \re{A},\ \im{A}$ also belong to $Z(\M)=\mathbb{C}1.$ 
Therefore $A\in \mathbb{C}1$.
\end{proof}

\appendix
\section{Direct Sums of Operators}

We recall the theory of direct sums of operators and show some facts.
We do not give proofs for well-known facts. 
See e.g., \cite{Arai}.

Let $\{\Hs_{\alpha}\}_{\alpha}$ be a family of Hilbert spaces and 
$\Hs = \bigoplus_{\alpha}\Hs_{\alpha}$ be the direct sum Hilbert space of $\{\Hs_{\alpha}\}_{\alpha}$, i.e.,

\begin{equation*}
\Hs := \left\{ \xi=\{\xi^{(\alpha)}\}_{\alpha}\ ;\ \xi^{(\alpha)} \in \Hs_{\alpha},\ \sum_{\alpha} \|\xi^{\alpha}\|^2 < \infty. \right\}.
\end{equation*}
For a subspace $\mathcal{D}_{\alpha}$ of $\Hs_{\alpha}$, we set 

\begin{equation*}
\widehat{\bigoplus}_{\alpha}\mathcal{D}_{\alpha} := 
\left\{ \xi=\{\xi^{(\alpha)}\}_{\alpha}\in \Hs \ ;\ \xi^{(\alpha)} \in \mathcal{D}_{\alpha},\ 
\xi^{(\alpha)}=0 \ {\rm except\ finitely\ many\ } \alpha. \right\}.
\end{equation*} 
It is known that $\widehat{\bigoplus}_{\alpha}\mathcal{D}_{\alpha}$ is dense in $\Hs$ whenever each $\mathcal{D}_{\alpha}$ is dense in $\Hs_{\alpha}$.

Next we recall the direct sum of unbounded operators.
Let $A_{\alpha}$ be a (possibly unbounded) linear operator on $\Hs_{\alpha}$. 
We define the liner operator $A = \oplus_{\alpha}A_{\alpha}$ on $\Hs$ as follows:

\begin{align*}
\dom{A} &:= \left\{ \xi=\{\xi^{(\alpha)}\}_{\alpha}\in \Hs \ ;\ 
\xi^{(\alpha)} \in \dom{A_{\alpha}},\ \sum_{\alpha} \|A_{\alpha}\xi^{\alpha}\|^2 < \infty. \right\}, \\
(A\xi)^{(\alpha)} &:= A_{\alpha}\xi^{(\alpha)}, \ \ \ \ \xi \in \dom{A}.
\end{align*}
$A$ is said to be the {\it direct sum} of $\{A_{\alpha}\}_{\alpha}$.
It is easy to see that if each $A_{\alpha}$ is a densely defined closed operator then so is $A$.
In this case,
\begin{equation*}
A^* = \oplus_{\alpha}{A_{\alpha}}^{*}
\end{equation*}
holds.
The following lemmata are well-known.

\begin{lem}\label{3bdd}
Assume the above notations.
\begin{list}{}{}
\item[(1)] $A\in \mathfrak{B}(\Hs)$ if and only if each $A_{\alpha}$ is in $\mathfrak{B}(\Hs_{\alpha})$ and $\sup_{\alpha}\|A_{\alpha}\|<\infty$.
In this case, 
\begin{equation*}
\|A\| = \sup_{\alpha}\|A_{\alpha}\|
\end{equation*}
holds.
%\item[(2)] $A$ is isometry if and only if each $A_{\alpha}$ is isometry.
\item[(2)] $A$ is unitary if and only if each $A_{\alpha}$ is unitary.
\item[(3)] $A$ is projection if and only if each $A_{\alpha}$ is projection.
In this case,
\begin{equation*}
\ran{A} = \bigoplus_{\alpha}\ran{A_{\alpha}}
\end{equation*}
holds.
\end{list}
\end{lem}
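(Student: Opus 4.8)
The plan is to prove the three statements in order, in each case reducing the question to a pointwise condition on the summands by exploiting the fact that vectors supported on a single $\Hs_{\alpha}$, and more generally vectors in $\widehat{\bigoplus}_{\alpha}\dom{A_{\alpha}}$, are available as test vectors. For (1), I would first show that if each $A_{\alpha}\in\mf{B}(\Hs_{\alpha})$ and $M:=\sup_{\alpha}\|A_{\alpha}\|<\infty$, then for $\xi=\{\xi^{(\alpha)}\}_{\alpha}\in\Hs$ one has $\sum_{\alpha}\|A_{\alpha}\xi^{(\alpha)}\|^2\le M^2\sum_{\alpha}\|\xi^{(\alpha)}\|^2<\infty$, so $\dom{A}=\Hs$ and $\|A\|\le M$. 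Conversely, if $A\in\mf{B}(\Hs)$, then testing against a unit vector $\xi$ supported in $\Hs_{\alpha}$ gives $\|A_{\alpha}\xi^{(\alpha)}\|=\|A\xi\|\le\|A\|$, so each $A_{\alpha}$ is bounded with $\|A_{\alpha}\|\le\|A\|$, whence $M\le\|A\|$. Combining the two inequalities yields $\|A\|=M$.

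For (2), assuming each $A_{\alpha}$ is unitary, part (1) already gives $A\in\mf{B}(\Hs)$ with $\|A\|=1$, and the computation $\|A\xi\|^2=\sum_{\alpha}\|A_{\alpha}\xi^{(\alpha)}\|^2=\sum_{\alpha}\|\xi^{(\alpha)}\|^2=\|\xi\|^2$ shows $A$ is isometric; applying the same reasoning to $\oplus_{\alpha}A_{\alpha}^*$ and using $A^*=\oplus_{\alpha}A_{\alpha}^*$ (recorded just above the lemma) shows $A^*$ is isometric too, so $A$ is unitary. Conversely, if $A$ is unitary it is in particular bounded with bounded inverse, so by (1) each $A_{\alpha}$ is bounded; restricting the relations $A^*A=1_{\Hs}$ and $AA^*=1_{\Hs}$ to the reducing subspace $\Hs_{\alpha}$ gives $A_{\alpha}^*A_{\alpha}=1_{\Hs_{\alpha}}$ and $A_{\alpha}A_{\alpha}^*=1_{\Hs_{\alpha}}$, so each $A_{\alpha}$ is unitary.

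For (3), if each $A_{\alpha}$ is a projection then each is bounded with norm $1$ (or $0$), so $A\in\mf{B}(\Hs)$ by (1); the identities $A=A^*$ and $A^2=A$ can be checked summand-by-summand, so $A$ is a projection, and $\xi\in\ran{A}=\ker{1-A}$ exactly when $\xi^{(\alpha)}=A_{\alpha}\xi^{(\alpha)}$ for every $\alpha$, i.e.\ $\xi^{(\alpha)}\in\ran{A_{\alpha}}$ for every $\alpha$, which is precisely the statement $\ran{A}=\bigoplus_{\alpha}\ran{A_{\alpha}}$. The converse again follows by restricting $A=A^*$ and $A^2=A$ to each $\Hs_{\alpha}$. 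None of this presents a genuine obstacle; the only point requiring a little care is the direction $M<\infty\Rightarrow A$ bounded in (1), where one must check the sum defining $\dom{A}$ really is all of $\Hs$ rather than merely a dense subspace, and the identification $A^*=\oplus_{\alpha}A_{\alpha}^*$ which is quoted as known from the discussion preceding the lemma. Since everything reduces to elementary Hilbert space estimates and the summands reduce $A$, I would keep the write-up short and, as in the surrounding text, simply remark that these are well-known.
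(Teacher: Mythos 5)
Your argument is correct. The paper itself gives no proof of this lemma: it is stated in Appendix A among facts declared ``well-known'' with a reference to \cite{Arai}, so there is no authorial argument to compare against; your write-up is exactly the standard verification one would supply, and the two points you flag as needing care (that $\dom{A}=\Hs$ in the bounded case, and the identity $A^*=\oplus_{\alpha}A_{\alpha}^*$ quoted from the preceding discussion) are indeed the only places where anything needs to be said.
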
 

\begin{lem}\label{3sum_core}
Assume that each $A_{\alpha}$ is closed.
Let $\mathcal{D}_{\alpha}$ be a core of $A_{\alpha}$.
Then $\widehat{\bigoplus}_{\alpha}\mathcal{D}_{\alpha}$ is a core of $A$.
\end{lem}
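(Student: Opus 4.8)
The plan is to show that every $\xi\in\dom{A}$ can be approximated in the graph norm of $A$ by vectors from $\widehat{\bigoplus}_{\alpha}\mathcal{D}_{\alpha}$; since $A$ is closed and $\widehat{\bigoplus}_{\alpha}\mathcal{D}_{\alpha}\subset\dom{A}$, this is precisely the statement that $\widehat{\bigoplus}_{\alpha}\mathcal{D}_{\alpha}$ is a core of $A$. I would do this in two stages, using the fact that the ``core'' relation is transitive (a core of a core is a core).

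\textbf{Stage 1: finitely supported vectors form a core of $A$.} For $\xi=\{\xi^{(\alpha)}\}_{\alpha}\in\dom{A}$ and a finite index set $F$, let $\xi_F$ be the vector whose $\alpha$-component is $\xi^{(\alpha)}$ for $\alpha\in F$ and $0$ otherwise, so that $\xi_F\in\widehat{\bigoplus}_{\alpha}\dom{A_{\alpha}}$. Since $\sum_{\alpha}\|\xi^{(\alpha)}\|^2<\infty$ and $\sum_{\alpha}\|A_{\alpha}\xi^{(\alpha)}\|^2<\infty$ (these equal $\|\xi\|^2$ and $\|A\xi\|^2$), the net $\{\xi_F\}_F$ over finite sets ordered by inclusion satisfies $\|\xi-\xi_F\|^2=\sum_{\alpha\notin F}\|\xi^{(\alpha)}\|^2\to0$ and $\|A\xi-A\xi_F\|^2=\sum_{\alpha\notin F}\|A_{\alpha}\xi^{(\alpha)}\|^2\to0$. (Only countably many components are nonzero, so one may equally well use an increasing sequence of finite sets.) Hence $\widehat{\bigoplus}_{\alpha}\dom{A_{\alpha}}$ is a core of $A$.

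\textbf{Stage 2: componentwise approximation.} Fix $\eta=\{\eta^{(\alpha)}\}_{\alpha\in F}\in\widehat{\bigoplus}_{\alpha}\dom{A_{\alpha}}$ (with $F$ finite) and $\eps>0$. Because each $\mathcal{D}_{\alpha}$ is a core of $A_{\alpha}$, choose for each $\alpha\in F$ a vector $\zeta^{(\alpha)}\in\mathcal{D}_{\alpha}$ with $\|\eta^{(\alpha)}-\zeta^{(\alpha)}\|<\eps$ and $\|A_{\alpha}\eta^{(\alpha)}-A_{\alpha}\zeta^{(\alpha)}\|<\eps$. Then $\zeta:=\{\zeta^{(\alpha)}\}_{\alpha\in F}\in\widehat{\bigoplus}_{\alpha}\mathcal{D}_{\alpha}$, and $\|\eta-\zeta\|^2=\sum_{\alpha\in F}\|\eta^{(\alpha)}-\zeta^{(\alpha)}\|^2<|F|\,\eps^2$, and similarly $\|A\eta-A\zeta\|^2<|F|\,\eps^2$. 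As $\eps$ is arbitrary, $\widehat{\bigoplus}_{\alpha}\mathcal{D}_{\alpha}$ is graph-norm dense in $\widehat{\bigoplus}_{\alpha}\dom{A_{\alpha}}$, and combining with Stage 1 shows it is a core of $A$.

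I do not expect a genuine obstacle here: the argument is elementary once the two-stage structure is in place. The only points needing a little care are the square-summability bookkeeping in Stage 1 (and the observation that it reduces to a sequence of finite sets), and noting in Stage 2 that the finiteness of $F$ lets the $|F|$ factor be absorbed by shrinking $\eps$.
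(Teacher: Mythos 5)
Your proof is correct. Note that the paper itself gives no argument for this lemma: it is stated in Appendix A as a well-known fact with a reference to the literature, so there is no in-paper proof to compare against. Your two-stage argument (graph-norm approximation by finitely supported truncations $\xi_F$, then componentwise approximation of each coordinate within the core $\mathcal{D}_{\alpha}$ of $A_{\alpha}$, combined via transitivity of graph-norm density) is exactly the standard way to fill this in; the only implicit prerequisite, that $A=\oplus_{\alpha}A_{\alpha}$ is itself closed so that the notion of core applies, is already recorded in the appendix as a consequence of each $A_{\alpha}$ being closed.
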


\begin{lem}\label{3unbdd}
Assume that each $A_{\alpha}$ is (possibly unbounded) self-adjoint.
\begin{list}{}{}
\item[(1)] $A$ is self-adjoint.
\item[(2)] For any complex valued Borel function $f$ on $\mathbb{R}$, 
\begin{equation*}
f(A) = \oplus_{\alpha}f(A_{\alpha})
\end{equation*}
holds.
\end{list}
\end{lem}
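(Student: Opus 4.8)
The plan is to obtain (1) essentially for free and to establish (2) by climbing the functional calculus: first resolvents, then bounded Borel functions, then arbitrary Borel functions. For (1), recall that it was already noted that if each $A_{\alpha}$ is densely defined and closed then so is $A$, with $A^{*}=\oplus_{\alpha}A_{\alpha}^{*}$ (this identity is itself a routine componentwise verification). Taking each $A_{\alpha}$ self-adjoint, we get $A^{*}=\oplus_{\alpha}A_{\alpha}^{*}=\oplus_{\alpha}A_{\alpha}=A$, so $A$ is self-adjoint; nothing more is needed.

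For (2), the first step is the resolvent formula $(A-z)^{-1}=\oplus_{\alpha}(A_{\alpha}-z)^{-1}$ for $z\in\mathbb{C}\setminus\mathbb{R}$. Indeed $A-z=\oplus_{\alpha}(A_{\alpha}-z)$, each $(A_{\alpha}-z)^{-1}$ is everywhere defined with $\|(A_{\alpha}-z)^{-1}\|\le|\im{z}|^{-1}$, so $B:=\oplus_{\alpha}(A_{\alpha}-z)^{-1}$ is bounded by Lemma \ref{3bdd}(1); a componentwise check gives $(A-z)B=1_{\Hs}$ and $B(A-z)\subset 1_{\Hs}$, whence $B=(A-z)^{-1}$.

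The second and main step is to spread this over the bounded Borel functional calculus. The assignment $\pi\colon g\mapsto\oplus_{\alpha}g(A_{\alpha})$ is well defined on bounded Borel $g$ (by Lemma \ref{3bdd}(1), since $\sup_{\alpha}\|g(A_{\alpha})\|\le\|g\|_{\infty}$) and is a unital $*$-homomorphism into $\mathfrak{B}(\Hs)$; by the first step it agrees with $g\mapsto g(A)$ on $\lambda\mapsto(\lambda\pm i)^{-1}$, hence on the $*$-algebra they generate, hence on its uniform closure $C_{0}(\mathbb{R})$. Fixing $\xi,\eta\in\Hs$, both $g\mapsto\nai{\eta}{\pi(g)\xi}$ and $g\mapsto\nai{\eta}{g(A)\xi}$ are integration against complex Borel measures on $\mathbb{R}$ (for $\pi$, against $\sum_{\alpha}\mu^{(\alpha)}_{\eta^{(\alpha)},\xi^{(\alpha)}}$, of finite total variation by Cauchy--Schwarz) which coincide on $C_{0}(\mathbb{R})$; so the measures coincide and $\pi(g)=g(A)$ for all bounded Borel $g$. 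In particular the spectral measures satisfy $E_{A}(S)=\oplus_{\alpha}E_{A_{\alpha}}(S)$, so $\nai{\xi}{E_{A}(\cdot)\xi}=\sum_{\alpha}\nai{\xi^{(\alpha)}}{E_{A_{\alpha}}(\cdot)\xi^{(\alpha)}}$ as measures. (Alternatively one could reach the same conclusion from Stone's formula together with the first step.)

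Finally, for an arbitrary Borel $f$, the identity $\|f(A)\xi\|^{2}=\int|f|^{2}\,d\nai{\xi}{E_{A}(\cdot)\xi}=\sum_{\alpha}\int|f|^{2}\,d\nai{\xi^{(\alpha)}}{E_{A_{\alpha}}(\cdot)\xi^{(\alpha)}}=\sum_{\alpha}\|f(A_{\alpha})\xi^{(\alpha)}\|^{2}$ shows $\xi\in\dom{f(A)}$ iff $\sum_{\alpha}\|f(A_{\alpha})\xi^{(\alpha)}\|^{2}<\infty$, i.e. $\dom{f(A)}=\dom{\oplus_{\alpha}f(A_{\alpha})}$. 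On this common domain, using the truncations $f_{n}:=f\cdot\mathbf{1}_{\{|f|\le n\}}$, we have $f_{n}(A)\xi\to f(A)\xi$ in $\Hs$ (hence componentwise), while $f_{n}(A)=\oplus_{\alpha}f_{n}(A_{\alpha})$ by the second step and $f_{n}(A_{\alpha})\xi^{(\alpha)}\to f(A_{\alpha})\xi^{(\alpha)}$; comparing $\alpha$-components gives $(f(A)\xi)^{(\alpha)}=f(A_{\alpha})\xi^{(\alpha)}$, i.e. $f(A)=\oplus_{\alpha}f(A_{\alpha})$. The only genuine work is in the second step, where the single resolvent identity must be propagated through the whole Borel calculus via a density argument plus bounded convergence for spectral integrals; the first and last steps are mere bookkeeping with direct sums.
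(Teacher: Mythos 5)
Your proof is correct. Note that the paper itself does not prove Lemma \ref{3unbdd}: it appears in Appendix A among the facts declared ``well-known'' with a pointer to the textbook of Arai, so there is no in-paper argument to compare yours against. Your route is the standard one and is complete: part (1) follows immediately from the identity $A^{*}=\oplus_{\alpha}A_{\alpha}^{*}$ already recorded in the appendix; for part (2) you correctly verify the resolvent identity componentwise, propagate it to $C_{0}(\mathbb{R})$ via Stone--Weierstrass (the $*$-algebra generated by $(\lambda\mp i)^{-1}$ separates points and vanishes nowhere) together with contractivity of both calculi, pass to bounded Borel functions by uniqueness of the finite complex Borel measures $\nai{\eta}{E_{A}(\cdot)\xi}$ and $\sum_{\alpha}\nai{\eta^{(\alpha)}}{E_{A_{\alpha}}(\cdot)\xi^{(\alpha)}}$ (the sum is legitimate since only countably many components of $\xi,\eta$ are nonzero and the total variations are summable by Cauchy--Schwarz), and finally handle unbounded $f$ by the domain computation $\int|f|^{2}d\nai{\xi}{E_{A}(\cdot)\xi}=\sum_{\alpha}\int|f|^{2}d\nai{\xi^{(\alpha)}}{E_{A_{\alpha}}(\cdot)\xi^{(\alpha)}}$ and truncation $f_{n}=f\mathbf{1}_{\{|f|\le n\}}$, which identifies both the domain and the action componentwise. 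The only implicit ingredients are the componentwise algebra rules for bounded direct sums (Lemma \ref{3sum_vN}) and standard properties of the spectral calculus, so no gap remains.
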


Finally, we study the direct sum of algebras of operators.
Let $\mathscr{S}_{\alpha}$ be a set of densely defined closed operators on $\Hs_{\alpha}$.
Put
\begin{equation*}
\bigoplus_{\alpha}\mathscr{S}_{\alpha} := \left\{\oplus_{\alpha}A_{\alpha}\ ; \ A_{\alpha}\in\mathscr{S}_{\alpha} \right\}.
\end{equation*}
Note that each element in $\bigoplus_{\alpha}\mathscr{S}_{\alpha}$ is a densely defined closed operator on $\Hs = \bigoplus_{\alpha}\Hs_{\alpha}$.
If each $\mathscr{S}_{\alpha}$ consists only of bounded operators, we also define
\begin{equation*}
\bigoplus_{\alpha}^{b}\mathscr{S}_{\alpha} := 
\left\{\oplus_{\alpha}x_{\alpha}\ ; \ x_{\alpha}\in\mathscr{S}_{\alpha},\ \sup_{\alpha}\|x_{\alpha}\|<\infty. \right\}.
\end{equation*} 
By Lemma \ref{3bdd}, each element in $\bigoplus_{\alpha}^{b}\mathscr{S}_{\alpha}$ is bounded.
The following is also well-known.

\begin{lem}\label{3sum_vN}
Let $\M_{\alpha}$ be a von Neumann algebra acting on $\Hs_{\alpha}$, and put 
\begin{equation*}
\M := \bigoplus_{\alpha}^{b}\M_{\alpha}.
\end{equation*}
Then $\M$ is a von Neumann algebra acting on $\Hs=\bigoplus_{\alpha}\Hs_{\alpha}$.
The sum, the scalar multiplication, the multiplication and the involution are given by 
\begin{align*}
\left(\oplus_{\alpha}x_{\alpha}\right)+\left(\oplus_{\alpha}y_{\alpha}\right) &= \oplus_{\alpha}\left(x_{\alpha}+y_{\alpha}\right), \\
\lambda \left(\oplus_{\alpha}x_{\alpha}\right) &= \oplus_{\alpha}\left(\lambda x_{\alpha}\right),\ \ \ \ for\ all\ \lambda \in \mathbb{C}, \\
\left(\oplus_{\alpha}x_{\alpha}\right)\left(\oplus_{\alpha}y_{\alpha}\right) &= \oplus_{\alpha}\left(x_{\alpha}y_{\alpha}\right), \\
\left(\oplus_{\alpha}x_{\alpha}\right)^* &=  \oplus_{\alpha}\left({x_{\alpha}}^*\right).
\end{align*}
Furthermore the followings hold.
\begin{list}{}{}
\item[(1)] $\M'= \bigoplus_{\alpha}^{b}\M_{\alpha}'$.
\item[(2)] $\M$ is a finite von Neumann algebra if and only if each $\M_{\alpha}$ is a finite von Neumann algebra.
\end{list}
\end{lem}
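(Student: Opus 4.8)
The plan is to derive the coordinatewise formulae directly from the definition of the direct sum of bounded operators, compute the commutant $\M'$ explicitly, and then read off the remaining assertions from the double commutant theorem and an elementary argument with isometries.

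First I would observe that the formulae for the sum, the scalar multiplication, the product and the involution are immediate from the definition of $\oplus_{\alpha}A_{\alpha}$ recalled in Appendix A, and that together with Lemma \ref{3bdd}(1) they show $\M=\bigoplus_{\alpha}^{b}\M_{\alpha}$ is a unital, self-adjoint subalgebra of $\mathfrak{B}(\Hs)$, with unit $1_{\Hs}=\oplus_{\alpha}1_{\Hs_{\alpha}}$. In particular $\M'$ is automatically a von Neumann algebra, so the whole problem reduces to identifying it.

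Next I would prove the formula $\M'=\bigoplus_{\alpha}^{b}\M_{\alpha}'$. The inclusion $\supseteq$ is trivial from the coordinatewise product formula. For $\subseteq$, introduce the projections $q_{\beta}:=\oplus_{\alpha}(\delta_{\alpha\beta}1_{\Hs_{\alpha}})\in\M$ (here $0\in\M_{\alpha}$ and $1_{\Hs_{\beta}}\in\M_{\beta}$, and $q_{\beta}$ is a projection by Lemma \ref{3bdd}(3)); their ranges $\tilde{\Hs}_{\beta}:=\ran{q_{\beta}}$ are mutually orthogonal and add up to $\Hs$. If $y\in\M'$ then $yq_{\beta}=q_{\beta}y$ for every $\beta$, so each $\tilde{\Hs}_{\beta}$ reduces $y$; transporting the reduced part to $\Hs_{\beta}$ by the natural unitary $v_{\beta}\colon\Hs_{\beta}\to\tilde{\Hs}_{\beta}$ and setting $y_{\beta}:=v_{\beta}^{*}(y|_{\tilde{\Hs}_{\beta}})v_{\beta}$ gives $y=\oplus_{\beta}y_{\beta}$ with $\sup_{\beta}\|y_{\beta}\|=\|y\|<\infty$ by Lemma \ref{3bdd}(1). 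Finally, for a fixed $\beta$ and an arbitrary $x_{\beta}\in\M_{\beta}$, the operator $x:=\oplus_{\alpha}(\delta_{\alpha\beta}x_{\beta})$ lies in $\M$, so $yx=xy$ forces $y_{\beta}x_{\beta}=x_{\beta}y_{\beta}$; hence $y_{\beta}\in\M_{\beta}'$, which proves (1). Applying (1) once more to the family $\{(\M_{\alpha}',\Hs_{\alpha})\}_{\alpha}$ and using $\M_{\alpha}''=\M_{\alpha}$ then yields $\M''=(\bigoplus_{\alpha}^{b}\M_{\alpha}')'=\bigoplus_{\alpha}^{b}\M_{\alpha}=\M$, so $\M=\M''$ is a von Neumann algebra acting on $\Hs$.

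For (2) I would use the defining property that a von Neumann algebra is finite if and only if it has no non-unitary isometry. If each $\M_{\alpha}$ is finite and $v=\oplus_{\alpha}v_{\alpha}\in\M$ satisfies $v^{*}v=1$, then $v_{\alpha}^{*}v_{\alpha}=1_{\Hs_{\alpha}}$ for all $\alpha$, so each $v_{\alpha}$ is a unitary in $\M_{\alpha}$, whence $vv^{*}=\oplus_{\alpha}v_{\alpha}v_{\alpha}^{*}=1$ and $v$ is unitary; thus $\M$ is finite. Conversely, if $\M$ is finite, fix $\beta$ and an isometry $v_{\beta}\in\M_{\beta}$, and put $v:=\oplus_{\alpha}w_{\alpha}$ with $w_{\beta}:=v_{\beta}$ and $w_{\alpha}:=1_{\Hs_{\alpha}}$ for $\alpha\neq\beta$; then $v\in\M$ is an isometry, hence unitary, so in particular $v_{\beta}v_{\beta}^{*}=1$ and $\M_{\beta}$ is finite. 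The only step that needs genuine care is the inclusion $\M'\subseteq\bigoplus_{\alpha}^{b}\M_{\alpha}'$ — namely the block-diagonalization of an operator commuting with every $q_{\beta}$ and the observation that its blocks then commute with the respective $\M_{\alpha}$ — while everything else is bookkeeping with the Appendix A formulae and Lemma \ref{3bdd}.
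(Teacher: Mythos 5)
Your proof is correct and complete. The paper itself offers no argument for this lemma (it is quoted in Appendix A as well-known), and your route is the standard one: the coordinatewise formulae from the definition of the direct sum, the explicit commutant computation $\M'=\bigoplus_{\alpha}^{b}\M_{\alpha}'$ via the central projections $q_{\beta}$, the double commutant theorem applied once more to the family $\{\M_{\alpha}'\}_{\alpha}$ to get $\M''=\M$, and the blockwise isometry argument for finiteness. In particular the only delicate point, the inclusion $\M'\subseteq\bigoplus_{\alpha}^{b}\M_{\alpha}'$, is handled properly: commutation with every $q_{\beta}$ gives the block-diagonal form with $\sup_{\beta}\|y_{\beta}\|\leq\|y\|$, and testing against $\oplus_{\alpha}(\delta_{\alpha\beta}x_{\beta})$ puts each block in $\M_{\beta}'$, so no gap remains.
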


We call $\bigoplus_{\alpha}^{b}\M_{\alpha}$ the {\it direct sum von Neumann algebra} of $\left\{\M_{\alpha}\right\}_{\alpha}$.

\section{Fundamental Results of SRT}

Let $\Hs$ be a Hilbert space.
The following lemmata are well-known \cite{Reed-Simon}:
\begin{lem}\label{AcoreSRT}
Let $\{A_{\lambda}\}_{\lambda\in\Lambda}$ be a net of self-adjoint operators on $\Hs$, $A$ be a self-adjoint operator on $\Hs$,
and $\mathcal{D}$ be a dense subspace of $\Hs$ which is a core of $A$ and 
$\mathcal{D} \subset \bigcap_{\lambda\in\Lambda}\dom{A_{\lambda}}\cap\dom{A}$.
Suppose for all $\xi \in \mathcal{D}$, $\lim_{\lambda\in\Lambda} A_{\lambda}\xi = A\xi$, 
then $A_{\lambda}$ converges to $A$ in the strong resolvent topology.
\end{lem}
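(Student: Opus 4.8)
The plan is to reduce everything to the strong convergence of the resolvents at $i$, and then run the classical Trotter-type approximation argument. First I would observe that since each $A_{\lambda}$ and $A$ are self-adjoint, one has $\re{A_{\lambda}}=A_{\lambda}$, $\im{A_{\lambda}}=0$, and likewise $\re{A}=A$, $\im{A}=0$. Hence, by the definition of the strong resolvent topology on $\rc{\Hs}$, the conclusion is equivalent to the single statement that $\{(A_{\lambda}-i)^{-1}\}_{\lambda\in\Lambda}$ converges strongly to $(A-i)^{-1}$ (the imaginary-part resolvents being constant, hence trivially convergent). For this I would use the elementary bounds $\|(A_{\lambda}-i)^{-1}\|\le 1$ and $\|(A-i)^{-1}\|\le 1$, valid for all self-adjoint operators, together with the identity, valid for $\psi\in\mathcal{D}$ because $\mathcal{D}\subset\dom{A_{\lambda}}\cap\dom{A}$,
\begin{equation*}
(A_{\lambda}-i)^{-1}(A-i)\psi-\psi=(A_{\lambda}-i)^{-1}\left\{(A-i)\psi-(A_{\lambda}-i)\psi\right\}=(A_{\lambda}-i)^{-1}(A-A_{\lambda})\psi.
\end{equation*}
Taking norms and using $\|(A_{\lambda}-i)^{-1}\|\le 1$ gives $\|(A_{\lambda}-i)^{-1}(A-i)\psi-(A-i)^{-1}(A-i)\psi\|\le\|(A-A_{\lambda})\psi\|\to 0$ by hypothesis; thus $(A_{\lambda}-i)^{-1}$ converges strongly to $(A-i)^{-1}$ on the subspace $(A-i)\mathcal{D}$.

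Next I would check that $(A-i)\mathcal{D}$ is dense in $\Hs$. Since $A$ is self-adjoint, $\ran{A-i}=\Hs$, so it suffices to see that $(A-i)\mathcal{D}$ is dense in $(A-i)\dom{A}$; this is immediate from the assumption that $\mathcal{D}$ is a core of $A$, for given $\phi\in\dom{A}$ there is a net $\{\psi_{\mu}\}\subset\mathcal{D}$ with $\psi_{\mu}\to\phi$ and $A\psi_{\mu}\to A\phi$, whence $(A-i)\psi_{\mu}\to(A-i)\phi$.

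Finally I would upgrade the strong convergence from the dense subspace $(A-i)\mathcal{D}$ to all of $\Hs$ by the standard $3\varepsilon$-argument, using the uniform bound $\sup_{\lambda}\|(A_{\lambda}-i)^{-1}\|\le 1$: given $\eta\in\Hs$ and $\varepsilon>0$, pick $\eta_{0}\in(A-i)\mathcal{D}$ with $\|\eta-\eta_{0}\|<\varepsilon$, and estimate
\begin{equation*}
\|(A_{\lambda}-i)^{-1}\eta-(A-i)^{-1}\eta\|\le\|\eta-\eta_{0}\|+\|(A_{\lambda}-i)^{-1}\eta_{0}-(A-i)^{-1}\eta_{0}\|+\|\eta_{0}-\eta\|,
\end{equation*}
so that $\limsup_{\lambda}\|(A_{\lambda}-i)^{-1}\eta-(A-i)^{-1}\eta\|\le 2\varepsilon$; since $\varepsilon>0$ is arbitrary, strong convergence on all of $\Hs$ follows. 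This argument presents no serious obstacle, being the classical criterion for strong resolvent convergence; the only points demanding a little care are that $\Lambda$ is merely a directed set, so the final approximation step must be phrased with $\limsup$ over nets rather than by extracting a subsequence, and the density of $(A-i)\mathcal{D}$, which is exactly where the hypothesis that $\mathcal{D}$ is a core of $A$ enters.
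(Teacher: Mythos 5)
Your proof is correct, and it is precisely the classical argument for strong resolvent convergence (Reed--Simon, Theorem VIII.25) that the paper itself invokes without proof, merely citing \cite{Reed-Simon}: reduce to strong convergence of $(A_{\lambda}-i)^{-1}$ via $\re{A_{\lambda}}=A_{\lambda}$, $\im{A_{\lambda}}=0$, prove it on $(A-i)\mathcal{D}$ using the resolvent identity and the core hypothesis, and extend by uniform boundedness of the resolvents. Nothing further is needed; your remark about phrasing the last step with $\limsup$ over the net rather than sequences is the right precaution.
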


\begin{lem}\label{ASRT=exp}
Let $\{A_n\}_{n=1}^{\infty}$ be a sequence of self-adjoint operators on $\Hs$, $A$ be a self-adjoint operator on $\Hs$.
Then $A_n$ converges to $A$ in the strong resolvent topology if and only if 
$e^{itA_n}$ converges strongly to $e^{itA}$ for all $t \in \mathbb{R}$. 
In this case, the strong convergence of $e^{itA_n}$ to $e^{itA}$ is uniform on every finite interval of $t$.
\end{lem}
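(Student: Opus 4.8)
The plan is to prove the two implications separately, using only the spectral theorem and elementary functional calculus. Note first that for a self\-adjoint operator $A$ one has $\re{A}=A$ and $\im{A}=0$, so the strong resolvent topology of the excerpt, restricted to self\-adjoint operators, reduces to the classical notion: $A_n\to A$ in the strong resolvent topology means precisely $(A_n-i)^{-1}\xi\to(A-i)^{-1}\xi$ for every $\xi\in\Hs$ (the $\im{\cdot}$\-resolvents being the constant $i\,\mathrm{Id}$).

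For the implication \emph{strong convergence of the exponentials $\Rightarrow$ strong resolvent convergence}, I would use the Laplace representation of the resolvent: for any self\-adjoint $B$ the spectral theorem gives, as a strong (Bochner) integral,
\begin{equation*}
(B-i)^{-1}=i\int_0^{\infty}e^{-s}\,e^{-isB}\,ds ,
\end{equation*}
since $\int_0^{\infty}e^{-s}e^{-is\lambda}\,ds=(1+i\lambda)^{-1}=-i(\lambda-i)^{-1}$. Applying this to $B=A_n$ and $B=A$ and invoking the dominated convergence theorem — the integrands are bounded in norm by $e^{-s}\|\xi\|$ and converge pointwise in $s$ by hypothesis — yields $(A_n-i)^{-1}\xi\to(A-i)^{-1}\xi$ for all $\xi$, i.e.\ strong resolvent convergence. (It is convenient here that we deal with a sequence, so that the dominated convergence theorem applies directly.)

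For the converse I would proceed in four steps. First, upgrade the hypothesis to strong convergence of \emph{both} resolvents: since $(A_n-i)^{-1}$ and $(A_n+i)^{-1}$ commute and $\bigl((A_n-i)^{-1}\bigr)^{*}=(A_n+i)^{-1}$, one has $\|(A_n+i)^{-1}\xi\|^{2}=\langle\xi,(A_n^{2}+1)^{-1}\xi\rangle=\|(A_n-i)^{-1}\xi\|^{2}$ and $\langle(A_n+i)^{-1}\xi,\eta\rangle=\langle\xi,(A_n-i)^{-1}\eta\rangle\to\langle\xi,(A-i)^{-1}\eta\rangle$, so $(A_n+i)^{-1}\xi\to(A+i)^{-1}\xi$ weakly with convergent norms, hence strongly. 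Second, since the $(A_n\pm i)^{-1}$ are contractions, every polynomial in them converges strongly, whence $f(A_n)\to f(A)$ strongly for every $f$ in the sup\-norm closure of the $*$\-algebra generated by $(x-i)^{-1}$ and $(x+i)^{-1}$, which by Stone--Weierstrass on $\mathbb{R}$ is $C_0(\mathbb{R})$. Third, to reach $f(x)=e^{itx}\notin C_0(\mathbb{R})$, introduce the cutoff $\psi_R(x):=x^{2}(x^{2}+R^{2})^{-1}=1-R^{2}(x^{2}+R^{2})^{-1}$, so that $1-\psi_R\in C_0(\mathbb{R})$ and $e^{itx}\bigl(1-\psi_R(x)\bigr)\in C_0(\mathbb{R})$, and write
\begin{equation*}
e^{itA_n}\xi-e^{itA}\xi=\Bigl(e^{itA_n}\bigl(1-\psi_R(A_n)\bigr)-e^{itA}\bigl(1-\psi_R(A)\bigr)\Bigr)\xi+e^{itA_n}\psi_R(A_n)\xi-e^{itA}\psi_R(A)\xi .
\end{equation*}
For fixed $R$ the first summand tends to $0$ by the $C_0$\-functional calculus convergence, and the last two are bounded by $\|\psi_R(A_n)\xi\|$ and $\|\psi_R(A)\xi\|$ because the exponentials are unitary; since $\|\psi_R(A)\xi\|\to0$ as $R\to\infty$ by dominated convergence against the spectral measure of $\xi$ for $A$, while $\psi_R(A_n)\xi\to\psi_R(A)\xi$ for fixed $R$ gives $\limsup_n\|\psi_R(A_n)\xi\|=\|\psi_R(A)\xi\|$, an $\varepsilon$ argument yields $e^{itA_n}\xi\to e^{itA}\xi$. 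Fourth, the uniformity on a finite interval $[-T,T]$ follows because $\{e^{itx}(1-\psi_R(x))\}_{|t|\le T}$ is a norm\-compact subset of $C_0(\mathbb{R})$, so the convergence of the first summand is uniform in $t\in[-T,T]$, and the other two summands do not depend on $t$.

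The main obstacle is the third step of the converse: strong resolvent convergence only delivers $f(A_n)\to f(A)$ for $f$ vanishing at infinity, whereas $e^{itx}$ does not, so one must cut the spectrum off at large $|x|$, and the delicate point is the \emph{uniform\-in\-$n$} control of the tail $\|\psi_R(A_n)\xi\|$, obtained by interchanging the limits $n\to\infty$ and $R\to\infty$ through the identity $\limsup_n\|\psi_R(A_n)\xi\|=\|\psi_R(A)\xi\|$. Everything else — the Laplace representation, the Stone--Weierstrass step, and the applications of dominated convergence — is routine and recorded in \cite{Reed-Simon}.
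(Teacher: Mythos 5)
Your proof is correct. Note, however, that the paper offers no proof of this lemma at all: it is stated in Appendix B as a well-known fact with a reference to Reed--Simon, where it is Trotter's theorem (Theorem VIII.21, resting on Theorem VIII.20). So the comparison is with the standard textbook argument rather than with an in-paper one. Your ``exponentials $\Rightarrow$ resolvents'' direction, via the Laplace representation $(A-i)^{-1}=i\int_0^{\infty}e^{-s}e^{-isA}\,ds$ and dominated convergence, is exactly the classical route. For the converse, the textbook proof typically uses a Duhamel-type device, differentiating $s\mapsto e^{isA_n}(A_n+i)^{-1}e^{i(t-s)A}(A+i)^{-1}\psi$ and integrating, so that strong resolvent convergence enters through the integrand; your route instead first establishes $f(A_n)\to f(A)$ strongly for all $f\in C_0(\mathbb{R})$ (Stone--Weierstrass applied to the $*$-algebra generated by $(x\mp i)^{-1}$, after upgrading to convergence of both resolvents via the norm identity $\|(A_n+i)^{-1}\xi\|=\|(A_n-i)^{-1}\xi\|$ plus weak convergence), and then reaches $e^{itx}$ by the spectral cutoff $\psi_R$, controlling the tail uniformly in $n$ through $\lim_n\|\psi_R(A_n)\xi\|=\|\psi_R(A)\xi\|$. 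This buys an argument free of any differentiation of operator-valued functions, and it delivers the uniformity in $t\in[-T,T]$ rather cleanly: the only step you leave implicit is that pointwise convergence of the uniformly Lipschitz maps $f\mapsto (f(A_n)-f(A))\xi$ on the norm-compact family $\{e^{it\cdot}(1-\psi_R)\colon |t|\le T\}\subset C_0(\mathbb{R})$ is automatically uniform, which is a routine finite-net argument worth one sentence. Your restriction to sequences (needed for dominated convergence) matches the statement, and your observation that for self-adjoint operators the paper's SRT reduces to ordinary strong resolvent convergence is the right reduction.
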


\begin{lem}\label{Aspec}
Let $\{A_n\}_{n=1}^{\infty}$ be a sequence of self-adjoint operators on $\Hs$, $A$ be a self-adjoint operator on $\Hs$.
Suppose $A_n$ converges to $A$ in the strong resolvent topology, 
then $E_{A_n}((a,b))$ converges strongly to $E_{A}((a,b))$ for each $a,b \in \mathbb{R}$ with $a<b$ and $a, b \notin \sigma_p(A)$, where $\sigma_p(A)$ is the set of point spectra of $A$.  
\end{lem}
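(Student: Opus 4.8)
The plan is to sandwich the spectral projection $E_{A_n}((a,b))$ between two continuous functions of $A_n$ and pass to the limit using the fact that bounded continuous functional calculus is strongly continuous for the strong resolvent topology (Lemma \ref{Abf}, which itself rests on Lemma \ref{ASRT=exp}). Fix $\xi\in\Hs$ and $\delta>0$. For small $\eps>0$ introduce piecewise-linear ``tent'' functions $f_{\eps},g_{\eps}$ on $\mathbb{R}$ with $0\le f_{\eps}\le \chi_{(a,b)}\le g_{\eps}\le 1$, where $f_{\eps}$ vanishes outside $(a,b)$ and equals $1$ on $[a+\eps,b-\eps]$, while $g_{\eps}$ equals $1$ on $[a,b]$ and vanishes outside $(a-\eps,b+\eps)$. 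Since the functional calculus preserves order, $f_{\eps}(A_n)\le E_{A_n}((a,b))\le g_{\eps}(A_n)$, and likewise for $A$.

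Next I would write $E_{A_n}((a,b))=f_{\eps}(A_n)+R_n$ with $0\le R_n\le D_n:=g_{\eps}(A_n)-f_{\eps}(A_n)\le 1$, and use the elementary bound that $0\le R\le D\le 1$ forces $\|R\eta\|^2=\langle\eta,R^2\eta\rangle\le\langle\eta,R\eta\rangle\le\langle\eta,D\eta\rangle$, to get
\[
\|E_{A_n}((a,b))\xi-E_A((a,b))\xi\|\le\|f_{\eps}(A_n)\xi-f_{\eps}(A)\xi\|+\langle\xi,D_n\xi\rangle^{1/2}+\|f_{\eps}(A)\xi-E_A((a,b))\xi\|.
\]
Since $f_{\eps}\nearrow\chi_{(a,b)}$ pointwise as $\eps\downarrow 0$, dominated convergence against the measure $\langle\xi,E_A(\cdot)\xi\rangle$ gives $f_{\eps}(A)\xi\to E_A((a,b))\xi$, so I fix $\eps$ making the last term $<\delta/3$. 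For that fixed $\eps$, Lemma \ref{Abf} gives $f_{\eps}(A_n)\to f_{\eps}(A)$ and $g_{\eps}(A_n)\to g_{\eps}(A)$ strongly, hence $D_n\to D:=g_{\eps}(A)-f_{\eps}(A)$ strongly, so $\langle\xi,D_n\xi\rangle\to\langle\xi,D\xi\rangle$ and $\|f_{\eps}(A_n)\xi-f_{\eps}(A)\xi\|\to 0$ as $n\to\infty$.

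The hypothesis enters in estimating $\langle\xi,D\xi\rangle=\langle\xi,g_{\eps}(A)\xi\rangle-\langle\xi,f_{\eps}(A)\xi\rangle$: since $g_{\eps}\downarrow\chi_{[a,b]}$ and $f_{\eps}\nearrow\chi_{(a,b)}$ pointwise, this tends to $\langle\xi,E_A([a,b])\xi\rangle-\langle\xi,E_A((a,b))\xi\rangle=\langle\xi,(E_A(\{a\})+E_A(\{b\}))\xi\rangle$, which is $0$ precisely because $a,b\notin\sigma_p(A)$. Thus by shrinking $\eps$ at the outset I may also arrange $\langle\xi,D\xi\rangle<(\delta/6)^2$; then for all large $n$ one has $\langle\xi,D_n\xi\rangle<(\delta/3)^2$ and $\|f_{\eps}(A_n)\xi-f_{\eps}(A)\xi\|<\delta/3$, so $\|E_{A_n}((a,b))\xi-E_A((a,b))\xi\|<\delta$, which proves strong convergence. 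I expect the main obstacle to be exactly this last point: $E_{A_n}((a,b))$ is not $h(A_n)$ for a fixed continuous $h$, so functional-calculus continuity cannot be invoked directly, and the real work is controlling the ``gap'' $R_n$ uniformly in $n$ — via the order estimate together with the SRT-continuity of $\langle\xi,D_n\xi\rangle$ and the non-eigenvalue assumption; the rest is routine.
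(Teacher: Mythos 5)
Your proof is correct. The paper does not prove Lemma \ref{Aspec} at all—it is listed among the ``well-known'' facts with a citation to Reed--Simon—and your argument (sandwiching $\chi_{(a,b)}$ between continuous tent functions, the order estimate $\|R_n\xi\|^2\le\langle\xi,D_n\xi\rangle$, Lemma \ref{Abf} for the fixed continuous functions, and the hypothesis $a,b\notin\sigma_p(A)$ to make the boundary mass $\langle\xi,(E_A(\{a\})+E_A(\{b\}))\xi\rangle$ vanish) is exactly the standard proof that the citation points to, so there is nothing to fix.
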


\begin{lem}\label{Abf}
Let $\{A_n\}_{n=1}^{\infty}$ be a sequence of self-adjoint operators on $\Hs$, $A$ be a self-adjoint operator on $\Hs$.
Suppose $A_n$ converges to $A$ in the strong resolvent topology, 
then for all complex valued bounded continuous function $f$ on $\mathbb{R}$,
$f(A_n)$ converges strongly to $f(A)$.  
\end{lem}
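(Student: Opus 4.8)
The plan is to prove the strong convergence $f(A_n)\to f(A)$ for a widening hierarchy of function classes, bootstrapping from the definition of the strong resolvent topology itself. Note first that, for self-adjoint operators, the hypothesis $A_n\to A$ in SRT unwinds (by the definition of the strong resolvent topology, using $\re{A_n}=A_n$ and $\im{A_n}=0$) to the single assertion that $(A_n-i)^{-1}\to(A-i)^{-1}$ strongly. Two soft facts will be used repeatedly: the spectral-calculus bound $\|g(A_n)\|\le\|g\|_{\infty}$ for every bounded Borel $g$ (which legitimizes $\varepsilon/3$-arguments), and the fact that a finite sum or product of uniformly bounded, strongly convergent nets of operators is again strongly convergent.

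The first stage is to upgrade the hypothesis to $(A_n-z)^{-1}\to(A-z)^{-1}$ strongly for every $z\in\mathbb{C}\setminus\mathbb{R}$. Fixing $\xi$, consider the set of such $z$ for which $(A_n-z)^{-1}\xi\to(A-z)^{-1}\xi$. Around any point $z_0$ of this set the Neumann expansion $(A_n-z)^{-1}=\sum_{k\ge0}(z-z_0)^k(A_n-z_0)^{-k-1}$, controlled uniformly in $n$ by $\|(A_n-z_0)^{-1}\|\le|\mathrm{Im}\,z_0|^{-1}$, shows that the whole disc $\{|z-z_0|<|\mathrm{Im}\,z_0|\}$ belongs to the set; since such discs stay in one open half-plane, the set is relatively open and closed in each (connected) open half-plane and therefore exhausts $\mathbb{C}\setminus\mathbb{R}$. (The lower half-plane can alternatively be obtained by taking adjoints, $(A_n-\bar z)^{-1}=\{(A_n-z)^{-1}\}^{*}$.)

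The second stage handles $f\in C_0(\mathbb{R})$, the continuous functions vanishing at infinity. Let $\mathcal{A}$ be the non-unital $*$-subalgebra of $C_0(\mathbb{R})$ generated by $\lambda\mapsto(\lambda-i)^{-1}$ and $\lambda\mapsto(\lambda+i)^{-1}$; every $g\in\mathcal{A}$ is a finite sum of products of these resolvents, so $g(A_n)\to g(A)$ strongly by the first stage. Since $\mathcal{A}$ separates points of $\mathbb{R}$, is closed under conjugation and vanishes nowhere, the locally compact Stone--Weierstrass theorem gives $\overline{\mathcal{A}}=C_0(\mathbb{R})$. For $f\in C_0(\mathbb{R})$ and $\varepsilon>0$ pick $g\in\mathcal{A}$ with $\|f-g\|_{\infty}<\varepsilon$; then $\|f(A_n)-g(A_n)\|<\varepsilon$, $\|f(A)-g(A)\|<\varepsilon$ and $g(A_n)\xi\to g(A)\xi$, so $\limsup_n\|f(A_n)\xi-f(A)\xi\|\le2\varepsilon\|\xi\|$. (One could instead use Lemma \ref{ASRT=exp} together with $f(A)=(2\pi)^{-1/2}\int\hat f(t)\,e^{itA}\,dt$ for $f$ with integrable Fourier transform, plus density of such $f$ in $C_0(\mathbb{R})$; I prefer the resolvent route as it is self-contained.)

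For a general $f\in C_b(\mathbb{R})$ one must prevent spectral mass from escaping to infinity along the sequence, and this is the real obstacle. Fix $\xi$ and set $\mu=\langle\xi,E_A(\cdot)\xi\rangle$, $\mu_n=\langle\xi,E_{A_n}(\cdot)\xi\rangle$, finite measures of total mass $\|\xi\|^2$. Choose $R$ with $\mu(\mathbb{R}\setminus[-R,R])<\varepsilon^2$ and a cutoff $\chi\in C_0(\mathbb{R})$ with $0\le\chi\le1$, $\chi\equiv1$ on $[-R,R]$ and $\chi\equiv0$ off $[-2R,2R]$. Applying the second stage to $\chi$ gives $\langle\xi,\chi(A_n)\xi\rangle\to\langle\xi,\chi(A)\xi\rangle\ge\|\xi\|^2-\varepsilon^2$, hence $\langle\xi,(1-\chi)(A_n)\xi\rangle<\varepsilon^2$ for all large $n$; this uniform-in-$n$ tail bound, absent a priori, is the crux of the matter. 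Writing $f=f\chi+f(1-\chi)$: the term $f\chi$ lies in $C_0(\mathbb{R})$, so $(f\chi)(A_n)\xi\to(f\chi)(A)\xi$ by the second stage, whereas, using $(1-\chi)^2\le1-\chi$, one has $\|(f(1-\chi))(A_n)\xi\|^2\le\|f\|_{\infty}^2\langle\xi,(1-\chi)(A_n)\xi\rangle<\|f\|_{\infty}^2\varepsilon^2$ for large $n$, and likewise $\|(f(1-\chi))(A)\xi\|^2\le\|f\|_{\infty}^2\mu(\mathbb{R}\setminus[-R,R])<\|f\|_{\infty}^2\varepsilon^2$. Combining the three estimates yields $\limsup_n\|f(A_n)\xi-f(A)\xi\|\le2\|f\|_{\infty}\varepsilon$; letting $\varepsilon\downarrow0$ and ranging over $\xi\in\Hs$ gives $f(A_n)\to f(A)$ strongly, as claimed (cf. \cite{Reed-Simon} for this standard circle of ideas).
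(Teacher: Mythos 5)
Your argument is, in outline, exactly the classical proof behind the reference the paper gives for this lemma (the paper supplies no proof of its own, citing \cite{Reed-Simon}): extend resolvent convergence from $z=i$ to all non-real $z$, use Stone--Weierstrass on the $*$-algebra generated by $\lambda\mapsto(\lambda\mp i)^{-1}$ to treat $C_0(\mathbb{R})$, and then a cutoff argument giving a tail bound on the spectral measures, uniform in $n$ for large $n$, to reach bounded continuous $f$. Your second and third stages are correct as written; in particular the estimate $(1-\chi)^2\le 1-\chi$ and the use of $\langle\xi,\chi(A_n)\xi\rangle\to\langle\xi,\chi(A)\xi\rangle$ do exactly what is needed.

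The one step that does not hold up as stated is the passage to the lower half-plane in your first stage. The Neumann-series/clopen argument only propagates convergence within a connected component of $\mathbb{C}\setminus\mathbb{R}$, and the hypothesis furnishes a seed point only at $z=i$; so by itself it yields the open upper half-plane and nothing below, and in particular it does not give $(A_n+i)^{-1}\to(A+i)^{-1}$ strongly, which your Stone--Weierstrass algebra requires. The parenthetical remedy ``taking adjoints'' is not yet a proof: the identity $(A_n-\bar z)^{-1}=\{(A_n-z)^{-1}\}^*$ is correct, but strong operator convergence is not preserved by passing to adjoints in general, so an additional argument is needed. It is short: with $R_n:=(A_n-i)^{-1}$ and $R:=(A-i)^{-1}$, strong convergence $R_n\to R$ gives weak convergence $R_n^*\xi\to R^*\xi$ for every $\xi$, and the self-adjointness of $A_n$, $A$ gives the norm identity $\|R_n^*\xi\|^2=\langle\xi,(A_n^2+1)^{-1}\xi\rangle=\|R_n\xi\|^2\to\|R\xi\|^2=\|R^*\xi\|^2$; weak convergence together with convergence of norms yields $R_n^*\xi\to R^*\xi$ in norm, i.e. $(A_n+i)^{-1}\to(A+i)^{-1}$ strongly. (Equivalently, pass to Cayley transforms and use that strongly convergent unitaries with unitary limit converge $*$-strongly.) With this patch the lower half-plane has a seed point, your Stage 1 closes, and the rest of the proof goes through as you wrote it.
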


\begin{lem}\label{Asrt-sot}
Let $\{x_{\lambda}\}_{\lambda\in\Lambda}$ be a net of bounded self-adjoint operators on $\Hs$, $x$ be a bounded self-adjoint operator on $\Hs$.
Suppose that 
\begin{equation*}
\sup_{\lambda\in\Lambda}\|x_{\lambda}\|<\infty,
\end{equation*} 
and $x_{\lambda}$ converges to $x$ in the strong resolvent topology, 
then $x_{\lambda}$ converges strongly to $x$.
\end{lem}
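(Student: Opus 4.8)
The plan is to derive strong convergence of $x_{\lambda}$ from strong convergence of the resolvents by means of the second resolvent identity, exploiting the uniform norm bound to control the (only apparently dangerous) factor. Put $M:=\sup_{\lambda\in\Lambda}\|x_{\lambda}\|<\infty$. Since $x_{\lambda}$ and $x$ are bounded and self-adjoint, the operators $x_{\lambda}-i$, $x-i$ and their inverses are everywhere-defined bounded operators, and moreover $\|x_{\lambda}-i\|\le M+1$ for every $\lambda$. Also, for self-adjoint operators, convergence in the strong resolvent topology simply means $(x_{\lambda}-i)^{-1}\eta\to(x-i)^{-1}\eta$ for every $\eta\in\Hs$ (the imaginary-part condition being vacuous since $\im{x_{\lambda}}=\im{x}=0$).

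First I would record the algebraic identity
\begin{equation*}
x-x_{\lambda}=(x_{\lambda}-i)\left[(x_{\lambda}-i)^{-1}-(x-i)^{-1}\right](x-i),
\end{equation*}
valid on all of $\Hs$, which is merely a rearrangement of $(x_{\lambda}-i)^{-1}-(x-i)^{-1}=(x_{\lambda}-i)^{-1}(x-x_{\lambda})(x-i)^{-1}$. Then, fixing $\xi\in\Hs$ and setting $\zeta_{\lambda}:=\left[(x_{\lambda}-i)^{-1}-(x-i)^{-1}\right](x-i)\xi$, the strong resolvent convergence applied to the vector $(x-i)\xi$ gives $\zeta_{\lambda}\to 0$ in norm, whence
\begin{equation*}
\|(x-x_{\lambda})\xi\|=\|(x_{\lambda}-i)\zeta_{\lambda}\|\le (M+1)\,\|\zeta_{\lambda}\|\longrightarrow 0,
\end{equation*}
which is exactly the strong convergence $x_{\lambda}\to x$. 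Alternatively, one can use the bounded continuous truncation $g(t):=\max\{-M,\min\{t,M\}\}$, observe that $g(x_{\lambda})=x_{\lambda}$ and $g(x)=x$ because all spectra lie in $[-M,M]$, and invoke Lemma \ref{Abf}; in this route the only minor extra point is the a priori estimate $\|x\|\le M$, obtained by a short computation with $\eta_{\lambda}:=(x_{\lambda}-i)^{-1}\xi\to(x-i)^{-1}\xi$.

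There is essentially no hard step here. The single point requiring care is that strong resolvent convergence, by itself, controls only the resolvents and not the operators, so one must not attempt to pass to the limit inside the product $x_{\lambda}(\,\cdots)$ naively; the uniform bound $\sup_{\lambda}\|x_{\lambda}\|<\infty$ is precisely what licenses the final inequality, and this is where the hypothesis is used.
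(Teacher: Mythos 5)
Your argument is correct and complete: the identity $x-x_{\lambda}=(x_{\lambda}-i)\bigl[(x_{\lambda}-i)^{-1}-(x-i)^{-1}\bigr](x-i)$ is valid since all operators involved are bounded and everywhere defined, strong resolvent convergence applied to the vector $(x-i)\xi$ gives $\zeta_{\lambda}\to 0$, and the uniform bound $\|x_{\lambda}-i\|\le M+1$ is exactly what lets you conclude $\|(x-x_{\lambda})\xi\|\to 0$. The paper itself offers no proof of this lemma (it is cited as well known from Reed--Simon), so there is nothing to compare against; your second-resolvent-identity argument is the standard one, and the alternative via the truncation $g$ and Lemma \ref{Abf} would also work once the a priori bound $\|x\|\le M$ is checked as you indicate.
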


\section{Tensor Categories}

We briefly review the definition of tensor categories.\ For more details about category theory, 
see MacLane\ \cite{MacLane} (we follow the style in Kassel\ \cite{Kassel},\ Chapter XI).

\begin{dfn}\label{Bnaturality}
Let $\msc{C},\msc{C}'$ be categories, $\mathcal{F}, \mathcal{G}$ be functors from $\msc{C}$ to $\msc{C}'$. 
A \textit{natural transformation} $\theta:\mathcal{F}\to \mathcal{G}$ is a function 
which assigns to each object $A$ in $\msc{C}$ a morphism 
$\theta(A):\mathcal{F}(A)\to \mathcal{G}(A)$ of $\msc{C}'$ 
in such a way that for every morphism $f:A\to B$ in $\msc{C}$, 
the following diagram commutes:
\[
\xymatrix{
\mathcal{F}(A)\ar@{}[rd]|\circlearrowright \ar[d]_{\mathcal{F}(f)}\ar[r]^{\theta(A)}& \mathcal{G}(A)\ar[d]^{\mathcal{G}(f)}\\
\mathcal{F}(B)\ar[r]^{\theta(B)}& \mathcal{G}(B)
}
\]
If $\theta(A)$ is an invertible morphism for every $A$, we call $\theta$ a \textit{natural isomorphism}.
\end{dfn} 

\begin{dfn}\label{Btensor category}
A {\it{tensor category}} $(\msc{C},\tens,I,\alpha,\lambda,\rho)$ is a category $\msc{C}$\ equipped with 
\begin{list}{}{}
\item[(1)] a bifunctor $\tens: \msc{C}\times \msc{C}\to \msc{C}$ called a \textit{tensor product}
\footnote{
This implies $(f'\tens g')\circ (f\tens g)=(f'\circ f)\tens (g'\circ g)$ for all morphisms in $\msc{C}$, 
and $1_A\tens 1_B=1_{A\tens B}$ for all objects in $\msc{C}$.
},
\item[(2)] an object $I$ in $\msc{C}$ called a \textit{unit object}, 
\item[(3)] a natural isomorphism $\alpha : \tens (\tens \times 1_{\msc{C}})$
\footnote{
$\tens (\tens \times 1_{\msc{C}})$ is the composition of the functors 
$\tens \times 1_{\msc{C}}:(\msc{C}\times \msc{C})\times \msc{C}\to \msc{C}\times \msc{C}$ 
and $\tens:\msc{C}\times \msc{C}\to \msc{C}$.
}
$\to \tens (1_{\msc{C}}\times \tens)$
called an \textit{associativity constraint}.
\end{list} 
(3) means for any objects $A,B,C$ in $\msc{C}$, there is an isomorphism $\alpha_{A,B,C}:(A\tens B)\tens C\to A\tens (B\tens C)$ 
such that the diagram
\[
\xymatrix{
(A\tens B)\tens C\ar@{}[rd]|\circlearrowright \ar[d]_{(f\tens g)\tens h}\ar[r]^{\alpha_{A,B,C}} 
& A\tens (B\tens C)\ar[d]^{f\tens (g\tens h)}\\
(A'\tens B')\tens C'\ar[r]^{\alpha_{A',B',C'}} & A'\tens (B'\tens C')
}
\]
commutes for all morphisms $f,g,h$ in $\msc{C}$.
\begin{list}{}{}
\item[(4)] a natural isomorphism $\lambda: \tens(I\times 1_{\msc{C}})$
\footnote{
$I\times 1_{\msc{C}}$ is the functor from $\msc{C}$ to $\msc{C}\times \msc{C}$ 
given by $A\mapsto (I,A)$ for all objects in $\mathscr{C}$ 
and $f\mapsto (1_I,f)$ for all morphisms in $\msc{C}$.}$\to 1_{\msc{C}}$ 
(resp. $\rho: \tens(1_{\msc{C}}\times I)\to 1_{\msc{C}}$) called a {\it left} 
(resp. {\it right}) {\it unit constraint} with respect to $I$.
\end{list}
(4) means for any object $A$ in $\msc{C}$, there is an isomorphism 
$\lambda_{A}: I\tens A\to A$ (resp. $\rho_A:A\tens I\to A)$ such that the following two diagrams commute:
\[
\xymatrix{
I\tens A\ar@{}[rd]|\circlearrowright \ar[d]_{1_I\tens f}\ar[r]^{\lambda_A} 
& A\ar[d]^f & A\tens I\ar@{}[rd]|\circlearrowright \ar[d]_{f\tens 1_I}\ar[r]^{\rho_A} & A\ar[d]^{f}\\
I\tens A'\ar[r]^{\lambda_{A'}} & A' & A'\tens I\ar[r]^{\rho_{A'}} & A'
}
\]
for each morphism $f:A\to A'$ in $\msc{C}$.
These functors and natural isomorphisms satisfy the {\it Pentagon Axiom} and the {\it Triangle Axiom}. 
Namely, for all objects $A,B,C$ and $D$, the following diagrams commute: 
\[
\xymatrix{
((A\tens B)\tens C)\tens D\ar@{}[rrdd]|\circlearrowleft \ar[dd]_{\alpha_{A\tens B,C,D}} \ar[rr]^{\alpha_{A,B,C}\tens 1_{D}}
& &(A\tens (B\tens C))\tens D\ar[d]^{\alpha_{A,B\tens C,D}}\\
& & A\tens ((B\tens C)\tens D)\ar[d]^{1_A\tens \alpha_{B,C,D}}\\
(A\tens B)\tens (C\tens D)\ar[rr]^{\alpha_{A,B,C\tens D}} & & A\tens (B\tens (C\tens D))
}
\]
\[
\xymatrix{
(A\tens I)\tens B\ar[dr]_{\rho_A\tens 1_B}\ar@{}[rrd]|\circlearrowright \ar[rr]^{\alpha_{A,I,B}} & & 
A\tens (I\tens B)\ar[ld]^{1_A\tens \lambda_B}\\
& A\tens B & 
}
\]
\end{dfn}

\begin{dfn}\label{6 def of tensor functor} 
Let $(\msc{C},\tens,I,\alpha,\lambda,\rho),\ (\msc{C}',\tens,I',\alpha',\lambda',\rho')$ be tensor categories. 
\begin{list}{}{}
\item[(1)] A triple $(\mc{F},h_1,h_2)$ is called a {\it tensor functor} from $\msc{C}$ to $\msc{C}'$ 
if $\mc{F}:\msc{C}\to \msc{C}'$ is a functor, $h_1$ is an isomorphism $I'\stackrel{\sim }{\to}\mc{F}(I)$ 
and $h_2$ is a natural isomorphism $\otimes (\mathcal{F}\times \mathcal{F})$
\footnote{
$\otimes (\mathcal{F}\times \mathcal{F})$ is a functor $\msc{C}\times \msc{C}\to \msc{C}$ 
which assigns $\mathcal{F}(A)\otimes \mathcal{F}(B)$ for each object $(A,B)$ in $\msc{C}\times \msc{C}$ 
and $\mathcal{F}(f)\otimes \mathcal{F}(g)$ for each morphism $(f,g)$ 
in $\msc{C}\times \msc{C}$
}
$\stackrel{\sim}{\rightarrow }\mathcal{F}\otimes $, and they satisfy 
\[
\xymatrix{
(\mc{F}(A)\tens \mc{F}(B))\tens \mc{F}(C)\ar@{}[rrdd]|\circlearrowleft 
\ar[rr]^{\alpha_{\mc{F}(A),\mc{F}(B),\mc{F}(C)}} \ar[d]_{h_2(A,B)\tens 1_{\mc{F}(C)}}& 
&\mc{F}(A)\tens (\mc{F}(B)\tens \mc{F}(C))\ar[d]^{1_{\mc{F}(A)}\tens h_2(B,C)}\\
\mc{F}(A\tens B)\tens \mc{F}(C)\ar[d]_{h_2(A\tens B,C)} & & \mc{F}(A)\tens \mc{F}(B\tens C)\ar[d]^{h_2(A,B\tens C)}\\
\mc{F}((A\tens B)\tens C)\ar[rr]_{\mc{F}(\alpha_{A,B,C})} & & \mc{F}(A\tens (B\tens C))
}\]
\[
\xymatrix{
I'\tens \mc{F}(A)\ar@{}[rd]|\circlearrowleft \ar[d]_{h_1\tens 1_{\mc{F}(A)}}\ar[r]^{\lambda'_{\mc{F}(A)}} & 
\mc{F}(A) & \mc{F}(A)\tens I'\ar@{}[rd]|\circlearrowleft \ar[d]_{1_{\mc{F}(A)}\tens h_1}\ar[r]^{\rho'_{\mc{F}(A)}} & \mc{F}(A)\\
\mc{F}(I)\tens \mc{F}(A)\ar[r]_{h_2(I,A)} & \mc{F}(I\tens A)\ar[u]_{\mc{F}(\lambda_A)} & 
\mc{F}(A)\tens \mc{F}(I)\ar[r]_{h_2(A,I)} & \mc{F}(A\tens I)\ar[u]_{\mc{F}(\rho_A)}
}
\]
for all objects $A,B,C$ in $\msc{C}$.
\item[(2)] A {\it natural tensor transformation} $\eta: (\mc{F},h_1,h_2)\to (\mc{F}',h_1',h_2')$ 
between tensor functors from $\msc{C}$ to $\msc{C}'$ is a natural transformation $\mc{F}\to \mc{F}'$ 
such that the following diagrams commute:
\[
\xymatrix{
  & \mc{F}(I)\ar[dd]^{\eta(I)} & \mc{F}(A)\tens \mc{F}(B)\ar@{}[rdd]|\circlearrowleft 
\ar[dd]_{\eta(A)\tens \eta(B)}\ar[r]^{h_2(A,B)}  & \mc{F}(A\tens B)\ar[dd]^{\eta(A\tens B)}\\
I\ar@{}[r]|\circlearrowleft \ar[ur]_{h_1}\ar[dr]^{h'_1} &           &                           &                 \\
  & \mc{F}'(I)& \mc{F}'(A)\tens \mc{F}'(B)\ar[r]^{h'_2(A,B)} & \mc{F}'(A\tens B)
}
\]
for all objects $A,B$ in $\msc{C}$. 
If $\eta$ is also a natural isomorphism, it is called a {\it natural tensor isomorphism.} 
\item[(3)] A {\it tensor equivalence} between tensor categories $\msc{C},\msc{C}'$ is a tensor functor 
$\mc{F} : \msc{C}\to \msc{C}'$ such that there exists a tensor functor 
$\mc{F}': \msc{C}'\to \msc{C}$ and natural tensor isomorphisms 
$\eta : 1_{\msc{C}'}\stackrel{\sim }{\to }\mc{F}\circ \mc{F}'$ and 
$\theta : \mc{F}'\circ \mc{F}\stackrel{\sim}{\to }1_{\msc{C}}.$ 
If $\eta$ and $\theta$ can be taken to be identity transformations, 
then we say $\msc{C}$ is {\it isomorphic} to $\msc{C}'$ as a tensor category. 
\end{list}
\end{dfn}

\section*{Acknowledgement}
The authors would like to express their sincere thanks to Professor Asao Arai at Hokkaido University, 
Professor Izumi Ojima at Kyoto University for the fruitful discussions, insightful comments and encouragements. 
H.A. also thanks to his colleagues: Mr. Ryo Harada, Mr. Takahiro Hasebe, Mr. Kazuya Okamura and Mr. Hayato Saigo 
for their useful comments and discussions during the seminar.
Y.M. also thanks to Professor Konrad Schm\"{u}dgen at Universit\"{a}t Leipzig for informing us of the paper \cite{Schmuedgen2} and  
Mr. Yutaka Shikano at MIT for his professional advice about LaTeX.
Finally, the authors thank again to Professor Izumi Ojima for his careful proofreading and suggestions.

\end{document}